
\documentclass[11pt,english,a4paper]{article}
\usepackage[T1]{fontenc}
\usepackage[latin9]{inputenc}
\usepackage{geometry}
\geometry{verbose,tmargin=2cm,bmargin=2cm,
lmargin=2cm,rmargin=2cm}
\usepackage{color}
\definecolor{note_fontcolor}{rgb}{0.53125, 0.53125, 0.53125}
\usepackage{refstyle}
\usepackage{mathtools}
\usepackage{amsmath}
\usepackage{amsthm}
\usepackage{amssymb}
\usepackage{enumitem}
\usepackage{float}
\PassOptionsToPackage{normalem}{ulem}
\usepackage{ulem}
\setcounter{tocdepth}{2}
\usepackage[unicode=true,pdfusetitle,
 bookmarks=true,bookmarksnumbered=false,bookmarksopen=false,
 breaklinks=false,pdfborder={0 0 1},backref=false,colorlinks=false]
 {hyperref}

\makeatletter


\AtBeginDocument{}
\AtBeginDocument{\providecommand\eqref[1]{\ref{eq:#1}}}

\RS@ifundefined{subsecref}
  {\newref{subsec}{name = \RSsectxt}}
  {}
\RS@ifundefined{thmref}
  {\def\RSthmtxt{theorem~}\newref{thm}{name = \RSthmtxt}}
  {}
\RS@ifundefined{lemref}
  {\def\RSlemtxt{lemma~}\newref{lem}{name = \RSlemtxt}}
  {}


\theoremstyle{definition}
\newtheorem*{example*}{\protect\examplename}
\theoremstyle{plain}
\newtheorem{thm}{\protect\theoremname}
\theoremstyle{plain}
\newtheorem{rem}[thm]{\protect\remarkname}
\theoremstyle{plain}
\newtheorem{example}[thm]{\protect\examplename}
\theoremstyle{remark}
\newtheorem*{rem*}{\protect\remarkname}
\theoremstyle{plain}
\newtheorem{lem}[thm]{\protect\lemmaname}
\theoremstyle{plain}

\theoremstyle{plain}
\newtheorem{prop}[thm]{\protect\propositionname}
\theoremstyle{remark}

\def\L{L}
\def\F{F}
\def\R{\mathbb R}
\def\Z{\mathbb Z}
\def\N{\mathbb N}
\def\ep{\varepsilon}




\global\long\def\Ff{\mathcal F}%

\global\long\def\Lf{\mathcal L}%

\definecolor{green}{rgb}{0.1,0.6,0}

\def\RSlemtxt{Lemma~}

\makeatother

\usepackage{babel}
\providecommand{\examplename}{Example}
\providecommand{\claimname}{Claim}
\providecommand{\lemmaname}{Lemma}
\providecommand{\propositionname}{Proposition}
\providecommand{\remarkname}{Remark}
\providecommand{\theoremname}{Theorem}
\providecommand{\corollaryname}{Corollary}

\title{Populations facing a {\it nonlinear} environmental gradient: steady states and pulsating fronts}
\date{}
\begin{document}
\global\long\def\red#1{\textcolor{red}{#1}}%
\global\long\def\blue#1{\textcolor{blue}{#1}}%
\global\long\def\green#1{\textcolor{green}{#1}}%
\global\long\def\Ff{\mathcal{F}}%
\global\long\def\Fmu{\mathcal{F}^{\mu}}%
\global\long\def\Lf{\mathcal{L}}%
\global\long\def\Lmu{\mathcal{L}^{\mu}}%
\global\long\def\ep{\varepsilon}%
\global\long\def\R{\mathbb{R}}%
\global\long\def\magenta#1{\textcolor{magenta}{#1}}%
\global\long\def\Re{\Re}%
\global\long\def\Im{\Im}%
\global\long\def\ind{\ind}%

\renewcommand\Re{\operatorname{Re}}
\renewcommand\Im{\operatorname{Im}}
\renewcommand\ind{\operatorname{ind}}

\maketitle

\begin{center}
{\large\bf Matthieu Alfaro\footnote{Universit\'e de Rouen Normandie, CNRS, Laboratoire de Math\'ematiques Rapha\"el Salem, Saint-Etienne-du-Rouvray, France \& BioSP, INRAE, 84914, Avignon, France. e-mail: {\tt matthieu.alfaro@univ-rouen.fr}} and Gwena\"el Peltier\footnote{IMAG, Univ. Montpellier, CNRS, Montpellier, France. e-mail: {\tt gwenael.peltier@umontpellier.fr}}} \\
[2ex]
\end{center}


\tableofcontents{}

\vspace{10pt}

\begin{abstract} We consider a population structured by a space
variable and a phenotypical trait, submitted to dispersion,
mutations, growth and nonlocal competition. This population is facing an {\it environmental gradient}: to survive at location $x$, an individual must have a trait close to some optimal trait $y_{opt}(x)$. Our main focus is to understand the effect of a {\it nonlinear} environmental gradient. 

We thus consider a nonlocal parabolic equation for the distribution of the population, with $y_{opt}(x)=\ep\theta(x)$, $0<\vert \ep \vert \ll 1$. We construct steady states solutions and, when $\theta$ is periodic, pulsating fronts. This requires the combination of rigorous perturbation techniques based on a careful application of the implicit function theorem in rather intricate function spaces. To deal with the phenotypic trait variable $y$ we take advantage of a Hilbert basis of $L^{2}(\R)$ made of eigenfunctions of an underlying Schr\"odinger operator, whereas to deal with the space variable $x$ we use the Fourier series expansions. 

Our mathematical analysis reveals, in particular, how both the steady states solutions and the fronts (speed and profile) are distorted by the nonlinear environmental gradient, which are important biological insights.
\\

\noindent{\underline{Key Words:} structured population, nonlocal
reaction-diffusion equation, steady states, pulsating fronts, perturbation techniques.}\\

\noindent{\underline{AMS Subject Classifications:} 35K57,  45K05, 35B10, 92D15.}
\end{abstract}

\section{Introduction}\label{s:intro}

This paper is concerned with the nonlocal parabolic equation
\begin{equation}
\label{eq}
\partial_t u=\partial_{xx} u+\partial_{yy} u+u\displaystyle \left(1-A^2\left(y-y_{opt}(x)\right)^2-\int_{\R}u(t,x,y')\,dy'\right), \quad t>0, x\in \R, y\in \R,
\end{equation}
with
\begin{equation}
\label{def-yopt}
y_{opt}(x):=\ep \theta(x),\quad \theta \in C_b(\R),
\end{equation}
which serves as a model in evolutionary biology. Here $u=u(t,x,y)$ denotes the distribution of a population which, at each time $t>0$, is structured by a space variable $x\in\mathbb R$, and a phenotypic trait $y\in\mathbb R$. This population  is  submitted to  spatial dispersion, mutations, growth and
competition. The spatial dispersion and the mutations are
modeled by diffusion operators, namely $\partial_{xx}u$ and $\partial_{yy}u$. The intrinsic {\it per capita growth rate}
of the population depends
on both the location $x$ and the phenotypic trait
$y$. It is modeled by the confining term $1-A^2\left(y-y_{opt}(x)\right)^2$, where $A>0$ is a constant that measures the strength of the selection. This corresponds to a population living in an {\it environmental gradient}: to survive at location $x$,
an individual must have a trait close to the optimal trait
$y_{opt}(x)=\ep \theta(x)$. Finally, we consider a logistic
regulation of the population distribution that is local in the spatial
variable $x$  and nonlocal in the phenotypic trait $y$. In other words,  we consider
that there exists, at each location, an intra-specific competition which takes place with all individuals whatever their trait.

The main input of this work is to analyze the case of a {\it nonlinear} environmental gradient. To do so, we consider that the optimal trait  is described by (\ref{def-yopt})
with  $0<\vert \ep\vert \ll 1$, which corresponds to a nonlinear perturbation of the linear case $\ep=0$. First,  under some natural assumptions, we construct steady states solutions, shedding light on how Gaussian solutions (corresponding to $\ep=0$) are distorted by the nonlinear perturbation. Next, we consider the case of a periodic perturbation, $\theta\in C(\R/L\Z)$ for some $L>0$, for which we construct {\it pulsating fronts}  with a semi infinite interval of admissible speeds.

\medskip

In ecology, an {\it environmental gradient} refers to a gradual change in various factors in space that determine the favoured phenotypic traits. Environmental gradients can be related to factors such as altitude, temperature, and other environment characteristics. It is now well
documented that invasive species need to evolve during their range
expansion to adapt to local
conditions \cite{Ett_08}, \cite{Kel_08}.  Such issues are highly relevant in the context of the  global
warming \cite{Dav_05}, \cite{Dup_12}, or of the evolution of resistance of bacteria to antibiotics \cite{Her_12}, \cite{Bay_16}. Theoretical models therefore need to incorporate  evolutionary factors  \cite{Gri_06}, \cite{Key_06}, \cite{Her_12}. In this context,
let us mention the so-called \lq\lq cane toad equation'' which has led to rich  mathematical results  \cite{BenCalMeuVoi_12}, \cite{BouCalMeuMirPerRao_12},  \cite{BerMouRao_15},  \cite{BouHenRhy_16}. On the other hand, equations having the form of (\ref{eq}) were developed  in \cite{Pec_98}, \cite{Pre_04}, \cite{Pol_05}, \cite{MirRao_13}.

Before discussing propagation phenomena in (\ref{eq}), let us briefly recall that  {\it  traveling fronts} are particular  solutions that consist  of a constant profile connecting zero to \lq\lq a non-trivial state'' and shifting at a constant speed. This goes back to the seminal works \cite{Fis_37}, \cite{KolPetPis_37} on the  Fisher-KPP equation
\begin{equation*}
 \partial_t u=\Delta u+u(1-u), \quad t>0, x\in \R^N,
\end{equation*}
and, among so many others, \cite{AroWei_75,AroWei_78}, \cite{FifMac_77}. The construction of such solutions is much harder when the equation does not enjoy the comparison principle. One then usually needs to use topological degree arguments and the identification of the \lq\lq non-trivial state'' is typically missing, see  e.g.
\cite{BerNadPerRyz_09}, \cite{AlfCov_12}, \cite{HamRyz_14} on the nonlocal Fisher-KPP equation. 
 
As far as the mathematical analysis of (\ref{eq}) is concerned, one has to deal with the fine interplay between the space variable $x$ and the phenotypic trait $y$, the fact that the phenotypic space is unbounded, and the nonlocal competition term. Because of the latter, equation (\ref{eq}) does not enjoy the comparison principle and its analysis is quite involved since
many techniques based on the comparison principle --- such as some
monotone iterative schemes or the sliding method --- are unlikely to be used.

Despite of that,  the linear environmental gradient case, namely 
\begin{equation}
\label{def:yopt-linear}
y_{opt}(x)=Bx, \quad \text{ for some }  B\in \R,
\end{equation}
is now rather well understood. In this case, depending on the sign of an underlying principal eigenvalue \cite{AlfCovRao_13}, either the population gets extinct, or it is able to adapt progressively to uncrowded zones and  invade the environment. When propagation occurs, known results are the following. First, the $B=0$ case allows a separation of variables trick, from which a rather exhaustive analysis can be performed \cite{BerJinSil_16}. Roughly speaking, traveling fronts can be written in the form $\Gamma_0(y)U(x-ct)$, where $\Gamma _0(y)$ is an underlying {\it ground state} or {\it principal eigenfunction} and $U(z:=x-ct)$ a Fisher-KPP traveling wave with speed $c$. This fact will be precised and exploited later in the present work. On the other hand, when $B\neq 0$, variables cannot be separated and careful estimates of the nonlocal competition term are required.  Thanks to rather sharp {\it a priori} estimates, Harnack and Bernstein type refined inequalities, traveling fronts are constructed in \cite{AlfCovRao_13} and the determinacy of the spreading speed in the associated Cauchy problem is obtained in \cite{AlfBerRao_17}.  Very recently, accelerating invasions induced by  initial {\it heavy tails} of the population distribution --- see 
\cite{HamRoq_10} and \cite{Gar_11} for related results in absence of evolution---  have been analysed in \cite{Pel_20}. 

Last, let us mention that the case of a moving optimum
$$
y_{opt}(t,x)=B(x-c_{s}t), \quad \text{ for some } B\in \R, c_{s}>0,
$$
is also analyzed in \cite{AlfBerRao_17}. This case serves as a model to study, e.g., the effect of global warming on the survival and propagation of a species: the favorable areas are shifted by the climate change at a given speed $c_s>0$. The outcome is that there is an identified critical climate speed $c_s^*\geq 0$ such that $c>c_s^*$ implies extinction, whereas $c_s<c_s^*$ implies survival and invasion. 

\medskip

Nevertheless, the case of nonlinear environmental gradients is of great importance for applications, for instance in the context of development of resistance of pathogens to antibiotics. In this respect, let us mention the  experimental set up of \cite{Bay_16} where, thanks to mutation, E. Coli bacteria are able to cross a  four feet long petri dish on which the antibiotic concentration sharply increases\footnote{see the striking movie at  https://www.youtube.com/watch?v=yybsSqcB7mE}.

As far as we know no significant mathematical results exist for model (\ref{eq}) when the environmental gradient $y_{opt}(x)$ is nonlinear. The reasons are, at least, threefold. First of all, it is much harder, if possible, to relate the issue to a underlying eigenvalue problem. Second, it is expected that the population may survive while being blocked in a restricted zone (so that invasion does not occur). Last, if invasion occurs, tracking the propagation of the solution is far from obvious since, among others, geometrical effects (via curvature) may appear along the optimal curve $y=y_{opt}(x)$.

\medskip

Thus, in order to understand the situation where the optimal trait no longer depends linearly on space, our strategy is to consider the case (\ref{def-yopt}) with $0< \vert \ep\vert \ll 1$, which we see as a nonlinear perturbation of the case (\ref{def:yopt-linear}) with $B=0$ studied in \cite{BerJinSil_16}.

Our first goal is to construct steady states, which we denote $n=n^{\ep}(x,y)$, to (\ref{eq}). To do so, we will rely on rigorous perturbation techniques based on the implicit function theorem. We will  also take advantage of the orthonormal basis of $L^2(\R)$ consisting of eigenfunctions of the underlying operator
$$
-\frac{d^2}{dy^2}-(1-A^2y^2).
$$
This requires to work in rather intricate function spaces. Besides this rigorous theoretical construction, asymptotic expansions combined with numerical explorations enable to capture the distortion of the steady state by the nonlinear perturbation of the environmental gradient.

Our second goal is to analyze the propagation phenomena arising from model (\ref{eq}). To do so, for $\theta$ being  $L$-periodic, we construct {\it pulsating fronts}. These particular solutions were first introduced by \cite{ShiKawTer_86} in a biological context, and by Xin  \cite{Xin_91, Xin_93, Xin_00} in the framework of flame
propagation, as  natural extensions, in the periodic
framework, of the aforementioned traveling fronts. By definition, a pulsating front is a speed $c_{\ep}\in \R$ and a profile $\tilde{u}^{\ep}(z,x,y)$, that is $L$-periodic in the $x$ variable, such that 
$$
u^{\ep}(t,x,y):=\tilde{u}^{\ep}(x-c_{\ep}t,x,y)
$$
solves equation (\ref{eq}) and such that, as $z\to \pm \infty$, $\tilde{u}^{\ep}(z,x,y)$ connects zero to a \lq\lq non-trivial periodic state'', a natural candidate being the steady state $n^{\ep}(x,y)$ constructed previously. Equivalently, a pulsating front is a 
solution  connecting zero to a \lq\lq non-trivial periodic state'', and that satisfies the constraint
$$
u^{\ep}\left(t+\frac L {c_{\ep}},x,y\right)=u^{\ep}(t,x-L,y), \quad \forall (t,x,y)\in \R ^3.
$$
 As far as monostable pulsating fronts are
concerned, we refer among others to the seminal works of Weinberger \cite{Wei_02},
Berestycki and Hamel \cite{BerHam_02}. Let us also mention \cite{HudZin_95},
\cite{BerHamRoq_05_2}, \cite{Ham_08}, \cite{HamRoq_11} for
related results. In contrast with these results and as mentioned above, model (\ref{eq}) does not enjoy the comparison principle. In such a situation, construction of pulsating fronts in a Fisher-KPP situation was recently achieved in \cite{AlfGri_18} (see \cite{ConDomRoqRyz_06}, \cite{Hen_14} for an ignition type nonlinearity and a different setting). Another inherent difficulty of the present situation is to deal with both variables $x$ (space) and $y$ (phenotypic trait). To do so, we will first  use the orthonormal basis of $L^2(\R)$ mentioned above to deal with $y$ and then  use the Fourier series expansions to deal with $x$. Again, this is combined with a careful use of rigorous perturbation techniques based on the implicit function theorem.  As far as we know, such perturbation arguments to construct pulsating fronts are rather used in the ignition  \cite{BagMar_09} or bistable cases
\cite{DinHamZha_14}. Besides this rigorous theoretical construction, our analysis reveals how the speed and profile of the fronts are modified  by the nonlinear perturbation of the environmental gradient, which are very relevant for biological applications.

\section{Main results}\label{s:results}

Letting 
\begin{equation}\label{def:r}
r(y):=1-A^2y^2,\quad A>0,
\end{equation}
equation (\ref{eq}) is recast
\begin{equation}
\label{eq-r}
\partial_t u=\partial_{xx} u+\partial_{yy} u+u\displaystyle \left(r\left(y-\ep\theta(x)\right)-\int_{\R}u(t,x,y')\,dy'\right).
\end{equation}

\begin{rem}[Quadratic choice] If $r:\R\to\R$ is continuous and confining, that is $
\lim_{\vert y\vert\to \infty} r(y)=-\infty$, then the operator $
- \frac{d^{2}}{dy^{2}}-r(y)$
is essentially self-adjoint on $C^{\infty}_{c} (\R)$, and has discrete spectrum. There exists an orthonormal basis $\{\Gamma _k\}_{k\in \N}$ of $L^2(\R)$ consisting of eigenfunctions, namely
$$
-\Gamma _k ''-r(y)\Gamma _k=\lambda_k \Gamma _k, \quad \Vert \Gamma _k\Vert _{L^{2}}=1,
$$
with corresponding eigenvalues $
\lambda_0<\lambda_1\leq \lambda_2\leq \cdots \leq \lambda_k\to +\infty$ of finite multiplicity. Assuming that the confinement is, say, polynomial we may handle such per capita growth rate $r$ as in \cite{AlfVer_19}. For the clarity of the exposition (in particular some  relations  between the eigenfunctions are helpful, see subsection \ref{ss:linear}) we have nonetheless decided to consider the quadratic case (\ref{def:r}) which, anyway, reveals all the possible features of the model.
\end{rem}

In the sequel, we denote by $(\lambda_{0},\Gamma_{0}(y))$ the principal eigenelements of $-\frac{d^{2}}{dy^{2}}-r(y)$, namely 
\[
\begin{cases}
-\Gamma_{0}^{\prime\prime}-(1-A^2y^{2})\Gamma_{0}=\lambda_{0}\Gamma_{0} & \text{in }\mathbb{R},\\
\Gamma_{0}>0 & \text{in }\mathbb{R},\\
||\Gamma_{0}||_{L^2}=1,
\end{cases}
\]
that is
\begin{equation*}
\label{expression-eigen}
\lambda_0= A-1, \quad \Gamma _0(y)=\left(\frac{A}{\pi}\right)^{\frac 14}e^{-\frac 12  A y^2}.
\end{equation*}

We first state that, as soon as $\lambda _0 >0$ and $\vert \ep \vert \ll 1$, extinction
of the population occurs for rather general initial data, including in particular the case of continuous compactly supported ones.

\begin{prop}[Extinction] \label{prop:extinction}  Assume $\lambda_0>0$. Let us fix  $0<\mu_0<\lambda _0$ and $\mu _0+1<a<\lambda_0+1$.  Let $\theta \in C_b(\R)$. Then there is $\ep_0>0$ such that, for any $\vert \ep\vert < \ep_0$, the following holds: any global nonnegative solution $u=u^\ep (t,x,y)$ of (\ref{eq-r}), starting from a 
initial data $u_0=u_0(x,y)$ such that
\begin{equation}\label{def:M}
M=M(u_0):=\sup_{(x,y)\in \R^2} u_0(x,y)e^{\frac 12 a y^2}<+\infty,
\end{equation}
goes extinct exponentially fast as $t\to+\infty$. More precisely, we have
\begin{equation*}
0\leq u(t,x,y)\leq Me^{-\mu _0  t}e^{-\frac 1 2 a y^2}, \quad \text{ for all } t\geq 0, x\in\R, y\in \R. 
\end{equation*}
\end{prop}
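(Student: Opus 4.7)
The natural strategy is to produce a supersolution of Gaussian-in-$y$ type that decays exponentially in $t$, and then to invoke a comparison principle. Since $u \geq 0$, the nonlocal term $-u\int_{\R} u(t,x,y')\,dy'$ is non-positive, so any non-negative solution of (\ref{eq-r}) satisfies the linear differential inequality
$$
\partial_t u \leq \partial_{xx} u + \partial_{yy} u + r(y-\ep\theta(x))\, u.
$$
This reduction to a linear equation is the key simplification: it gets rid of the nonlocal competition, which is the main source of difficulty in the model, and it restores the comparison principle.

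I would then try the ansatz $\bar u(t,x,y) := M e^{-\mu_0 t} e^{-\frac{1}{2} a y^2}$, which matches the initial data bound \eqref{def:M} at $t=0$ and has the prescribed exponential decay in $t$. A direct computation gives
$$
\partial_t \bar u - \partial_{xx}\bar u - \partial_{yy}\bar u - r(y-\ep\theta(x))\,\bar u
= \bar u\bigl[(a-1-\mu_0) + (A^2-a^2)y^2 - 2A^2\ep\theta(x)\,y + A^2 \ep^2 \theta(x)^2\bigr].
$$
Using $\mu_0 + 1 < a < \lambda_0 + 1 = A$, both $a-1-\mu_0 > 0$ and $A^2 - a^2 > 0$, so I complete the square in $y$ to obtain a lower bound of
$$
\bar u\left[(a-1-\mu_0) - \frac{A^2 a^2}{A^2 - a^2}\,\ep^2 \theta(x)^2\right].
$$
Since $\theta \in C_b(\R)$, choosing $\ep_0>0$ so that $\ep_0^2 \|\theta\|_\infty^2 \leq (a-1-\mu_0)(A^2-a^2)/(A^2 a^2)$ makes $\bar u$ a classical supersolution of the linear equation for every $|\ep|<\ep_0$.

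It then remains to apply a comparison principle on $\R_t \times \R_x \times \R_y$ to conclude $0 \leq u \leq \bar u$. The main technical point here will be to justify comparison on the unbounded domain: $\bar u$ decays in $y$ but is constant in $x$, so a Phragm\'en--Lindel\"of-type argument is needed, typically by testing against $\bar u + \eta e^{Kt}\cosh(\kappa x)\cosh(\kappa y)$ for small $\eta > 0$ and some $\kappa$ compatible with the Gaussian decay of $\bar u$, then letting $\eta \downarrow 0$. Once this is granted, the pointwise inequality $u(t,x,y) \leq M e^{-\mu_0 t} e^{-\frac{1}{2} a y^2}$ follows immediately, which is exactly the claimed extinction estimate. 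The only true obstacle is selecting the constants so that $\bar u$ remains a supersolution uniformly in $x$, and the bound $\|\theta\|_\infty < \infty$ together with the strict inequalities $\mu_0 < \lambda_0$ and $a < \lambda_0+1$ is precisely what makes this possible.
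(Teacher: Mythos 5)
Your proof is correct and follows essentially the same route as the paper: drop the non-positive nonlocal term, verify that $Me^{-\mu_0 t}e^{-\frac12 a y^2}$ is a supersolution of the resulting linear equation by showing the quadratic in $y$ has negative discriminant for $|\ep|$ small (the paper phrases your completed square as a discriminant condition), and conclude by comparison. Your extra remark on justifying comparison on the unbounded domain via a Phragm\'en--Lindel\"of perturbation is a point the paper glosses over, and is a welcome addition rather than a deviation.
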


When $\lambda _0 \geq 0$, extinction in the linear case  $\ep=0$ is easily proved thanks to the comparison principle since the nonlocal term is \lq\lq harmless'' when searching an estimate {\it from above}. Hence, when $\lambda _0>0$, the proof of  Proposition \ref{prop:extinction} follows from a rather direct perturbation argument. Notice that the critical case $\lambda_0=0$ is much more subtle, since more sensitive to perturbations, and left open here.

\medskip

We now focus on the case $\lambda_{0}<0$, for which survival is expected when $\vert \ep\vert \ll 1$. We thus inquire for nonnegative and nontrivial steady state $n=n^\ep(x,y)$ solving
\begin{equation}
\label{eq:statio_state}
\partial_{xx} n+\partial_{yy} n+n\displaystyle \left(r\left(y-\ep\theta(x)\right)-\int_{\R}n(x,y')\,dy'\right)=0.
\end{equation}
Notice that, in this paper, we reserve the notations $n=n^\ep(x,y)$ to steady states and $u=u^\ep(t,x,y)$ to time dependent solutions. Observe first that, when $\ep=0$, an appropriate renormalization of the ground state $\Gamma _0=\Gamma _0(y)$ provides a positive solution: it is obvious that
\begin{equation}
\label{def:n0}
n^0(y):=\eta \Gamma_0(y), \quad \eta:=\frac{-\lambda_{0}}{\Vert \Gamma_{0}\Vert_{L^1}}>0,
\end{equation}
solves (\ref{eq:statio_state}) when $\ep=0$. Our first main result is concerned with the construction of steady states when $\vert \ep\vert$ is small enough. 

\begin{thm}[Steady states]
\label{thm:steady_state_eps} Assume $\lambda_{0}<0$. Let $\theta\in C_{b}(\mathbb{R})$. Let us fix $\beta >\frac{13}4$.

Then there is $\ep_0>0$ such that, for any $\vert \ep \vert < \ep_0$, 
\begin{equation}\label{objectif1}
\text{ 
 there is a unique $n^{\varepsilon}\in Y$ such that $n^{\ep}$ solves (\ref{eq:statio_state})},
\end{equation}
where the function space $Y$ is given by (\ref{eq:Y_space}). Additionally, we have 
 \begin{equation}\label{terme-un}
 n^{\varepsilon}=n^{0}+\ep n^{1}+o(\ep) \quad \text{  in $Y$, as } \varepsilon\rightarrow 0,
 \end{equation}
where $Y$ is equipped with the norm (\ref{eq:Y_norm}), and where
\begin{equation}
\label{def-terme-un}
n^{1}=n^{1}(x,y):=A (\rho_A*\theta)(x) yn_0(y),
\end{equation}
with $\rho_A$ the probability density given by
\begin{equation}
\label{def:proba}
\rho_A(z):=\frac 12 \sqrt{2A}e^{-\sqrt{2A}\vert z\vert}.
\end{equation}

If we assume further that $\theta\in C_{b}^{m}(\mathbb{R})$ for some $m\geq 1$, then the same conclusions hold true with $Y$ replaced by  $Y_{m}$ given by (\ref{eq:Ym_space}).
\end{thm}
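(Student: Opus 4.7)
The natural approach is to invoke the implicit function theorem at the explicit solution $(\ep, n) = (0, n^0)$. I define
$$
F(\ep, n)(x,y) := \partial_{xx} n + \partial_{yy} n + n\Bigl(r\bigl(y - \ep\theta(x)\bigr) - \int_{\R} n(x, y')\,dy'\Bigr),
$$
viewed as a map from $\R \times Y$ into some target Banach space $Z$ chosen so that $n^0\in Y$, $F$ is $C^1$ (indeed analytic in $\ep$) with $F(0, n^0) = 0$. Everything reduces to showing that the partial differential
$$
L := D_n F(0, n^0),\qquad L[v] = \partial_{xx} v + \partial_{yy} v + (r(y) + \lambda_0)\, v - \eta\,\Gamma_0(y) \int_{\R} v(x, y')\,dy',
$$
is a Banach-space isomorphism from $Y$ onto $Z$, after which the theorem is immediate.

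To invert $L$, I decompose $v(x,y) = \sum_{k \geq 0} v_k(x)\, \Gamma_k(y)$ along the Hilbert basis, and similarly $g = L[v] = \sum g_k \Gamma_k$. Using $\Gamma_k'' = -(r + \lambda_k)\Gamma_k$, projection onto $\Gamma_k$ decouples into countably many ODEs: for $k \geq 1$,
$$
v_k''(x) - \mu_k\, v_k(x) = g_k(x), \qquad \mu_k := \lambda_k - \lambda_0 = 2kA > 0,
$$
uniquely solved in a decaying class by convolution with $-\tfrac{1}{2\sqrt{\mu_k}} e^{-\sqrt{\mu_k}|x|}$; for $k=0$,
$$
v_0''(x) - \eta c_0\, v_0(x) = g_0(x) + \eta \sum_{j \geq 1} c_{2j}\, v_{2j}(x),
$$
where $c_k := \int_\R \Gamma_k$ vanishes for odd $k$ by parity and $c_0 = \Vert\Gamma_0\Vert_{L^1} > 0$, again uniquely solvable by convolution with the analogous exponential kernel once the $v_k$, $k \geq 1$, are known. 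This yields an explicit formula for $L^{-1}$. The main technical obstacle is the careful design of $Y$ and $Z$ so that this series representation converges, $L^{-1}: Z \to Y$ is bounded, and at the same time $F$ remains smooth between these spaces; the threshold $\beta > \tfrac{13}{4}$ should be precisely what provides enough Hermite-index decay to absorb both the $\sqrt{\mu_k} = O(\sqrt{k})$ growth in the Green's kernels and the moments $c_k$, $\langle y\Gamma_k,\Gamma_j\rangle$, etc., appearing when estimating the nonlocal and quadratic terms in $F$ against norms that encode polynomial/Gaussian decays in $y$ and bounded-in-$x$ regularity.

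Once $L^{-1}$ is available, the first-order expansion comes from $n^1 = -L^{-1} \partial_\ep F(0, n^0)$. Direct differentiation gives $\partial_\ep F(0, n^0) = -\theta(x) r'(y)\, n^0(y) = 2A^2\, y\, n^0(y)\, \theta(x)$. The harmonic-oscillator identity $y\,\Gamma_0(y) = \tfrac{1}{\sqrt{2A}}\Gamma_1(y)$ shows that the source has a single nonzero Hermite component, on $\Gamma_1$, equal to $-\sqrt{2}\, A^{3/2}\,\eta\,\theta(x)$, so $v_k = 0$ for $k \neq 1$ and
$$
v_1''(x) - 2A\, v_1(x) = -\sqrt{2}\, A^{3/2}\, \eta\, \theta(x).
$$
Convolution with $-\tfrac{1}{2\sqrt{2A}} e^{-\sqrt{2A}|x|} = -\tfrac{1}{2A}\rho_A(x)$ gives $v_1 = \eta \sqrt{A/2}\,(\rho_A * \theta)$, whence
$$
n^1 = v_1 \Gamma_1 = \eta\sqrt{A/2}\,\sqrt{2A}\,(\rho_A*\theta)(x)\, y\,\Gamma_0(y) = A\,(\rho_A*\theta)(x)\, y\, n^0(y),
$$
matching (\ref{def-terme-un}). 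The extension to $\theta \in C_b^m$ and the refined space $Y_m$ is a routine bootstrap: additional $x$-regularity of $\theta$ propagates through $\partial_\ep F$ and through $L^{-1}$, which commutes with $\partial_x$, so re-applying the implicit function theorem between $Y_m$ and the analogous target space yields the refined statement with no new ideas.
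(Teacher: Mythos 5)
Your proposal is correct and follows essentially the same route as the paper: an implicit function theorem at $(\ep,n)=(0,n^0)$ (the paper writes $n=n^0+h$ and linearizes at $h=0$, which is the same thing), inversion of the linearization by Hermite decomposition in $y$ into the decoupled ODEs $v_k''-2kA\,v_k=g_k$ for $k\geq1$ plus the coupled $k=0$ equation, explicit exponential Green's kernels, and the computation $n^1=-L^{-1}\partial_\ep F(0,n^0)$ which you carry out correctly, including the identity $y\Gamma_0=\tfrac{1}{\sqrt{2A}}\Gamma_1$ and the resulting formula $A(\rho_A*\theta)\,y\,n^0$. The one step you defer --- constructing $Y$ and $Z$ with the double control (pointwise $(1+y^2)^{-2}$ decay plus decay of the Hermite coefficients), proving $L^{-1}:Z\to Y$ is bounded by reassembling $h=\sum h_i\Gamma_i$ from the bounds $\Vert h_i\Vert_\infty\lesssim(1+i)^{-\beta-1}$, $\Vert h_i''\Vert_\infty\lesssim(1+i)^{-\beta}$ against the growth $\Vert y^4\Gamma_i\Vert_\infty\lesssim i^{9/4}$ --- is where the bulk of the paper's effort goes, but your description of what the spaces must accomplish and of why $\beta>\tfrac{13}{4}$ is the threshold is accurate.
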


The proof relies on rigorous perturbation techniques, and involves rather intricate function spaces, such as $Y\subsetneq C^{2}_b(\R^{2})$, $Y_m\subsetneq C^{2+k}_b(\R^{2})$, which are precisely defined in (\ref{eq:Y_space}), (\ref{eq:Ym_space}).

The positivity of the constructed steady state is not provided by our proof. Nevertheless, it can be proved {\it a posteriori} in some prototype situations for the perturbation $\theta=\theta(x)$, in particular when it is periodic or {\it localized}. To state this, we denote
$$
C_{per}^{L}(\R):=\{f \in C(\R): \forall x \in \R, f(x+L)=f(x)\}, \quad \text{ for $L>0$},
$$
and
$$
C_0(\R):=\{f \in C(\R): \lim _{\vert x\vert \to +\infty} f(x)=0\}.
$$

\begin{thm}[Positive steady states, the periodic case]\label{thm:steady-per} Let the conditions of Theorem \ref{thm:steady_state_eps} hold and assume further that $\theta\in C_{per}^{L}(\R)$ for some $L>0$. Then, the steady states $n^\ep=n^{\ep}(x,y)$ constructed in Theorem \ref{thm:steady_state_eps} are $L$-periodic with respect to the $x$ variable. Furthermore, up to reducing $\ep _0>0$, there holds
\begin{equation}
\label{positivite-exp-per}
\forall a >0, \exists C>0, \forall \vert \ep \vert < \ep _0, \forall (x,y)\in \R^2, \quad 0<n^\ep (x,y) \leq Ce^{-a \vert y\vert }.
\end{equation}
\end{thm}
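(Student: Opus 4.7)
The plan is to treat the three claims in order: $L$-periodicity in $x$, the exponential upper bound in $y$, and strict positivity. I use two background facts carried over from Theorem~\ref{thm:steady_state_eps}: the space $Y$ embeds continuously into $C^{2}_b(\R^{2})$, so that elements are classical solutions amenable to the maximum principle; and the $Y$-convergence $n^\ep\to n^{0}=\eta\Gamma_{0}$ yields uniform convergence of $n^\ep$ and its first two derivatives to those of $n^{0}$ on compact subsets of $\R^{2}$.

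\textbf{Periodicity.} Since $\theta$ is $L$-periodic, the translation $S_{L}:n(x,y)\mapsto n(x-L,y)$ leaves equation (\ref{eq:statio_state}) invariant. It is also an isometry of $Y$ (whose construction is translation-invariant in $x$) and it fixes $n^{0}$ (which depends only on $y$). Hence $S_{L}n^\ep\in Y$ lies in the same neighbourhood of $n^{0}$ and solves (\ref{eq:statio_state}); the local uniqueness in (\ref{objectif1}) forces $S_{L}n^\ep=n^\ep$.

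\textbf{Exponential upper bound.} Rewriting (\ref{eq:statio_state}) as
\[
\Delta n^\ep=-V^\ep(x,y)n^\ep,\qquad V^\ep(x,y):=r(y-\ep\theta(x))-\int_{\R}n^\ep(x,y')\,dy',
\]
and using the uniform $C^{2}_b$-bound on $n^\ep$ together with $r(y)\to-\infty$ as $|y|\to\infty$, one sees that $|\Delta n^\ep|$ stays bounded while $|V^\ep(x,y)|\to\infty$ in $y$ uniformly in $x\in[0,L]$. This forces $n^\ep(x,y)\to 0$ as $|y|\to\infty$ uniformly in $x$. For each fixed $a>0$, I would then construct a supersolution of the form $w(y):=Me^{-a|y|}$: choosing $R$ so large that $A^{2}(y-\ep\theta(x))^{2}-1+\int n^\ep(x,\cdot)>a^{2}$ on $\{|y|>R\}$, the function $w$ satisfies $-\Delta w-V^\ep w\geq 0$ in each half-strip (away from $y=0$), and taking $M$ with $Me^{-aR}\geq\sup_{x,\,|y|=R}n^\ep(x,y)$ a classical comparison then yields $n^\ep\leq Me^{-a|y|}$ for $|y|\geq R$; the uniform sup-bound on $n^\ep$ allows one to inflate $M$ so that the inequality holds on the whole plane, with $M$ depending on $a$ but not on $\ep$.

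\textbf{Positivity.} For contradiction, suppose $n^\ep$ takes a negative value. By the decay above and $L$-periodicity in $x$, the infimum of $n^\ep$ is attained at some interior point $(x_{0},y_{0})\in[0,L]\times\R$ with $m:=n^\ep(x_{0},y_{0})<0$. At this minimum $\Delta n^\ep(x_{0},y_{0})\geq 0$, so the equation forces $V^\ep(x_{0},y_{0})\geq 0$. But $V^\ep\to V^{0}=A(1-Ay^{2})$ uniformly on compacts (using the $C^{2}_b$ control plus Gaussian tails of $n^\ep$ to pass the nonlocal term to the limit) and $V^{0}<0$ for $|y|>1/\sqrt{A}$, so $y_{0}$ must lie in a fixed compact interval $[-R_{0},R_{0}]$ for $\ep_{0}$ small. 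Yet on $[0,L]\times[-R_{0},R_{0}]$ the uniform convergence $n^\ep\to n^{0}>0$ contradicts $m<0$ once $\ep_{0}$ is small enough. Hence $n^\ep\geq 0$, and the strong maximum principle applied to the linear elliptic equation $\Delta n^\ep+V^\ep n^\ep=0$ (with $V^\ep$ locally bounded and $n^\ep\not\equiv 0$ by the expansion~(\ref{terme-un})) upgrades this to $n^\ep>0$ on $\R^{2}$. The main technical obstacle is bridging the abstract construction in $Y$ with these pointwise max-principle arguments, i.e. extracting from $n^\ep\to n^{0}$ in $Y$ both the uniform convergence on compacts and the passage to the limit in the nonlocal coefficient $\int_{\R}n^\ep(x,\cdot)$; the calibration $\beta>13/4$ is precisely what guarantees $Y\hookrightarrow C^{2}_{b}(\R^{2})$, and it is here that one relies on this most critically.
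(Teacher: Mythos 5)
Your proposal is correct and follows essentially the same route as the paper: periodicity via the translation-invariance of $\F$ plus the local uniqueness from the implicit function theorem; nonnegativity by evaluating the equation at an interior minimum, confining the minimum point to a compact set, and invoking the uniform convergence $n^\ep\to n^0>0$ there; the strong maximum principle for strict positivity; and a supersolution $Me^{-a|y|}$ on the region where the confining potential dominates for the exponential tail (the paper merely does positivity before the tail estimate, which is immaterial since your comparison on $\{|y|>R\}$ has a zeroth-order coefficient of the right sign). One cosmetic point: to confine $y_0$ you should not appeal to convergence of $V^\ep$ on compacts but to the explicit global bound $V^\ep(x,y)\le 1-A^2y^2+2A^2\ep_0\|\theta\|_\infty|y|+A^2\ep_0^2\|\theta\|_\infty^2+\pi\|n^0\|_Y$, which is uniformly negative for $|y|$ large --- exactly the computation you already use when choosing $R$ in the decay step.
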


\begin{thm}[Positive steady states, the localized case]\label{thm:steady-C0} Let the conditions of Theorem \ref{thm:steady_state_eps} hold and assume further that $\theta\in C_0(\R)$. Then,  the steady states $n^\ep=n^{\ep}(x,y)$ constructed in Theorem \ref{thm:steady_state_eps} satisfy $n^\ep -n_0 \in \tilde{Y}$, where the function space $\tilde{Y}$ is given by (\ref{eq:Y_C0}) and equipped with the norm
(\ref{norme-Y-tilde}). In particular,
\begin{equation}
\label{uniform}
n^{\ep}(x,y)\to n^{0}(y), \quad \text{ as } \vert x\vert \to +\infty, \text{ uniformly w.r.t. $y\in \R$}.
\end{equation}
Furthermore, up to reducing $\ep _0>0$, there holds
\begin{equation}
\label{positivite-exp-per-bis}
\forall a >0, \exists C>0, \forall \vert \ep \vert < \ep _0, \forall (x,y)\in \R^2, \quad 0<n^\ep (x,y) \leq Ce^{-a \vert y\vert }.
\end{equation}
\end{thm}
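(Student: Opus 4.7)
The overall plan is to replicate the implicit function theorem argument of Theorem~\ref{thm:steady_state_eps}, but inside the smaller Banach space $\tilde Y\subset Y$ that encodes decay as $|x|\to\infty$ (with an analogous decaying codomain $\tilde Y'$). Once the IFT produces a solution $w^\ep\in\tilde Y$ such that $n^0+w^\ep$ solves (\ref{eq:statio_state}), the uniqueness in $Y$ given by Theorem~\ref{thm:steady_state_eps} forces $w^\ep=n^\ep-n^0$, so that $n^\ep-n^0\in\tilde Y$; the uniform convergence (\ref{uniform}) then comes for free from the definition of $\tilde Y$.

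To carry this out, let $\Phi(\ep,w)$ denote the residue of (\ref{eq:statio_state}) at $n^0+w$, exactly as in the proof of Theorem~\ref{thm:steady_state_eps}. Since $\theta\in C_0(\R)$ and each appearance of $\theta$ inside $\Phi$ carries an explicit factor that vanishes with $\ep$, a short inspection shows that $\Phi$ maps a neighborhood of $(0,0)$ in $\R\times\tilde Y$ into $\tilde Y'$: the key fact is that a product of a $C_0(\R)$-function by a bounded function is again in $C_0(\R)$. The crucial step is then to verify that the Fr\'echet derivative $D_w\Phi(0,0)$ restricts to an isomorphism $\tilde Y\to\tilde Y'$. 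Projecting onto the Hilbert basis $\{\Gamma_k\}_{k\in\N}$ in the $y$-variable, the linearized problem decouples into the family
\[
-w_k''(x)+(\lambda_k-\lambda_0)\,w_k(x)=f_k(x),\qquad k\geq 1,
\]
together with a single $k=0$ equation $-w_0''(x)-\lambda_0\, w_0(x)=f_0(x)-\eta\sum_{j\geq 1} C_j w_j(x)$, with $C_j:=\int_\R\Gamma_j$, in which the nonlocal competition has been absorbed (note $\eta\Vert\Gamma_0\Vert_{L^1}=-\lambda_0>0$). All these ODEs have a strictly positive constant coefficient, hence are inverted by convolution against the Yukawa kernel $\frac{1}{2\sqrt{\alpha}}e^{-\sqrt{\alpha}|\cdot|}\in L^1(\R)$. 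Since convolution of an $L^1$-kernel with a $C_0(\R)$-function remains in $C_0(\R)$, decay in $x$ is preserved mode by mode; summing the modes back and using the $\beta$-moment control from $Y$ yields the required isomorphism.

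Positivity together with (\ref{positivite-exp-per-bis}) is then handled exactly as in the proof of Theorem~\ref{thm:steady-per}. Writing $n^\ep=\eta\Gamma_0(y)+w^\ep$ with $w^\ep\to 0$ in $Y$, and using that the $Y$-norm controls the Gaussian-weighted quotient $f/\Gamma_0$, the ratio $n^\ep(x,y)/\Gamma_0(y)$ tends to $\eta>0$ uniformly on $\R^2$ as $\ep\to 0$; hence for $|\ep|<\ep_0$ small enough it is positive and bounded, giving both $n^\ep>0$ and $n^\ep(x,y)\leq C\Gamma_0(y)\leq C_a\,e^{-a|y|}$ for every fixed $a>0$. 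The main obstacle in the whole plan is really the decay-preservation of $D_w\Phi(0,0)^{-1}$: one has to track carefully how the $y$-mode decomposition interacts with the $C_0$-decay in $x$ inherited from $\theta$, and this is where the assumption $\beta>\tfrac{13}{4}$ on the moment control defining $Y$ really enters, by ensuring that the mode sums converge absolutely within a $C_0$-preserving bound.
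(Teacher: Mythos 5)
Your strategy for the membership $n^\ep-n^0\in\tilde Y$ is the paper's: rerun the implicit function theorem in decay-encoding spaces and invert the linearization mode by mode via convolution with the kernels $\rho_i$. However, you gloss over the one point where the localized case genuinely differs from Theorem \ref{thm:steady_state_eps}. The space $\tilde Y$ in (\ref{eq:Y_C0}) is not ``functions with $C_0$ decay in $x$''; it is weighted by a specific majorant $G(x)$ of $\max(|\theta|,\theta^2)$, and the bijectivity of $\L:\tilde Y\to\tilde Z$ requires the \emph{quantitative, uniform-in-$i$} estimate $(\rho_i*G)(x)\leq\frac{\sigma}{1+i}G(x)$ (Lemma \ref{lem:theta-G}): one must gain the factor $(1+i)^{-1}$ \emph{and} retain the pointwise bound by $G(x)$ simultaneously. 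The soft fact that $L^1*C_0\subset C_0$ gives you one or the other ($\|\rho_i*f_i\|_\infty\leq\|\rho_i\|_{L^1}\|f_i\|_\infty$ loses the $x$-decay; qualitative vanishing at infinity loses the rate), and the difficulty is not resolved by $\beta>\frac{13}{4}$, which only governs the $y$-mode summation exactly as in the unweighted case. The paper's construction of a piecewise-constant, slowly varying $G$ satisfying $G(x/2)\leq 2G(x)$ and $G(x)e^{\tilde\alpha x}\to+\infty$ is precisely the device that makes your ``decay is preserved mode by mode'' rigorous, and it is missing from your argument.

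The positivity step contains a genuine error. You assert that the $Y$-norm ``controls the Gaussian-weighted quotient $f/\Gamma_0$'' and conclude that $n^\ep/\Gamma_0\to\eta$ uniformly on $\R^2$. This is false: the $Y$-norm controls $(1+y^2)^2|h^\ep(x,y)|$, i.e.\ \emph{polynomial} decay in $y$, whereas $\Gamma_0(y)=(A/\pi)^{1/4}e^{-\frac12 Ay^2}$ is Gaussian, so $h^\ep/\Gamma_0$ is not bounded by $\|h^\ep\|_Y$ and the sign of $n^\ep=n^0+h^\ep$ is not determined for large $|y|$ by the smallness of $\|h^\ep\|_Y$. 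For the same reason your claimed upper bound $n^\ep\leq C\Gamma_0\leq C_ae^{-a|y|}$ does not follow from the function-space membership. The paper handles both points differently: it assumes a negative minimum exists, evaluates the steady-state equation at the minimum point to force $|y_p|\leq Y$ uniformly (and uses the $G$-controlled spatial decay to force $|x_p|\leq X$), derives a contradiction from uniform convergence $n^{\ep_p}\to n^0>0$ on the resulting compact set, and only \emph{afterwards} obtains the bound $Ce^{-a|y|}$ by a supersolution comparison on the region where $1-A^2(y-\ep\theta(x))^2\leq -a^2$. You need some version of this localization-of-the-minimum argument; the pointwise comparison with $\Gamma_0$ cannot replace it.
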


The distortion of the positive steady state by the  nonlinear (periodic or localized) perturbation $\theta=\theta(x)$ is encoded in (\ref{terme-un})---(\ref{def-terme-un}) and will be discussed in details in subsections \ref{ss:bio-loc} an \ref{ss:bio-steady}.

\medskip

Next, still assuming $\lambda_{0}<0$, we enquire on the existence
of fronts for equation (\ref{eq-r}). To deal with the $\ep=0$ case, let us recall the well-known fact  concerning the Fisher-KPP traveling fronts: for any
\[
c_{0}\geq c^{*}_0:=2\sqrt{-\lambda_{0}}>0,
\]
there is a unique (up to translation) profile $U=U(z)$ solving
\begin{align}
\begin{cases}
U^{\prime\prime}+c_{0}U^{\prime}-\lambda_{0}U(1-U)=0 & \text{on }\mathbb{R},\\
U(-\infty)=1,\\
U(+\infty)=0.
\end{cases}\label{eq:FKPP_front}
\end{align}
which moreover satisfies $U'<0$. Equipped with a Fisher-KPP front $(c_0,U)$, a straightforward computation shows that, when $\ep=0$,
\[
u^{0}(x-c_{0}t,y):=U(x-c_{0}t)n^{0}(y)
\]
solves (\ref{eq-r}), where $n^{0}$ is the ground state given by
(\ref{def:n0}). As explained above, this corresponds to
a separation of the variables $z=x-c_{0}t$ and $y$. In other words, the profile
$n^{0}(y)$ invades the trivial state along the $x$ axis at the spreading
speed $c_{0}$.

Our second main result is concerned with the case $\theta\in C_{per}^{L}(\mathbb{R})$,
for which we construct fronts when $|\varepsilon|$ is small enough.
Because of the periodic term $\varepsilon\theta(x)$ in (\ref{eq-r}),
we look for a pulsating front of the form $u^{\varepsilon}(x-c_{\ep}t,x,y)$
with $u^{\ep}=u^{\ep}(z,x,y)$ satisfying
\begin{equation}
\begin{cases}
u^{\varepsilon}(z,\cdot,y)\in C_{per}^{L}(\mathbb{R}) & \forall(z,y)\in\mathbb{R}^{2},\\
u^{\varepsilon}(-\infty,x,y)=n^{\varepsilon}(x,y) & \text{uniformly w.r.t. }(x,y)\in\mathbb{R}^{2},\\
u^{\ep}(+\infty,x,y)=0 & \text{uniformly w.r.t. }(x,y)\in\mathbb{R}^{2},
\end{cases}\label{eq:ueps_cond}
\end{equation}
where $n^{\varepsilon}=n^{\ep}(x,y)$ is the (periodic in $x$) steady
state provided by Theorem \ref{thm:steady-per}. That is, $u^{\varepsilon}$
spreads at the perturbed speed $c_{\varepsilon}$ and connects the
steady state $n^{\varepsilon}$ to the trivial one. 

\begin{thm}[Pulsating fronts]
\label{thm:pulsating} Assume $\lambda_{0}<0$. Let us fix $\beta>\frac{19}{4}$
and $\gamma>3$. Assume $\theta\in C^{k,\delta}(\mathbb{R})\cap C_{per}^{L}(\R)$
with $L>0$ and where $k\in\mathbb{N}$, $0\leq\delta<1$ satisfy $k+\delta>\gamma+\frac{1}{2}$. Let $n^{\varepsilon}$ be the steady state solving (\ref{eq:statio_state}) and obtained from Theorem \ref{thm:steady-per}.

Then there is $\overline{\ep}_{0}>0$ such that, for any $\vert\ep\vert<\overline{\ep}_{0}$,
there are a speed $c_{\varepsilon}>0$ and a profile $u^{\varepsilon}=u^\ep(z,x,y)\in C_{b}^{2}(\mathbb{R}^{3})$
such that
\begin{equation*}
\begin{cases}
	u^{\ep}\textup{ satisfies } (\ref{eq:ueps_cond}),\\ 
	(t,x,y)\mapsto u^{\ep}(x-c_{\ep}t,x,y)
\textup{ solves }(\ref{eq-r}).\\
\end{cases}
\end{equation*}
Additionally, we have
\begin{equation}
|c_\ep-c_0|+\sup_{(z,x,y)\in \R^{3}}\; \Big| (1+y^2)e^{b|z|} \Big( u^{\varepsilon}(z,x,y)-U(z)n^{\varepsilon}(x,y) \Big) \Big|  \to 0, \text{ as } \ep \to 0.\label{eq:ueps_CV}
\end{equation}
for some $b>0$. 
\end{thm}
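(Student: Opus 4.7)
The plan is to recast the pulsating front problem as a zero-finding equation for a smooth map, and to apply the implicit function theorem around the explicit solution available at $\ep = 0$. Writing the ansatz $u(t,x,y) = v(x - c t, x, y)$ and inserting it into (\ref{eq-r}) leads to the profile equation
\begin{equation*}
F(c, v, \ep)(z,x,y) := v_{zz} + 2 v_{zx} + v_{xx} + v_{yy} + c\, v_z + v\Big( r(y - \ep\theta(x)) - \int_\R v(z,x,y')\,dy' \Big) = 0,
\end{equation*}
for $(z,x,y) \in \R^{3}$, with $v$ being $L$-periodic in $x$. One immediately checks that $F(c_0, U n^0, 0) = 0$. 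I would then introduce a Banach space $X$ of $L$-periodic (in $x$), $C^2$ profiles whose difference $v - U(z) n^\ep(x,y)$ is controlled by the weight $(1+y^2) e^{b|z|}$ for some $b > 0$ smaller than the slower spatial decay rate of $U$. The boundary conditions (\ref{eq:ueps_cond}) are then automatic for $v \in X$, and the regularity assumptions on $\theta$ combined with the exponential decay of the Fisher--KPP profile $U$ ensure that $F$ is smooth from $\R \times X \times (-\ep_*, \ep_*)$ into an appropriate target space.

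The core of the argument is to show that the partial differential $\Lf := D_{(c,v)} F(c_0, U n^0, 0)$ is an isomorphism, once an appropriate normalization removes the translation freedom $v \mapsto v(\cdot + s, \cdot, \cdot)$. Following the strategy advertised for Theorem \ref{thm:steady_state_eps}, I would expand a test perturbation $h$ of the profile on the Hilbert basis of eigenfunctions $\{\Gamma_{k}\}_{k \in \N}$ of $-\partial_{yy} - r(y)$, and then expand each coefficient in Fourier series in $x$, turning $\Lf h = g$ into an infinite family of ordinary differential equations on $\R$ indexed by $(k, n) \in \N \times \Z$. For every mode $(k, n) \neq (0, 0)$ the resulting operator carries a coercive spectral shift $-\lambda_k - 4\pi^2 n^2 / L^2 < 0$ (using either $-\lambda_k \leq -\lambda_1 < 0$ when $k \geq 1$, or $n \neq 0$), which yields invertibility in the weighted space by standard Fredholm arguments. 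The nonlocal competition term only couples the even-indexed modes, since the odd Hermite-type eigenfunctions have vanishing integral, and is treated as a relatively compact perturbation by a Neumann-type argument built on the estimates of $X$.

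The main obstacle is the mode $(k, n) = (0, 0)$, which reproduces exactly the linearization of (\ref{eq:FKPP_front}) around the one-dimensional Fisher--KPP front at the chosen speed $c_0$. At the critical speed $c_0 = c_0^* = 2\sqrt{-\lambda_0}$ the essential spectrum of this operator touches the origin and its only bounded kernel element is $U'$, so invertibility is particularly delicate and dictates the narrow admissible range of the weight parameter $b$. The resolution is twofold: pick $b$ strictly smaller than the slower decay rate of $U$, so that the scalar operator becomes Fredholm of index one with kernel $\R U'$, and use the direction $\partial_c F(c_0, U n^0, 0) = U'(z) n^0(y)$ together with the normalization $\int_\R U'(z) h_{0,0}(z)\, dz = 0$ (which kills the translation) to make the full map $(c, h) \mapsto \Lf(c, h)$ an isomorphism between appropriately complemented spaces. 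Applying the implicit function theorem then produces, for $|\ep| < \overline{\ep}_0$, the unique pair $(c_\ep, v^\ep)$ close to $(c_0, U n^0)$ in $\R \times X$. The limits (\ref{eq:ueps_cond}) are built into the space, and the convergence (\ref{eq:ueps_CV}) is just the continuity of the IFT branch expressed in the weighted norm of $X$.
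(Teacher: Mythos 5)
Your skeleton (perturbative ansatz around $U(z)n^0(y)$, implicit function theorem, expansion in the eigenbasis $\{\Gamma_j\}$ in $y$ followed by Fourier series in $x$, and pairing the speed parameter with the kernel direction $U'$ in the $(0,0)$ mode) matches the paper's strategy. However, there are two genuine gaps. The first, and central, one is that the second-order operator in the $(z,x)$ variables, $\partial_{zz}+2\partial_{zx}+\partial_{xx}=(\partial_z+\partial_x)^2$, is \emph{degenerate}: after projection onto the mode $(n,j)$ the zeroth-order coefficient grows only like $n^2\sigma^2-n^2\sigma^2+\dots$, i.e.\ the inversion of each scalar ODE (\ref{eq:vnj}) with $\mu=0$ gains only a factor of order $(1+j+|n|)^{-1}$, which is enough to recover one $x$-derivative of the reconstructed $v$ but not two. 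Hence the linearization is \emph{not} boundedly invertible onto a space of $C^2$ profiles, and "standard Fredholm arguments" do not produce an isomorphism $\Lf$ on your space $X$. This is exactly why the paper introduces the regularization $\Fmu=\Ff+\mu v_{xx}$ in (\ref{op-reg}), builds the $\mu$-dependent weight $\frac{1}{1+\mu n^2+j+|n|}$ into $\mathcal{Y}_\mu$ in (\ref{eq:Yrond}), proves that the contraction constants and the admissible range of $\ep$ are uniform in $\mu$ (using that $(\Lmu)^{-1}$ and $\Fmu-\Lmu$ stay bounded even though $\Fmu$ does not), and only then removes $\mu$ by an Arzel\`a--Ascoli compactness argument in subsection \ref{subsec:ascoli}, which in turn forces the stronger hypotheses $\beta>\frac{19}{4}$, $\gamma>3$ to get $C^3$ bounds. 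Your proposal contains no substitute for this step, and a direct IFT at $\mu=0$ fails.

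The second gap is your claim that every mode $(k,n)\neq(0,0)$ is coercive because "$-\lambda_k\leq-\lambda_1<0$ when $k\geq1$". Since $\lambda_1=3A-1$, this is false whenever $A<\frac13$; more generally the finitely many modes with $\lambda_j+n^2\sigma^2<0$ produce a homogeneous solution $\varphi_+$ that decays at both ends, hence a nontrivial kernel in the exponentially weighted space. The paper must restrict the departure space to $\mathcal{S}_{n,j,\mu}=\{\varphi_+\}^{\bot}$ for all such modes (Lemma \ref{lem:redef}), not just for $(0,0)$; your normalization removes only the translation kernel. Your treatment of the $(0,0)$ mode itself (weight $b$ below the slow decay rate of $U$, the direction $\partial_c F=U'n^0$, orthogonality to $U'$) is consistent with the paper's construction of the operator $\mathcal{M}$, but as written the proposal would not close without repairing the two points above.
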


An inherent difficulty to the construction of pulsating fronts is that the underlying elliptic operator, see (\ref{op-deg}), is degenerate.  This requires to consider a regularization, see (\ref{op-reg}), via a parameter $0<\mu\ll 1$. For a fixed such $\mu$, we use rigorous perturbation techniques (from the $\ep=0$ situation), that involve very intricate function spaces, which are precisely defined in Section \ref{s:pulsating}. 
 To deal with the phenotypic trait variable $y$ we take advantage of a Hilbert basis of $L^{2}(\R)$ made of eigenfunctions of an underlying Schr\"odinger operator, whereas to deal with the space variable $x$ we use the Fourier series expansions. Last, thanks to a judicious choice of function spaces, we can let the regularization parameter $\mu \to 0$ and then catch the desired pulsating front solution for a {\it nontrivial} range of small $\vert \ep \vert$. We refer to Remark \ref{rem:spaces-2} for more technical and precise details. 
 
Let us comment on the issue of the positivity of the constructed pulsating front which is not provided by our proof.  One might be tempted to adapt the argument of subsection \ref{ss:positivity} which proves {\it a posteriori} the positivity of the constructed steady state, but this would require a precise control of the tail of the front as $z\to +\infty$, which is not reachable by our construction, nor by an adjustment of it. Nevertheless, we believe that a precise {\it a priori} argument, in the spirit of \cite{Ham_08}, may connect the \lq\lq positivity issue'' with some \lq\lq minimal speed issue'' denoted $c_\ep ^{*}$.  Equipped with this, we conjecture that, up to reducing $\ep_0>0$, one may prove {\it a posteriori} the positivity of the constructed pulsating front as soon as $c_0>c_0^*=2\sqrt{-\lambda_0}$. In other words, the positivity should not be lost, at least when we perturb from a {\it super-critical} traveling front. This is a very delicate issue, that would require lengthy arguments, and left here as an open question.
 
 Our analysis, see subsection \ref{ss:bio-puls}, reveals that 
 \begin{equation}\label{c1-egal-zero}
c_{\ep}=c_{0}+o(\ep),\quad\text{ as }\ep\to 0.
\end{equation}
In other words, the perturbation of the speed of the front by the periodic nonlinearity $\theta=\theta(x)$ vanishes {\it at the first order with respect to $\ep$}, which is a relevant biological information. Notice, however, that this could be guessed from the fact that, in view of the model, the sign of $\ep$ is expected to be irrelevant for the speed issue.  On the other hand, the distortion of the profile of the front is less predictable, but our mathematical analysis provides some clues. We refer to Example \ref{ex:periodic2} in subsection \ref{ss:bio-puls}.



\medskip

The organization of the paper is as follows. In Section \ref{s:preliminaries}, we prove the extinction result, namely Proposition \ref{prop:extinction}, and present some useful tools for the following, in particular some spectral properties. The steady states are constructed in Section \ref{s:steady} through the proofs of Theorem \ref{thm:steady_state_eps}, Theorem \ref{thm:steady-per} and Theorem \ref{thm:steady-C0}. In Section \ref{s:pulsating}, we construct pulsating fronts by proving Theorem \ref{thm:pulsating}. Last, in Section \ref{s:numerics},  we present some biological insights of our results, together with some numerical explorations.

\section{Preliminaries}\label{s:preliminaries}

\subsection{Extinction result}\label{ss:extinction}

We here consider the case $\lambda_0>0$ for which we prove extinction, as stated in Proposition \ref{prop:extinction}.

\begin{proof}[Proof of Proposition \ref{prop:extinction}] For $M\geq 0$ given by (\ref{def:M}), we consider
$$
\phi(t,y):=Me^{-\mu _0  t}e^{-\frac 1 2 a y^2}
$$
which satisfies
$$
\partial_t \phi-\partial_{xx}\phi-\partial_{yy} \phi-r(y-\ep\theta(x))\phi
=\left[(A^2-a^2)y^2-2 \ep A^2\theta(x)y+\ep^2A^2 \theta^2(x)+a-1-\mu _0\right]\phi.
$$
The discriminant of the quadratic polynomial in $y$ is $4A^2a^2\theta ^2(x) \ep ^2 -4(A^2-a^2)(a-1-\mu_0)$, which is uniformly (with respect to $x\in \R$) negative for $\vert \ep\vert \leq\ep _0$ for $\ep_0>0$ sufficiently small (recall that $\theta$ is bounded). As a result
$$
\partial_t \phi-\partial_{xx}\phi-\partial_{yy} \phi-r(y-\ep\theta(x))\phi\geq 0 \geq \partial _t u -\partial_{xx} u-\partial _{yy} u-r(y-\ep\theta(x))u.
$$
Since we know from (\ref{def:M}) that $\phi(0,y)\geq u_0(x,y)$,  we deduce from the comparison principle that $u(t,x,y)\leq \phi(t,y)$, which concludes the proof.
\end{proof}

\subsection{Implicit Function Theorem}\label{ss:ift}

We recall the {\it Implicit Function Theorem}, see \cite[Theorem 4.B]{Zei_86} for instance.

\begin{thm}[Implicit Function Theorem]
\label{thm:IFT}Let $X,Y,Z$ be Banach spaces over $\mathbb{K}=\mathbb{R}$
or $\mathbb{K}=\mathbb{C}$ with their respective norms $||.||_{X}$,
$||.||_{Y}$ and $||.||_{Z}$. Let $U$ be a open neighborhood of $(0,0)$
in $X\times Y$. Let $F\colon U\rightarrow Z$ be a map.
Suppose that
\begin{enumerate}[label=(\roman*)]
\item $F(0,0)=0$, and $F$ is continuous at $(0,0)$,\label{enu:continuite}
\item $D_{y}F$ exists as a partial Fréchet derivative on $U$,
and $D_{y}F$ is continuous at $(0,0)$,\label{enu:__IFT_diff_exists}
\item $D_{y}F(0,0)\colon Y\rightarrow Z$ is bijective.\label{enu:bij}
\end{enumerate}
Then the following are true:
\begin{itemize}
\item There are $r_{0}>0$ and $r_{1}>0$ such that, for
every $x\in X$ satisfying $||x||_{X}< r_{0}$, there is a
unique $y(x)\in Y$ for which $||y(x)||_{Y}\leq r_{1}$ and $F(x,y(x))=0$.
\item If $F$ is $C^{k}$ on $U$ with $0\leq k\leq\infty$,
then $x\mapsto y(x)$ is also  $C^{k}$ on a neighborhood of
$0$.
\end{itemize}
\end{thm}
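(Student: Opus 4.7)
The strategy is to reduce the equation $F(x,y)=0$ to a parametric Banach fixed-point problem, and then bootstrap the resulting implicit map up to the claimed regularity. Let $T:=D_{y}F(0,0)\in\mathcal{L}(Y,Z)$. By hypothesis \ref{enu:bij}, $T$ is a bijection between Banach spaces, so by the bounded inverse theorem (a direct consequence of the open mapping theorem) the inverse $T^{-1}\in\mathcal{L}(Z,Y)$ is bounded. Introduce
\[
G(x,y):=y-T^{-1}F(x,y),\qquad (x,y)\in U,
\]
so that $F(x,y)=0\iff G(x,y)=y$. The point of multiplying by $T^{-1}$ is precisely to make $D_{y}G(0,0)=I_{Y}-T^{-1}D_{y}F(0,0)=0$, so that by continuity of $D_{y}F$ at $(0,0)$ we can control the norm of $D_{y}G(x,y)=T^{-1}\bigl(T-D_{y}F(x,y)\bigr)$ on a small neighborhood.

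Next I would fix $r_{1}>0$ and then $r_{0}\in(0,r_{1}]$ small enough that $B_{X}(0,r_{0})\times\overline{B}_{Y}(0,r_{1})\subset U$, and on this set $\|D_{y}G(x,y)\|_{\mathcal{L}(Y)}\leq 1/2$ (by \ref{enu:__IFT_diff_exists}) as well as $\|G(x,0)\|_{Y}=\|T^{-1}F(x,0)\|_{Y}\leq r_{1}/2$ (by \ref{enu:continuite} and $F(0,0)=0$). The Banach-space mean-value inequality applied to $y\mapsto G(x,y)$ on the convex ball $\overline{B}_{Y}(0,r_{1})$ then yields
\[
\|G(x,y_{1})-G(x,y_{2})\|_{Y}\leq \tfrac{1}{2}\|y_{1}-y_{2}\|_{Y},\qquad y_{1},y_{2}\in\overline{B}_{Y}(0,r_{1}),
\]
together with the self-map property $G(x,\cdot):\overline{B}_{Y}(0,r_{1})\to\overline{B}_{Y}(0,r_{1})$. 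The Banach fixed-point theorem now delivers a unique $y(x)\in\overline{B}_{Y}(0,r_{1})$ with $G(x,y(x))=y(x)$, i.e. $F(x,y(x))=0$, which is the first bullet.

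For the regularity bullet I would first promote $x\mapsto y(x)$ to continuity: writing $y(x_{1})-y(x_{2})=\bigl(G(x_{1},y(x_{1}))-G(x_{2},y(x_{1}))\bigr)+\bigl(G(x_{2},y(x_{1}))-G(x_{2},y(x_{2}))\bigr)$ and absorbing the contraction into the left-hand side, one obtains $\|y(x_{1})-y(x_{2})\|_{Y}\leq 2\|T^{-1}\bigl(F(x_{1},y(x_{1}))-F(x_{2},y(x_{1}))\bigr)\|_{Y}$, which vanishes as $x_{1}\to x_{2}$ by the joint continuity of $F$ included in the $C^{k}$ hypothesis. For differentiability at $0$, the $C^{1}$ expansion $F(x,y)=D_{x}F(0,0)x+Ty+o(\|x\|_{X}+\|y\|_{Y})$ evaluated at $(x,y(x))$ gives $Ty(x)=-D_{x}F(0,0)x+o(\|x\|_{X})$, hence $y$ is Fr\'echet differentiable at $0$ with $Dy(0)=-T^{-1}D_{x}F(0,0)$. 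Since inversion is continuous on the open subset of invertible operators in $\mathcal{L}(Y,Z)$, $D_{y}F(x',y(x'))$ remains invertible along the implicit curve in a smaller neighborhood; rerunning the same argument at each such point promotes $y$ to class $C^{1}$ with
\[
Dy(x)=-\bigl(D_{y}F(x,y(x))\bigr)^{-1}\circ D_{x}F(x,y(x)).
\]
The general $C^{k}$ conclusion then follows by induction on $k$ from this identity, the chain rule, and the smoothness of operator inversion on the open set of invertible elements.

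The main obstacle is the bootstrap step in the last paragraph. Hypothesis \ref{enu:__IFT_diff_exists} only asserts continuity of the \emph{partial} derivative $D_{y}F$ at the single point $(0,0)$, which is enough for the contraction to run on a fixed ball but is a priori weaker than what is needed to differentiate $F(x,y(x))=0$ in $x$. Once the $C^{k}$ assumption ($k\geq 1$) is invoked, $F$ is jointly differentiable on a neighborhood and $D_{y}F$ is jointly continuous there, so the above identity for $Dy(x)$ becomes legitimate and the induction closes; the asymmetry between the conditions \ref{enu:continuite}--\ref{enu:bij} and the $C^{k}$ hypothesis reflects exactly the fact that the first is optimal for existence/uniqueness, while the second is what drives the regularity transfer from $F$ to $y$.
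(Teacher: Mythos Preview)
The paper does not prove this theorem where it is stated; it merely recalls it and cites \cite[Theorem~4.B]{Zei_86}. Your contraction-mapping argument is the standard one and is correct. It is worth noting that the paper later (subsection~\ref{subsec:Constr_veps_mu}) essentially reproduces exactly your fixed-point scheme --- setting $T_{\varepsilon,\mu}(s,v)=(s,v)-(\Lmu)^{-1}\Fmu(\varepsilon,s,v)$, bounding $\|D_{(s,v)}\mathcal{G}_{\mu}\|$ by $\tfrac{1}{2C_{\mathcal{L}}}$ on a small ball, and invoking Banach's fixed-point theorem --- precisely because it needs the radii $r_{0},r_{1}$ to be uniform in the regularization parameter $\mu$, which the black-box statement does not guarantee. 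So your proof and the paper's (implicit) proof coincide.
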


\subsection{Linear material}\label{ss:linear}

In this subsection we fix $A>0$ and consider the operator $\mathcal{H}w:=-w^{\prime\prime}-(1-A^{2}y^{2})w$,
which corresponds to the harmonic oscillator. The following is well-known.

\begin{prop}[Eigenelements of the harmonic oscillator]
\label{prop:basis_eigenfunctions} The operator $\mathcal{H}w:=-w^{\prime\prime}-(1-A^{2}y^{2})w$
admits a family of eigenelements $(\lambda_i,\Gamma_i)_{i\in\N}$, where 
\begin{equation}
\lambda_{i}=-1+(2i+1)A,\label{eq:eigenval}
\end{equation}
and $\Gamma_{i}(y)=C_{i}H_{i}(\sqrt{A}y)e^{-\frac{1}{2}Ay{}^{2}}$. Here  $(H_{i})_{i\in\mathbb{N}}$ denotes the family of Hermite polynomials,
that is the unique family of real polynomials satisfying
\begin{equation*}
\int_{\mathbb{R}}H_{i}(x)H_{j}(x)e^{-x^{2}}dx=2^{i}i!\sqrt{\pi}\delta_{ij},\qquad\deg H_{i}=i,\label{eq:eigenfunc_hermite}
\end{equation*}
and 
\begin{equation}
C_{i}=\left(\frac{A}{\pi}\right)^{1/4}\sqrt{\frac{1}{2^{i}i!}}\label{eq:eigenfunc_cst}
\end{equation}
a normalization constant so that $||\Gamma_{i}||_{L^2}=1$.
	
Additionally, the family $(\Gamma_{i})_{i\in\mathbb{N}}$ forms a Hilbert
basis of $L^{2}(\mathbb{R})$.
\end{prop}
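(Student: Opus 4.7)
The plan is to reduce the problem to the standard quantum harmonic oscillator by a scaling change of variables, and then invoke the classical spectral theory of that operator.

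First, I would set $v(x):=w(x/\sqrt{A})$ and correspondingly introduce the rescaled variable $x=\sqrt{A}\,y$. A direct computation shows that the eigenvalue equation $\mathcal{H}w=\lambda w$, i.e.\ $-w''-(1-A^{2}y^{2})w=\lambda w$, becomes, after multiplication by $1/A$,
\begin{equation*}
-v''(x)+x^{2}v(x)=\frac{\lambda+1}{A}\,v(x),\qquad x\in\mathbb{R}.
\end{equation*}
This is the eigenvalue problem for the standard quantum harmonic oscillator $\mathcal{H}_{0}v:=-v''+x^{2}v$. The classical theory (see any standard reference on Hermite functions) asserts that the spectrum of $\mathcal{H}_{0}$ is pure point, consists of the simple eigenvalues $\mu_{i}=2i+1$, $i\in\mathbb{N}$, with associated eigenfunctions $x\mapsto H_{i}(x)e^{-x^{2}/2}$, where $H_{i}$ is the $i$-th Hermite polynomial.

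Identifying $\mu_{i}=(\lambda_{i}+1)/A$ immediately gives $\lambda_{i}=-1+(2i+1)A$, which is formula (\ref{eq:eigenval}). Inverting the change of variables, the corresponding eigenfunction of $\mathcal{H}$ is proportional to $H_{i}(\sqrt{A}y)\,e^{-\frac{1}{2}Ay^{2}}$, and it only remains to fix the normalization constant $C_{i}$. Using the substitution $x=\sqrt{A}y$ and the orthogonality relation for Hermite polynomials,
\begin{equation*}
\int_{\R}\Gamma_{i}(y)^{2}\,dy=\frac{C_{i}^{2}}{\sqrt{A}}\int_{\R}H_{i}(x)^{2}e^{-x^{2}}\,dx=\frac{C_{i}^{2}}{\sqrt{A}}\,2^{i}i!\sqrt{\pi},
\end{equation*}
so the condition $\Vert\Gamma_{i}\Vert_{L^{2}}=1$ forces the formula (\ref{eq:eigenfunc_cst}) for $C_{i}$.

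Finally, the Hilbert basis property of $(\Gamma_{i})_{i\in\mathbb{N}}$ in $L^{2}(\mathbb{R})$ is inherited from that of the Hermite functions $(x\mapsto H_{i}(x)e^{-x^{2}/2})_{i\in\mathbb{N}}$ (a classical fact, e.g.\ via the density of polynomials in the weighted space $L^{2}(\mathbb{R},e^{-x^{2}}dx)$, or equivalently via the self-adjointness of $\mathcal{H}_{0}$ with compact resolvent). Indeed, the map $v\mapsto A^{1/4}v(\sqrt{A}\,\cdot)$ is a unitary isomorphism of $L^{2}(\mathbb{R})$ onto itself which sends the Hermite basis, up to the normalization constants $C_{i}$, onto $(\Gamma_{i})_{i\in\mathbb{N}}$. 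There is no real obstacle here — the whole statement is a routine consequence of the standard spectral decomposition of the quantum harmonic oscillator, repackaged with the scaling parameter $A$ kept explicit.
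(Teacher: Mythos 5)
Your proof is correct: the paper itself states this proposition without proof (``The following is well-known''), and your reduction to the standard quantum harmonic oscillator via the scaling $x=\sqrt{A}\,y$, followed by the normalization computation and the unitarity of the rescaling map, is exactly the standard argument being invoked. All the computations (the eigenvalue identification $\lambda_i=-1+(2i+1)A$ and the constant $C_i$) check out.
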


We now present some relations between the eigenfunctions, which will
prove useful in our proofs.

\begin{lem}[Some linear relations]\label{lem:relations} For any integer $i$, there holds
\begin{equation}\label{eq:yGamma}
y\Gamma_{i}(y)=p_{i}^{+}\Gamma_{i+1}(y)+p_{i}^{-}\Gamma_{i-1}(y), \qquad 
p_{i}^{+}:=\sqrt{\frac{i+1}{2A}},\qquad p_{i}^{-}:=\sqrt{\frac{i}{2A}}.
\end{equation}
and
\begin{equation}\label{eq:dGamma}
\Gamma_{i}^{\prime}(y)=q_{i}^{+}\Gamma_{i+1}(y)+q_{i}^{-}\Gamma_{i-1}(y), \qquad q_{i}^{+}=-\sqrt{\frac{(i+1)A}{2}},\qquad q_{i}^{-}=\sqrt{\frac{iA}{2}}.
\end{equation}
with the conventions $p_0^-\Gamma_{-1}(y)\equiv q_0^-\Gamma_{-1}(y)\equiv 0$.
\end{lem}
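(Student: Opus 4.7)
The plan is to read both identities off the explicit formula $\Gamma_i(y)=C_iH_i(\sqrt{A}y)e^{-\frac12 Ay^2}$ by invoking two classical Hermite polynomial identities, namely the three term recurrence
\begin{equation*}
xH_i(x)=\tfrac12 H_{i+1}(x)+i\,H_{i-1}(x)
\end{equation*}
and the derivative relation $H_i'(x)=2i\,H_{i-1}(x)$ (with $H_{-1}\equiv 0$). Both are standard consequences of the generating function / Rodrigues formula for Hermite polynomials, so I would just quote them.

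For the first identity, I would multiply $\Gamma_i(y)$ by $y$, write $y=\tfrac{1}{\sqrt{A}}(\sqrt{A}y)$, and apply the recurrence at $x=\sqrt{A}y$. This yields
\begin{equation*}
y\Gamma_i(y)=\frac{1}{2\sqrt{A}}\,\frac{C_i}{C_{i+1}}\,\Gamma_{i+1}(y)+\frac{i}{\sqrt{A}}\,\frac{C_i}{C_{i-1}}\,\Gamma_{i-1}(y).
\end{equation*}
Then using \eqref{eq:eigenfunc_cst} one gets $C_i/C_{i+1}=\sqrt{2(i+1)}$ and $C_i/C_{i-1}=1/\sqrt{2i}$, which upon simplification are exactly $p_i^+=\sqrt{(i+1)/(2A)}$ and $p_i^-=\sqrt{i/(2A)}$. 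The $i=0$ case is covered by the convention $p_0^-\Gamma_{-1}\equiv 0$, consistent with $H_{-1}\equiv 0$.

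For the second identity, I would differentiate $\Gamma_i$ by the product rule, producing the two terms $C_i\sqrt{A}H_i'(\sqrt{A}y)e^{-Ay^2/2}$ and $-C_iAyH_i(\sqrt{A}y)e^{-Ay^2/2}$. The derivative identity turns the first into $2iC_i\sqrt{A}H_{i-1}(\sqrt{A}y)e^{-Ay^2/2}$, while the $Ay$ factor in the second can be expanded again with the three term recurrence (equivalently, reuse the first identity already proved). Collecting and normalising with the same ratios $C_i/C_{i\pm 1}$ gives
\begin{equation*}
\Gamma_i'(y)=-\sqrt{\tfrac{(i+1)A}{2}}\,\Gamma_{i+1}(y)+\sqrt{\tfrac{iA}{2}}\,\Gamma_{i-1}(y),
\end{equation*}
which is exactly \eqref{eq:dGamma}.

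There is no real obstacle here: the entire proof is bookkeeping, and the only thing to be careful about is keeping the normalisations $C_i$ straight when turning polynomial identities at level $H_i$ into identities at the level of the $L^2$-normalised eigenfunctions $\Gamma_i$. Alternatively, one could give a purely operator-theoretic proof by introducing the ladder operators $a_{\pm}=\mp\frac{1}{\sqrt{2A}}\frac{d}{dy}+\sqrt{\tfrac{A}{2}}\,y$, noting that $\mathcal{H}=2A(a_+a_-)+(A-1)=2A(a_-a_+)-(A+1)$ so $a_\pm\Gamma_i$ is an eigenfunction of $\mathcal{H}$ with eigenvalue $\lambda_{i\pm 1}$, hence proportional to $\Gamma_{i\pm 1}$; then one identifies the proportionality constants using $||\Gamma_i||_{L^2}=1$ and the commutation relation $[a_-,a_+]=1$. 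Adding and subtracting $a_+\Gamma_i$ and $a_-\Gamma_i$ recovers the two relations. I would nonetheless favour the direct Hermite computation as it is shorter and self contained.
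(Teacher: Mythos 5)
Your proof is correct and follows essentially the same route as the paper: both derive \eqref{eq:yGamma} from the Hermite three-term recurrence after substituting $x=\sqrt{A}y$ and tracking the normalisation ratios $C_i/C_{i\pm1}$, and both obtain \eqref{eq:dGamma} by combining the product rule with $H_i'=2iH_{i-1}$ and the same recurrence. The ladder-operator alternative you sketch is a valid variant, but the computation you actually carry out matches the paper's argument.
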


\begin{proof}
The Hermite polynomials are known to satisfy  the recursion relation 
\[
2xH_{i}(x)=H_{i+1}(x)+2iH_{i-1}(x).  
\]
Multiplying this by $C_{i}$ and setting $x=\sqrt{A}y$,
we get
$
2\sqrt{A}y\Gamma_{i}(y) =\frac{C_{i}}{C_{i+1}}\Gamma_{i+1}(y)+\frac{2iC_{i}}{C_{i-1}}\Gamma_{i-1}(y)$
which, combined with (\ref{eq:eigenfunc_cst}), proves (\ref{eq:yGamma}).

The Hermite polynomials are known to satisfy the relations
\[
H_{i}^{\prime}(x)=2iH_{i-1}(x),\qquad 2xH_{i}(x)=H_{i+1}(x)+2iH_{i-1}(x).
\]
Differentiating the expression $\Gamma_{i}(y)=C_{i}H_{i}(\sqrt{A}y)e^{-\frac{1}{2}Ay{}^{2}}$ and using the above relations, we reach
$$
\Gamma_{i}^{\prime}(y) =C_{i}\left(i\sqrt A H_{i-1}(\sqrt A y)-\frac{1}{2}\sqrt A H_{i+1}(\sqrt A y)\right)e^{-\frac{1}{2}Ay^{2}}=\frac{i\sqrt AC_i}{C_{i-1}} \Gamma_{i-1}(y)-\frac{\sqrt A C_{i}}{2C_{i+1}} \Gamma_{i+1}(y),
$$
which, combined with (\ref{eq:eigenfunc_cst}), proves (\ref{eq:dGamma}).
\end{proof}

We pursue with some $L^\infty$ and $L^1$ estimates on eigenfunctions, possibly with some polynomial weight. 

\begin{lem}[$L^\infty$ and $L^1$ estimates]
\label{lem:eigenfunc_control} There is $C=C(A)>0$ such that, for all $i\in \N$,
\begin{align}
||\Gamma_{i}||_{L^{1}} & \leq C i^{1/4},\label{eq:eigenfunc_control_L1}\\
||\Gamma_{i}||_{L^\infty} & \leq Ci^{1/4},\label{eq:eigenfunc_control_inf}
\end{align}
together with
\begin{equation}
||\Gamma_{i}^{\prime}||_{L^\infty}  \leq C i^{3/4},\quad  ||\Gamma_{i}^{\prime\prime}||_{L^\infty}  \leq C i^{5/4}, \label{eq:dGamma_control_inf}
\end{equation}
and
\begin{equation}
||y^{2}\Gamma_{i}||_{L^\infty}  \leq C i^{5/4},\quad  ||y^{4}\Gamma_{i}||_{L^\infty}  \leq C i^{9/4}.\label{eq:y2_Gamma_control_inf}
\end{equation}
\end{lem}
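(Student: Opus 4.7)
My plan is to derive all six bounds from Lemma~\ref{lem:relations} combined with an elementary Gagliardo--Nirenberg interpolation. First, orthonormality of $(\Gamma_j)_{j\in\N}$ in $L^2(\R)$ applied to \eqref{eq:yGamma}--\eqref{eq:dGamma} gives at once
\[
\|y\Gamma_i\|_{L^2}^2 = (p_i^+)^2+(p_i^-)^2 = \frac{2i+1}{2A},\qquad \|\Gamma_i'\|_{L^2}^2 = (q_i^+)^2+(q_i^-)^2 = \frac{A(2i+1)}{2}.
\]
The one-dimensional inequality $\|f\|_{L^\infty}^2 \leq 2\|f\|_{L^2}\|f'\|_{L^2}$, obtained from $f(y)^2 = 2\int_{-\infty}^y f f'$ and Cauchy--Schwarz, applied to $\Gamma_i$ then yields $\|\Gamma_i\|_{L^\infty}\leq C i^{1/4}$, which is \eqref{eq:eigenfunc_control_inf}.

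I expect the $L^1$ estimate \eqref{eq:eigenfunc_control_L1} to be the main obstacle, since the naive bound $\|\Gamma_i\|_{L^\infty}\cdot(\text{support length})\sim i^{1/4}\cdot\sqrt{i}$ overshoots the target exponent. I would split $\R$ at the classical turning point $R:=\sqrt{(2i+1)/A}\sim \sqrt{i}$. On $[-R,R]$, Cauchy--Schwarz immediately gives $\int_{-R}^R|\Gamma_i|\leq \sqrt{2R}\cdot\|\Gamma_i\|_{L^2}\leq Ci^{1/4}$. For the tail I first control a higher moment: iterating \eqref{eq:yGamma} expresses $y^2\Gamma_i$ as an $O(i)$ linear combination of $\Gamma_{i-2},\Gamma_i,\Gamma_{i+2}$, so by orthonormality $\int y^4\Gamma_i^2 \leq C i^2$. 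Then a second Cauchy--Schwarz with the weight $y^{-4}$ on $\{|y|>R\}$ produces
\[
\int_{|y|>R}|\Gamma_i|\,dy \;\leq\; \Big(\int_{|y|>R} y^4\Gamma_i^2\,dy\Big)^{1/2}\Big(\int_{|y|>R} y^{-4}\,dy\Big)^{1/2} \;\leq\; Ci\cdot CR^{-3/2} \;\leq\; Ci^{1/4},
\]
and combining with the bulk estimate yields \eqref{eq:eigenfunc_control_L1}.

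For the remaining bounds \eqref{eq:dGamma_control_inf}--\eqref{eq:y2_Gamma_control_inf}, I would iterate the recursions of Lemma~\ref{lem:relations} and plug in the just-obtained bound $\|\Gamma_j\|_{L^\infty}\leq C j^{1/4}$. Directly, \eqref{eq:dGamma} with $|q_i^{\pm}|\sim \sqrt{i}$ gives $\|\Gamma_i'\|_{L^\infty}\leq C\sqrt{i}\cdot C i^{1/4} = Ci^{3/4}$. Differentiating once more and re-applying \eqref{eq:dGamma} writes $\Gamma_i''$ as an $O(i)$ combination of $\Gamma_j$'s with $j\in\{i-2,i,i+2\}$, hence $\|\Gamma_i''\|_{L^\infty}\leq Ci^{5/4}$. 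In the same vein, applying \eqref{eq:yGamma} once (resp.\ twice) writes $y^2\Gamma_i$ (resp.\ $y^4\Gamma_i$) as an $O(i)$ (resp.\ $O(i^2)$) linear combination of $\Gamma_j$'s with indices within distance $2$ (resp.\ $4$) of $i$, producing the two bounds in \eqref{eq:y2_Gamma_control_inf}.
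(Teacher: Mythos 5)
Your proof is correct. For the derivative and moment bounds \eqref{eq:dGamma_control_inf}--\eqref{eq:y2_Gamma_control_inf} you do exactly what the paper does: iterate the recursions \eqref{eq:yGamma}--\eqref{eq:dGamma} and feed in the sup-norm bound \eqref{eq:eigenfunc_control_inf}. The difference lies in the first two estimates, which the paper does not prove at all but simply imports from \cite[Propositions 2.4 and 2.6]{AlfVer_19}; you supply a self-contained elementary argument instead. Your $L^\infty$ bound via orthonormality (giving $\|\Gamma_i'\|_{L^2}^2=A(2i+1)/2$) and the Agmon inequality $\|f\|_{L^\infty}^2\le 2\|f\|_{L^2}\|f'\|_{L^2}$ is clean and yields the stated $i^{1/4}$ (the sharp rate for Hermite functions is in fact $i^{-1/12}$, but $i^{1/4}$ is all the lemma claims). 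Your $L^1$ argument --- Cauchy--Schwarz on the bulk $|y|\le R\sim\sqrt{i}$ giving $\sqrt{2R}\,\|\Gamma_i\|_{L^2}\le Ci^{1/4}$, plus a weighted Cauchy--Schwarz on the tail using $\|y^2\Gamma_i\|_{L^2}^2\le Ci^2$ (again from \eqref{eq:yGamma} and orthonormality) against $\int_{|y|>R}y^{-4}\,dy\sim R^{-3}$ --- balances exactly to $i\cdot i^{-3/4}=i^{1/4}$ and is a nice way to recover the ``not so standard'' $L^1$ estimate without the external reference. The only blemish is a harmless miscount at the end: obtaining $y^2\Gamma_i$ (resp.\ $y^4\Gamma_i$) requires two (resp.\ four) applications of \eqref{eq:yGamma}, not one (resp.\ two); the resulting coefficient sizes $O(i)$ and $O(i^2)$ that you state are nevertheless the right ones, so the conclusion stands.
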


\begin{proof} 
The  non so standard $L^{1}$ estimate (\ref{eq:eigenfunc_control_L1}) can be found in \cite[Proposition 2.4]{AlfVer_19}, whereas the $L^{\infty}$ estimate (\ref{eq:eigenfunc_control_inf}) can be found in \cite[Proposition 2.6]{AlfVer_19}. Next, estimate (\ref{eq:dGamma_control_inf})  easily follows from the combination of  (\ref{eq:dGamma}) and  (\ref{eq:eigenfunc_control_inf}), whereas estimate (\ref{eq:y2_Gamma_control_inf}) easily follows from  (\ref{eq:yGamma}) and  (\ref{eq:eigenfunc_control_inf}). Details are omitted. 
\end{proof}

Throughout this paper, we denote $m_i$ the \lq\lq mass'' of the $i$-th eigenfunction, namely
\begin{equation}\label{def:masse}
m_{i}:=\int_{\mathbb{R}}\Gamma_{i}(y)dy.
\end{equation}

\section{Construction of steady states}\label{s:steady}

In this section, we prove Theorem \ref{thm:steady_state_eps} on steady states, Theorem \ref{thm:steady-per} on the periodic case, and Theorem \ref{thm:steady-C0} on the localized case.

\medskip

We look after a steady state solution to (\ref{eq:statio_state}) in the perturbative form  $n^{\varepsilon}(x,y)={}n^{0}(y)+h^{\varepsilon}(x,y)$, where $n^{0}=n^{0}(y)$, given by (\ref{def:n0}), is a steady state when $\ep=0$. From  straightforward computations, we are left to find  $h^{\varepsilon}$ satisfying  $\F(\varepsilon,h^{\varepsilon})=0$, where 
\begin{align*}
\F(\varepsilon,h):={} & h_{xx}+h_{yy}+ n^{0}\left(2A^{2}\varepsilon\theta(x)y-A^{2}\varepsilon^{2}\theta^{2}(x)-\int_{\mathbb{R}}h(x,y^{\prime})dy^{\prime}\right)\\
 & \quad +h\left(1-A^{2}(y-\varepsilon\theta(x))^{2}+\lambda_{0}-\int_{\mathbb{R}}h(x,y^{\prime})dy^{\prime}\right).
\end{align*}
We thus aim at applying the Implicit Function Theorem, namely Theorem \ref{thm:IFT},
to $\F\colon\mathbb{R}\times Y\rightarrow Z$ where
the function spaces $Y,Z$ are to be appropriately chosen.


\subsection{Function spaces\label{subsec:IFT_prelim}}\label{ss:spaces}

Let us fix $\beta>\frac{13}{4}$. Recall that $\Gamma _i=\Gamma _i(y)$ are the eigenfunctions defined in subsection \ref{ss:linear}. We set
\begin{equation}
Y:=\left\{ h\in C^{2}(\mathbb{R}^{2})\left|\begin{array}{c}
\exists C>0,\,\forall |\alpha|\leq 2,\quad\left|D^\alpha h(x,y)\right|\leq\frac{C}{(1+y^{2})^{2}}\quad\text{ on }\mathbb{R}^{2},\vspace{10pt}\\
\exists K>0,\,\forall k\leq2,\,\forall i\in\mathbb{N},\,\quad\underset{x\in\R}{\sup}\left|\int_{\mathbb{R}}D_{x}^{k}h(x,y)\Gamma_{i}(y)dy\right|\leq\frac{K}{(1+i)^{\beta+1-k/2}}
\end{array}\right.\right\} ,\label{eq:Y_space}
\end{equation}
and
\begin{equation}
Z:=\left\{ f\in C(\mathbb{R}^{2})\left|\begin{array}{c}
\exists C>0,\quad\left|f(x,y)\right|\leq\frac{C}{1+y^{2}}\quad\text{ on }\mathbb{R}^{2},\vspace{10pt}\\
\exists K>0,\forall i\in\mathbb{N},\quad\underset{x\in\R}{\sup}\left|\int_{\mathbb{R}}f(x,y)\Gamma_{i}(y)dy\right|\leq\frac{K}{(1+i)^{\beta}}
\end{array}\right.\right\} ,\label{eq:Z_space}
\end{equation}
equipped with the norms
\begin{equation}
||h||_{Y}:=\sum_{|\alpha|\leq2}\sup_{(x,y)\in\mathbb{R}^2}\left|(1+y^{2})^{2}D^{\alpha}h(x,y)\right|+\sum_{k=0}^{2}||D_{x}^{k}h||_{\beta+1-k/2},\label{eq:Y_norm}
\end{equation}
and
\begin{equation}
||f||_{Z}:=\sup_{(x,y)\in\mathbb{R}^2}\left|(1+y^{2})f(x,y)\right|+||f||_{\beta},\label{eq:Z_norm}
\end{equation}
where, for $m\in\mathbb{R}$, we define
\begin{equation}
||w||_{m}:=\sup_{i\in\mathbb{N}}\left[(1+i)^{m}\sup_{x\in\mathbb{R}}\left|\int_{\mathbb{R}}w(x,y)\Gamma_{i}(y)dy\right|\right].\label{eq:X_norm}
\end{equation}


\begin{rem}[Choice of the function spaces]\label{rem:spaces}
Let us comment on the spaces $Y,Z$ and the two controls
appearing in their definition. The crux of the proof is to show that
$\L:= D_{h}\F(0,0)$, given
by (\ref{eq:Lstate}), is bijective from $Y$ to $Z$: for
every fixed $f\in Z$, there is a unique $h\in Y$ such that $\L h=f$. First, thanks to the controls on the $y$-tails, i.e. the first constraint in the definition of $Y$ and $Z$,  $h(x,\cdot),f(x,\cdot)\in L^{2}(\mathbb{R})$
for all $x\in \R$. This allows to decompose $h$ and $f$ along the eigenfunction
basis $(\Gamma_{i})_{i\in\mathbb{N}}$, leading to (\ref{eq:L2_decomp}).
From there we obtain an expression of $h_{i}=h_{i}(x)$ given by (\ref{eq:h_i_expr})---(\ref{eq:h_0_expr}).
Next, the control on $f_{i}(x)=\int_{\mathbb{R}}f(x,y)\Gamma_{i}(y)dy$, i.e. the second constraint in the definition of $Z$, allows to prove the bounds (\ref{eq:h_i_bound})---(\ref{eq:hi_xx_bound}) for $h_{i}$.
This in turn allows to prove the control on the $y$-tails for $h$.
This is done by using (\ref{eq:eigenfunc_control_inf})---(\ref{eq:y2_Gamma_control_inf})
and by taking $\beta>\frac{13}{4}$. 
\end{rem}

In what follows, it is useful to keep in mind the straightforward
estimates
\begin{align}
\forall h\in Y, \forall |\alpha|\leq 2 , \forall (x,y)\in\mathbb{R}^2,\quad & \left|D^{k}h(x,y)\right|\leq\frac{||h||_{Y}}{(1+y^{2})^{2}},\label{eq:bound_Y}\\
\forall f\in Z, \forall (x,y)\in\mathbb{R}^2,\quad & \left|f(x,y)\right|\leq\frac{||f||_{Z}}{1+y^{2}},\label{eq:bound_Z}
\end{align}
and
\begin{equation}
\forall h\in Y, \forall x\in\mathbb{R}, \quad \left|\int_{\mathbb{R}}h(x,y)dy\right|\leq||h||_{Y}\int_{\mathbb{R}}(1+y^{2})^{-2}dy=\frac{\pi}{2}||h||_{Y}.\label{eq:bound_int_Y}
\end{equation}

\begin{lem}[$Y$ and $Z$ are Banach]\label{lem:YZ_Banach}
The spaces $Y$ given by (\ref{eq:Y_space}), and $Z$ given by  (\ref{eq:Z_space}), are Banach spaces when equipped with their respective norm $||.||_{Y}$ given by (\ref{eq:Y_norm}), and $||.||_{Z}$ given by (\ref{eq:Z_norm}).
\end{lem}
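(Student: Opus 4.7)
The plan is the standard two-step argument for weighted function spaces: verify the norm axioms, then prove completeness. The norm axioms are essentially built into (\ref{eq:Y_norm}) and (\ref{eq:Z_norm}): both expressions are finite on the corresponding space by definition, homogeneity and the triangle inequality follow from the corresponding properties of pointwise suprema and of the seminorm $\|\cdot\|_m$ from (\ref{eq:X_norm}), and definiteness is clear because the weighted pointwise term dominates the ordinary sup norm, so $\|h\|_Y=0$ (resp.\ $\|f\|_Z=0$) forces $h\equiv 0$ (resp.\ $f\equiv 0$).

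For completeness of $Y$, I would start with a Cauchy sequence $(h_n)\subset Y$. The first block of (\ref{eq:Y_norm}) yields $|D^\alpha h_n(x,y)-D^\alpha h_m(x,y)|\leq \|h_n-h_m\|_Y$ for every $|\alpha|\leq 2$ and every $(x,y)\in\R^2$, so $(h_n)$ is Cauchy in the classical $C^2_b(\R^2)$ and converges, together with all derivatives up to order two, uniformly to some $h\in C^2(\R^2)$. The pointwise weighted bound for $h$ is then recovered by passing to the pointwise limit in $(1+y^2)^2|D^\alpha h_n(x,y)|\leq \|h_n\|_Y$, using that Cauchy sequences are bounded. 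For the eigenfunction part, the crux is to interchange $\int\!\cdot\,\Gamma_i\,dy$ with $\lim_n$; this is handled cleanly by dominated convergence with the $n$-uniform integrable majorant $\sup_n\|h_n\|_Y\,(1+y^2)^{-2}|\Gamma_i(y)|$ (whose integrability uses only $\|\Gamma_i\|_{L^\infty}<\infty$, cf.\ Lemma~\ref{lem:eigenfunc_control}). Passing to the limit in $(1+i)^{\beta+1-k/2}\sup_{x\in\R}|\int_\R D_x^k h_n(x,y)\Gamma_i(y)\,dy|\leq \|h_n\|_Y$ then establishes the second constraint in (\ref{eq:Y_space}), hence $h\in Y$.

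The convergence $\|h_n-h\|_Y\to 0$ is obtained by the customary Cauchy trick: fix $\epsilon>0$ and $N$ with $\|h_n-h_m\|_Y<\epsilon$ for all $n,m\geq N$, then, for each fixed $(x,y,\alpha)$ with $|\alpha|\leq 2$ (resp.\ for each fixed $(i,k,x)$), pass to the pointwise limit $m\to\infty$ in the weighted $C^2$ inequality (resp.\ in the projection inequality, again via dominated convergence), and finally take the appropriate suprema over $(x,y,\alpha)$ and over $i$. The completeness of $Z$ follows by the exact same argument, simpler since no derivatives are involved and the weight $(1+y^2)^{-1}$ still provides an integrable dominant against $\Gamma_i$. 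The only step that requires any genuine care is the dominated convergence interchange described above; everything else is routine bookkeeping.
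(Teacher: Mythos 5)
Your proof is correct and follows essentially the same route as the paper: uniform convergence in $C^2_b(\R^2)$ from the first block of the norm, pointwise passage to the limit for the weighted bounds, and dominated convergence (with the majorant $C(1+y^2)^{-2}|\Gamma_i(y)|$) for the projection bounds, followed by the standard Cauchy trick for $\|h_n-h\|_Y\to 0$. The only difference is that you spell out the norm axioms and the final convergence step, which the paper omits as routine.
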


\begin{proof} For the sake of completeness, let us give a short proof that $Y$ is Banach, the proof for $Z$ being similar. Let $(h_{n})_{n\in\mathbb{N}}$ be a Cauchy sequence in $Y$. Since the injection $Y\hookrightarrow C^2_b(\R^2)$ is continuous and $C^2_b(\R^2)$ is Banach, there is $h\in C_{b}^{2}(\mathbb{R}^2)$ such that
$h_{n}\rightarrow h$ in the norm $||.||_{C_{b}^{2}(\R^2)}$. 

Let us prove that $h\in Y$. Set
\[
C_{n}:=\sum_{|\alpha|\leq2}\sup_{(x,y)\in\mathbb{R}^2}\left|(1+y^{2})^{2}D^{\alpha}h_{n}(x,y)\right|.
\]
Since $(h_{n})$ is Cauchy, the sequence $(C_{n})_{n\in\mathbb{N}}$
is bounded by some $C\geq 0$. Then, for all $|\alpha|\leq2$ and $(x,y)\in\mathbb{R}^{2}$, there holds $
\left|D^{\alpha}h_{n}(x,y)\right|  \leq \frac{C}{(1+y^{2})^{2}}$, and  $n\rightarrow+\infty$ yields
\[
\left|D^{\alpha}h(x,y)\right|\leq\frac{C}{(1+y^{2})^{2}}.
\]
Similarly, the sequence
\[
K_{n}:=\sum_{k\leq2}||D_{x}^{k}h_{n}||_{\beta+1-k/2}
\]
is bounded by some $K\geq 0$.  Then,  for all  $0\leq k\leq2$, there holds 
\[
\forall i\in\mathbb{N}, \forall x\in\mathbb{R}, \quad \left|\int_{\mathbb{R}}D_{x}^{k}h_{n}(x,y)\Gamma_{i}(y)dy\right|\leq\frac{K}{(1+i)^{\beta+1-k/2}}.
\]
Given that $|D_{x}^{k}h_{n}(x,y)|\leq\frac C{(1+y^{2})^{2}}$,
the dominated convergence theorem allows to let $n\to +\infty$ and obtain that the above estimate also holds for $h$. We conclude that $h\in Y$.

Now, very classical arguments (that we omit) yield that $h_n\to h$ in $Y$.
\end{proof}

We conclude this subsection with a preliminary result,  which in
particular states that each \lq\lq $h$-term'' appearing in $\F(\varepsilon,h)$ has its $Z$-norm controlled by the $Y$-norm of $h$.
For better readability we denote $yh$ and $y^{2}h$ the functions
$(x,y)\mapsto yh(x,y)$ and $(x,y)\mapsto y^{2}h(x,y)$ respectively.

\begin{lem}[Controlling in $Z$ the terms of $\F(\ep,h)$] \label{lem:prelim_controls} There is $C=C(A)>0$ such that, for all $h\in Y$,
\begin{equation}
\label{truc}
\max\left(||h||_{Z}, ||yh||_{Z}, ||y^{2}h||_{Z}, ||h_{xx}||_{Z}, ||h_{yy}||_{Z}\right)\leq C ||h||_{Y}.
\end{equation}
Also,  if $f=f(x,y)\in Z$ and $b=b(x)\in C_{b}(\mathbb{R})$, then  
\begin{equation}\label{truc2}
||bf||_{Z}\leq||b||_{L^\infty}||f||_{Z}.
\end{equation}
\end{lem}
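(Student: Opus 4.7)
The plan is to verify each of the five bounds in (\ref{truc}) by estimating separately the two contributions to $\|\cdot\|_Z$, namely the pointwise weighted sup $\sup_{(x,y)}|(1+y^{2})(\cdot)|$ and the spectral norm $\|\cdot\|_{\beta}$ from (\ref{eq:X_norm}); the second claim (\ref{truc2}) will then follow from a direct computation.

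For the pointwise part, the decay built into the definition of $Y$ gives everything essentially for free. From (\ref{eq:bound_Y}) with $|\alpha|=0$ one has $(1+y^{2})|h|\leq\|h\|_{Y}/(1+y^{2})\leq\|h\|_{Y}$, and since $|y|(1+y^{2})^{-1}\leq\tfrac{1}{2}$ and $y^{2}(1+y^{2})^{-1}\leq 1$, the same weighted bound controls $yh$ and $y^{2}h$ up to an absolute constant. For $h_{xx}$ and $h_{yy}$ one applies (\ref{eq:bound_Y}) with $|\alpha|=2$ to obtain the same pointwise estimate as for $h$ itself.

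The spectral part is where the work lies. Every bound reduces to the built-in estimate $\sup_{x}|\int D_{x}^{k}h(x,y)\Gamma_{i}(y)\,dy|\leq\|h\|_{Y}(1+i)^{-(\beta+1-k/2)}$ inherited from the definition of $Y$. The cases $h$ (take $k=0$) and $h_{xx}$ (take $k=2$) are immediate and yield the required $(1+i)^{-\beta}$ decay, with room to spare for $h$. For $yh$ and $y^{2}h$ I would move the powers of $y$ onto the eigenfunctions via the recursion (\ref{eq:yGamma}) of Lemma \ref{lem:relations}: the identity $\int yh(x,y)\Gamma_{i}(y)\,dy=p_{i}^{+}\int h\,\Gamma_{i+1}\,dy+p_{i}^{-}\int h\,\Gamma_{i-1}\,dy$, with $p_{i}^{\pm}=O(\sqrt{1+i})$, produces a net bound $O(\|h\|_{Y}(1+i)^{-\beta-1/2})$; iterating the identity once more handles $y^{2}h$ with coefficients of order $1+i$ and yields a bound $O(\|h\|_{Y}(1+i)^{-\beta})$. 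The convention $p_{0}^{-}\Gamma_{-1}\equiv 0$ absorbs the $i=0$ boundary term without any extra work. For $h_{yy}$ I would integrate by parts twice --- boundary terms vanish since $h$ and $\Gamma_{i}$ decay rapidly at infinity --- and then insert the eigenvalue equation $\Gamma_{i}''=-(1-A^{2}y^{2})\Gamma_{i}-\lambda_{i}\Gamma_{i}$ to rewrite
\[
\int h_{yy}(x,y)\Gamma_{i}(y)\,dy=-(1+\lambda_{i})\int h\,\Gamma_{i}\,dy+A^{2}\int y^{2}h\,\Gamma_{i}\,dy.
\]
Since $\lambda_{i}=O(i)$ by (\ref{eq:eigenval}) and the two projections on the right have already been controlled in the previous step, the desired $O((1+i)^{-\beta})$ decay follows by taking the sup in $x$ and multiplying by $(1+i)^{\beta}$.

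Finally, (\ref{truc2}) is essentially immediate because $b=b(x)$ does not depend on $y$: both $(1+y^{2})|b(x)f(x,y)|\leq\|b\|_{L^{\infty}}(1+y^{2})|f|$ and $\int b(x)f(x,y)\Gamma_{i}(y)\,dy=b(x)\int f\,\Gamma_{i}\,dy$ pick up exactly one factor $\|b\|_{L^{\infty}}$. The main (really only) obstacle is the integration-by-parts plus eigenvalue-equation trick used for $h_{yy}$: attacking $h_{yy}$ through the crude pointwise bound $|h_{yy}|\leq\|h\|_{Y}(1+y^{2})^{-2}$ alone would not give any decay in $i$, so one genuinely has to push the two $y$-derivatives back onto $\Gamma_{i}$ and exploit the spectral structure of the harmonic oscillator.
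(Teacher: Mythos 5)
Your proof is correct and follows essentially the same strategy as the paper: split each $Z$-norm into its pointwise and spectral parts, read the pointwise bounds off the definition of $Y$, and move the factors $y$, $y^{2}$ and the two $y$-derivatives onto the eigenfunctions via the recursions of Lemma \ref{lem:relations}. The only (harmless) variation is in the $h_{yy}$ term, where the paper applies (\ref{eq:dGamma}) twice to expand $\Gamma_i''$ over $\Gamma_{i\pm2},\Gamma_i$, whereas you invoke the eigenvalue equation and recycle the bound already proved for $y^{2}h$ --- both yield the same $(1+i)^{-\beta}$ decay.
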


\begin{proof} The proof of assertion (\ref{truc2}) is obvious. As for (\ref{truc}), the estimates for $h$ and $h_{xx}$ follow directly from the definitions of $Y$, $Z$ and their respective norms.  In the sequel, $C$ denotes a positive  constant that may change from line to line, but that always depends only on $A$.

Let us prove  $||h_{yy}||_{Z}\leq C||h||_{Y}$. Since 
$
\left|(1+y^{2})h_{yy}(x,y)\right|\leq\left|(1+y^{2})^{2}h_{yy}(x,y)\right|\leq||h||_{Y}$, it remains to consider the second term appearing in the right hand side of (\ref{eq:Z_norm}), that is $||h_{yy}||_{\beta}$. Integrating by parts and using (\ref{eq:dGamma}) we have
\begin{align*}
\left|\int_{\mathbb{R}}h_{yy}(x,y)\Gamma_{i}(y)dy\right| & =\left|\int_{\mathbb{R}}h(x,y)\Gamma_{i}^{\prime\prime}(y)dy\right|\\
 & \leq q_i^+ q_{i+1}^+\left|\int_{\mathbb{R}}h\Gamma_{i+2}dy\right|+\left(q_{i}^{+}q_{i+1}^{-}+q_{i}^{-}q_{i-1}^{+}\right) \left|\int_{\mathbb{R}}h\Gamma_{i}dy\right|+q_{i}^{-}q_{i-1}^{-}\left|\int_{\mathbb{R}}h\Gamma_{i-2}dy\right|\\
 & \leq  C  \frac {\sqrt{(1+i)(2+i)}}{(1+i)^{\beta+1}} ||h||_{Y},
\end{align*}
from the expressions of $q_i^{\pm}$ and the fact that $h\in Y$ so that $||h||_{\beta+1}\leq ||h||_{Y}$. As a result $||h_{yy}||_{\beta}\leq C||h||_{Y}$ and we are done.

As for the cases of $yh$ and $y^2h$, it suffices to use (\ref{eq:yGamma}) instead of (\ref{eq:dGamma}) and very similar arguments. 
\end{proof}

\subsection{Checking assumptions of Theorem \ref{thm:IFT}}\label{ss:checking}

Equipped with the function spaces $Y$ and $Z$, we thus consider 
\begin{align}
\F(\varepsilon,h):={} & h_{xx}+h_{yy}+ n^{0}\left(2A^{2}\varepsilon\theta(x)y-A^{2}\varepsilon^{2}\theta^{2}(x)-\int_{\mathbb{R}}h(x,y^{\prime})dy^{\prime}\right)\nonumber \\
 & \quad +h\left(1-A^{2}(y-\varepsilon\theta(x))^{2}+\lambda_{0}-\int_{\mathbb{R}}h(x,y^{\prime})dy^{\prime}\right).\label{Fstate}
\end{align}
Clearly $\F(0,0)=0$. We prove below that the assumptions  of Theorem \ref{thm:IFT} hold true.

\begin{proof}[Checking assumptions $\ref{enu:continuite}$ and $\ref{enu:__IFT_diff_exists}$ of Theorem \ref{thm:IFT}] We first check that $\F$ is well defined. Recalling that $n^0(y)=\eta \Gamma _0(y)$ and since $(\Gamma_{i})_{i\in\mathbb{N}}$ is orthonormal in $L^{2}(\mathbb{R})$, it is clear that the conditions in (\ref{eq:Z_space}) are satisfied, so that $n^{0}\in Z$. Similarly and in view of  (\ref{eq:yGamma}), $yn^{0}\in Z$. Next, for fixed $\varepsilon\in\mathbb{R}$ and
$h\in Y$, the function $b(x):=-A^{2}\varepsilon^{2}\theta^{2}(x)-\int_{\mathbb{R}}h(x,y^{\prime})dy^{\prime}$
is continuous and bounded thanks to (\ref{eq:bound_int_Y}), and therefore
$bn^{0}\in Z$ from Lemma \ref{lem:prelim_controls}. In the
same way, setting $\tilde{b}(x):=2A^{2}\varepsilon\theta(x)$, we obtain
$\tilde{b}yn^{0}\in Z$. Finally, the other terms in $\F(\varepsilon,h)$
also belong to $Z$, again by virtue of Lemma \ref{lem:prelim_controls}.

We now compute $D_h\F(0,0)$  the Fréchet derivative of $\F$ along
the second variable at point $(0,0)$. We have $\F(0,h)=\L h+ R (h)$, where 
\begin{equation}
\L h:= h_{xx}+h_{yy}+h\left(1-A^{2}y^{2}+\lambda_{0}\right)-n^{0}\int_{\mathbb{R}}h(x,y^{\prime})dy^{\prime},\label{eq:Lstate}
\end{equation}
and  $R(h)=-h\int_{\mathbb{R}}h(x,y^{\prime})dy^{\prime}$. From  Lemma \ref{lem:prelim_controls} and  (\ref{eq:bound_int_Y}), the remainder $R(h)$ satisfies
$$
||R(h)||_{Z}  \leq||h||_{Z}\left\Vert \int_{\mathbb{R}}h(\cdot,y^{\prime})dy^{\prime}\right\Vert _{L^\infty}\leq  C ||h||_{Y}^{2}.
$$
 On the other
hand, $\L\colon Y\rightarrow Z$ is a linear continuous
operator, which is readily seen by using Lemma \ref{lem:prelim_controls}
and (\ref{eq:bound_int_Y}). Since $\F(0,0)=0$,
we then have $D_{h}\F(0,0)=\L$.

Using similar arguments, one shows that $D_{h}\F$ is well-defined
on a neighborhood of $(0,0)$, as well as the continuity of $\F$
and $D_{h}\F$ at $(0,0)$.
\end{proof}

Now, the main part is to prove the bijectivity of $D_{h}\F(0,0)=\L:Y\to Z$.

\begin{proof}[Checking assumption $\ref{enu:bij}$ of Theorem \ref{thm:IFT}] We proceed by analysis and synthesis. Let $f\in Z$ be given, and assume there exists $h\in Y$ such that $\L h=f$. Thanks to (\ref{eq:bound_Y}) and (\ref{eq:bound_Z}),
 $f(x,\cdot)$ and $ h(x,\cdot)$ are in $L^{2}(\mathbb{R})$
for any $x\in\mathbb{R}$. Since the family of eigenfunctions $(\Gamma_{i})_{i\in\mathbb{N}}$ of Proposition \ref{prop:basis_eigenfunctions} forms a Hilbert basis of
$L^{2}(\mathbb{R})$, we can write
\begin{equation}
h(x,y)=\sum_{i=0}^{+\infty}h_{i}(x)\Gamma_{i}(y),\quad f(x,y)=\sum_{i=0}^{+\infty}f_{i}(x)\Gamma_{i}(y),\label{eq:L2_decomp}
\end{equation}
where, for any $i\in \N$,
\begin{equation*}
h_{i}(x):=\int_{\mathbb{R}}h(x,y)\Gamma_{i}(y)dy,\quad f_{i}(x):=\int_{\mathbb{R}}f(x,y)\Gamma_{i}(y)dy.\label{eq:L2_coord}
\end{equation*}
Notice that, for any $x\in \R$,  the equalities in (\ref{eq:L2_decomp}) correspond, \textit{a
priori}, to a convergence of the series in the Hilbert space $L^{2}(\R)$
norm. However, since $h\in Y$ and $f\in Z$, there
holds
\begin{equation}
||h_{i}||_{L^\infty}  \leq\frac{||h||_{Y}}{(1+i)^{\beta+1}},\label{eq:hi_apriori_bound}
\end{equation}
and
\begin{equation}
||f_{i}||_{L^\infty}  \leq\frac{||f||_{Z}}{(1+i)^{\beta}}.\label{eq:fi_bound}
\end{equation}
Consequently, since $\beta>\frac{13}{4}>\frac 54$ and (\ref{eq:eigenfunc_control_inf})
holds, the convergences in (\ref{eq:L2_decomp}) are also valid pointwise in $\R^{2}$. 
Similarly, thanks to (\ref{eq:eigenfunc_control_L1}),  the equality
\[
\int_{\mathbb{R}}h(x,y)dy=\sum_{i=0}^{+\infty}h_{i}(x)\int_{\mathbb{R}}\Gamma_{i}(y)dy
\]
holds pointwise in $\R$.  Also, thanks to (\ref{eq:bound_Y}) and (\ref{eq:bound_Z}), we obtain
that $h_{i}\in C_{b}^{2}(\mathbb{R})$ and $f_{i}\in C_{b}(\mathbb{R})$,
with
$$
h_{i}^{\prime}(x)  =\int_{\mathbb{R}}h_{x}(x,y)\Gamma_{i}(y)dy,\quad 
h_{i}^{\prime\prime}(x)  =\int_{\mathbb{R}}h_{xx}(x,y)\Gamma_{i}(y)dy.
$$

Now, we project equality $f=\L h$ on each $\Gamma _i$ so that, for all $x\in\mathbb{R}$,
\begin{align*}
f_{i}(x) & =\int_{\mathbb{R}}h_{xx}(x,y)\Gamma_{i}(y)dy+\int_{\mathbb{R}}h_{yy}(x,y)\Gamma_{i}(y)dy+\int_{\mathbb{R}}(1-A^2y^{2}+\lambda_{0})h(x,y)\Gamma_{i}(y)dy\\
 & \quad-\left(\int_{\mathbb{R}}h(x,y')dy'\right)\int_{\mathbb{R}}n^{0}(y)\Gamma_{i}(y)dy\\
 & =h_{i}^{\prime\prime}(x)+\int_{\mathbb{R}}h(x,y)\left[\Gamma_{i}^{\prime\prime}(y)+(1-A^2y^{2}+\lambda_{0})\Gamma_{i}(y)\right]dy-\eta\delta_{i0}\int_{\mathbb{R}}h(x,y')dy'\\
 & =h_{i}^{\prime\prime}(x)-(\lambda_{i}-\lambda_{0})h_{i}(x)-\eta\delta_{i0}\sum_{i=0}^{+\infty}h_{i}(x)\int_{\mathbb{R}}\Gamma_{i}(y)dy,
\end{align*}
where we have  integrated by parts and used (\ref{def:n0}). Therefore, $\L h=f$ is reduced to an infinite system of  linear ordinary differential equations for the $h_i$'s, namely 
\begin{equation}
h_{i}^{\prime\prime}-(\lambda_{i}-\lambda_{0}) h_{i}  =f_{i}(x), \quad    (i\geq1),\label{eq:h_i}
\end{equation}
and
\begin{equation}
h_{0}^{\prime\prime}+\lambda_{0}h_{0}  =f_{0}(x)+\eta\sum_{i=1}^{+\infty}m_{i}h_{i}(x),\label{eq:h_0}
\end{equation}
where we recall the notation (\ref{def:masse}) for the mass $m_i$. Notice that, combining (\ref{eq:hi_apriori_bound}) with
(\ref{eq:eigenfunc_control_L1}), the series appearing in the right-hand
side of (\ref{eq:h_0}) converges to a function in $C_{b}(\mathbb{R})$.

We first deal with the case $i\geq 1$, that is (\ref{eq:h_i}). Since $\lambda_{i}-\lambda_{0}>0$ and $f_{i}\in C_{b}(\mathbb{R})$, there is a unique solution
$h_{i}$ to (\ref{eq:h_i}) which remains in $C_{b}^{2}(\mathbb{R})$, and it is explicitly given by
\begin{equation}
h_{i}(x)=-\rho_{i}\ast f_{i}(x)\; \text{ where }\; \rho_{i}(z):=\frac{1}{2\sqrt{\lambda_{i}-\lambda_{0}}}e^{-\sqrt{\lambda_{i}-\lambda_{0}}|z|},\quad (i\geq1).\label{eq:h_i_expr}
\end{equation}
The functions $h_i$ ($i\geq 1$) now determined, we can deal with the $i=0$ case. Since $\lambda_{0}<0$, there is a unique solution
$h_{0}$ to (\ref{eq:h_0}) which remains in $C_{b}^{2}(\mathbb{R})$,  and it is explicitly given by
\begin{equation}
h_{0}(x)=-\rho_{0}\ast\left(f_{0}+\eta\sum_{i=1}^{+\infty}m_{i}h_{i}\right)(x)\; \text{ where }\; \rho_{0}(z):= \frac{1}{2\sqrt{-\lambda_{0}}}e^{-\sqrt{-\lambda_{0}}|z|}.\label{eq:h_0_expr}
\end{equation}

\medskip 

Conversely, we need to prove that, for $h_i=h_i(x)$ provided by (\ref{eq:h_i_expr}) and then (\ref{eq:h_0_expr}), 
the function
\begin{equation}
h(x,y):=\sum_{i=0}^{+\infty}h_{i}(x)\Gamma_{i}(y),\label{eq:h_expr}
\end{equation}
does belong to $Y$ and that $\L h=f$.

Let us first prove that $h\in C^{2}(\mathbb{R}^{2})$. In the sequel, $C$ denotes a positive constant that may change from line to line, but that  always depends only on $A$ and  $||f||_{Z}$. From (\ref{eq:fi_bound}) and (\ref{eq:eigenval}) we deduce that,  for all
$i\geq1$, 
\begin{align}
||h_{i}||_{L^\infty} & \leq||\rho_{i}||_{L^1}||f_{i}||_{L^\infty}\leq\frac{1}{\lambda_{i}-\lambda_{0}}\times\frac{||f||_{Z}}{(1+i)^{\beta}}\leq\frac{C}{(1+i)^{\beta+1}},\label{eq:h_i_bound}
\end{align}
\begin{align}
||h_{i}^{\prime}||_{L^\infty} & \leq||\rho_{i}^{\prime}||_{L^1}||f_{i}||_{L^\infty}\leq\frac{1}{\sqrt{\lambda_{i}-\lambda_{0}}}\times \frac{||f||_{Z}}{(1+i)^{\beta}}\leq\frac{C}{(1+i)^{\beta+1/2}},\label{eq:hi_x_bound}
\end{align}
and thus, from equation (\ref{eq:h_i}),
\begin{align}
||h_{i}^{\prime\prime}||_{L^\infty} & \leq||f_{i}||_{L^\infty}+(\lambda_{i}-\lambda_{0})||h_{i}||_{L^\infty}\leq\frac{C}{(1+i)^{\beta}}.\label{eq:hi_xx_bound}
\end{align}
Therefore,
with (\ref{eq:eigenfunc_control_inf}), the series in (\ref{eq:h_expr})
is normally convergent, and the equality is valid pointwise. Now,
since $\beta>\frac{13}{4}>\frac 5 4$, combining (\ref{eq:h_i_bound})---(\ref{eq:hi_xx_bound})
and (\ref{eq:eigenfunc_control_inf})---(\ref{eq:dGamma_control_inf}),
we obtain that $h\in C^{2}(\mathbb{R}^{2})$, with the pointwise
expressions
\begin{equation}
D_x^p D _y ^q h(x,y)=\sum_{i=0}^{+\infty}\frac{d^p h_i}{dx^p} (x)\frac{d ^q\Gamma_{i}}{dy^q}(y), \quad (p+q\leq 2).\label{eq:h_xy_decomp}
\end{equation}
Also, recalling definition (\ref{eq:X_norm}), we infer from  (\ref{eq:h_i_bound})---(\ref{eq:hi_xx_bound}) that 
$$
\sum_{k=0}^{2}||D_{x}^{k}h||_{\beta+1-k/2}<+\infty.
$$

In view of (\ref{eq:Y_norm}), we now need to prove that $(x,y)\mapsto(1+y^{2})^{2}D^{\alpha}h(x,y)$
is bounded for any multi-index $|\alpha|\leq2$. 
Using (\ref{eq:h_i_bound}) and (\ref{eq:dGamma_control_inf}),
we find that, for all $(x,y)\in\mathbb{R}^{2}$,
\begin{align*}
\left|(1+y^{2})^{2}h(x,y)\right| & \leq\sum_{i=0}^{+\infty}\left|h_{i}(x)\right|\times \left|(1+2y^{2}+y^{4})\Gamma_{i}(y)\right|\leq C\sum_{i=0}^{+\infty}\frac{i^{9/4}}{(1+i)^{\beta+1}}<+\infty,
\end{align*}
since $\beta >\frac{13}{4}>\frac 9 4$. Analogously, combining  (\ref{eq:h_i_bound})---(\ref{eq:hi_xx_bound}) with  (\ref{eq:dGamma_control_inf}), we can deal with $D^{\alpha}h$ for any other multi-index $\vert \alpha\vert \leq 2$. For instance, notice the so-called \lq\lq worst case'':
\begin{align*}
\left|(1+y^{2})^{2}h_{xx}(x,y)\right|  \leq C \sum_{i=0}^{+\infty}\frac{i^{9/4}}{(1+i)^{\beta}}<+\infty,
\end{align*}
since $\beta>13/4$. 

Eventually, we proved that $||h||_{Y}<+\infty$, therefore $h\in Y$ and the proof of  $\L h=f$ is clear.
\end{proof}

\subsection{Completion of the proof of Theorem \ref{thm:steady_state_eps}}
\label{ss:proof-th2}

\begin{proof}[Proof of Theorem \ref{thm:steady_state_eps}] From the above two subsections, we can apply  Theorem \ref{thm:IFT} to the function
$\F$ around the point $(0,0)$. Hence there are $\ep_0>0$ and $r_1>0$ such that, for any $\vert \ep \vert <\ep _0$, the following holds: there is a unique  $h^{\varepsilon}\in Y$ for which $||h^{\varepsilon}||_{Y}\leq r_{1}$
and $\F(\varepsilon,h^{\varepsilon})=0$.  Recalling  $n^{\varepsilon}(x,y)= n^{0}(y)+h^{\varepsilon}(x,y)$, this transfers into (\ref{objectif1}).

Let us now prove (\ref{terme-un}). Since $\F$ is of the class $C^1$ (the case of the variable $h$ was treated in subsection \ref{ss:checking} and the case of the $\ep$ variable is clear)  we deduce from Theorem \ref{thm:IFT},  $\F (\ep,h^{\ep})=0$ and the chain rule that
$$
D_\varepsilon \F(\varepsilon, h^\ep)+ D_h \F(\varepsilon, h^\ep)\left(\frac{d h^\ep}{d\ep}\right) = 0,
$$
which we evaluate at $\ep=0$ to get
$$
D_\ep\F(0,0)+L\left(\left.\frac{d h^\ep}{d\ep} \right\vert_{\ep=0} \right)=0.
$$
From the expression of $\F=\F(\ep,h)$ we easily compute $D_\ep\F(0,0)
=2A^2 \theta(x)yn^{0}(y)$, so that, since $n^0(y)=\eta \Gamma_0(y)$,
\begin{equation*}
\left.\frac{d h^\ep}{d\ep} \right\vert_{\ep=0}=-2A^{2}\L^{-1}\left(\theta(x)yn^{0}(y)\right)=-2A^{2}\eta \L^{-1}\left(\theta(x)y\Gamma _{0}(y)\right).
\end{equation*}
From (\ref{eq:yGamma}) we know $y\Gamma_0(y)=\frac{1}{\sqrt{2A}}\Gamma_1(y)$ so that
$$
\left.\frac{d h^\ep}{d\ep} \right\vert_{\ep=0}=-\sqrt 2 A^{3/2}\eta \L ^{-1}\left(\theta(x)\Gamma _1(y)\right).
$$
Now, we compute $\L ^{-1}\left(\theta(x)\Gamma _1(y)\right)$ via (\ref{eq:h_i_expr}) and (\ref{eq:h_0_expr}) and reach (recall that $m_1=\int_\R \Gamma_1(y)dy=0$)
\begin{eqnarray*}
\left.\frac{d h^\ep}{d\ep} \right\vert_{\ep=0}&=&-\sqrt 2 A^{3/2}\eta \left[\eta m_1 \left(\rho_0*(\rho _1*\theta)\right)(x)\Gamma _0(y)-(\rho_1*\theta)(x)\Gamma _1(y)\right]\\
&=& \sqrt 2 A^{3/2}\left[y \sqrt{2A}(\rho_1*\theta) (x) \right]n_0(y)\\
&=& 2A^2 (\rho_1*\theta)(x)\, yn_0(y),
\end{eqnarray*}
which can be recast (\ref{terme-un}).

It remains to consider the case when we assume further that $\theta\in C_{b}^{m}(\mathbb{R})$ for some $m\geq 1$ which, in particular, improves the regularity of the solution $n^{\ep}=n^{\ep}(x,y)$. In this case one can actually  redo the proofs above by replacing
the spaces $Y,Z$ in (\ref{eq:Y_space}) and (\ref{eq:Z_space}) with
$Y_{m},Z_{m}$ given by
\begin{equation}
Y_{m}:=\left\{ h\in C^{m+2}(\mathbb{R}^{2})\left|\begin{array}{c}
\exists C>0, \forall|\alpha|\leq m+2,\quad\left|D^{\alpha}h(x,y)\right|\leq\frac{C}{(1+y^{2})^{2}}\quad\text{on }\mathbb{R}^{2},\vspace{10pt}\\
\exists K>0, \forall k\leq m+2,\,\forall i\in\mathbb{N},\vspace{3pt}\\
\underset{x\in\R}{\sup}\left|\int_{\mathbb{R}}D_{x}^{k}h(x,y)\Gamma_{i}(y)dy\right|\leq\frac{K}{(1+i)^{\beta+(m+2-k)/2}}
\end{array}\right.\right\} ,\label{eq:Ym_space}
\end{equation}
and
\[
Z_{m}:=\left\{ f\in C^{m}(\mathbb{R}^{2})\left|\begin{array}{c}
\exists C>0, \forall|\alpha|\leq m,\quad\left|D^{\alpha}f(x,y)\right|\leq\frac{C}{1+y^{2}}\quad\text{on }\mathbb{R}^{2},\vspace{10pt}\\
\exists K>0, \forall k\leq m,\,\forall i\in\mathbb{N},\vspace{3pt}\\
\underset{x\in\R}{\sup}\left|\int_{\mathbb{R}}D_{x}^{k}f(x,y)\Gamma_{i}(y)dy\right|\leq\frac{K}{(1+i)^{\beta+(m-k)/2}}
\end{array}\right.\right\} ,
\]
equipped with their respective norms
\[
||h||_{Y_m}:=\sum_{|\alpha|\leq m+2}\sup_{(x,y)\in\mathbb{R}^{2}}\left|(1+y^{2})^{2}D^{\alpha}h(x,y)\right|+\sum_{k=0}^{m+2}||D_{x}^{k}h||_{\beta+(m+2-k)/2},
\]
and
\[
||f||_{Z_m}:=\sum_{|\alpha|\leq m} \sup_{x,y\in\mathbb{R}}\left|(1+y^{2})D^{\alpha}f(x,y)\right| +\sum_{k=0}^{m}||D_{x}^{k}f||_{\beta+(m-k)/2},
\]
where we recall definition  (\ref{eq:X_norm}). Details are omitted.
\end{proof}

\subsection{Additional properties in the periodic and localized cases}\label{ss:additional}

In this subsection, we start the proof of Theorem \ref{thm:steady-per} and Theorem \ref{thm:steady-C0}, estimates (\ref{positivite-exp-per}) and (\ref{positivite-exp-per-bis}) being postponed to the next subsection.

\begin{proof} [Proof of  the periodicity of the steady states in Theorem \ref{thm:steady-per}] In addition to the conditions of Theorem \ref{thm:steady_state_eps}, let us assume $\theta \in C_{per}^{L}(\R)$ for some $L>0$. Let us recall that, from subsection
\ref{ss:proof-th2}, for any $\vert \ep \vert <\ep _0$, there is a unique  $h^{\varepsilon}\in Y$ for which $||h^{\varepsilon}||_{Y}\leq r_{1}$
and $\F(\varepsilon,h^{\varepsilon})=0$. Defining
\[
\tilde{h}^{\varepsilon}(x,y):=h^{\varepsilon}(x+L,y),
\]
 one readily checks that $\F(\varepsilon,\tilde{h}^{\varepsilon})=0$
and $||\tilde{h}^{\varepsilon}||_{Y}\leq r_{1}$.
Therefore $h^{\varepsilon}\equiv \tilde{h}^{\varepsilon}$, that is $h^{\varepsilon}$
is $L$-periodic in $x$, and so is $n^{\ep}(x,y)=n^{0}(y)+h^{\ep}(x,y)$.
\end{proof}

\begin{proof} [Proof of $n^{\ep} - n_0 \in \tilde{Y}$, where $\tilde{Y}$ is defined by (\ref{eq:Y_C0}), in Theorem \ref{thm:steady-C0}]   In addition to the conditions of Theorem \ref{thm:steady_state_eps}, let us assume $\theta \in C_0(\R)$. Our proof relies on the following technical  lemma, whose proof is postponed. 

\begin{lem}[Function $G$]
\label{lem:theta-G} Let $\theta\in C_{0}(\mathbb{R})$. Then there
is a piecewise constant function $G>0$ such that
\begin{equation}
\begin{cases}
G(x)\geq \max\left(|\theta(x)|,\theta^{2}(x)\right), \qquad \forall x\in\R,\\
G\text{ is even on $\R$, nonincreasing on $[0,+\infty)$,}\\
\lim_{x\rightarrow +\infty} G(x)=0,
\end{cases}\label{eq:___G_hypo}
\end{equation}
together with the following property: there is $\sigma>0$ such that, for all $i\in\mathbb{N}$,
\begin{equation}
(\rho_{i}*G)(x)\leq\frac{\sigma}{1+i}G(x), \quad \forall x\in \R,\label{eq:___G_convo}
\end{equation}
where the $\rho_{i}$'s are given by (\ref{eq:h_i_expr}) and (\ref{eq:h_0_expr}).
\end{lem}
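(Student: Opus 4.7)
The plan is to build $G$ explicitly as a geometric step function whose shell radii are tuned to the tail of $\theta$. Setting $M(x):=\max(|\theta(x)|,\theta^{2}(x))$, which belongs to $C_{0}(\R)$ since $\theta$ does, I would first form the nonincreasing tail envelope $\tilde g(r):=\sup_{|z|\geq r}M(z)$, which satisfies $\tilde g\to 0$ at infinity and $\tilde g(|x|)\geq M(x)$ for all $x\in\R$. Next, fix a rate $\gamma\in(0,1)$, set $a_{0}:=\|M\|_{\infty}+1>0$ and $a_{n}:=\gamma^{n}a_{0}$ for $n\geq1$, and introduce $\mu^{*}:=\min\bigl(\sqrt{-\lambda_{0}},\sqrt{2A}\bigr)>0$, which is a uniform lower bound for the decay rates $\sqrt{-\lambda_{0}}$ and $\sqrt{\lambda_{i}-\lambda_{0}}=\sqrt{2iA}$, $i\geq 1$, entering the definition of the kernels $\rho_{i}$. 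Then I would select radii $0=r_{0}<r_{1}<r_{2}<\cdots\to+\infty$ inductively so that, for every $n\geq 1$,
\[
r_{n}\geq 2r_{n-1},\qquad r_{n}\geq 2n\log(1/\gamma)/\mu^{*},\qquad\tilde g(r_{n})\leq\gamma^{n}a_{0}.
\]
The last condition is achievable because $\tilde g\to 0$. Defining $G(x):=a_{n}$ for $r_{n}\leq|x|<r_{n+1}$, $n\in\N$, yields a piecewise constant, strictly positive, even function, nonincreasing on $[0,+\infty)$ and tending to $0$ at infinity; the pointwise majorization $G\geq M$ follows shell by shell from the third inductive condition together with the definition of $\tilde g$, which settles (\ref{eq:___G_hypo}).

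For (\ref{eq:___G_convo}), the key idea is to split $(\rho_{i}*G)(x)=\int_{\R}\rho_{i}(u)G(|x-u|)\,du$ according to whether $|u|\leq|x|/2$ or $|u|>|x|/2$. In the first region $|x-u|\geq|x|-|u|\geq|x|/2$, so monotonicity of $G$ gives $G(|x-u|)\leq G(|x|/2)$; in the second one, the crude bound $G(|x-u|)\leq G(0)=a_{0}$ suffices. Using that $\|\rho_{i}\|_{L^{1}}=1/(\lambda_{i}-\lambda_{0})$ for $i\geq 1$ (resp.\ $1/(-\lambda_{0})$ for $i=0$) and the explicit tail integral $\int_{|u|>r}\rho_{i}(u)\,du=\|\rho_{i}\|_{L^{1}}\,e^{-\sqrt{\lambda_{i}-\lambda_{0}}\,r}\leq\|\rho_{i}\|_{L^{1}}\,e^{-\mu^{*}r}$, one obtains
\[
(\rho_{i}*G)(x)\;\leq\;\|\rho_{i}\|_{L^{1}}\,\bigl(G(|x|/2)+a_{0}\,e^{-\mu^{*}|x|/2}\bigr).
\]
The three inductive constraints on $(r_{n})$ are precisely what turn this into the desired pointwise bound. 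Indeed, for $x$ in the shell of index $n\geq 1$, the growth $r_{n}\geq 2r_{n-1}$ gives $|x|/2\geq r_{n}/2\geq r_{n-1}$, so $|x|/2$ sits in a shell of index at least $n-1$ and hence $G(|x|/2)\leq a_{n-1}=\gamma^{-1}a_{n}=\gamma^{-1}G(x)$; while $r_{n}\geq 2n\log(1/\gamma)/\mu^{*}$ together with $|x|\geq r_{n}$ yields $\mu^{*}|x|/2\geq n\log(1/\gamma)$, so $a_{0}\,e^{-\mu^{*}|x|/2}\leq a_{0}\gamma^{n}=G(x)$. Combining these two bounds and using that the eigenvalue formula $\lambda_{i}=-1+(2i+1)A$ gives $\|\rho_{i}\|_{L^{1}}\leq C(A,\lambda_{0})/(1+i)$ for some constant $C(A,\lambda_{0})>0$, one reaches (\ref{eq:___G_convo}) with $\sigma:=(1+\gamma^{-1})C(A,\lambda_{0})$.

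The genuine difficulty, which I would write up carefully, lies in checking that the three constraints on $(r_{n})$ can indeed be met simultaneously at every inductive step (this is the only place where the assumption $\theta\in C_{0}(\R)$, via $\tilde g\to 0$, actually enters) and in verifying the doubling-type estimate $G(|x|/2)\leq\gamma^{-1}G(x)$ in the innermost shell $|x|<r_{1}$, where it degenerates to the trivial equality $G(|x|/2)=G(x)=a_{0}$, together with the corresponding bound $a_{0}e^{-\mu^{*}|x|/2}\leq a_{0}=G(x)$ that holds by inspection there. All the remaining verifications amount to explicit and routine computations with the exponential kernels $\rho_{i}$.
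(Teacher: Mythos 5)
Your proof is correct, and it implements the same underlying strategy as the paper --- build a positive, even, piecewise constant, radially nonincreasing majorant of $\max(|\theta|,\theta^{2})$ that decays to zero but no faster than $e^{-c|x|}$ for a $c$ strictly below all the kernel rates, and that satisfies a doubling-type property $G(|x|/2)\leq C\,G(x)$ --- but the construction and the convolution estimate are genuinely different in their details. The paper fixes \emph{dyadic shells} $2^{k+1}\leq|x|<2^{k+2}$ and lets the \emph{values} adapt (a running maximum of halved tail-suprema of an envelope into which the term $e^{-\tilde\alpha|t|/2}$ is inserted by hand to force slow decay, yielding $G(x)e^{\tilde\alpha x}\to+\infty$ and $G(x/2)\leq 2G(x)$); you instead fix a \emph{geometric value sequence} $a_{n}=\gamma^{n}a_{0}$ and let the \emph{shell radii} $r_{n}$ adapt, with the spacing condition $r_{n}\geq 2n\log(1/\gamma)/\mu^{*}$ playing exactly the role of the paper's slow-decay requirement and $r_{n}\geq 2r_{n-1}$ giving the doubling bound $G(|x|/2)\leq\gamma^{-1}G(x)$. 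For the convolution, the paper splits $\int_{-\infty}^{x/2}+\int_{x/2}^{x}+\int_{x}^{+\infty}$ and computes each exponential integral explicitly, whereas your two-piece split $|u|\lessgtr|x|/2$ combined with $\|\rho_{i}\|_{L^{1}}=1/(\lambda_{i}-\lambda_{0})$ and the exact tail mass $\|\rho_{i}\|_{L^{1}}e^{-\sqrt{\lambda_{i}-\lambda_{0}}\,r}$ is shorter and makes the source of the factor $1/(1+i)$ (namely $\|\rho_{i}\|_{L^{1}}\sim 1/(2iA)$) more transparent; the paper extracts the same factor by normalizing with $\alpha_{i}^{2}=\lambda_{i}-\lambda_{0}$ at the outset. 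All the steps you flag as requiring care do go through: the three inductive constraints on $(r_{n})$ are simultaneously satisfiable precisely because $\tilde g\to 0$ (the only use of $\theta\in C_{0}(\R)$), and the innermost shell is handled by the trivial bounds you state. One could quibble that your $G$ is strictly positive only because $a_{n}=\gamma^{n}a_{0}>0$, which you do ensure by taking $a_{0}=\|M\|_{\infty}+1$; this also covers the degenerate case $\theta\equiv 0$.
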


Then, equipped with such a function $G$, we can redo the proof of subsections \ref{subsec:IFT_prelim}
to \ref{ss:proof-th2} by replacing the spaces $Y,Z$ in
(\ref{eq:Y_space}) and (\ref{eq:Z_space}) with

\begin{equation}
\tilde{Y}:=\left\{ h\in C^{2}(\mathbb{R}^{2})\left|\begin{array}{c}
\exists C>0,\,\forall |\alpha|\leq 2, \quad\left|D^\alpha h(x,y)\right|\leq\frac{C G(x)}{(1+y^{2})^{2}}\quad\text{ on }\mathbb{R}^{2},\vspace{10pt}\\
\exists K>0,\,\forall k\leq2,\,\forall i\in\mathbb{N},\quad \left|\int_{\mathbb{R}}D_{x}^{k}h(x,y)\Gamma_{i}(y)dy\right|\leq\frac{KG(x)}{(1+i)^{\beta+1-k/2}}\quad \text{ on }\R
\end{array}\right.\right\},\label{eq:Y_C0}
\end{equation}
and
\[
\tilde{Z}:=\left\{ f\in C(\mathbb{R}^{2})\left|\begin{array}{c}
\exists C>0,\quad\left|f(x,y)\right|\leq\frac{CG(x)}{1+y^{2}}\quad\text{ on }\mathbb{R}^{2},\vspace{10pt}\\
\exists K>0,\forall i\in\mathbb{N},\quad \left|\int_{\mathbb{R}}f(x,y)\Gamma_{i}(y)dy\right|\leq\frac{KG(x)}{(1+i)^{\beta}}\quad \text{ on }\R
\end{array}\right.\right\},
\]
equipped with the norms
\begin{equation}\label{norme-Y-tilde}
||h||_{\tilde{Y}}:=\sum_{|\alpha|\leq2}\sup_{(x,y)\in\mathbb{R}^2}\left|(1+y^{2})^{2}G(x)^{-1}D^{\alpha}h(x,y)\right|+\sum_{k=0}^{2}\left\Vert \frac{1}{G}D_{x}^{k}h \right\Vert_{\beta+1-k/2},
\end{equation}
and
\[
||f||_{\tilde{Z}}:=\sup_{(x,y)\in\mathbb{R}^2} \left|(1+y^{2})G(x)^{-1}f(x,y)\right|+ \left\Vert \frac{1}{G}f \right\Vert_{\beta},
\]
where we recall definition (\ref{eq:X_norm}), and $\beta>\frac{13}{4}$.

Let us make some comments on how the proof is modified. One can readily
check that $\tilde{Y},\tilde{Z}$ are Banach, as in subsection \ref{ss:spaces}. Also, since $G\geq \max(|\theta|,\theta^{2})$, the map $\F:\tilde{Y}\rightarrow\tilde{Z}$
in (\ref{Fstate}) is well-defined, and its continuity, differentiability
are still valid, with $D_h\F(0,0)=\L$ given by (\ref{eq:Lstate}).
To conclude, we need to prove that, for a fixed $f\in\tilde{Z}$, there
exists a unique $h\in\tilde{Y}$ such that $\L h=f$. Following the same procedure as in subsection \ref{ss:checking},
we obtain that the $h_{i}$'s  are necessarily given by (\ref{eq:h_i_expr}) and (\ref{eq:h_0_expr}). We claim (see below) that $h$ defined by (\ref{eq:h_expr}) does belong to $\tilde{Y}$. Then we conclude the proof by applying Theorem \ref{thm:IFT} in the same way as in subsection \ref{ss:proof-th2}.

Let us show that $h$ defined by (\ref{eq:h_expr}) belongs to $\tilde{Y}$.
Notice that since $f\in\tilde{Z}$ and (\ref{eq:___G_convo}) holds,
we obtain, for all $i\geq1$, 
\begin{equation}
|h_{i}(x)|=\left|\rho_{i}*f_{i}(x)\right|\leq\frac{||f||_{\tilde{Z}}}{(1+i)^{\beta}}(\rho_{i}*G)(x)\leq\frac{\sigma||f||_{\tilde{Z}}}{(1+i)^{\beta+1}}G(x),\label{eq:C0_hi_bound}
\end{equation}
and similarly, since $\lambda_{i}-\lambda_{0}=2iA$ for all $i\geq1$,
we have
\begin{align*}
h_{i}^{\prime}(x)=\sqrt{\lambda_{i}-\lambda_{0}}(\rho_{i}*f_{i})(x) & \Rightarrow|h_{i}^{\prime}(x)|\leq\frac{\sqrt{2A}\sigma||f||_{\tilde{Z}}}{(1+i)^{\beta+1/2}}G(x),\\
h_{i}^{\prime\prime}(x)=(\lambda_{i}-\lambda_{0})(\rho_{i}*f_{i})(x) & \Rightarrow|h_{i}^{\prime\prime}(x)|\leq\frac{2A\sigma||f||_{\tilde{Z}}}{(1+i)^{\beta}}G(x).
\end{align*}
The bounds on $h_{0},h_{0}^{\prime},h_{0}^{\prime\prime}$, can then
be deduced. Indeed, from (\ref{eq:C0_hi_bound}) we have
\[
\left|f_{0}(x)+\eta \sum_{i=1}^{+\infty}m_{i}h_{i}(x)\right|\leq\left(1+\eta \sigma\sum_{i=1}^{+\infty}\frac{\vert m_{i}\vert}{(1+i)^{\beta+1}}\right)||f||_{\tilde{Z}}G(x),
\]
where the series converges from  (\ref{eq:eigenfunc_control_L1}) 
and $\beta>\frac{13}4>\frac 1 4$. Combining this with (\ref{eq:h_0_expr}) yields that (\ref{eq:C0_hi_bound}) also holds for $i=0$. For $h_{0}^{\prime},h_{0}^{\prime\prime}$ we proceed as above and thus deduce that $
\sum_{k=0}^{2}||G^{-1}D_{x}^{k}h||_{\beta+1-k/2}<+\infty$. 
It remains to prove the upper bound on $|D^{\alpha}h(x,y)|$ for $|\alpha|\leq2$.
As in subsection \ref{ss:checking},
we have that $h\in C^{2}(\mathbb{R}^{2})$ and (\ref{eq:h_xy_decomp}) holds.
Additionally, combining Lemma  \ref{lem:relations} and Lemma \ref{lem:eigenfunc_control}, for any $p,q\in\mathbb{N}$ such that $p+q\leq2$, there holds
\begin{align*}
\left|(1+y^{2})^{2}D_{x}^{p}D_{y}^{q}h(x,y)\right| & \leq\sum_{i=0}^{+\infty}\vert h_{i}^{(p)}(x)\vert  \times \vert\left(1+y^{2}\right)^{2}\Gamma_{i}^{(q)}(y)\vert\\
 & \leq C G(x)\sum_{i=0}^{+\infty}\frac{1}{(1+i)^{\beta+1-p/2}}\times i^{2}i^{1/4+q/2},
\end{align*}
for some constant $C>0$. The series converges since $\beta>13/4$. Hence $h\in\tilde{Y}$. 
\end{proof}

It remains to prove Lemma \ref{lem:theta-G}.

\begin{proof}[Proof of Lemma \ref{lem:theta-G}]
Set 
\[
0<\alpha_{i}:=\begin{cases}
\sqrt{\lambda_{i}-\lambda_{0}}=\sqrt{2iA} & i\geq1,\\
\sqrt{-\lambda_{0}}=\sqrt{1-A} & i=0,
\end{cases}
\]
and $\tilde{\alpha}:=\frac{1}{2}\min(\alpha_{0},\alpha_{1})>0$. Notice
that $\tilde{\alpha}\leq\frac{1}{2}\alpha_{i}$ for all $i\in\mathbb{N}$. Define 
$$
\tilde G(x):=\sup_{\vert t\vert \geq \vert x\vert} \max \left(\vert \theta (t)\vert,\theta^2(t),e^{-\frac{\tilde \alpha}{2}\vert t\vert}\right),
$$
which clearly satisfies (\ref{eq:___G_hypo}). Then the function 
\[
G(x):=\begin{cases}
\tilde{G}(0) & 0\leq|x|<1,\\
\max\left(\frac{\tilde{G}(0)}{2},\tilde{G}(1)\right) & 1\leq|x|<2,\\
\max\left( \frac{\tilde{G}(0)}{2^{k+2}} , \frac{\tilde{G}(1)}{2^{k+1}} , \frac{\tilde{G}(2)}{2^{k}} , \cdots,  \frac{\tilde{G}(2^{k})}{2},\tilde{G}(2^{k+1})\right) & 2^{k+1}\leq|x|<2^{k+2},\quad k\in\mathbb{N},
\end{cases}
\]
is piecewise constant, satisfies (\ref{eq:___G_hypo}) as well as
\begin{equation}
G(x)e^{\tilde{\alpha}x}\to +\infty, \quad \text{ as } x\to +\infty,\label{eq:___G_HT}
\end{equation}
and
\begin{equation}
G(x/2)\leq2G(x),\quad \forall x\geq0.\label{eq:___G_decay}
\end{equation}

It remains to prove (\ref{eq:___G_convo}). Since both $G$ and $\rho_{i}*G$ are even, it suffices to consider $x\geq 0$. We write
\begin{align*}
I(x) & :=\frac{\alpha_{i}^{2}}{G(x)}(\rho_{i}*G)(x)\\
 & =\frac{\alpha_{i}}{2G(x)}\left(\int_{-\infty}^{x/2}e^{-\alpha_{i}(x-z)}G(z)dz+\int_{x/2}^{x}e^{-\alpha_{i}(x-z)}G(z)dz+\int_{x}^{+\infty}e^{\alpha_{i}(x-z)}G(z)dz\right)\\
 & \eqqcolon I_{-}(x)+I_{0}(x)+I_{+}(x).
\end{align*}
Since $G$ is nonincreasing on $[0,+\infty)$ and satisfies (\ref{eq:___G_decay}),
there holds
\begin{align*}
I_{-}(x) & \leq\frac{\alpha_{i}}{2G(x)e^{\alpha_{i}x}}||G||_{\infty}\int_{-\infty}^{x/2}e^{\alpha_{i}z}dz\leq\frac{||G||_{\infty}}{2G(x)e^{\tilde{\alpha}x}},\\
I_{+}(x) & \leq\frac{\alpha_{i}}{2}e^{\alpha_{i}x}\int_{x}^{+\infty}e^{-\alpha_{i}z}dz=\frac{1}{2},\\
I_{0}(x) & =\frac{\alpha_{i}}{2G(x)e^{\alpha_{i}x}}\int_{x/2}^{x}e^{\alpha_{i}z}G(z)dz\leq\frac{\alpha_{i}}{e^{\alpha_{i}x}}\int_{x/2}^{x}e^{\alpha_{i}z}dz\leq1.
\end{align*}
Since $G$ satisfies (\ref{eq:___G_HT}), $I(x)$ is uniformly bounded
on $[0,+\infty)$ independently of $i\in\mathbb{N}$. Consequently,
since $\alpha_{i}^{2}=2iA$ for $i\geq1$, we see that (\ref{eq:___G_convo})
holds for some $\sigma>0$.
\end{proof}

\subsection{Positivity and control on the $y$-tails in the periodic and localized cases}\label{ss:positivity}

In this subsection, we prove  estimates (\ref{positivite-exp-per}) and (\ref{positivite-exp-per-bis}), thus completing the proof of Theorem \ref{thm:steady-per} and Theorem \ref{thm:steady-C0}.

\begin{proof}[Proof of  (\ref{positivite-exp-per}) and (\ref{positivite-exp-per-bis})] We assume either $\theta\in C_{per}^L(\R)$ for some $L>0$ (periodic case), or $\theta\in C_{0}(\R)$ (localized case). From subsection
\ref{ss:additional}, in the periodic case, $n^{\ep}\in Y$ is $L$-periodic
in $x$, while in the localized case, we have $n^{\varepsilon}-n^0\in\tilde{Y}$
where $\tilde{Y}$ is given by (\ref{eq:Y_C0}). Notice that, in both cases,
$n^{\varepsilon}-n^0\to 0$ as $\varepsilon\to 0$ (in $Y$ or in $\tilde{Y}$ respectively). As a result, by reducing $\ep_0>0$ if necessary, there holds that, for any $\vert \varepsilon\vert <\varepsilon_{0}$,
\begin{align}
\left|n^{\varepsilon}(x,y)\right| & \leq\frac{||n^{\varepsilon}||_{Y}}{(1+y^{2})^{2}} \leq\frac{2||n^0||_{Y}}{(1+y^{2})^{2}},  & & \text{in the periodic case},\label{eq:neps_per_bound}\\
\left|n^{\varepsilon}(x,y)-n^0(y)\right| & \leq\frac{||n^{\varepsilon}-n^0||_{\tilde{Y}}G(x)}{(1+y^{2})^{2}}\leq\frac{G(x)}{(1+y^{2})^{2}}, & & \text{in the localized case}.\label{eq:neps_loc_bound}
\end{align}

\medskip

Assume by contradiction that there is a sequence $\ep_p\to 0$ with $p\geq 1$ such that $n^{\varepsilon_{p}}$ is {\it not} nonnegative  on $\mathbb{R}^{2}$.

{\it Step 1: $n^{\varepsilon_{p}}$ admits a minimum.} Set $m_{p}:=\inf_{(x,y)\in\mathbb{R}^{2}}n^{\varepsilon_{p}}(x,y)<0$, and consider a
sequence $(x_{p}^{k},y_{p}^{k})_{k\in\mathbb{N}}$ such that 
$n^{\varepsilon_{p}}(x_{p}^{k},y_{p}^{k})\to m_{p}$ as $k\to +\infty$. From (\ref{eq:neps_per_bound})---(\ref{eq:neps_loc_bound}), $n^ {\ep_p}(x,y)$ tends to zero as $|y|\to +\infty$ uniformly in $x\in \R$. Thus there
exists $Y_{p}>0$ such that, for all $k$,  $|y_{p}^{k}|\leq Y_{p}$. Notice
that, despite (\ref{eq:neps_per_bound})---(\ref{eq:neps_loc_bound}), $Y_{p}$ depends a priori
on $p$ through the value of $m_{p}$. On the other hand, in the periodic case, we may consider that $x_{p}^{k}\in[0,L]$ while, in the localized case, from (\ref{eq:neps_loc_bound}) we have in the same way $|x_p^k|\leq X_p$. Therefore, assuming  $X_p\geq L$,  we have in both cases
\[
(x_{p}^{k},y_{p}^{k})\in[-X_p,X_p]\times[-Y_{p},Y_{p}],\quad\forall k\in\mathbb{N}.
\]
Hence, up to a
subsequence, $(x_{p}^{k},y_{p}^{k})$ converges to a point $(x_{p},y_{p})\in[-X_p,X_p]\times[-Y_{p},Y_{p}]$,
where $n^{\varepsilon_{p}}$ is thus reaching its minimum.

{\it Step 2: bound on $y_{p}$ that is uniform w.r.t. $p$.} From the steady state equation (\ref{eq:statio_state}) for $n^{\ep_p}$ evaluated at the minimum point $(x_p,y_p)$, we  obtain (recall $m_p<0$)
\begin{eqnarray}
0&=&\frac{1}{m_{p}}\left(\Delta_{x,y}n^{\varepsilon_{p}}(x_{p},y_{p})+n^{\varepsilon_{p}}(x_{p},y_{p})\left(1-A^{2}(y_{p}-\varepsilon_{p}\theta(x_{p}))^{2}-\int_{\mathbb{R}}n^{\varepsilon_{p}}(x_{p},y^{\prime})dy^{\prime}\right)\right)\nonumber\\
&\leq & 1-A^{2}(y_{p}-\varepsilon_{p}\theta(x_{p}))^{2}-\int_{\mathbb{R}}n^{\varepsilon_{p}}(x_p,y^{\prime})dy^{\prime}\nonumber\\
&\leq & 1-A^{2}y_{p}^{2}+2A^{2}\varepsilon_{0}|y_{p}|.||\theta||_{\infty}+A^{2}\varepsilon_{0}^{2}||\theta||_{\infty}^{2}+\begin{cases}
\pi||n^{0}||_{Y} & \text{in the periodic case},\\-\lambda_0 + \frac{\pi}{2}||G||_{\infty} & \text{in the localized case},
\end{cases}\label{eq:__yp_ineq}
\end{eqnarray}
where we used (\ref{eq:neps_per_bound})---(\ref{eq:neps_loc_bound}) in the last inequality. The above enforces the existence
of some $Y>0$ (independent of $p$) such that $|y_{p}|\leq Y$.

{\it Step 3: bound on $x_p$ that is uniform w.r.t. $p$.} In the periodic case, this is obvious since we can assume $x_p^k\in [0,L]$. In the localised case, thanks to (\ref{eq:neps_loc_bound}), we have for all $x\in \R$ and $|y|\leq Y$,
\[
n^{\ep_p}(x,y) \geq n^0(y) - \frac{G(x)}{(1+y^2)^2} ||n^{\ep_p}-n^0||_{\tilde{Y}} \geq n^0(Y) -G(x).
\]
This implies the existence of $X>0$ independent of $p$ such that $n^{\ep_p}(x,y)\geq \frac12 n^0(Y) > 0$ for any $|x|\geq X$ and $|y|\leq Y$. Consequently, for $k$ large enough, we have $|x_p^k|\leq X$. Assuming $X>L$, we thus have in both cases $|x_p| \leq X$.

{\it Step 4: deriving a contradiction.} From the above, we can assert that $(x_{p},y_{p})\in[-X,X]\times[-Y,Y]$
for $p$ large enough. However, let us underline that $n^{0}>0$ on $\mathbb{R}^{2}$
and, in both the periodic and the localized case, 
\[
||n^{\varepsilon}-n^{0}||_{L^{\infty}(\mathbb{R}^{2})}\rightarrow0,\quad\text{as }\varepsilon\rightarrow0.
\]
As a consequence, for $p$ large enough, there
holds $n^{\varepsilon_p}>0$ on $[-X,X]\times[-Y,Y]$, which contradicts $m_{p}=n^{\varepsilon_{p}}(x_{p},y_{p})<0$.

Therefore, by reducing $\varepsilon_{0}>0$ if necessary, we have that, for all $|\varepsilon|\leq\varepsilon_{0}$, the steady state $n^{\ep}$ is nonnegative. Now, as already seen in (\ref{eq:__yp_ineq}), there is $C>0$ such that, for all $x\in \R$, $
\int_{\mathbb{R}}n^{\varepsilon}(x,y)dy\leq C$. We thus deduce from  (\ref{eq:statio_state}) that $
-n^{\ep}_{xx}-n^{\ep}_{yy}-n^{\ep}\left[1-A^{2}(y-\varepsilon\theta(x))^{2}-C\right]\geq0.
$
 The maximum principle then implies
\begin{equation*}\label{n-eps-positif}
\forall|\varepsilon|\leq\varepsilon_{0},\forall (x,y)\in\mathbb{R}^2,\quad n^{\varepsilon}(x,y)
>0.
\end{equation*}

\medskip

Last, we prove the exponential control appearing in (\ref{positivite-exp-per}) and (\ref{positivite-exp-per-bis}). Let $a>0$ be given. Set 
$$
\Omega:=\left\{ (x,y)\in\mathbb{R}^{2}: 1-A^{2}(y-\varepsilon\theta(x))^{2}>-a^2\right\} .
$$
From  (\ref{eq:neps_loc_bound}), there is $N>0$ such that $0<n^{\ep}\leq N$. We now define 
\[
\overline{n}(x,y):= Ne^{a y_{0}}e^{-a|y|},\quad y_{0}:=\varepsilon_{0}||\theta||_{\infty}+\frac{1}{A}\sqrt{1+a^2}>0,
\]
so that $\overline{n}\geq n^{\varepsilon}$ in $\overline{\Omega}$.
It remains to prove that $\overline{n}\ge n^{\varepsilon}$ in $\Omega^{c}$.
Notice that, since $n^{\varepsilon}\geq0$ solves (\ref{eq:statio_state}), there
holds 
\[
\mathcal{E}n^{\varepsilon}:=-n_{xx}^{\varepsilon}-n_{yy}^{\varepsilon}-n^{\varepsilon}\left[1-A^{2}(y-\varepsilon\theta(x))^{2}\right]\leq0.
\]
Meanwhile, in $\Omega^{c}$,
$$
\mathcal{E}\overline{n}  =-a^{2}\overline{n}-\overline{n}\left[1-A^{2}(y-\varepsilon\theta(x))^{2}\right] \geq0.
$$
Due to the maximum principle, we deduce that $n\leq\overline{n}$
on $\Omega^{c}$, and thus on $\mathbb{R}^{2}$. This concludes the proof of (\ref{positivite-exp-per}) and (\ref{positivite-exp-per-bis}).
\end{proof}

\section{Construction of pulsating fronts}\label{s:pulsating}

In this section, we prove Theorem \ref{thm:pulsating} on pulsating
fronts.

\medskip

Let $\varepsilon_{0}>0$ be as in Theorem \ref{thm:steady-per} and, for $\vert\ep\vert <\ep_0$, let $n^{\ep}=n^{\ep}(x,y)$  be the periodic positive steady state provided by Theorem \ref{thm:steady-per}. Let us fix a speed $c_{0}\geq c^{*}=2\sqrt{-\lambda_0}$ and recall that $U=U(z)$ denotes the Fisher-KPP front given by (\ref{eq:FKPP_front})
and traveling at speed $c_0$. We look after a pulsating front
solution to (\ref{eq-r}) in the perturbative form
\[
u^{\varepsilon}(z,x,y)=U(z)n^{\varepsilon}(x,y)+v_{\varepsilon}(z,x,y),\quad c_{\varepsilon}=c_{0}+s_{\varepsilon},
\]
where we understand $z=x-c_{\ep}t$, meaning that the front spreads
at the perturbed speed $c_{\varepsilon}=c_{0}+s_{\varepsilon}$.
Plugging this into (\ref{eq-r}), using the steady state equation (\ref{eq:statio_state}) for $n^\ep(x,y)$ and the front equation (\ref{eq:FKPP_front})  for $U(z)$,
we are left to find $(s_{\varepsilon},v_{\varepsilon})$ satisfying
$\Ff(\varepsilon,s_{\varepsilon},v_{\varepsilon})=0$ where
\begin{align}
\Ff(\varepsilon,s,v):= & \;v_{zz}+2v_{xz}+v_{xx}+v_{yy}+(c_{0}+s)v_{z}
+sU^{\prime}(z)n^{\varepsilon}(x,y)+2U^{\prime}(z)n_{x}^{\varepsilon}(x,y)\nonumber\\
 & +v\left(1-A^{2}(y-\varepsilon\theta(x))^{2}-U(z)\int_{\mathbb{R}}n^{\varepsilon}(x,y^{\prime})dy^{\prime}-\int_{\mathbb{R}}v(z,x,y^{\prime})dy^{\prime}\right)\nonumber\\
 & -U(z)n^{\varepsilon}(x,y)\int_{\mathbb{R}}v(z,x,y^{\prime})dy^{\prime}+U(z)(1-U(z))n^{\varepsilon}(x,y)\left(\lambda_{0}+\int_{\mathbb{R}}n^{\varepsilon}(x,y')dy'\right).\label{op-deg}
\end{align}
However, since the elliptic operator appearing in the right-hand side above is degenerate in the $(z,x)$ variables, we need to consider the regularization
\begin{equation}\label{op-reg}
\Fmu(\varepsilon,s,v):=\Ff(\varepsilon,s,v)+\mu v_{xx}, \quad 0<\mu \ll 1.
\end{equation}
To prove Theorem \ref{thm:pulsating}, the very crude strategy is as follows. We first apply the Implicit Function Theorem, namely
Theorem \ref{thm:IFT}, to $\Fmu\colon\mathbb{R}\times\mathbb{R}\times\mathcal{Y}_{\mu}\to\mathcal{Z}$
where the function spaces $\mathcal{Y}_{\mu}$ and $\mathcal{Z}$
are appropriately chosen. 
This will provide a couple $( s_{\ep,\mu},v_{\ep,\mu} )\in \R\times \mathcal{Y}_\mu$ for any $\mu>0$ small enough. Then, we shall obtain $s_\ep,v_\ep$ by passing to the limit $\mu\to 0$. See Remark in subsection \ref{subsec:Spaces_Yrond_Zrond} for more details on the key ideas of the proof.

\medskip

By assumption, see Theorem \ref{thm:pulsating}, there are $\gamma>3$, $k\geq 0$ and $0\leq \delta <1$ with $k+\delta>\gamma+\frac{1}{2}$ such that $\theta$ belongs to $C^{k,\delta}(\mathbb{R})\cap C_{per}^{L}(\mathbb{R})$, and so does  $\theta^{2}$. In particular, the Fourier coefficients
of $\theta$ and $\theta^{2}$ decay at least at speed $|m|^{-(k+\delta)}$
as $|m|\to\infty$, that is
\begin{equation}
\exists K_{\theta}>0,\,\forall m\in\mathbb{Z},\; \max\left(\left|\theta_{m}\right|,\left|(\theta^{2})_{m}\right|\right) \leq\frac{K_{\theta}}{(1+|m|)^{k+\delta}},\label{eq:thetam_decay}
\end{equation}
where we denote
\[
\theta_{m}:=\frac{1}{L}\int_{0}^{L}\theta(x)e^{-\frac{2i\pi mx}{L}}dx,\qquad(\theta^{2})_{m}:=\frac{1}{L}\int_{0}^{L}\theta^{2}(x)e^{-\frac{2i\pi mx}{L}}dx.
\]

\subsection{Function spaces\label{subsec:Spaces_Yrond_Zrond}}

We first present a few notations that will be used below. For any function $f=f(z,x,y)\in C_{b}(\mathbb{R}^{3})$
such that $f(z,x,\cdot)\in L^{2}(\mathbb{R})$ and $f(z,x+L,y)=f(z,x,y)$
for all $z,x,y$, we denote
\begin{equation}
f_{j}(z,x):=\int_{\mathbb{R}}f(z,x,y)\Gamma_{j}(y)dy,\label{eq:fj_nota}
\end{equation}
that is $f_{j}$ denotes the $j$-th coordinate of $f$ along the
basis of eigenfunctions $(\Gamma_{j}=\Gamma_{j}(y))_{j\in\mathbb{N}}$.
We also define
\begin{equation}
f_{j}^{n}(z):=\frac{1}{L}\int_{0}^{L}f_{j}(z,x)e^{-\frac{2i\pi n}{L}x}dx=\frac{1}{L}\int_{0}^{L}f_{j}(z,x)e_{-n}(x)dx,\label{eq:fnj_nota}
\end{equation}
\begin{equation}
e_{n}(x):= e^{\frac{2i\pi n}{L}x}=e^{i\sigma nx},\qquad\sigma:=\frac{2\pi}{L},\,n\in\mathbb{Z},\label{eq:en_nota}
\end{equation}
that is $f_{j}^{n}(z)$ denotes the $n$-th Fourier coefficient 
of $x\mapsto f_{j}(z,x)$.

\medskip

Now, for a $\kappa\in\left(0,-\frac{1}{2}c_{0}+\frac{1}{2}\sqrt{c_{0}^{2}-4\lambda_{0}}\right)$ to be precised later, we define
\begin{equation}
\mathcal{Y}_{\mu}:=\left\{ v\in C^{2}(\mathbb{R}^{3})\left|\begin{array}{c}
v(z,x+L,y)=v(z,x,y)\quad\text{on }\mathbb{R}^{3},\vspace{7pt}\\
\exists C>0,\,\forall|\alpha|\leq2,\quad\left|D^{\alpha}v(z,x,y)\right|\leq\frac{Ce^{-\kappa|z|}}{(1+y^{2})^{2}}\quad\text{on }\mathbb{R}^{3},\vspace{7pt}\\
\exists K>0,\,\forall n\in\mathbb{Z},\,\forall j\in\mathbb{N},\,\forall k\leq2,\quad\text{there holds}\vspace{3pt}\\
|(v_{j}^{n})^{(k)}(z)|\leq\frac{Ke^{-\kappa|z|}}{(1+j)^{\beta}(1+|n|)^{\gamma}}\times\frac{1+|n|^{k}+j^{k/2}}{1+\mu n^{2}+j+|n|}\quad\text{on }\mathbb{R}\vspace{7pt}
\end{array}\right.\right\} ,\label{eq:Yrond}
\end{equation}
\begin{equation}
\mathcal{Z}:=\left\{ f\in C(\mathbb{R}^{3})\left|\begin{array}{c}
f(z,x+L,y)=f(z,x,y)\quad\text{on }\mathbb{R}^{3},\vspace{7pt}\\
\exists C>0,\quad\left|f(z,x,y)\right|\leq\frac{Ce^{-\kappa|z|}}{1+y^{2}}\quad\text{on }\mathbb{R}^{3},\vspace{7pt}\\
\exists K>0,\,\forall n\in\mathbb{Z},\,\forall j\in\mathbb{N},\quad\text{there holds}\vspace{3pt}\\
|f_{j}^{n}(z)|\leq\frac{Ke^{-\kappa|z|}}{(1+j)^{\beta}(1+|n|)^{\gamma}}\quad\text{on }\mathbb{R}
\end{array}\right.\right\} .\label{eq:Zrond}
\end{equation}
We equip the space $\mathcal{Y}_{\mu}$ with the norm
\begin{equation}
||v||_{\mathcal{Y_{\mu}}}:=\sum_{|\alpha|\leq2}\sup_{(z,x,y)\in\mathbb{R}^3}\left[\left|(1+y^{2})^{2}D^{\alpha}v(z,x,y)\right|e^{\kappa|z|}\right]+||v||_{\beta,\gamma,\mu},\label{eq:Yrond_norm}
\end{equation}
where
\[
||v||_{\beta,\gamma,\mu}:=\sum_{k=0}^{2}\sup_{n\in\mathbb{Z},j\in\mathbb{N}}\left[(1+j)^{\beta}(1+|n|)^{\gamma}\frac{1+\mu n^{2}+j+|n|}{1+|n|^{k}+j^{k/2}}\sup_{z\in\mathbb{R}}\left|(v_{j}^{n})^{(k)}(z)e^{\kappa|z|}\right|\right].
\]
We equip the space $\mathcal Z$ with the norm
\begin{equation}
||f||_{\mathcal{Z}}=\sup_{(z,x,y)\in\mathbb{R}^3}\left[\left|(1+y^{2})f(z,x,y)\right|e^{\kappa|z|}\right]+||f||{}_{\beta,\gamma},\label{eq:Zrond_norm}
\end{equation}
where
\[
||f||_{\beta,\gamma}:=\sup_{n\in\mathbb{Z},j\in\mathbb{N}}\left[(1+j)^{\beta}(1+|n|)^{\gamma}\sup_{z\in\mathbb{R}}\left|f_{j}^{n}(z)e^{\kappa|z|}\right|\right].
\]

\begin{rem}[Choice of the function spaces and overview of the proof of Theorem \ref{thm:pulsating}]\label{rem:spaces-2}
Let us comment on the spaces $\mathcal{Y}_{\mu},\mathcal{Z}$
and the two controls appearing in their definition. As in the stationary
case, i.e. Section \ref{s:steady}, the crux of the proof is to show
that $\Lmu:= D_{(s,v)}\Fmu(0,0,0)$, given by (\ref{eq:Lpar}), is
bijective from  $\R \times \mathcal{S}_\mu$ to $\mathcal{Z}$, where $\mathcal{S}_{\mu}\subset\mathcal{Y}_{\mu}$ is to be determined, that is for every fixed $f\in\mathcal{Z}$, there
is a unique $s_{\mu}\in\mathbb{R}$ and a unique $v_{\mu}\in\mathcal{S}_{\mu}$
such that $\Lmu(s_{\mu},v_{\mu})=f$. Using the controls on the y-tails
provided by the first constraint in the definition of $\mathcal{Y}_{\mu}$
and $\mathcal{Z}$, and then the $L$-periodicity in $x$, we decompose
successively $v_{\mu}$ and $f$ along the eigenfunction bases $(\Gamma_{j})_{j\in\mathbb{N}}$
and $(e_{n})_{n\in\mathbb{Z}}$ respectively, leading to (\ref{eq:vnj}),
where we denoted $v=v_{\mu}$ to ease readability. Next, the control
on $f_{j}^{n}$, i.e. the second constraint in the definition of $\mathcal{Z}$,
allows to prove the bound (\ref{eq:vnj_toProve}). 

However, the operators $\mathcal{L}_{n,j,\mu}$ defined by (\ref{eq:vnj}), (\ref{eq:Lnj}) and (\ref{eq:E_kappa}) might not be injective. Thus, in order to ensure
the uniqueness, we require that $(v_{\mu})_{j}^{n}$ belongs to a
subspace $\mathcal{S}_{n,j,\mu}$ of the departure space of $\mathcal{L}_{n,j,\mu}$.
These additional conditions lead to $v_{\mu}\in\mathcal{S}_{\mu}$,
after reconstruction of $v_{\mu}$ according to (\ref{eq:v_reconstr}).
To show that $v_{\mu}\in C_{b}^{2}(\mathbb{R}^{3})$ and $v_{\mu}$
satisfies the first control in $\mathcal{Y}_{\mu}$, we require $\beta>\frac{17}{4}$
and $\gamma>2$. This allows to apply Theorem \ref{thm:IFT} and deduce
the existence of $s_{\varepsilon,\mu},v_{\varepsilon,\mu}$ such that
$\Fmu(\varepsilon,s_{\varepsilon,\mu},v_{\varepsilon,\mu})=0$ for
any $|\varepsilon|\leq\overline{\varepsilon}_{0}$.

The next step is to let $\mu\to 0$ in $\Fmu(\varepsilon,s_{\varepsilon,\mu},v_{\varepsilon,\mu})=0$. 
However, to do so, we first require that $\overline{\varepsilon}_{0}$
may be fixed independently on $\mu$. This is actually true from the crucial observation that, despite $\Fmu$ is unbounded with respect to $\mu$, both $(\Lmu)^{-1}$ and $\Fmu-\Lmu$, see (\ref{def:Gmu}), are bounded (see subsection \ref{subsec:Constr_veps_mu}).

Last, to ensure that a subsequence of $(v_{\varepsilon,\mu})_{\mu}$
converges as $\mu\to0$, we need to redo the above proof by replacing
$\mathcal{Y}_{\mu}$ and $\mathcal{Z}$ with (\ref{eq:Yhat}) and
(\ref{eq:Zhat}) respectively. This allows to obtain $C_{b}^{3}$
regularity for $v_{\varepsilon,\mu}$, at the cost of the assumptions
$\beta>\frac{19}{4}$ and $\gamma>3$. Then, the $L$-periodicity
on $x$ and the controls on the $y$- and $z$-tails in the definition
of (\ref{eq:Yhat}) allow in some sense to compactify the domain of
definition of $v_{\varepsilon,\mu}$, so that we can adapt the proof
of the Arzelà-Ascoli theorem and conclude, see subsection \ref{subsec:ascoli}.
\end{rem}

In what follows, we will repeatedly use the following straightforward estimates:
\begin{alignat}{2}
 & \forall v\in\mathcal{Y}_\mu,\,\forall|\alpha|\leq2,\,\forall(z,x,y)\in\mathbb{R}^{3}, & \quad & |D^{\alpha}v(z,x,y)|\leq ||v||_{\mathcal{\mathcal{Y}_{\mu}}} \frac{e^{-\kappa|z|}}{(1+y^{2})^{2}},\label{eq:v_Ybound}\\
 & \forall v\in\mathcal{Y}_\mu,\,\forall k\leq2,\,\forall(n,j)\in\mathbb{Z}\times\mathbb{N},\,\forall z\in\mathbb{R}, & \quad & |(v_{j}^{n})^{(k)}(z)|\leq ||v||_{\mathcal{Y}_{\mu}}\frac{e^{-\kappa|z|}\left(1+|n|^{k}+j^{k/2}\right)}{(1+j)^{\beta}(1+|n|)^{\gamma}\left(1+\mu n^{2}+j+|n|\right)},\label{eq:vnj_Ybound}\\
 & \forall f\in\mathcal{Z},\,\forall(z,x,y)\in\mathbb{R}^{3}, & \quad & |f(z,x,y)|\leq ||f||_{\mathcal{Z}} \frac{e^{-\kappa|z|}}{1+y^{2}},\label{eq:f_Zbound}\\
 & \forall f\in\mathcal{Z},\,\forall(n,j)\in\mathbb{Z}\times\mathbb{N},\,\forall z\in\mathbb{R}, & \quad & |f_{j}^{n}(z)|\leq ||f||_{\mathcal{Z}}\frac{e^{-\kappa|z|}}{(1+j)^{\beta}(1+|n|)^{\gamma}},\label{eq:fnj_Zbound}\\
 & \forall v\in\mathcal{Y}_\mu,\,\forall(z,x)\in\mathbb{R}^{2}, & \quad & \left|\int_{\mathbb{R}}v(z,x,y)dy\right|\leq\frac{\pi}{2}||v||_{\mathcal{Y}_{\mu}}e^{-\kappa|z|}.\label{eq:intv_Ybound}
\end{alignat}
Also, we claim that there exists $K_{A}>0$, that depends
only on $A$, such that
\begin{equation}
\forall v\in\mathcal{Y}_\mu,\,\forall z\in\mathbb{R},\,\forall n\in\mathbb{Z},\qquad\left|\frac{1}{L}\int_{0}^{L}\int_{\mathbb{R}}v(z,x,y)dy e_{-n}(x)dx\right| \leq K_{A}||v||_{\mathcal{Y}_{\mu}} \frac{e^{-\kappa|z|}}{(1+|n|)^{\gamma+1}}.\label{eq:v_thetacond}
\end{equation}
Indeed, from (\ref{eq:vnj_Ybound}), we have (as usual the constant $C>0$ is independent of $z$, $x$, $y$, $j$ and $n$ but may change from line to line)
\[
\left|v_{j}^{n}(z)\right|\leq C ||v||_{\mathcal{Y}_{\mu}} \frac{e^{-\kappa|z|}}{(1+j)^{\beta}(1+|n|)^{\gamma+1}}.
\]
Let us recall that $\beta>\frac{19}{4}$
and $\gamma>3$. Thus we obtain
\[
\vert v_j(z,x)\vert \leq\sum_{n=-\infty}^{\infty}|v_{j}^{n}(z)|\leq  C ||v||_{\mathcal{Y}_{\mu}} \frac{e^{-\kappa|z|}}{(1+j)^{\beta}}.
\]
Since (\ref{eq:eigenfunc_control_L1}) holds, we deduce 
\[
\int_{\mathbb{R}}v(z,x,y)dy=\int_{\mathbb{R}}\left(\sum_{j\in\mathbb{N}}v_{j}(z,x)\Gamma_{j}(y)\right)dy=\sum_{j\in\mathbb{N}}m_{j}v_{j}(z,x),
\]
where we recall the notation $m_j:= \int_\R \Gamma_j(y)dy$. This in turn leads to
\[
\left|\frac{1}{L}\int_{0}^{L}\left(\int_{\mathbb{R}}v(z,x,y)dy\right)e_{-n}(x)dx\right|=\left|\sum_{j\in\mathbb{N}}m_{j}v_{j}^{n}(z)\right|\leq C ||v||_{\mathcal{Y}_{\mu}} \frac{e^{-\kappa|z|}}{(1+|n|)^{\gamma+1}}\sum_{j\in\mathbb{N}}\frac{|m_{j}|}{(1+j)^{\beta}},
\]
which proves the claim (\ref{eq:v_thetacond}) using again (\ref{eq:eigenfunc_control_L1}).

\begin{lem}[$\mathcal{Y}_\mu$ and $\mathcal{Z}$ are Banach]
\label{lem:YZrondBanach}For all $0<\mu<1$, the spaces $\mathcal{Y}_{\mu}$
given by (\ref{eq:Yrond}), and $\mathcal{Z}$ given by (\ref{eq:Zrond}),
are Banach spaces when equipped with their respective norm $||.||_{\mathcal{Y}_{\mu}}$
given by (\ref{eq:Yrond_norm}), and $||.||_{\mathcal{Z}}$ given
by (\ref{eq:Zrond_norm}).
\end{lem}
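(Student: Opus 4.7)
The plan is to mimic the proof of Lemma \ref{lem:YZ_Banach}, the only new ingredients being the extra $e^{\kappa|z|}$ weight, the Fourier coefficient $f_j^n(z)$ on top of the eigenfunction coefficient $f_j(z,x)$, and the $\mu$-dependent weight $(1+\mu n^2+j+|n|)/(1+|n|^k+j^{k/2})$ appearing in $||.||_{\beta,\gamma,\mu}$. None of these creates a genuine obstacle because the norm splits as a sum of two terms, each of which is a supremum over a family of \emph{continuous linear functionals} applied to $v$ (pointwise evaluation of $D^\alpha v$ on one hand, integration against $\Gamma_j e_{-n}$ on the other).

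Let me detail the argument for $\mathcal{Y}_\mu$, the case of $\mathcal{Z}$ being strictly simpler and handled in the same way. Let $(v_p)_{p\in\mathbb{N}}$ be a Cauchy sequence in $\mathcal{Y}_\mu$. The first half of $||\cdot||_{\mathcal{Y}_\mu}$ controls the weighted $C^2_b$ norm
\[
\sum_{|\alpha|\leq 2}\sup_{(z,x,y)\in\mathbb{R}^3}\bigl|(1+y^2)^2 e^{\kappa|z|} D^\alpha v(z,x,y)\bigr|,
\]
which makes the set $\{v\in C^2(\mathbb{R}^3):\text{this quantity is finite}\}$ into a Banach space by a standard argument (cf.\ the proof of Lemma \ref{lem:YZ_Banach}). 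Hence there exists $v\in C^2(\mathbb{R}^3)$ such that $(1+y^2)^2 e^{\kappa|z|} D^\alpha v_p \to (1+y^2)^2 e^{\kappa|z|} D^\alpha v$ uniformly on $\mathbb{R}^3$ for every $|\alpha|\leq 2$. The $L$-periodicity in $x$ is a pointwise closed condition so it passes to the limit, and the uniform $C_0:=\sup_p \sum_{|\alpha|\leq 2}\sup|(1+y^2)^2 e^{\kappa|z|}D^\alpha v_p|<+\infty$ transfers to $v$, giving the first constraint in (\ref{eq:Yrond}).

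For the second constraint, observe that for each fixed $(n,j,k,z)$ the map $w\mapsto (w_j^n)^{(k)}(z)$ is, by (\ref{eq:fj_nota})--(\ref{eq:fnj_nota}), a linear combination of integrals of $D_z^k w$ against $\Gamma_j(y) e_{-n}(x)$ over $[0,L]\times\mathbb{R}$. Since $|D_z^k v_p(z,x,y)|\leq C_0\, e^{-\kappa|z|}(1+y^2)^{-2}$ uniformly in $p$, dominated convergence yields $(v_p)_j^n(z)\to v_j^n(z)$ pointwise and similarly for its derivatives up to order $2$. The Cauchy property of $(v_p)$ forces
\[
K_0:=\sup_{p\in\mathbb{N}}||v_p||_{\beta,\gamma,\mu}<+\infty,
\]
so for every $p$ the inequality
\[
\bigl|(v_p)_j^n)^{(k)}(z)\bigr|\leq \frac{K_0\, e^{-\kappa|z|}}{(1+j)^\beta(1+|n|)^\gamma}\cdot\frac{1+|n|^k+j^{k/2}}{1+\mu n^2+j+|n|}
\]
holds for all $n,j,k\leq 2,z$. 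Letting $p\to+\infty$ we get the same bound for $v$, hence $v\in\mathcal{Y}_\mu$.

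Finally we prove $||v_p-v||_{\mathcal{Y}_\mu}\to 0$. Fix $\eta>0$ and choose $P$ so that $||v_p-v_q||_{\mathcal{Y}_\mu}<\eta$ for all $p,q\geq P$. Applying the two parts of the norm separately to $w=v_p-v_q$ and letting $q\to+\infty$ (using the pointwise convergences already established together with Fatou's lemma for the suprema over $(z,x,y)$ and $(n,j,k)$) shows that $||v_p-v||_{\mathcal{Y}_\mu}\leq\eta$ for every $p\geq P$. Thus $v_p\to v$ in $\mathcal{Y}_\mu$ and $\mathcal{Y}_\mu$ is Banach. The proof for $\mathcal{Z}$ is identical, with the second part of the norm reduced to the single bound on $f_j^n(z)$. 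The only (very mild) subtlety is keeping track of the $\mu$-dependent weight when passing to the limit, but since the weight is a fixed positive function of $(n,j,k)$ for each fixed $\mu$ this plays no role in the argument.
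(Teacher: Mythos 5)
Your proof is correct and follows essentially the same route as the paper's: pass to a limit in a (weighted) $C^2_b$ space, transfer the two defining bounds to the limit via uniform bounds from the Cauchy property plus dominated convergence for the coefficients $(v_j^n)^{(k)}$, and conclude convergence in the $\mathcal{Y}_\mu$ norm by letting $q\to\infty$ in the Cauchy estimate. The paper compresses the last step into ``classical arguments (that we omit)''; you spell it out, which is fine.
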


\begin{proof}  Let us fix $0<\mu<1$. For the sake of completeness, we give a short
proof that $\mathcal{Y}_{\mu}$ is Banach, the proof for $\mathcal{Z}$
being similar. Let $(v_{m})_{m\in\mathbb{N}}$ be a Cauchy sequence
in $\mathcal{Y}_{\mu}$. Since the embedding $\mathcal{Y}_{\mu}\hookrightarrow C_{b}^{2}(\mathbb{R}^{3})$
is continuous and $C_{b}^{2}(\mathbb{R}^{3})$ is Banach, there is
$v\in C_{b}^{2}(\mathbb{R}^{3})$ such that $||v_{m}-v||_{C_{b}^{2}(\mathbb{R}^{3})}\to0$.

Let us prove that $v\in\mathcal{Y}_{\mu}$. The $L$-periodicity in
$x$ of $v$ is obvious. Following the same arguments as in the proof
of Lemma \ref{lem:YZ_Banach}, there exists $C>0$ such that
\begin{equation}
|D^{\alpha}v(z,x,y)|\leq\frac{Ce^{-\kappa|z|}}{(1+y^{2})^{2}},\label{eq:___vC}
\end{equation}
for all $|\alpha|\leq2$ and $(z,x,y)\in\mathbb{R}^{3}$. Next, similarly
to the proof of Lemma \ref{lem:YZ_Banach}, the sequence $K_{m}:=\sum_{k=0}^{2}||v_{m}||_{\gamma,\beta,\mu}$ is bounded for all $m\in\mathbb{N}$ by some $K>0$. Since (\ref{eq:___vC})
holds, we deduce by the dominated convergence theorem that for any
$k\leq2$,
\begin{align*}
\left|(v_{j}^{n})^{(k)}(z)\right| & =\lim_{m\to\infty}\left|\frac{1}{L}\int_{0}^{L}\left(\int_{\mathbb{R}}D_{z}^{k}v_{m}(z,x,y)\Gamma_{j}(y)dy\right)e_{-n}(x)dx\right|\\
& \leq\frac{Ke^{-\kappa|z|}}{(1+j)^{\beta}(1+|n|)^{\gamma}}\times\frac{1+|n|^{k}+j^{k/2}}{1+\mu n^{2}+j+|n|}.
\end{align*}
Therefore $v\in\mathcal{Y}_{\mu}$. From there, classical arguments
(that we omit) yield that $||v_{m}-v||_{\mathcal{Y}_\mu}\to0$.
\end{proof}

We now state some preliminary results. For better readability, we
denote $yv$ the function $(z,x,y)\mapsto yv(z,x,y)$, and similarly for
$y^{2}v$.

\begin{lem}[Controlling in $\mathcal{Z}$ the $v$ terms of $\Fmu(\varepsilon,s,v)$]
\label{lem:puls_prelim_control}There exists $C>0$ such that, for
any $\mu\in(0,1)$ and $v=v(z,x,y)\in\mathcal{Y}_{\mu}$,
\begin{equation}
\max\left(||v||_{\mathcal{Z}},||D_z v||_{\mathcal{Z}},||yv||_{\mathcal{Z}},||y^{2}v||_{\mathcal{Z}}\right)\leq C||v||_{\mathcal{Y}_{\mu}},\label{eq:Z_control_noMu}
\end{equation}
\begin{equation}
\max_{|\alpha|\leq2}||D^{\alpha}v||_{\mathcal{Z}}\leq\mu^{-1}C||v||_{\mathcal{Y}_{\mu}}.\label{eq:Z_control}
\end{equation}
Also, set $\rho\geq0$ and assume $b=b(z,x)\in C_{b}(\mathbb{R}^{2})$
satisfies
\begin{equation}
\begin{cases}
b(z,x+L)=b(z,x) & \forall z,x\in\mathbb{R},\vspace{3pt}\\
\left|b_{m}(z)\right|:=\left|\frac{1}{L}\int_{0}^{L}b(z,x)e_{-m}(x)dx\right|\leq\frac{K_{b}}{(1+|m|)^{\gamma+\rho}} & \forall m\in\mathbb{Z},\,\forall z\in\mathbb{R},
\end{cases}\label{eq:bm_cond}
\end{equation}
for some $K_{b}>0$. Then there are $C_{\rho},C_{\rho}^{\prime}>0$
such that, for any $\mu\in(0,1)$ and $v=v(z,x,y)\in\mathcal{Y}_{\mu}$,
\begin{equation}
||bv||_{\mathcal{Z}}\leq\begin{cases}
(\mu^{-1}C_{\rho}K_{b}+||b||_{L^{\infty}(\mathbb{R}^{2})})||v||_{\mathcal{Y}_{\mu}} & \text{if }\rho=0,\vspace{5pt}\\
(C_{\rho}K_{b}+||b||_{L^{\infty}(\mathbb{R}^{2})})||v||_{\mathcal{Y}_{\mu}}  & \text{if }\rho>0,
\end{cases}\label{eq:bv_bound}
\end{equation}
and
\begin{equation}
||byv||_{\mathcal{Z}}\leq(C_{\rho}^{\prime}K_{b}+||b||_{L^{\infty}(\mathbb{R}^{2})})||v||_{\mathcal{Y}_{\mu}} \qquad\text{if }\rho>1/2.\label{eq:ybv_bound}
\end{equation}
\end{lem}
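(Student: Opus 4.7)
The strategy is to treat each claimed bound separately by splitting the $\mathcal{Z}$-norm into its weighted pointwise part and its Fourier--eigenfunction part $\|\cdot\|_{\beta,\gamma}$ from (\ref{eq:Zrond_norm}). The pointwise piece of every bound follows immediately from the first constraint of (\ref{eq:Yrond}) (or, in the product case, from $\|b\|_{L^{\infty}}$), together with the elementary facts $|y|/(1+y^{2})\le 1/2$ and $y^{2}/(1+y^{2})\le 1$. In particular, none of the pointwise estimates ever produce a $\mu^{-1}$ factor, so the entire $\mu$-dependence appearing in (\ref{eq:Z_control}) and in the $\rho=0$ case of (\ref{eq:bv_bound}) must come from the Fourier--eigenfunction part, which is where the real work lies.

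For the non-singular bound (\ref{eq:Z_control_noMu}), the estimates on $\|v\|_{\mathcal{Z}}$ and $\|D_{z}v\|_{\mathcal{Z}}$ come directly from the $k=0,1$ cases of (\ref{eq:Yrond}) once one observes that $(1+|n|^{k}+j^{k/2})/(1+\mu n^{2}+j+|n|)$ is bounded by an absolute constant for $k\le 1$ (since $|n|/(1+|n|)\le 1$ and $\sqrt{j}/(1+j)\le 1$). For $\|yv\|_{\mathcal{Z}}$ and $\|y^{2}v\|_{\mathcal{Z}}$, I would apply (\ref{eq:yGamma}) (iterated once in the second case) to rewrite $(yv)_{j}^{n}=p_{j}^{+}v_{j+1}^{n}+p_{j}^{-}v_{j-1}^{n}$ and then absorb the resulting $O(\sqrt{j})$, resp. $O(j)$, factor into the spare $(1+j)$-decay embedded in the denominator $1+\mu n^{2}+j+|n|$ of the $\mathcal{Y}_{\mu}$-bound, exactly as in the proof of Lemma~\ref{lem:prelim_controls}.

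For the singular bound (\ref{eq:Z_control}), I would express each second derivative in Fourier--eigenfunction coordinates: $(\partial_{x}^{2}v)_{j}^{n}=-\sigma^{2}n^{2}v_{j}^{n}$, $(\partial_{x}\partial_{z}v)_{j}^{n}=i\sigma n\,(v_{j}^{n})'$, $(\partial_{z}^{2}v)_{j}^{n}=(v_{j}^{n})''$, while integration by parts together with (\ref{eq:dGamma}) handles the $\partial_{yy}$ and $\partial_{xy}$ terms by reducing them to shifted indices. The three decisive singular estimates are $n^{2}/(1+\mu n^{2})\le \mu^{-1}$ for $\partial_{xx}$, $|n|(1+|n|+\sqrt{j})/(1+\mu n^{2}+j+|n|)\le C\mu^{-1}$ for $\partial_{xz}$, and $|n|\sqrt{j}/(\mu n^{2}+j)\le \mu^{-1/2}/2$ (by AM--GM) for the mixed $\partial_{x}\partial_{y}$ term; each of them contributes at worst a factor $\mu^{-1}$ once combined with the $\mathcal{Y}_{\mu}$-bounds on $(v_{j}^{n})^{(k)}$, which yields exactly the growth claimed in (\ref{eq:Z_control}).

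The product bounds (\ref{eq:bv_bound}) and (\ref{eq:ybv_bound}) rest on the identity $(bv)_{j}^{n}(z)=\sum_{m\in\Z}b_{m}(z)\,v_{j}^{n-m}(z)$, which reduces the Fourier--eigenfunction control to a discrete convolution of decaying sequences. Combining the assumed $(1+|m|)^{-(\gamma+\rho)}$ decay of $b_{m}$ with the $(1+|n-m|)^{-(\gamma+1)}$ decay inherited from $\mathcal{Y}_{\mu}$ through $1+\mu(n-m)^{2}+j+|n-m|\ge 1+|n-m|$, and splitting the sum according to $|m|\le |n|/2$ versus $|m|>|n|/2$, one obtains a total decay of order $(1+|n|)^{-\gamma-\min(1,\rho)}$, which yields (\ref{eq:bv_bound}) with an $O(1)$ constant in the regime $\rho>0$. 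The main obstacle is the borderline case $\rho=0$: here the above splitting is marginal, and one must instead retain and exploit the $\mu n^{2}$ term in the denominator of the $\mathcal{Y}_{\mu}$-bound, which is precisely what forces the $\mu^{-1}$ constant claimed in (\ref{eq:bv_bound}). Finally, (\ref{eq:ybv_bound}) is obtained by combining the convolution analysis above with the $y$-shift from (\ref{eq:yGamma}); the hypothesis $\rho>1/2$ is exactly what is needed to absorb the $O(\sqrt{j})$ loss introduced by this shift while retaining enough decay in $n$ to remain in $\mathcal{Z}$.
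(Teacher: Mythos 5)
Your proposal is correct, and its overall architecture (pointwise part trivial from the weight, Fourier--eigenfunction part via the relations (\ref{eq:yGamma})--(\ref{eq:dGamma}) and the multipliers $(in\sigma)^{k}$, product terms via the discrete convolution $(bv)_{j}^{n}=\sum_{m}b_{m}v_{j}^{n-m}$) coincides with the paper's. The one place where you genuinely diverge is the convolution estimate. The paper spends the entire decay $(1+|n-m|)^{-\gamma}(1+|m|)^{-\gamma}$ at once via the elementary inequality $\frac{1}{(1+|n-m|)(1+|m|)}\le\frac{1}{1+|n|}$, and is then left with $\sum_{m}\frac{1}{1+\mu m^{2}+j+|m|}$, which it controls either by the $\mu m^{2}$ term (giving $\mu^{-1}$ when $\rho=0$) or by H\"older on $\sum_{m}(1+|n-m|)^{-\rho}(1+|m|)^{-1}$ when $\rho>0$ (and on $\sum_{m}(1+|n-m|)^{-\rho}(1+|m|)^{-1/2}$ for $byv$, which is where $\rho>1/2$ enters). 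You instead keep one power $(1+|n-m|)^{-1}$ from the $\mathcal{Y}_{\mu}$ denominator and run a splitting $|m|\le|n|/2$ versus $|m|>|n|/2$; since $\gamma>1$ this gives $(1+|n|)^{-\gamma-\min(1,\rho)}$ with a $\mu$-independent constant. Note that this already yields the required $(1+|n|)^{-\gamma}$ decay \emph{even at} $\rho=0$, so your final remark that at $\rho=0$ the splitting is only "marginal" and one "must" fall back on the $\mu n^{2}$ term undersells your own argument: marginal decay is still sufficient for membership in $\mathcal{Z}$, and your route in fact proves (\ref{eq:bv_bound}) at $\rho=0$ without the $\mu^{-1}$ (the fallback you describe is exactly the paper's argument and is of course also valid, since the stated bound is weaker). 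Two small housekeeping points: you should record that the Fourier series of $b(z,\cdot)$ converges absolutely (it does, since $\gamma+\rho>3$) before exchanging sum and integral in the product identity, and in the cross terms $\partial_{y}\partial_{z}$, $\partial_{x}\partial_{y}$ the AM--GM step must be applied to the piece $|n|\sqrt{j}/(\mu n^{2}+j)$ specifically, with the remaining pieces $\sqrt{j}\,(1+\sqrt j)/(1+j)$ and $|n|/(1+|n|)$ bounded separately --- which is what you do.
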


\begin{proof}
Fix $\mu\in(0,1)$. By definition of $\mathcal{Y}_{\mu}$, for any
function $w\in\{D^{\alpha}v,yv,y^{2}v,bv,byv\}$, it is clear that
$w$ is $L$-periodic in $x$ and satisfies, thanks to (\ref{eq:v_Ybound}),
\[
|w(z,x,y)|\leq\frac{C||v||_{\mathcal{Y}_{\mu}}e^{-\kappa|z|}}{1+y^{2}},\qquad\forall (z,x,y)\in\mathbb{R}^3,
\]
with $C=||b||_{L^{\infty}(\mathbb{R}^{2})}$ if $w\in\{bv,byv\}$,
and $C=1$ otherwise. Thus in order to prove (\ref{eq:Z_control_noMu}) and (\ref{eq:Z_control}), it is enough to control $||w||_{\beta,\gamma}$ for each $w\in\{D^{\alpha}v,yv,y^{2}v,bv,byv\}$.

If $w=yv$, by virtue of (\ref{eq:yGamma}), we have
\begin{align*}
(yv)_{j}^{n}(z) & =\frac{1}{L}\int_{0}^{L}\left(\int_{\mathbb{R}}v(z,x,y)y\Gamma_{j}(y)dy\right)e_{-n}(x)dx=\begin{cases}
p_{j}^{+}v_{j+1}^{n}(z)+p_{j}^{-}v_{j-1}^{n}(z) & j\geq1,\\
p_{0}^{+}v_{1}^{n}(z) & j=0.
\end{cases}
\end{align*}
From (\ref{eq:vnj_Ybound}), we thus obtain $||yv||_{\beta,\gamma}\leq C ||v||_{\mathcal{Y}_{\mu}}$ for some $C>0$. One can readily check that the same is true for $w=y^{2}v$.

Now, set $|\alpha|\leq2$ and $w=D^{\alpha}v$. If $w=v$ or $w=D_z v$, then from (\ref{eq:vnj_Ybound}),
we deduce $||w||_{\beta,\gamma}\leq ||v||_{\beta,\gamma,\mu} \leq ||v||_{\mathcal{Y}_\mu}$. If $w=D_{z}^{2}v$, then (\ref{eq:vnj_Ybound})
yields $||D_{z}^{2}v||_{\beta,\gamma}\leq\mu^{-1}||v||_{\mathcal{Y}_{\mu}}$
since $0<\mu<1$. Now, consider $w=D_{x}^{k}v$ with $k\in\{1,2\}$.
Then by integration by parts there holds
\begin{align*}
(D_{x}^{k}v)_{j}^{n}(z) & =\frac{1}{L}\int_{0}^{L}D_{x}^{k}\left(\int_{\mathbb{R}}v(z,x,y)\Gamma_{j}(y)dy\right)e_{-n}(x)dx=\left(in\sigma\right)^{k}v_{j}^{n}(z),
\end{align*}
which, thanks to (\ref{eq:vnj_Ybound}) implies $||D_{x}v||_{\beta,\gamma}\leq\sigma||v||_{\mathcal{Y}_{\mu}}$
and $||D_{x}^{2}v||_{\beta,\gamma}\leq\mu^{-1}\sigma||v||_{\mathcal{Y}_{\mu}}$.
As for $w=D_{y}^{k}v$ with $k\leq2$, the proof is similar to $w=y^{k}v$
by using (\ref{eq:dGamma}) instead of (\ref{eq:yGamma}). Therefore (\ref{eq:Z_control}) holds
for $D^{\alpha}\in\{D_{z}^{k},D_{x}^{k},D_{y}^{k}\}$ with $0\leq k\leq2$.
The proof for the cross derivatives $D^{\alpha}\in\{D_{xy},D_{xz},D_{yz}\}$
results from a combination of the above arguments. Therefore we proved (\ref{eq:Z_control_noMu})---(\ref{eq:Z_control}).

Next, let us consider $w=bv$. Since (\ref{eq:bm_cond}) holds with
$\gamma+\rho\geq\gamma>3>1$, the Fourier series of $b(z,\cdot)$
converges uniformly on $\mathbb{R}$ and we have pointwise
\[
b(z,x)=\sum_{m=-\infty}^{\infty}b_{m}(z)e_{m}(x).
\]
This leads to
\begin{align*}
(bv)_{j}^{n}(z) & =\frac{1}{L}\int_{0}^{L}b(z,x)\left(\int_{\mathbb{R}}v(z,x,y)\Gamma_{j}(y)dy\right)e_{-n}(x)dx\\
 & =\frac{1}{L}\int_{0}^{L} \left( \sum_{m=-\infty}^{\infty}b_{m}(z)e_{m}(x)\right) v_j(z,x) e_{-n}(x)dx\\
 & =\sum_{m=-\infty}^{\infty}b_{m}(z)v_{j}^{n-m}(z)=\sum_{m=-\infty}^{\infty}b_{n-m}(z)v_{j}^{m}(z).
\end{align*}
Let us first assume that $\rho=0$. The controls (\ref{eq:vnj_Ybound})
and (\ref{eq:bm_cond}) then yield
\begin{align*}
\left|(bv)_{j}^{n}(z)\right| & \leq\sum_{m=-\infty}^{\infty}|b_{n-m}(z)|\times\left|v_{j}^{m}(z)\right|\\
 & \leq K_{b}||v||_{\mathcal{Y}_{\mu}}\sum_{m=-\infty}^{\infty}\frac{1}{(1+|n-m|)^{\gamma}}\times\frac{e^{-\kappa|z|}}{(1+j)^{\beta}(1+|m|)^{\gamma}}\times\frac{1}{1+\mu m^{2}+j+|m|}.
\end{align*}
From there, one can readily check, by studying all possible cases on the signs of $n$, $m$ and $n-m$, that 
\[
\frac{1}{(1+|n-m|)(1+|m|)}\leq\frac{1}{1+|n|},\qquad\forall m,n\in\mathbb{Z}.
\]
Therefore
\begin{align*}
\left|(bv)_{j}^{n}(z)\right| & \leq\frac{K_{b}||v||_{\mathcal{Y}_{\mu}}e^{-\kappa|z|}}{(1+j)^{\beta}(1+|n|)^{\gamma}}\sum_{m=-\infty}^{\infty}\frac{1}{1+\mu m^{2}+j+|m|}\\
 & \leq\frac{\mu^{-1}K_{b}||v||_{\mathcal{Y}_{\mu}}e^{-\kappa|z|}}{(1+j)^{\beta}(1+|n|)^{\gamma}}\sum_{m=-\infty}^{\infty}\frac{1}{1+m^{2}},
\end{align*}
which gives (\ref{eq:bv_bound}) for $\rho=0$. Meanwhile, if $\rho>0$, similar calculations yield
\begin{align*}
\left|(bv)_{j}^{n}(z)\right| & \leq\frac{K_{b}||v||_{\mathcal{Y}_{\mu}}e^{-\kappa|z|}}{(1+j)^{\beta}(1+|n|)^{\gamma}}\sum_{m=-\infty}^{\infty}\frac{1}{(1+|n-m|)^{\rho}}\times\frac{1}{1+\mu m^{2}+j+|m|}\\
 & \leq\frac{K_{b}||v||_{\mathcal{Y}_{\mu}}e^{-\kappa|z|}}{(1+j)^{\beta}(1+|n|)^{\gamma}}\sum_{m=-\infty}^{\infty}\frac{1}{(1+|n-m|)^{\rho}(1+|m|)}.
\end{align*}
Since $\rho>0$, H\"{o}lder inequality shows that the sum of the infinite series above is bounded by some  $C_\rho>0$ which is independent of $n\in\Z$. This gives (\ref{eq:bv_bound}) for $\rho >0$.

Finally, let us prove (\ref{eq:ybv_bound}). With (\ref{eq:yGamma}),
we obtain that, for some $C_{A,\beta}>0$,
\begin{align*}
\left|(byv)_{j}^{n}(z)\right| & \leq\frac{K_{b}||v||_{\mathcal{Y}_{\mu}}e^{-\kappa|z|}}{(1+j)^{\beta}(1+|n|)^{\gamma}}\sum_{m=-\infty}^{\infty}\frac{1}{(1+|n-m|)^{\rho}}\times\frac{C_{A,\beta}\sqrt{j}}{1+j+|m|}\\
 & \leq\frac{K_{b}||v||_{\mathcal{Y}_{\mu}}e^{-\kappa|z|}}{(1+j)^{\beta}(1+|n|)^{\gamma}}\sum_{m=-\infty}^{\infty}\frac{1}{(1+|n-m|)^{\rho}}\times\frac{ C_{A,\beta}}{\sqrt 2 (1+|m|)^{\frac 12}},
\end{align*}
as easily seen by studying the maximum of $j\in[0,+\infty)\mapsto \frac{\sqrt j}{1+j+\vert m\vert}$. Since $\rho>\frac 12$, H\"{o}lder inequality shows that the sum of the infinite series above is bounded by some  $C_\rho '>0$ which is independent of $n\in\Z$. This gives (\ref{eq:ybv_bound}).
\end{proof}

To conclude this subsection, we prove that, by taking $|\varepsilon|$
possibly smaller, we obtain some estimates on the steady state $n^{\varepsilon}=n^\ep(x,y)$ in
the $\mathcal{Y}_{\mu},\mathcal{Z}$ norms. For better readability,
we denote $e^{-\kappa|z|}h$ the function $(z,x,y)\mapsto e^{-\kappa|z|}h(x,y)$
and similarly for $e^{-\kappa|z|}h_{x}$.

\begin{lem}[The steady state $n^{\varepsilon}$ when $\theta\in C_{per}^{L}(\mathbb{R})$
further satisfies (\ref{eq:thetam_decay})]
\label{lem:neps_Yast}Fix $\beta>\frac{17}{4}$ and $\gamma>2$. Let the conditions of Theorem
\ref{thm:steady-per} hold.
Assume further that $\theta\in C_{per}^{L}(\mathbb{R})$ satisfies (\ref{eq:thetam_decay}). Then,
there is $\varepsilon_{0}^{*}>0$ such that, for any $|\varepsilon|\leq\varepsilon_{0}^{*}$,
\[
\text{there is a unique }n^{\varepsilon}\in Y^{*}\text{ such that }n^{\varepsilon}\text{ solves }(\ref{eq:statio_state}),
\]
where the function space $Y^{*}$ is given by (\ref{eq:Yast}). Additionally, we have
$||n^{\varepsilon}-n^{0}||_{Y^{*}}\to 0$, as $\ep\to 0$, 
where $||\cdot||_{Y^{*}}$ is the norm given by (\ref{eq:Yast_norm}). Finally, there are $K_\sigma>0$ (depending only on $\sigma=\frac{2\pi}L$), $K_\kappa>0$ (depending only on $\kappa$) and $K_A>0$ (depending only on $A$) such that, for any $h\in Y^{*}$,
\begin{align}
||e^{-\kappa|z|}h||_{\mathcal{Z}} & \leq||h||_{Y^{*}},\label{eq:hZ_Yast}\\
||e^{-\kappa|z|}h_{x}||_{\mathcal{Z}} & \leq K_\sigma ||h||_{Y^{*}},\label{eq:hxZ_Yast}\\
||e^{-\kappa|z|}h||_{\mathcal{Y}_{\mu}} & \leq K_\kappa ||h||_{Y^{*}},\qquad \forall 0<\mu<1,\label{eq:hY_Yast}\\
\left|\int_{\mathbb{R}}h(x,y)dy\right| & \leq\frac{\pi}{2}||h||_{Y^{*}} \qquad\quad \forall x\in \R, \label{eq:inth_bound}\\
\left|\frac{1}{L}\int_{0}^{L}\left(\int_{\mathbb{R}}h(x,y)dy\right)e_{-n}(x)dx\right| & \leq\frac{K_{A}||h||_{Y^{*}}}{(1+|n|)^{\gamma+2}} \qquad\forall n\in\mathbb{Z}.\label{eq:h_thetacond}
\end{align}
\end{lem}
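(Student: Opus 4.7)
The plan is to mimic the construction of Section \ref{s:steady} but in a refined function space $Y^{*}$ that, in addition to the $y$-tail and $\Gamma_{i}$-projection controls of $Y_{m}$ (with some $m\geq 2$), encodes the $L$-periodicity in $x$ together with an algebraic decay of the Fourier coefficients of $\int_{\R} D_{x}^{k}h(\cdot,y)\Gamma_{i}(y)dy$ of the form $(1+i)^{-\beta-\cdots}(1+|n|)^{-\gamma-\cdots}$. The space $Z^{*}$ will be defined analogously with one less derivative, so that $Y^{*}\hookrightarrow C^{3}_{b}(\R^{2})$, $Z^{*}\hookrightarrow C^{1}_{b}(\R^{2})$, and so that bijectivity of $\mathcal{L}:Y^{*}\to Z^{*}$ holds (see below). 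One then checks, exactly as in Lemma \ref{lem:YZ_Banach}, that $Y^{*}$ is a Banach space, and that each term in $\F(\ep,h)$ defined by (\ref{Fstate}) belongs to $Z^{*}$ for $h\in Y^{*}$. The key point here is that $\ep\theta(x) y n^{0}(y)$ and $\ep^{2}\theta^{2}(x)n^{0}(y)$ have Fourier coefficients in $x$ decaying like $K_{\theta}(1+|n|)^{-(k+\delta)}$ thanks to (\ref{eq:thetam_decay}), and $k+\delta>\gamma+\tfrac{1}{2}$ is precisely what is needed to plug these source terms into $Z^{*}$.

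The heart of the argument is to show that $\mathcal{L}:Y^{*}\to Z^{*}$, with $\mathcal{L}$ given by (\ref{eq:Lstate}), is bijective. This is done in the spirit of subsection \ref{ss:checking} by a double decomposition: for $h,f$ respectively in $Y^{*},Z^{*}$ and $L$-periodic in $x$, write $h(x,y)=\sum_{i}h_{i}(x)\Gamma_{i}(y)$, $f(x,y)=\sum_{i}f_{i}(x)\Gamma_{i}(y)$, and then expand $h_{i}(x)=\sum_{n\in\Z}h_{i}^{n}e_{n}(x)$, $f_{i}(x)=\sum_{n\in\Z}f_{i}^{n}e_{n}(x)$. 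The ODEs (\ref{eq:h_i})--(\ref{eq:h_0}) reduce to the purely algebraic relations
\begin{equation*}
\bigl((n\sigma)^{2}+(\lambda_{i}-\lambda_{0})\bigr)h_{i}^{n}=-f_{i}^{n}\quad(i\geq 1),\qquad \bigl((n\sigma)^{2}-\lambda_{0}\bigr)h_{0}^{n}=-f_{0}^{n}-\eta\sum_{i\geq1}m_{i}h_{i}^{n},
\end{equation*}
both of which are invertible since $\lambda_{i}-\lambda_{0}>0$ ($i\geq 1$) and $-\lambda_{0}>0$. The resulting divisor $(n\sigma)^{2}+(\lambda_{i}-\lambda_{0})\gtrsim 1+n^{2}+i$ yields automatic gains in both indices, which combined with (\ref{eq:eigenfunc_control_L1}) for the mass coupling in the $i=0$ equation and with Lemma \ref{lem:eigenfunc_control} for reconstructing $h\in C^{m+2}(\R^{2})$, closes the scheme provided $\beta>\tfrac{17}{4}$ (to absorb the $y$-tail estimates with one extra $x$-derivative than in Theorem \ref{thm:steady_state_eps}) and $\gamma>2$ (to sum Fourier series and control one extra $x$-derivative). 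With bijectivity in hand, the implicit function theorem (Theorem \ref{thm:IFT}) produces $h^{\ep}\in Y^{*}$ with $\F(\ep,h^{\ep})=0$, and $n^{\ep}=n^{0}+h^{\ep}$. Uniqueness follows from the fact that this $n^{\ep}$ coincides, by the periodicity argument of Theorem \ref{thm:steady-per}, with the one previously constructed, up to reducing $\ep_{0}^{*}$.

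It remains to establish the five explicit estimates (\ref{eq:hZ_Yast})--(\ref{eq:h_thetacond}). Estimate (\ref{eq:hZ_Yast}) is immediate: the $y$-tail and $(\Gamma_{j},e_{n})$-decay parts of $||h||_{Y^{*}}$ dominate the corresponding parts of $||e^{-\kappa|z|}h||_{\mathcal{Z}}$, since the $z$-dependence factors out as $e^{-\kappa|z|}$ and $(1+|n|)^{-\gamma}\leq(1+|n|)^{-\gamma}$ trivially. Estimate (\ref{eq:hxZ_Yast}) follows from the same observation applied to $h_{x}$, using that an $x$-derivative picks up a factor $in\sigma$ in the Fourier expansion, so that one order of $x$-smoothness built into $Y^{*}$ exactly compensates. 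Estimate (\ref{eq:hY_Yast}) compares the $Y^{*}$-norm (two $x$-derivatives with decay $(1+|n|)^{-\gamma-2}$) against the $\mathcal{Y}_{\mu}$-norm (the quotient $\tfrac{1+|n|^{k}+j^{k/2}}{1+\mu n^{2}+j+|n|}\leq 1+|n|$ for $k\leq 2$), with the $z$-exponential giving $K_{\kappa}$ as an $L^{\infty}$-norm of derivatives of $e^{-\kappa|z|}$; crucially, the bound is uniform in $\mu\in(0,1)$. Estimates (\ref{eq:inth_bound}) and (\ref{eq:h_thetacond}) follow from integrating the $y$-tail bound $|h(x,y)|\leq ||h||_{Y^{*}}(1+y^{2})^{-2}$ on $\R$ and then, for (\ref{eq:h_thetacond}), invoking exactly the Fourier-mass computation already used to prove (\ref{eq:v_thetacond}), where the extra gain $(1+|n|)^{-\gamma-2}$ comes from the fact that $\int_{\R}h(x,y)dy=\sum_{j}m_{j}h_{j}(x)$, and the Fourier coefficient $h_{j}^{n}$ in $Y^{*}$ decays as $(1+j)^{-\beta}(1+|n|)^{-\gamma-2}$ by definition. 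The main (mild) obstacle is to fix the exponents in $Y^{*}$ so that all five estimates, as well as the fixed-point scheme, hold simultaneously; this is precisely what the assumptions $\beta>\tfrac{17}{4}$, $\gamma>2$, and $k+\delta>\gamma+\tfrac{1}{2}$ are tailored for.
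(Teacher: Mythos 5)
Your plan follows essentially the same route as the paper: define $Y^{*},Z^{*}$ by combining the $\Gamma_{j}$-projection controls with Fourier decay in $x$, reduce $\mathcal{L}h=f$ via the double decomposition to the algebraic relations $\bigl(n^{2}\sigma^{2}+(\lambda_{j}-\lambda_{0})\bigr)h_{j}^{n}=-f_{j}^{n}$ (and the coupled $j=0$ equation), exploit the divisor $\gtrsim 1+j+n^{2}$, apply the implicit function theorem, and read off (\ref{eq:hZ_Yast})--(\ref{eq:h_thetacond}) from the definitions — which is exactly what the paper does, with the paper's $Y^{*}$ sitting in $C^{2}_{b}$ rather than your $C^{3}_{b}$. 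The only imprecision is your attribution of the threshold $\beta>\frac{17}{4}$ to an "extra $x$-derivative": in the paper it arises because the gain $(1+j+n^{2})^{-1}$ from inverting the symbol yields no improvement in $j$ when $|n|$ is large, so $\|h_{j}\|_{\infty}\lesssim(1+j)^{-\beta}$ only, and the worst $y$-tail term $(1+y^{2})^{2}h_{yy}\lesssim\sum_{j}j^{13/4}(1+j)^{-\beta}$ forces $\beta>\frac{17}{4}$.
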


\begin{proof}
In the context of this proof, for any function $h=h(x,y)\in C_{b}(\mathbb{R}^{2})$
such that $h(x,\cdot)\in L^{2}(\mathbb{R})$ and $h(x+L,y)=h(x,y)$
for all $(x,y)\in\mathbb{R}^{2}$, we denote
\[
h_{j}(x):=\int_{\mathbb{R}}h(x,y)\Gamma_{j}(y)dy, \qquad h_{j}^{n}:=\frac{1}{L}\int_{0}^{L}h_{j}(x)e_{-n}(x)dx,
\]
that is $h_{j}^{n}$ is the $n$-th Fourier coefficient of $h_{j}$,
which is the $j$-th coordinate of $h$ along the basis of eigenfunctions
$(\Gamma_{j})_{j\in\mathbb{N}}$. We now define
\begin{equation}
Y^{*}:=\left\{ h\in C^{2}(\mathbb{R}^{2})\left|\begin{array}{c}
h(x+L,y)=h(x,y),\qquad\forall x,y\in\mathbb{R},\vspace{7pt}\\
\exists C>0,\,\forall |\alpha|\leq 2, \quad \left|D^\alpha h(x,y)\right| \leq\frac{C}{(1+y^{2})^{2}}\quad\text{ on }\mathbb{R}^{2},\vspace{7pt}\\
\exists K>0,\,\forall n\in\mathbb{Z},\,\forall j\in\mathbb{N},\quad\left|h_{j}^{n}\right|\leq\frac{K}{(1+j)^{\beta}(1+|n|)^{\gamma}} \times \frac{1}{1+j+n^2}
\end{array}\right.\right\} ,\label{eq:Yast}
\end{equation}
\[
Z^{*}:=\left\{ f\in C(\mathbb{R}^{2})\left|\begin{array}{c}
f(x+L,y)=f(x,y),\qquad\forall x,y\in\mathbb{R},\vspace{7pt}\\
\exists C>0,\quad\left|f(x,y)\right|\leq\frac{C}{1+y^{2}}\quad\text{ on }\mathbb{R}^{2},\vspace{7pt}\\
\exists K>0,\,\forall n\in\mathbb{Z},\,\forall j\in\mathbb{N},\quad\left|f_{j}^{n}\right|\leq\frac{K}{(1+j)^{\beta}(1+|n|)^{\gamma}}
\end{array}\right.\right\} ,
\]
\begin{equation}
||h||_{Y^{*}}:=\sum_{|\alpha|\leq2}\sup_{(x,y)\in\mathbb{R}^{2}}\left|(1+y^{2})^{2}D^{\alpha}h(x,y)\right|+\sup_{n\in\mathbb{Z},j\in\mathbb{N}}\left[(1+j)^{\beta}(1+|n|)^{\gamma}(1+j+n^2)\left|h_{j}^{n}\right|\right],\label{eq:Yast_norm}
\end{equation}
\[
||f||_{Z^{*}}:=\sup_{(x,y)\in\mathbb{R}^{2}}\left|(1+y^{2})f(x,y)\right|+\sup_{n\in\mathbb{Z},j\in\mathbb{N}}\left[(1+j)^{\beta}(1+|n|)^{\gamma}\left|f_{j}^{n}\right|\right].
\]
The proof of Lemma \ref{lem:neps_Yast} relies on applying the Implicit
Function Theorem, namely Theorem \ref{thm:IFT}, to the function $\F=\F(\varepsilon,h)$
defined by (\ref{Fstate}). Firstly, adapting the proof of Lemmas
\ref{lem:YZ_Banach} and \ref{lem:prelim_controls}, one can readily
check that $Y^{*},Z^{*}$ are Banach spaces, that
$\F:\R \times Y^{*}\to Z^{*}$ is well-defined, and  that conditions
$\ref{enu:continuite}$---$\ref{enu:__IFT_diff_exists}$ of Theorem \ref{thm:IFT}
are satisfied, with $D_{h}\F(0,0)=\L$ given by (\ref{eq:Lstate}).
It remains to prove that $\L:Y^{*}\to Z^{*}$ is bijective. Following
the same procedure as in subsection \ref{ss:checking}, we have that
$h_{j},f_{j}$ satisfy (\ref{eq:h_i})---(\ref{eq:h_0}). We now use the Fourier coefficients: for $n\in\mathbb{Z}$, we multiply equations (\ref{eq:h_i})---(\ref{eq:h_0}) by $\frac{1}{L}e_{-n}(x)$
and integrate over $x\in [0,L]$. We obtain 
\begin{equation}
\begin{cases}
\left(-n^{2}\sigma^{2}-(\lambda_{j}-\lambda_{0})\right)h_{j}^{n}=f_{j}^{n}, & j\geq1,\vspace{5pt}\\
\left(-n^{2}\sigma^{2}+\lambda_{0}\right)h_{0}^{n}=f_{0}^{n}+\eta \sum_{\ell =1}^{+\infty} m_\ell  h_\ell ^n, & j=0.
\end{cases}\label{eq:___hjn}
\end{equation}
For $j\geq 1$, since $0<\lambda_j-\lambda_0$, we see that, for any $n\in \Z$,  there is a unique $h_{j}^{n}\in\mathbb{C}$ solving the first equation in (\ref{eq:___hjn}). Since
$f\in Z^*$, we have 
\[
|f_{j}^{n}|\leq\frac{||f||_{Z^{*}}}{(1+j)^{\beta}(1+|n|)^{\gamma}},
\]
which leads to 
\[
|h_{j}^{n}|=\frac{\vert f_j^n\vert}{n^2\sigma^2+2jA}\leq \frac{K||f||_{Z^{*}}}{(1+j)^{\beta}(1+|n|)^{\gamma}}\times \frac{1}{1+j+n^2},
\]
for some $K=K(A,L)>0$. From there, in view of (\ref{eq:eigenfunc_control_L1}) and $\beta>\frac{17}{4}>\frac{5}{4}$, the right-hand side of the second equation of (\ref{eq:___hjn}) is well-defined, and bounded by $M||f||_{Z^\ast} (1+|n|)^{-\gamma}$ for some $M>0$ independent of $n$. Therefore we obtain
\[
|h_0^n| \leq \frac{K||f||_{Z^\ast}}{(1+|n|)^\gamma(1+n^2)},
\]
by taking $K$ possibly larger. It remains to reconstruct $h$ and prove that it belongs to $Y^\ast$. Since $\gamma>2>1$, we
have for any $0\leq k\leq2$
\[
h_{j}^{(k)}(x)=\sum_{n\in\mathbb{Z}}h_{j}^{n}(i\sigma n)^{k}e_{n}(x),
\]
from which we deduce
\[
||h_{j}^{(k)}||_{\infty}\leq\frac{K||f||_{Z^{*}}}{(1+j)^{\beta}}\sum_{n\in\mathbb{Z}}\frac{(\sigma|n|)^{k}}{(1+|n|)^{\gamma+2}}\leq\frac{C||f||_{Z^{*}}}{(1+j)^{\beta}},
\]
for some $C=C(A,L,\gamma)>0$. In other words, we obtain the estimates
playing the roles of  (\ref{eq:h_i_bound})---(\ref{eq:hi_xx_bound}). Then, like the rest of the proof in subsection \ref{ss:checking},
we prove that $h\in Y^{*}$ since $\beta>\frac{17}{4}$. Thus $\L$ is bijective. Finally, we apply Theorem \ref{thm:IFT},
which leads to the existence of $\varepsilon_{0}^{*}>0$ such that,
for any $|\varepsilon|\leq\varepsilon_{0}^{*}$, there exists a unique
function $h^{\varepsilon}\in Y^{*}$ such that $\F(\varepsilon,h^{\varepsilon})=0$,
with $||h^{\varepsilon}||_{Y^{*}}\to0$ as $\varepsilon\to0$. Since
$n^{0}\in Y^{*}$, we deduce that $n^{\varepsilon}(x,y)=n^{0}(y)+h^{\varepsilon}(x,y)\in Y^{*}$
and $n^{\varepsilon}$ solves (\ref{eq:statio_state}).

To conclude, (\ref{eq:hZ_Yast})---(\ref{eq:hY_Yast}) simply follow
from the definitions of $\mathcal{Z},\mathcal{Y}_{\mu},Y^{*}$, given that $(h_{x})_{j}^{n}=(in\sigma)h_{j}^{n}$. Meanwhile,
(\ref{eq:inth_bound})---(\ref{eq:h_thetacond})  is proved in the
same way as (\ref{eq:intv_Ybound})---(\ref{eq:v_thetacond}).
\end{proof}

\subsection{Checking assumptions $\ref{enu:continuite}$ and $\ref{enu:__IFT_diff_exists}$
of Theorem \ref{thm:IFT}\label{subsec:Front_IFT_i_ii}}

For the rest of this section, we assume that $|\varepsilon|\leq\varepsilon_{0}^{*}$,
where $\varepsilon_{0}^{*}$ is obtained from Lemma \ref{lem:neps_Yast}.
We also recall that  $\mu\in(0,1)$. Equipped
with the above spaces $\mathcal{Y}_{\mu}$ and $\mathcal{Z}$, we
thus consider
\begin{align*}
\Fmu(\varepsilon,s,v)= & \;v_{zz}+2v_{xz}+(1+\mu)v_{xx}+v_{yy}+(c_{0}+s)v_{z}
+sU^{\prime}(z)n^{\varepsilon}(x,y)+2U^{\prime}(z)n_{x}^{\varepsilon}(x,y)\\
 & +v\left(1-A^{2}(y-\varepsilon\theta(x))^{2}-U(z)\int_{\mathbb{R}}n^{\varepsilon}(x,y^{\prime})dy^{\prime}-\int_{\mathbb{R}}v(z,x,y^{\prime})dy^{\prime}\right)\\
 & -U(z)n^{\varepsilon}(x,y)\int_{\mathbb{R}}v(z,x,y^{\prime})dy^{\prime}+U(z)(1-U(z))n^{\varepsilon}(x,y)\left(\lambda_{0}+\int_{\mathbb{R}}n^{\varepsilon}(x,y')dy'\right).
\end{align*}
Recall that $n^{\varepsilon}(x,y)=n^{0}(y)$  when $\ep=0$ and $\int _\R n^0(y')dy'=-\lambda_0$. Consequently $\Fmu(0,0,0)=0$. 

\begin{proof}[Checking assumptions \ref{enu:continuite} and \ref{enu:__IFT_diff_exists}
of Theorem \ref{thm:IFT}]
Fix $\mu\in(0,1)$. We first prove that $\Fmu\colon\mathcal{Y}_{\mu}\to\mathcal{Z}$
is well-defined and continuous at $(0,0,0)$. Since all terms of $\Fmu(\varepsilon,s,v)$
are obviously $L$-periodic in $x$, and since $\Fmu(0,0,0)=0$, it
suffices to prove that each term of $\Fmu(\varepsilon,s,v)$ tends
to zero in the norm $||\cdot||_{\mathcal{Z}}$ as $|\varepsilon|+|s|+||v||_{\mathcal{Y}_{\mu}}\to0$.
Firstly, Lemma \ref{lem:puls_prelim_control} and the fact that $\theta$
satisfies (\ref{eq:thetam_decay}) imply 
\[
\exists C>0,\,\forall\mu\in(0,1),\,\forall v\in\mathcal{Y}_{\mu},\qquad||w||_{\mathcal{Z}}\leq\mu^{-1}C||v||_{\mathcal{Y}_{\mu}}\xrightarrow[v\to0]{}0,
\]
for any $w\in\{D^{\alpha}v,y^{2}v,y\theta v,\theta^{2}v\}$ and $|\alpha|\leq2$.
Next, let us recall that $v$ satisfies (\ref{eq:intv_Ybound})---(\ref{eq:v_thetacond}),
and from Lemma (\ref{lem:neps_Yast}), $n^{\varepsilon}$ satisfies
(\ref{eq:inth_bound})---(\ref{eq:h_thetacond}). As a result, since
$|U(z)|\leq1$ the functions $U(z)\int_{\mathbb{R}}n^{\varepsilon}(x,y')dy'$
and $\int_{\mathbb{R}}v(z,x,y')dy'$ are uniformly bounded and satisfy
(\ref{eq:bm_cond}) with $\rho=1$ and $K_{b}=K_{A}$. From Lemma
\ref{lem:puls_prelim_control}, we thus deduce
\[
\left\Vert v(z,x,y)U(z)\int_{\mathbb{R}}n^{\varepsilon}(x,y^{\prime})dy^{\prime}\right\Vert _{\mathcal{Z}}\leq\left(C_{1}K_{A}+\frac{\pi}{2}\right)||n^{\varepsilon}||_{Y^{*}}||v||_{\mathcal{Y}_{\mu}}\xrightarrow[v\to0]{}0,
\]
\begin{equation}
\left\Vert v(z,x,y)\int_{\mathbb{R}}v(z,x,y^{\prime})dy^{\prime}\right\Vert _{\mathcal{Z}}\leq\left(C_{1}K_{A}+\frac{\pi}{2}\right)||v||_{\mathcal{Y}_{\mu}}^{2}\xrightarrow[v\to0]{}0.\label{eq:___v_intv}
\end{equation}
We now look at the term $U(z)n^{\varepsilon}(x,y)\int_{\mathbb{R}}v(z,x,y')dy^{\prime}$.
Since $|U(z)|\leq1$, $n^{\varepsilon}$ satisfies (\ref{eq:hY_Yast}),
and $v$ satisfies (\ref{eq:v_thetacond}), we have
\[
U(z)n^{\varepsilon}(x,y)\int_{\mathbb{R}}v(z,x,y^{\prime})dy^{\prime}=\underbrace{n^{\varepsilon}(x,y)e^{-\kappa|z|}}_{\in\mathcal{Y}_{\mu}}\times\underbrace{U(z)\int_{\mathbb{R}}v(z,x,y^{\prime})e^{\kappa|z|}dy^{\prime}.}_{\text{satisfies (\ref{eq:bm_cond})}\text{ with }\rho=1,\,K_{b}=K_{A}}
\]
Thus, thanks to (\ref{eq:intv_Ybound}) and (\ref{eq:bv_bound}),
we have
\begin{align*}
\left\Vert U(z)n^{\varepsilon}(x,y)\int_{\mathbb{R}}v(z,x,y^{\prime})dy^{\prime}\right\Vert _{\mathcal{Z}} & \leq\left(C_{1}K_{A}+\frac{\pi}{2}\right)||v||_{\mathcal{Y}_{\mu}}||n^{\varepsilon}e^{-\kappa|z|}||_{\mathcal{Y}_{\mu}}\\
 & \leq K_\kappa \left(C_{1}K_{A}+\frac{\pi}{2}\right)||v||_{\mathcal{Y}_{\mu}}||n^{\varepsilon}||_{Y^{*}}\xrightarrow[v\to0]{}0.
\end{align*}
Next, it is well-known that, since $\kappa<-\frac{1}{2}c_{0}+\frac{1}{2}\sqrt{c_{0}^{2}-4\lambda_{0}}$,
\[
\exists C_{U}>0,\,\forall z\in\mathbb{R},\quad U(1-U)(z),|U^{\prime}(z)|\leq C_{U}e^{-\kappa|z|}.
\]
Therefore, from (\ref{eq:hZ_Yast})---(\ref{eq:hY_Yast}), there
holds
\[
||sU^{\prime}(z)n^{\varepsilon}(x,y)||_{\mathcal{Z}}\leq C_{U}\,|s|\,||e^{-\kappa|z|}n^{\varepsilon}(x,y)||_{\mathcal{Z}}\leq C_{U}\,|s|\,||n^{\varepsilon}||_{Y^{*}}\xrightarrow[s\to0]{}0,
\]
\[
||U^{\prime}(z)n_{x}^{\varepsilon}(x,y)||_{\mathcal{Z}}=||U^{\prime}(z)(n^{\varepsilon}-n^{0})_{x}(x,y) ||_{\mathcal{Z}}\leq C_{U}K_\sigma ||n^{\varepsilon}-n^{0}||_{Y^{*}}\xrightarrow[\varepsilon\to0]{}0.
\]
Finally, setting 
\[
b_{\varepsilon}(z,x):= U(z)(1-U(z))e^{\kappa|z|}\left(\lambda_{0}+\int_{\mathbb{R}}n^{\varepsilon}(x,y')dy^{\prime}\right)=U(z)(1-U(z))e^{\kappa|z|}\int_{\mathbb{R}}(n^{\varepsilon}(x,y')-n^{0}(y'))dy^{\prime},
\]
we have, since (\ref{eq:inth_bound})---(\ref{eq:h_thetacond}) holds,
that $||b_{\varepsilon}||_{L^{\infty}(\mathbb{R}^{2})}\leq C_{U}\frac{\pi}{2}||n^{\varepsilon}-n^{0}||_{Y^{*}}$
and satisfies (\ref{eq:bm_cond}) with $\rho=2$ and $K_{b_{\varepsilon}}=K_{A}C_{U}||n^{\varepsilon}-n^{0}||_{Y^{*}}$.
Therefore from (\ref{eq:bv_bound}) we deduce
\[
||b_{\varepsilon}(z,x)e^{-\kappa|z|}n^{\varepsilon}(x,y)||_{\mathcal{Y}_{\mu}}\leq\left(C_{2}K_{b_{\varepsilon}}+||b_{\varepsilon}||_{\infty}\right)||n^{\varepsilon}||_{Y^{*}}\underset{\varepsilon\to0}{=}||n^{\varepsilon}||_{Y^{*}}\times o(1).
\]
Therefore $\Fmu$ is well-defined and continuous at $(0,0)$.

We now compute $D_{(s,v)}\Fmu(0,0,0)$, that is the Fréchet derivative
of $\Fmu$ along the $(s,v)$ variables at point $(0,0,0)$. We have
$\Fmu(0,s,v)=\Lmu(s,v)+\mathcal{R}(s,v)$ where $\mathcal{R}(s,v)=sv_{z}-v\int_{\mathbb{R}}v(z,x,y^{\prime})dy^{\prime}$ and
\begin{equation}
\begin{aligned}
\Lmu(s,v) & = v_{zz}+2v_{xz}+(1+\mu)v_{xx}+v_{yy}+c_{0}v_{z}+sU^{\prime}(z)n^{0}(y)\\
& \quad +v\left(1-A^{2}y^{2}+\lambda_{0}U(z)\right)-U(z)n^{0}(y)\int_{\mathbb{R}}v(z,x,y^{\prime})dy^{\prime}.\label{eq:Lpar}
\end{aligned}
\end{equation}
We readily check from (\ref{eq:Z_control_noMu}) and (\ref{eq:___v_intv})
that $\mathcal{R}(s,v)=o\left(|s|+||v||_{\mathcal{Y}_{\mu}}\right)$.
The continuity of $\Lmu$ is a consequence of the controls obtained
above. Consequently, $D_{(s,v)}\Fmu(0,0,0)=\Lmu$. It remains to prove
the continuity of $D_{(s,v)}\Fmu$ around $(0,0,0)$. This results
from similar arguments as above. Details are omitted.
\end{proof}

\subsection{Bijectivity of $\protect\Lmu$\label{subsec:Front_bijectivity}}

In this subsection we prove that, if $\mu>0$ is small enough, $\Lmu$
is bijective from $\mathbb{R}\times\mathcal{S}_{\mu}$ to $\mathcal{Z}$,
where $\mathcal{S}_{\mu}$ is a subset of $\mathcal{Y}_{\mu}$ that
will be determined later. We proceed by analysis and synthesis. Fix
$f\in\mathcal{Z}$, and assume there exist $(s,v)\in\mathbb{R}\times\mathcal{Y}_{\mu}$
such that $\Lmu(s,v)=f$. Naturally, $s$ and $v$ depend {\it a priori}
on $\mu$, but to ease the readability we shall omit this dependence in the notations.

\subsubsection{Decoupling in $x$ and $y$}

Thanks to (\ref{eq:v_Ybound}) and (\ref{eq:f_Zbound}), we have $v(z,x,\cdot),f(z,x,\cdot)\in L^{2}(\mathbb{R})$
for all $z,x\in\mathbb{R}$. Since the family of eigenfunctions $(\Gamma_{j})_{j\in\mathbb{N}}$
of Proposition \ref{prop:basis_eigenfunctions} forms a Hilbert basis
of $L^{2}(\mathbb{R})$, we can write
\begin{equation}
v(z,x,y)=\sum_{j=0}^{\infty}v_{j}(z,x)\Gamma_{j}(y),\qquad f(z,x,y)=\sum_{j=0}^{\infty}f_{j}(z,x)\Gamma_{j}(y),\label{eq:pulsL2_decomp}
\end{equation}
where we used the notation (\ref{eq:fj_nota}) for $v_{j}$ and $f_{j}$.
Since $(v,f)\in\mathcal{Y}_\mu\times\mathcal{Z}$, all functions $v_{j},f_{j}$
are $L$-periodic in $x$, we may compute their Fourier coefficients
in $x$:
\begin{equation}
v_{j}(z,x)=\sum_{n\in\mathbb{Z}}v_{j}^{n}(z)e_{n}(x),\qquad f_{j}(z,x)=\sum_{n\in\mathbb{Z}}f_{j}^{n}(z)e_{n}(x),\label{eq:pulsFourier_decomp}
\end{equation}
where we used the notation (\ref{eq:fnj_nota})---(\ref{eq:en_nota})
for $v_{j}^{n}$, $f_{j}^{n}$, and $e_{n}$. Note that the equalities
(\ref{eq:pulsL2_decomp})---(\ref{eq:pulsFourier_decomp}) correspond,
\textit{a priori}, to a convergence of the series in the $L^{2}(\mathbb{R})$
and $L^{2}(0,L)$ norms respectively. However, since $\gamma>3>1$, we deduce from (\ref{eq:vnj_Ybound})
and (\ref{eq:fnj_Zbound}) that equalities in (\ref{eq:pulsFourier_decomp})
hold pointwise. Additionally, $v_{j}\in C_{b}^{2}(\mathbb{R}^{2})$
and $f_{j}\in C_{b}(\mathbb{R})$ with
\[
||f_{j}||_{L^{\infty}(\mathbb{R}^{2})}\leq\frac{||f||_{\mathcal{Z}}}{(1+j)^{\beta}},
\]
and the pointwise equality
\[
D_{z}^{p}D_{x}^{q}v_{j}(z,x)=\sum_{n\in\mathbb{Z}} (v_{j}^{n})^{(p)}(z)(i\sigma n)^{q}e_{n}(x),\qquad(p+q\leq2),
\]
which leads to 
\[
||D_{z}^{p}D_{x}^{q}v_{j}||_{L^{\infty}(\mathbb{R}^{2})}\leq\frac{C||v||_{\mathcal{Y}_\mu}}{(1+j)^{\beta}},\qquad(p+q\leq2),
\]
for some $C=C(\gamma)$. Next, since $\beta>\frac{19}{4}>\frac{5}{4}$
and (\ref{eq:eigenfunc_control_inf}) holds, the series in (\ref{eq:pulsL2_decomp})
are also normally convergent, which leads to pointwise equalities
in (\ref{eq:pulsL2_decomp}). Additionally, since $\beta>\frac{19}{4}>\frac{9}{4}$, with (\ref{eq:dGamma_control_inf})
we have the following pointwise equality:
\[
D_{z}^{p}D_{x}^{q}D_{y}^{r}v(z,x,y)=\sum_{j\in\mathbb{N}}D_{z}^{p}D_{x}^{q}v_{j}(z,x)\Gamma_{j}^{(r)}(y),\qquad(p+q+r\leq2),
\]
and since (\ref{eq:eigenfunc_control_L1}) holds, we also have
\[
\int_{\mathbb{R}}v(z,x,y)dy=\sum_{j=0}^{\infty}v_{j}(z,x)\int_{\mathbb{R}}\Gamma_{j}(y)dy=\sum_{j=0}^{\infty}m_{j}v_{j}(z,x),
\]
where we recall the notation $m_{j}:=\int_{\mathbb{R}}\Gamma_{j}(y)dy$.

\medskip

Let us recall that $n^{0}$ is given by (\ref{def:n0}) and that $\Gamma_{j}^{\prime\prime}+(1-A^{2}y^{2})\Gamma_{j}=-\lambda_{j}\Gamma_{j}$
from Proposition \ref{prop:basis_eigenfunctions}. Consequently, when
projecting the equation $\Lmu(s,v)=f$ along $\Gamma_{j}$, we obtain 
\begin{equation}
(v_{j})_{zz}+2(v_{j})_{xz}+(1+\mu)(v_{j})_{xx}+c_{0}(v_{0})_{z}-\left(\lambda_{j}-\lambda_{0}U(z)\right)v_{j}=f_{j},\qquad j\geq1,\label{eq:front_v_j}
\end{equation}
\begin{equation}
(v_{0})_{zz}+2(v_{0})_{xz}+(1+\mu)(v_{0})_{xx}+c_{0}(v_{0})_{z}-\lambda_{0}\left(1-2U(z)\right)v_{0}=f_{0}-\eta U^{\prime}(z)s+\eta U(z)\sum_{\ell=1}^{\infty}m_{\ell}v_{\ell}.\label{eq:front_v_0}
\end{equation}
Then, multiplying (\ref{eq:front_v_j}) and (\ref{eq:front_v_0})
by $\frac{1}{L}e_{-n}(x)$ and integrating over $x\in [0,L]$, we obtain
\begin{equation}
\mathcal{E}_{n,j,\mu}[v_{j}^{n}]:=(v_{j}^{n})^{\prime\prime}+\left(2in\sigma+c_{0}\right)(v_{j}^{n})^{\prime}-\left(\lambda_{j}-(1+\delta_{0j})\lambda_{0}U(z)+(1+\mu)n^{2}\sigma^{2}\right)v_{j}^{n}=\begin{cases}
f_{j}^{n}(z) & j\geq1,\vspace{3pt}\\
\widetilde{f_{0}^{n}}(z) & j=0,
\end{cases}\label{eq:vnj}
\end{equation}
where we recall $\sigma:=\frac{2\pi}{L}>0$ and denote
\begin{equation}
\widetilde{f_{0}^{n}}(z):= f_{0}^{n}(z)-\eta U^{\prime}(z)s\delta_{n0}+\eta U(z)\sum_{\ell=1}^{\infty}m_{\ell}v_{\ell}^{n}(z).\label{eq:f0n_tilde}
\end{equation}
Finally, we define the operator 
\begin{alignat}{2}
\mathcal{L}_{n,j,\mu}\colon & E_{\kappa}^{2} &  & \to E_{\kappa}^{0}\label{eq:Lnj}\\
 & u &  & \mapsto\mathcal{E}_{n,j,\mu}[u],\nonumber 
\end{alignat}
where for any $k\in\mathbb{N}$ we set
\begin{equation}
E_{\kappa}^{k}:=\left\{ g\in C^{k}(\mathbb{R},\mathbb{C}): ||g||_{\kappa,k}<\infty\right\} ,\qquad ||g||_{\kappa,k}:=\sum_{r=0}^{k} ||g^{(r)}(z)e^{\kappa|z|}||_{L^{\infty}}.\label{eq:E_kappa}
\end{equation}

\medskip

The proof for the rest of subsection \ref{subsec:Front_bijectivity}
is organized as follows. 
\begin{itemize}
\item In subsection \ref{sssec:homo_pb}, we construct a fundamental system
of solutions of the homogenous equations associated to (\ref{eq:vnj}).
\item Then, in subsection \ref{subsec:kappa_Lredef}, we fix the value of
$\kappa$ and investigate the injectivity of the linear operators
$\mathcal{L}_{n,j,\mu}$. To ensure that each $\mathcal{L}_{n,j,\mu}$
is injective, we may redefine some of them on a smaller space $\mathcal{S}_{n,j,\mu}\subset E_{\kappa}^{2}$.
\item Next, in subsection \ref{sssec:solve_j_geq1}, for any $j\geq1$,
we construct explicitly the solution of (\ref{eq:vnj}), which proves
the surjectivity of $\mathcal{L}_{n,j,\mu}$. We also prove that,
for any $n\in\mathbb{Z}$, $j\geq1$ and $\mu>0$ small enough, $v_{j}^{n}$
satisfies (\ref{eq:vnj_toProve}).
\item Afterwards, in subsection \ref{sssec:solve_j_0}, we prove that $\widetilde{f_{0}^{n}}$
satisfies a bound of the type (\ref{eq:fnj_Zbound}). The construction
of $v_{0}^{n}$ then follows in the same way, except for the case
$n=0$, where we shall also prove the existence and uniqueness of
$s$.
\item Finally, in subsection \ref{sssec:Reconstruct_v}, we prove the existence
and uniqueness of $s\in\mathbb{R}$ and $v\in\mathcal{S}_{\mu}\subset\mathcal{Y}_{\mu}$
such that $\Lmu(s,v)=f$, where $\mathcal{S}_{\mu}$ is constructed
from the spaces $\mathcal{S}_{n,j,\mu}$.
\end{itemize}

\subsubsection{Fundamental system of solutions for the homogeneous problem\label{sssec:homo_pb}}

We consider the homogeneous equation associated to (\ref{eq:vnj}),
that is
\begin{equation}
k^{\prime\prime}+\left(2in\sigma+c_{0}\right)k^{\prime}-\left(\lambda_{j}-(1+\delta_{0j})\lambda_{0}U(z)+(1+\mu)n^{2}\sigma^{2}\right)k=0.\label{eq:homo}
\end{equation}
Although we assumed $0<\mu<1$, we also need to consider
solutions of (\ref{eq:homo}) for $\mu=0$. For that reason we shall
assume in this subsection that $0\leq\mu<1$ unless otherwise stated.

To construct a fundamental system of solutions of (\ref{eq:homo}),
we first take the limit $z\rightarrow\pm\infty$ in the coefficients
of (\ref{eq:homo}), and thus consider
\begin{equation}
k^{\prime\prime}+\left(2in\sigma+c_{0}\right)k^{\prime}-\left(\lambda_{j}-(1+\delta_{0j})\lambda_{0}+(1+\mu)n^{2}\sigma^{2}\right)k=0,\label{eq:homo_-}
\end{equation}
and
\begin{equation}
k^{\prime\prime}+\left(2in\sigma+c_{0}\right)k^{\prime}-\left(\lambda_{j}+(1+\mu)n^{2}\sigma^{2}\right)k=0.\label{eq:homo_+}
\end{equation}
A fundamental system of solutions of (\ref{eq:homo_-}) is given by
$z \mapsto e^{a_{n,j,\mu}^{\pm}z}$ with\footnote{In what follows, for any $z=re^{i\theta}\in\mathbb{C}$ with $r\geq0$
and $\theta\in]-\pi,\pi]$, we denote $\sqrt{z}:=\sqrt{r}e^{i\theta/2}$.
In particular, $\text{Re }\sqrt{z}>0$ if $z\in\mathbb{C}\backslash\mathbb{R}_{-}$.}
\begin{equation}
a_{n,j,\mu}^{\pm}=\frac{1}{2}\left(-2in\sigma-c_{0}\pm\sqrt{4\mu n^{2}\sigma^{2}+c_{0}^{2}+4in\sigma c_{0}+4\left[(1-\delta_{0j})\lambda_{j}-\lambda_{0}\right]}\right).\label{eq:a_nj}
\end{equation}
Similarly, a system for (\ref{eq:homo_+}) is given by $z \mapsto e^{b_{n,j,\mu}^{\pm}z}$
with 
\begin{equation}
b_{n,j,\mu}^{\pm}=\frac{1}{2}\left(-2in\sigma-c_{0}\pm\sqrt{4\mu n^{2}\sigma^{2}+c_{0}^{2}+4in\sigma c_{0}+4\lambda_{j}}\right).\label{eq:b_nj}
\end{equation}
Note that, for all $(n,j)\in\mathbb{Z}\times\mathbb{N}$ and $0\leq\mu<1$,
one can straightforwardly check that
\begin{equation}
\text{Re } a_{n,j,\mu}^{-}<0<\text{Re }a_{n,j,\mu}^{+},\qquad\text{Re }b_{n,j,\mu}^{-}<0,\label{eq:sign_ab}
\end{equation}
\begin{equation}
\text{sign}\left(\text{Re }(b_{n,j,\mu}^{+})\right)=\text{sign}\left(\lambda_{j}+(1+\mu)n^{2}\sigma^{2}\right),\label{eq:sign_b+}
\end{equation}
\begin{equation}
\Re\left(a^{+}_{n,j,\mu}-a^{-}_{n,j,\mu}\right)>0,\qquad\Re\left(b^{+}_{n,j,\mu}-b^{-}_{n,j,\mu}\right)\begin{cases}
>0 & \text{if }(n,j,c_{0})\neq(0,0,c^{*}),\\
=0 & \text{otherwise},
\end{cases}\label{eq:Re_a-a_sign}
\end{equation}
with the convention $\text{sign}(0)=0$ and where we recall $c_0\geq c^*:=2\sqrt{-\lambda_0}$. We have the following estimates. 

\begin{lem}[Estimates related to $a_{n,j,\mu}^{\pm},b_{n,j,\mu}^{\pm}$]
\label{lem:ab_estim}There exist $\underline{C},\overline{C}>0$
such that for any $(n,j)\in\mathbb{Z}\times\mathbb{N}$ and $0\leq\mu<1$,
there holds
\begin{equation}
|a_{n,j,\mu}^{\pm}|,\,|b_{n,j,\mu}^{\pm}|\leq\overline{C}\left(1+|n|+\sqrt{j}\right),\label{eq:a_bound}
\end{equation}
\begin{equation}
\left|a_{n,j,\mu}^{+}-a_{n,j,\mu}^{-}\right|,\,\left|b_{n,j,\mu}^{+}-b_{n,j,\mu}^{-}\right|\,\geq\underline{C}\sqrt{\mu n^{2}+j+|n|},\label{eq:a-a_bound}
\end{equation}
\begin{equation}
\left|\Re a_{n,j,\mu}^{\pm}\right|,\,\left|\Re b_{n,j,\mu}^{\pm}\right|\geq\underline{C}\sqrt{\mu n^{2}+j+|n|}-c_{0},\label{eq:Re_a_bound}
\end{equation}
\begin{equation}
\Re\left(a_{n,j,\mu}^{+}-a_{n,j,\mu}^{-}\right),\,\Re\left(b_{n,j,\mu}^{+}-b_{n,j,\mu}^{-}\right)\geq\underline{C}\sqrt{j+|n|},\label{eq:Re_a-a_noMu}
\end{equation}
\begin{equation}
|a_{n,j,\mu}^{\pm}-b_{n,j,\mu}^{\pm}|\leq\overline{C}.\label{eq:a-b_bound}
\end{equation}
\end{lem}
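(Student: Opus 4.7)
The plan is to reduce everything to algebraic manipulations on the complex discriminants
\[
D_a := 4\mu n^{2}\sigma^{2} + c_{0}^{2} + 4in\sigma c_{0} + 4[(1-\delta_{0j})\lambda_j - \lambda_0], \qquad D_b := 4\mu n^{2}\sigma^{2} + c_{0}^{2} + 4in\sigma c_{0} + 4\lambda_j,
\]
so that $a_{n,j,\mu}^{\pm} = \tfrac{1}{2}(-2in\sigma - c_{0} \pm \sqrt{D_a})$ and $b_{n,j,\mu}^{\pm} = \tfrac{1}{2}(-2in\sigma - c_{0} \pm \sqrt{D_b})$, with $\sqrt{\cdot}$ denoting the principal branch. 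In particular $a_{n,j,\mu}^{+}-a_{n,j,\mu}^{-} = \sqrt{D_a}$ and $b_{n,j,\mu}^{+}-b_{n,j,\mu}^{-} = \sqrt{D_b}$, which is the source of four of the five bounds.

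The preliminary facts I would establish first are the following. Using $\lambda_j = -1 + (2j+1)A$ and $c_0 \geq 2\sqrt{-\lambda_0}$, both $\Re D_a$ and $\Re D_b$ are nonnegative: more precisely $\Re D_a \geq c_0^2 - 4\lambda_0 + 4\mu n^2\sigma^2$ always, with an additional $8jA$ when $j \geq 1$, and $\Re D_b \geq 4\mu n^2\sigma^2 + 8jA$ for $j \geq 1$ (with $\Re D_b \geq 0$ for $j=0$). Next, $|\Im D_a| = |\Im D_b| = 4|n|\sigma c_0$. Combining these, $|D_a|, |D_b| \geq c(1 + \mu n^2 + j + |n|)$ and $|D_a|, |D_b| \leq C(1 + n^2 + j)$, uniformly in $\mu \in [0,1)$. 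Finally, since $\Re D_a, \Re D_b \geq 0$, their principal arguments lie in $[-\pi/2, \pi/2]$, whence $\Re\sqrt{D_a} \geq \tfrac{1}{\sqrt{2}}\sqrt{|D_a|}$ and likewise for $b$.

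From these ingredients the estimates fall out quickly. For (\ref{eq:a_bound}), $|a_{n,j,\mu}^{\pm}| \leq |n|\sigma + c_0/2 + \tfrac{1}{2}\sqrt{|D_a|} \lesssim 1 + |n| + \sqrt{j}$. For (\ref{eq:a-a_bound}), $|a^+-a^-| = \sqrt{|D_a|} \gtrsim \sqrt{\mu n^2 + j + |n|}$. For (\ref{eq:Re_a_bound}), $|\Re a_{n,j,\mu}^{\pm}| = \tfrac{1}{2}|c_0 \mp \Re\sqrt{D_a}| \geq \tfrac{1}{2}\Re\sqrt{D_a} - \tfrac{c_0}{2} \geq \underline{C}\sqrt{\mu n^2 + j + |n|} - c_0$ after adjusting $\underline{C}$. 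For (\ref{eq:Re_a-a_noMu}), the same computation is used but the $\mu n^2$ contribution is dropped: the $\mu$-independent part of $\Re D_a$ and $\Re D_b$, together with $|\Im D_a|$, already gives $|D_a|, |D_b| \gtrsim j + |n|$ whenever $j \geq 1$ or $n \neq 0$, while the exceptional case $(n,j) = (0,0)$ with $c_0 = c^{\ast}$ for $b$, flagged by (\ref{eq:Re_a-a_sign}), is trivially compatible with the right-hand side being zero. For (\ref{eq:a-b_bound}), the telescoping identity
\[
\sqrt{D_a} - \sqrt{D_b} = \frac{D_a - D_b}{\sqrt{D_a} + \sqrt{D_b}},
\]
together with $|D_a - D_b| = 4|\lambda_0|(1 + \delta_{0j}) \leq 8|\lambda_0|$ and $|\sqrt{D_a} + \sqrt{D_b}| \geq \Re\sqrt{D_a} \geq \tfrac{1}{\sqrt{2}}\sqrt{c_0^2 - 4\lambda_0} > 0$ yields $|a_{n,j,\mu}^{\pm} - b_{n,j,\mu}^{\pm}| = \tfrac{1}{2}|\sqrt{D_a} - \sqrt{D_b}| \lesssim 1$.

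The main obstacle is not conceptual but organisational: one has to carefully separate the $\mu$-dependent and $\mu$-independent contributions to $\Re D_a$, $\Re D_b$ and treat the degenerate regime $j=0$ separately, so that the constants $\underline{C}, \overline{C}$ can be chosen uniformly in $(n,j,\mu) \in \mathbb{Z} \times \mathbb{N} \times [0,1)$. No genuine difficulty appears beyond the principal-branch inequality $\Re\sqrt{z} \geq \tfrac{1}{\sqrt{2}}\sqrt{|z|}$, valid whenever $\Re z \geq 0$. This $\mu$-uniform control is precisely what is needed later when passing to the limit $\mu \to 0$.
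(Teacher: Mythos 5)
Your proof is correct and takes essentially the paper's route: the paper declares (\ref{eq:a_bound})--(\ref{eq:Re_a-a_noMu}) straightforward and omits them, and establishes (\ref{eq:a-b_bound}) exactly as you do, by writing $a^{\pm}_{n,j,\mu}-b^{\pm}_{n,j,\mu}$ as $\pm\frac12(\sqrt{Z+\Lambda}-\sqrt{Z})$ with $Z$ in the closed right half-plane and $\Lambda=-4\lambda_0(1+\delta_{0j})>0$, the uniform boundedness of which is precisely your rationalization $\sqrt{D_a}-\sqrt{D_b}=(D_a-D_b)/(\sqrt{D_a}+\sqrt{D_b})$ combined with $\Re\sqrt{z}\ge\tfrac1{\sqrt2}\sqrt{|z|}$. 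Two harmless bookkeeping slips: for $j\ge1$ one has $\Re D_a=4\mu n^2\sigma^2+c_0^2+8jA$ (the term $-4\lambda_0$ is \emph{replaced} by $8jA$, not supplemented by it), so the claim $\Re D_a\ge c_0^2-4\lambda_0+4\mu n^2\sigma^2$ can fail when $A<1/(2j+1)$; accordingly, in the last step the uniform lower bound on the denominator should be taken as $\tfrac{1}{\sqrt2}\,c_0$ (valid for all $j$) rather than $\tfrac{1}{\sqrt2}\sqrt{c_0^2-4\lambda_0}$. Neither affects any of the five estimates.
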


\begin{proof}
The proofs of estimates (\ref{eq:a_bound})---(\ref{eq:Re_a-a_noMu})
are straightforward and omitted. As for (\ref{eq:a-b_bound}), notice
that
\[
-2\left(a_{n,j,\mu}^{-}-b_{n,j,\mu}^{-}\right)=2(a_{n,j,\mu}^{+}-b_{n,j,\mu}^{+})=\sqrt{Z(n,j,\mu)-4\lambda_{0}(1+\delta_{0j})}-\sqrt{Z(n,j,\mu)},
\]
where $Z(n,j,\mu)$ belongs to the half-plane $H_{+}:=\left\{ z\in\mathbb{C}: \Re z\geq0\right\} $
for all $n,j,\mu$. Therefore, setting $\Lambda:=-4\lambda_{0}(1+\delta_{0j})\in \{-4\lambda _0,-8\lambda_0\}>0$,
it is enough to prove that the function $h\colon Z\in H_{+}\mapsto\sqrt{Z+\Lambda}-\sqrt{Z}$
is uniformly bounded, which is rather clear.
\end{proof}

The construction of solutions for (\ref{eq:homo}) follows from the following when $(n,j)\neq (0,0)$.

\begin{lem}[Fundamental system of (\ref{eq:homo}) for $(n,j)\neq(0,0)$]
\label{lem:homo_nj}There exists $\mu_{max}>0$ such that the following
results hold. Fix any $(n,j)\in\mathbb{Z}\times\mathbb{N}$ with\textbf{
$(n,j)\neq(0,0)$} and $0\leq\mu<\mu_{max}$. There exists a fundamental
system of solutions $(\varphi_{-},\varphi_{+})$ of (\ref{eq:homo})
such that
\begin{equation}
\varphi_{-}(z)=\begin{cases}
P_{-}(z)e^{a_{n,j,\mu}^{-}z} & z\leq0,\\
Q_{-}(z)e^{b_{n,j,\mu}^{-}z} & z\geq0,
\end{cases}\qquad\varphi_{+}(z)=\begin{cases}
P_{+}(z)e^{a_{n,j,\mu}^{+}z} & z\leq0,\\
Q_{+}(z)e^{b_{n,j,\mu}^{+}z} & z\geq0,
\end{cases}\label{eq:phi_pm}
\end{equation}
with $P_{\pm}\in C_{b}^{2}(\mathbb{R}_{-})$, $Q_{\pm}\in C_{b}^{2}(\mathbb{R}_{+})$
and $a_{n,j,\mu}^{\pm},b_{n,j,\mu}^{\pm}$ given by (\ref{eq:a_nj})---(\ref{eq:b_nj}).
Also, $\liminf_{z\to-\infty}|P_{-}(z)|>0$ and $\liminf_{z\to+\infty}|Q_{+}(z)|>0$.

Additionally, there exists $R_{max}>0$ such that 
\begin{equation}
\sup_{(n,j)\neq(0,0)}\sup_{0\leq\mu<\mu_{max}}\sup_{R\in\{P_{\pm},Q_{\pm}\}}\left(||R||_{\infty}+||R^{\prime}||_{\infty}\right)\leq R_{max},\label{eq:PQ_bounds}
\end{equation}
where by convention the sup norm is taken over the domain of definition 
of $R$. 

Next, there exists $W_{0}>0$ such that for all $(n,j)\neq(0,0)$
and $0\leq\mu<\mu_{max}$, the Wronskian of $(\varphi_-,\varphi_+)$ at $z=0$ satisfies
\begin{equation}
\left|W_{\varphi}\right|:=\left|\left[\varphi_{-}^{\prime}\varphi_{+}-\varphi_{+}^{\prime}\varphi_{-}\right](0)\right|\geq W_{0},\label{eq:W0}
\end{equation}
Also, there exist $C_{W},N_{0},J_{0}>0$ such that, if $|n|\geq N_{0}$
or $j\geq J_{0}$, we have for any such $n,j$
\begin{equation}
\frac{1}{|W_{\varphi}|}\leq\frac{C_{W}}{\sqrt{1+\mu n^{2}+j+|n|}},\qquad\forall\mu\in[0,\mu_{max}).\label{eq:W0_bignj}
\end{equation}

Finally, there exist $\zeta_{1},\zeta_{2}>0$ such that for all $(n,j)\neq0$
and $0\leq\mu<\mu_{max}$,
\begin{equation}
\int_{-\infty}^{0}|\varphi_{+}(z)|^{2}dz\geq\zeta_{1}e^{-\zeta_{2}\Re a^{+}}.\label{eq:int_phi}
\end{equation}
\end{lem}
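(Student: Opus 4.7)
The plan is to construct $\varphi_+$ as the unique (up to scaling) solution of (\ref{eq:homo}) decaying like $e^{a^+_{n,j,\mu} z}$ at $-\infty$, and $\varphi_-$ as the unique solution decaying like $e^{b^-_{n,j,\mu} z}$ at $+\infty$, and then verify the two-sided factorisations, the Wronskian bounds and the integral estimate. First I would perform the change of variable $k(z)=P(z)e^{a^\pm z}$ on $(-\infty,0]$ and $k(z)=Q(z)e^{b^\pm z}$ on $[0,+\infty)$, which converts (\ref{eq:homo}) into
\[
P'' + (a^\pm_{n,j,\mu}-a^\mp_{n,j,\mu})\, P' + (1+\delta_{0j})\lambda_0(U(z)-1)\,P = 0, \quad z\leq 0,
\]
and the analogous equation for $Q$ on $\R_+$ with perturbation $(1+\delta_{0j})\lambda_0 U(z)$. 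Since $(c_0,U)$ is a Fisher--KPP front, $1-U$ and $U$ decay exponentially at $-\infty$ and $+\infty$ respectively with rates independent of $(n,j,\mu)$. Variation of parameters against the two explicit solutions $1$ and $e^{-(a^\pm-a^\mp)z}$ of the constant-coefficient part then recasts these as Volterra integral equations, and a Banach fixed-point argument in $C_b$ on each half-line produces Picard solutions $\tilde P_\pm$ and $\tilde Q_\pm$ normalised so that $\tilde P_\pm(-\infty)=\tilde Q_\pm(+\infty)=1$. The contraction constants are controlled by $(|a^+-a^-|)^{-1}$ (resp.\ $(|b^+-b^-|)^{-1}$) times the $L^1$-mass of the perturbation, hence uniform in $(n,j)\ne(0,0)$ and $\mu<\mu_{\max}$ thanks to (\ref{eq:a-a_bound}); this yields (\ref{eq:PQ_bounds}).

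I would then set $\varphi_+$ to be the extension to $\R$ of $\tilde P_+ e^{a^+ z}$ and $\varphi_-$ that of $\tilde Q_- e^{b^- z}$. On $[0,+\infty)$ the extension of $\varphi_+$ admits a unique decomposition $\varphi_+(z) = \alpha_{n,j,\mu}\,\tilde Q_+(z)e^{b^+z} + \beta_{n,j,\mu}\,\tilde Q_-(z)e^{b^- z}$, so that $Q_+(z):=\alpha_{n,j,\mu}\tilde Q_+(z) + \beta_{n,j,\mu}\tilde Q_-(z)e^{(b^--b^+)z}$ is bounded on $\R_+$ thanks to $\Re(b^--b^+)\le 0$ from (\ref{eq:Re_a-a_sign}), which holds since $(n,j)\ne(0,0)$; analogously $\varphi_-$ factors as $P_-(z)e^{a^- z}$ on $\R_-$.

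The hard part will be to secure $\liminf_{z\to+\infty}|Q_+(z)|>0$ and $\liminf_{z\to-\infty}|P_-(z)|>0$ uniformly in $(n,j,\mu)$. These amount to uniform non-vanishing of the linking coefficient $\alpha_{n,j,\mu}$ (and its $\varphi_-$-counterpart): $\alpha_{n,j,\mu}=0$ would force $\varphi_+$ to decay exponentially at both infinities, i.e.\ to be a nontrivial element of the kernel of $\mathcal L_{n,j,\mu}$ in a two-sided exponentially weighted space. For $\mu=0$ and $(n,j)\ne(0,0)$, a direct inspection of the asymptotic constant-coefficient operators at $z=\pm\infty$ via (\ref{eq:sign_ab})--(\ref{eq:sign_b+}) shows that $0$ lies outside their essential spectrum, so a standard relatively compact perturbation argument rules out such a kernel; a perturbation argument in $\mu$ then transfers the conclusion to small $\mu>0$, and a compactness sweep over the precompact region of remaining $(n,j,\mu)$ closes the loop and pins down a $\mu_{\max}>0$ together with the required uniform lower bound on $|\alpha_{n,j,\mu}|$.

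For (\ref{eq:W0})--(\ref{eq:W0_bignj}), since the coefficient of $k'$ in (\ref{eq:homo}) is the constant $2in\sigma+c_0$, the map $z\mapsto W_\varphi(z)e^{(2in\sigma+c_0)z}$ is constant on $\R$; evaluating at $z\to-\infty$ via the asymptotics $\varphi_+\sim e^{a^+z}$ and $\varphi_-\sim (\text{linking const})\,e^{a^-z}$ yields $W_\varphi(0)=(a^--a^+)\cdot(\text{linking const})$, and combining with the uniform lower bound from the previous step and (\ref{eq:a-a_bound}) delivers both (\ref{eq:W0}) and (\ref{eq:W0_bignj}). Finally, on $\R_-$ one has $|\varphi_+(z)|^2 = |\tilde P_+(z)|^2 e^{2\Re(a^+) z}$; uniform Picard convergence provides $M>0$ independent of $(n,j,\mu)$ with $|\tilde P_+|\ge \tfrac12$ on $(-M,0]$, so that
\[
\int_{-\infty}^0|\varphi_+(z)|^2\, dz \;\ge\; \frac{1-e^{-2M\Re a^+}}{8\,\Re a^+},
\]
which dominates $\zeta_1 e^{-\zeta_2\Re a^+}$ for suitable $\zeta_1,\zeta_2>0$, establishing (\ref{eq:int_phi}).
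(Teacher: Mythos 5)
Your overall architecture is the same as the paper's: half-line Picard/Volterra constructions of $\tilde P_\pm,\tilde Q_\pm$ with contraction constants controlled by $|a^+-a^-|^{-1}$ and the $L^1$-mass of $1-U$ (resp.\ $U$), gluing via linking coefficients, the Wronskian read off from the asymptotics, and the integral bound from a pointwise lower bound on $|P_+|$. However, there is one genuine gap, at exactly the step you yourself flag as "the hard part": ruling out $\alpha_{n,j,\mu}=0$, i.e.\ excluding a nontrivial solution of (\ref{eq:homo}) that decays exponentially at both infinities. Your argument --- "$0$ lies outside the essential spectrum of the asymptotic constant-coefficient operators, so a relatively compact perturbation argument rules out such a kernel" --- does not work: knowing that $0$ avoids the essential spectrum only tells you the operator is Fredholm; $0$ could still be an isolated eigenvalue of finite multiplicity, and a two-sided decaying solution is precisely such an eigenfunction. (Compare $-u''+Vu$ with $V$ compactly supported: the essential spectrum is $[0,\infty)$, yet negative bound states may exist.) There is no soft spectral or perturbative reason forbidding the one-dimensional "stable-at-$+\infty$" and "unstable-at-$-\infty$" subspaces from coinciding; this is an eigenvalue condition that must be excluded by a genuine argument.

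The paper closes this gap with two nontrivial ingredients that your sketch is missing. First, a Fredholm computation (Theorem~\ref{thm:Fredholm}, Lemma~\ref{lem:Fredholm_weight}) shows that the operator $\mathcal{L}_{n,j,\mu}^{\delta,\rho}$, acting between H\"older spaces weighted by $e^{\rho z}$ with $\rho=c_0/2$, has index \emph{zero}, by counting the unstable dimensions $k_\pm$ of the limiting first-order systems at $z=\pm\infty$ (here the choice of weight matters: it symmetrizes the exponents so that $\Re a^\pm+\rho=\pm\tfrac12\Re(a^+-a^-)$). Second, an explicit variation-of-constants construction proves that this operator is \emph{surjective} on the weighted space. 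Only the combination (index zero) $+$ (surjective) $\Rightarrow$ (injective) kills the putative two-sided decaying solution. You would need to supply both of these, or an equivalent substitute (e.g.\ an Evans-function or energy argument), to make your step rigorous. Two smaller points: for bounded $(n,j)$ the fixed-point argument only gives $|\tilde P_+|\ge\tfrac12$ on $(-\infty,z_0]$ for some fixed $z_0<0$, not on $(-M,0]$, so the integral in (\ref{eq:int_phi}) should be localized near $z_0$ (this only changes the constants $\zeta_1,\zeta_2$); and for the uniform lower bound on the linking coefficient you must treat large $(n,j)$ quantitatively (as the paper does via the explicit $2\times2$ linking system and (\ref{eq:PQtilde_0})--(\ref{eq:PpQptilde0})), since a compactness sweep only covers the finitely many remaining indices.
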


The proof of Lemma \ref{lem:homo_nj}, lengthy and technical,
is postponed to Appendix \ref{sec:Lemmaphi}. The case $n=j=0$ is simpler and reads as follows.

\begin{lem}[Fundamental system of (\ref{eq:homo}) for $(n,j)=(0,0)$]
\label{lem:homo_00}For all $0\leq \mu<1$, a fundamental system of solutions
of (\ref{eq:homo}) when $n=j=0$ is given by $(U^{\prime},\Upsilon)$,
where $U$ solves (\ref{eq:FKPP_front}) and 
\begin{equation}
\Upsilon(z):= U^{\prime}(z)\int_{z}^{+\infty}\frac{1}{U^{\prime}(\omega)^{2}}e^{-c_{0}\omega}d\omega.\label{eq:upsilon}
\end{equation}
Additionally, we have 
\begin{equation}
U^{\prime}(z)\approx_{-\infty}e^{a_{0,0,0}^{+}z},\qquad U^{\prime}(z)\approx_{+\infty}\begin{cases}
e^{b_{0,0,0}^{+}z} & \text{if }c_{0}>c^{*},\\
ze^{b_{0,0,0}^{+}z} & \text{if }c_{0}=c^{*},
\end{cases}\label{eq:Up_equiv}
\end{equation}
\begin{equation}
\Upsilon(z)\approx_{-\infty}e^{a_{0,0,0}^{-}z}\qquad\Upsilon(z)\approx_{+\infty}\begin{cases}
e^{b_{0,0,0}^{-}z} & \text{if }c_{0}>c^{*},\\
\frac{1}{z}e^{b_{0,0,0}^{-}z} & \text{if }c_{0}=c^{*},
\end{cases}\label{eq:upsilon_equiv}
\end{equation}
where $A(z)\approx_{\pm\infty}B(z)$ with $B(z)>0$ means $0<\liminf_{\pm\infty}\frac{\left|A(z)\right|}{B(z)}<\limsup_{\pm\infty}\frac{\left|A(z)\right|}{B(z)}<+\infty$.
\end{lem}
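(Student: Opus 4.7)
My plan is the following. When $(n,j)=(0,0)$ the equation (\ref{eq:homo}) collapses to $k''+c_{0}k'+\lambda_{0}(2U(z)-1)k=0$ (since the $(1+\delta_{0j})\lambda_{0}U(z)$ term becomes $2\lambda_{0}U(z)$, and $\lambda_{j}=\lambda_{0}$). First I would simply differentiate the Fisher-KPP profile equation (\ref{eq:FKPP_front}) once in $z$, obtaining $U'''+c_{0}U''-\lambda_{0}U'(1-2U)=0$, which is precisely the above equation with $k=U'$. Hence $U'$ is a (nonvanishing, since $U'<0$) solution.

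To build the second independent solution, I would use reduction of order. Setting $\Upsilon=U'\cdot w$, the Wronskian $W:=U'\Upsilon'-U''\Upsilon=(U')^{2}w'$ must satisfy Abel's identity $W'+c_{0}W=0$, so $W(z)=W(0)e^{-c_{0}z}$, i.e.\ $w'(z)=W(0)(U'(z))^{-2}e^{-c_{0}z}$. Choosing $W(0)$ with the appropriate sign and integrating from $z$ to $+\infty$ yields formula (\ref{eq:upsilon}); the constant Wronskian being nonzero gives linear independence with $U'$. Convergence of the integral is the only nontrivial point and follows from the decay of $(U'(\omega))^{-2}e^{-c_{0}\omega}$ as $\omega\to+\infty$, which in turn comes from Step 3 below.

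The asymptotics (\ref{eq:Up_equiv})--(\ref{eq:upsilon_equiv}) are then deduced from classical KPP profile theory combined with explicit integration. Near $-\infty$, the linearization of (\ref{eq:FKPP_front}) at $U=1$ reads $v''+c_{0}v'+\lambda_{0}v=0$ for $v:=1-U$, with characteristic roots $a_{0,0,0}^{\pm}$; the stable mode at $-\infty$ forces $v\sim C e^{a^{+}z}$ and hence $U'(z)\approx_{-\infty}e^{a_{0,0,0}^{+}z}$. Near $+\infty$, the linearization at $U=0$ is $U''+c_{0}U'-\lambda_{0}U=0$ with roots $b_{0,0,0}^{\pm}$: for $c_{0}>c^{*}$ the Fisher-KPP front decays like $Ce^{b^{+}z}$ (slow decay), hence $U'\approx e^{b^{+}z}$, while for $c_{0}=c^{*}$ the double root produces the classical logarithmic (algebraic) correction $U(z)\sim Cz e^{-c^{*}z/2}$, so $U'\approx z e^{b^{+}z}$. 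Plugging these into (\ref{eq:upsilon}) and evaluating the integrals, I would use the identities $a^{+}+a^{-}=b^{+}+b^{-}=-c_{0}$ to rewrite the exponents: e.g.\ as $z\to-\infty$, $(U'(\omega))^{-2}e^{-c_{0}\omega}\sim e^{-(c_{0}+2a^{+})\omega}$ with $c_{0}+2a^{+}>0$, so the integral is dominated by its lower endpoint and produces $\Upsilon(z)\approx e^{a^{+}z}\cdot e^{-(c_{0}+2a^{+})z}=e^{a^{-}z}$; the analogous computation at $+\infty$ yields the two announced behaviors (the critical case producing an $\int_z^{+\infty}\omega^{-2}d\omega$ factor).

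The main obstacle is really only in Step 3, namely invoking the precise critical-speed asymptotic $U(z)\sim Cz e^{-c^{*}z/2}$ at $+\infty$: this is standard but requires citing (or briefly recalling via a center-manifold/phase-plane argument) the Aronson--Weinberger type sharp decay for the critical KPP front. Everything else is direct linear ODE computation.
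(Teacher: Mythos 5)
Your proposal is correct and follows essentially the same route as the paper: differentiate (\ref{eq:FKPP_front}) to see that $U'$ solves the $(n,j)=(0,0)$ homogeneous equation, obtain $\Upsilon$ by reduction of order (the paper writes $\Upsilon=gU'$; your Abel/Wronskian phrasing is the same computation and also delivers linear independence), cite the classical supercritical and critical KPP decay rates for (\ref{eq:Up_equiv}), and read off (\ref{eq:upsilon_equiv}) by asymptotic evaluation of the integral in (\ref{eq:upsilon}).

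One caveat on the critical case at $+\infty$. Carrying out exactly the computation you describe, $U'(\omega)\approx \omega e^{b^{+}\omega}$ with $2b^{+}=-c_{0}$ gives $U'(\omega)^{-2}e^{-c_{0}\omega}\approx \omega^{-2}$, hence $\int_{z}^{+\infty}\omega^{-2}\,d\omega = z^{-1}$ and $\Upsilon(z)\approx z e^{b^{+}z}\cdot z^{-1}=e^{b^{-}z}$: the $1/z$ produced by the integral cancels against the $z$ in $U'$. So your computation actually yields $\Upsilon\approx_{+\infty} e^{b^{-}z}$ rather than the $\frac{1}{z}e^{b^{-}z}$ announced in (\ref{eq:upsilon_equiv}); the extra $1/z$ in the statement appears to be a slip (harmless for the only later use of the lemma, which relies on the blow-up of $\Upsilon$ at $-\infty$). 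You should not assert that the computation yields the announced behavior in that subcase without noting this discrepancy.
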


\begin{proof}
Estimates (\ref{eq:Up_equiv}) are classical results for the critical $(c_0=c^*)$ and supercritical ($c_0>c^*$) Fisher-KPP traveling waves. When $n=j=0$,
(\ref{eq:homo}) amounts to
\begin{equation}
k^{\prime\prime}+c_{0}k^{\prime}-\lambda_{0}\left(1-2U(z)\right)k=0.\label{eq:L00_homo}
\end{equation}
Note that $\mu$ does not play any role here. First, we see that $U^{\prime}$
solves (\ref{eq:L00_homo}) since $U$ solves (\ref{eq:FKPP_front}). In this case another solution (non-proportional to $U^\prime$) of (\ref{eq:L00_homo}) can be sought in the form of $\Upsilon(z) = g(z)U^{\prime}(z)$. Using this, some straightforward computations yield that (\ref{eq:upsilon}) is another solution. Then (\ref{eq:upsilon_equiv})
follows straightforwardly from (\ref{eq:upsilon}) and (\ref{eq:Up_equiv}). 
\end{proof}

\subsubsection{Fixing the values $\kappa$ and $\mu_{max}$, redefinitions of $\mathcal{L}_{n,j,\mu}$\label{subsec:kappa_Lredef}}

Here, we shall fix the value of $\kappa$ with the following Lemma.

\begin{lem}[Choice of $\kappa$]
\label{lem:kappa_choice}If $\mu_{max}>0$ is small enough, there
exists $\kappa>0$ such that for all $(n,j)\in\mathbb{Z}\times\mathbb{N}$
and $0\leq\mu<\mu_{max}$, we have
\begin{equation}
\begin{cases}
\left|\Re a_{n,j,\mu}^{\pm}\right|,\left|\Re b_{n,j,\mu}^{-}\right|\geq2\kappa,\vspace{0.3cm}\\
\left|\Re b_{n,j,\mu}^{+}\right|\geq2\kappa, & \text{if }\Re b_{n,j,\mu}^{+}<0.
\end{cases}\label{eq:kappa}
\end{equation}
Also, there exist $C_{\kappa},\overline{N},\overline{J}\geq0$ such
that if $|n|\geq\overline{N}$ or $j\geq\overline{J}$, then for all
$0\leq\mu<\mu_{max}$, we have
\begin{equation}
0<\frac{1}{|\Re a_{n,j,\mu}^{\pm}|-\kappa},\frac{1}{|\Re b_{n,j,\mu}^{\pm}|-\kappa}\leq\frac{C_{\kappa}}{\sqrt{1+\mu n^{2}+j+|n|}}.\label{eq:C_kappa}
\end{equation}
\end{lem}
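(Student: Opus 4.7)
The plan is to establish (\ref{eq:kappa}) by separately controlling $|\Re a^\pm_{n,j,\mu}|$, $|\Re b^-_{n,j,\mu}|$, and $|\Re b^+_{n,j,\mu}|$ under the constraint $\Re b^+<0$, using the elementary identity $\Re\sqrt{Z} = \sqrt{(|Z|+\Re Z)/2}$ which yields $\Re\sqrt{Z}\geq \sqrt{\Re Z}$ whenever $\Re Z\geq 0$. First, for the discriminant $X$ defining $a^\pm$, I would check that $\Re X = 4\mu n^2\sigma^2 + c_0^2 + 4[(1-\delta_{0j})\lambda_j -\lambda_0] \geq c_0^2 - 4\lambda_0 > c_0^2$ (since $\lambda_0 < 0$), so $\Re\sqrt{X} > c_0$, giving $\Re a^+ \geq \tfrac{1}{2}(\sqrt{c_0^2-4\lambda_0}-c_0)>0$ and $\Re a^- \leq -c_0/2 < 0$ uniformly in $(n,j,\mu)$. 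Similarly, for $Y$ defining $b^\pm$, the assumption $c_0\geq c^* = 2\sqrt{-\lambda_0}$ ensures $\Re Y\geq 0$ in every case (the critical one being $j=0$, where $c_0^2 + 4\lambda_0 \geq 0$), hence $\Re\sqrt{Y}\geq 0$ and $\Re b^- \leq -c_0/2$. This produces uniform positive lower bounds on $|\Re a^\pm|$ and $|\Re b^-|$ independent of $(n,j,\mu)$.

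The delicate step is the lower bound on $|\Re b^+_{n,j,\mu}|$ in the case $\Re b^+<0$. By the sign identity (\ref{eq:sign_b+}), this condition forces $j=0$ and $|n| < \sqrt{-\lambda_0/(1+\mu)}/\sigma \leq \sqrt{-\lambda_0}/\sigma$, which restricts attention to finitely many values of $n$. For each integer $n$ with $n^2\sigma^2 < -\lambda_0$ strictly, the transition where $\Re b^+_{n,0,\mu}$ vanishes occurs at the positive value $\mu^*(n) := -\lambda_0/(n^2\sigma^2) - 1$; continuity of $\mu\mapsto \Re b^+_{n,0,\mu}$ on any compact subinterval of $[0,\mu^*(n))$ then yields a positive lower bound on $|\Re b^+_{n,0,\mu}|$. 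The edge case $n^2\sigma^2 = -\lambda_0$ (possible only when $\sqrt{-\lambda_0}/\sigma\in\mathbb{Z}$) gives $\Re b^+_{n,0,0}=0$, but for $\mu>0$ one has $\lambda_0 + (1+\mu)n^2\sigma^2 = -\mu\lambda_0 > 0$, hence $\Re b^+_{n,0,\mu}>0$, so such pairs never contribute to the constraint $\Re b^+<0$. Setting $\mu_{max} := \tfrac{1}{2}\min_n \mu^*(n)$ over the finitely many contributing $n$, and then taking $\kappa$ to be half the minimum of all positive lower bounds obtained, delivers (\ref{eq:kappa}).

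For (\ref{eq:C_kappa}), I would directly exploit the general bound (\ref{eq:Re_a_bound}) of Lemma \ref{lem:ab_estim}, namely $|\Re a^\pm|, |\Re b^\pm| \geq \underline C\sqrt{\mu n^2+j+|n|} - c_0$. Choosing $\overline N, \overline J$ large enough so that $|n|\geq \overline N$ or $j\geq \overline J$ forces $\sqrt{\mu n^2+j+|n|}\geq \max(4c_0,8\kappa)/\underline C$, one obtains $|\Re a^\pm|, |\Re b^\pm|\geq \tfrac{\underline C}{2}\sqrt{\mu n^2+j+|n|} \geq 4\kappa$, and hence $|\Re a^\pm|-\kappa, |\Re b^\pm|-\kappa \geq \tfrac{\underline C}{4}\sqrt{\mu n^2+j+|n|}$; (\ref{eq:C_kappa}) then follows with $C_\kappa := 4\sqrt{2}/\underline C$, using that $\mu n^2+j+|n|$ is comparable to $1+\mu n^2+j+|n|$ in this regime. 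The main obstacle is the finite-set continuity argument for $\Re b^+$: since the sign of $\Re b^+_{n,0,\mu}$ genuinely flips as $\mu$ crosses $\mu^*(n)$, one must be careful to choose $\mu_{max}$ strictly smaller than $\min_n\mu^*(n)$ so that the interval $[0,\mu_{max}]$ stays uniformly away from every such transition.
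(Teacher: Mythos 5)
Your strategy is viable, and for the $a^{\pm}$ and $b^{-}$ terms it is in fact more direct than the paper's route (the paper instead observes that the set $\mathcal{R}_0\cap[0,a^+_{0,0,0}]$ of relevant real parts at $\mu=0$ is finite and does not contain $0$, then passes to small $\mu>0$ by uniform continuity over finitely many indices). The treatment of (\ref{eq:C_kappa}) coincides with the paper's use of (\ref{eq:Re_a_bound}) and is correct. However, two steps of your argument for (\ref{eq:kappa}) fail as written.

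First, the inequality $\Re X\geq c_0^2-4\lambda_0$ is false for $j\geq1$: there the bracket equals $4(\lambda_j-\lambda_0)=8jA$, which is smaller than $-4\lambda_0=4(1-A)$ whenever $(2j+1)A<1$, e.g. $j=1$ and $A<1/3$. What you actually need --- that $\Re X-c_0^2$ admits a positive lower bound uniform in $(n,j,\mu)$ --- still holds, with margin $4\min(-\lambda_0,\,2A)$, so the damage is only to the stated constant; but the claimed bound $\Re a^+\geq\tfrac12(\sqrt{c_0^2-4\lambda_0}-c_0)$ does not follow as written.

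Second, and more seriously, the claim that $\Re b^+_{n,j,\mu}<0$ \emph{forces} $j=0$ is wrong. By (\ref{eq:sign_b+}) the condition is $\lambda_j+(1+\mu)n^2\sigma^2<0$, which only requires $\lambda_j<0$, i.e. $(2j+1)A<1$; when $A<1/3$ this admits $j=1,2,\dots$ up to $j<\frac{1-A}{2A}$. The admissible set of pairs $(n,j)$ is still finite, so your continuity-in-$\mu$ argument survives, but the threshold must be $\mu^*(n,j)=-\lambda_j/(n^2\sigma^2)-1$ with the minimum taken over all such pairs having $n\neq0$. You must also treat $n=0$ separately, since your formula for $\mu^*$ is undefined there: for $n=0$ and $\lambda_j<0$ the quantity $b^+_{0,j,\mu}$ does not depend on $\mu$ and $\Re b^+_{0,j,\mu}$ is a fixed negative number, so the case is harmless but cannot be absorbed into the $\mu^*$ bookkeeping. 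With these corrections the finite-set-plus-continuity scheme closes the proof of (\ref{eq:kappa}).
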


\begin{proof}
Let us first prove (\ref{eq:kappa}). We define the following sets:
\begin{align*}
\mathcal{I} & :=\left\{(n,j)\in\mathbb{Z}\times\mathbb{N}: \lambda_{j}+n^{2}\sigma^{2}<0\right\} =\left\{ (n,j)\in\mathbb{Z}\times\mathbb{N}: \Re b_{n,j,0}^{+}<0\right\},\\
\mathcal{R}_{\mu} & :=\left(\cup_{(n,j)\in\mathbb{Z}\times\mathbb{N}}\left\{ \left|\Re a_{n,j,\mu}^{+}\right|,\left|\Re a_{n,j,\mu}^{-}\right|,\left|\Re b_{n,j,\mu}^{-}\right|\right\} \right)\bigcup\left(\cup_{(n,j)\in\mathcal{I}}\left\{ \left|\Re b_{n,j,\mu}^{+}\right|\right\} \right),
\end{align*}
for all $0\leq\mu<\mu_{max}$. Because of (\ref{eq:sign_ab}) and
the definition of $\mathcal{I}$, there holds $0\notin\mathcal{R}_{0}$.
Note that, due to (\ref{eq:Re_a_bound}), the sets $\mathcal{I}$
and $\mathcal{R}_{0}\cap[0,a_{0,0,0}^{+}]$ are finite. Therefore
we have
\[
m:=\min\left(\mathcal{R}_{0}\cap[0,a_{0,0,0}^{+}]\right)=\min\mathcal{R}_{0}>0.
\]
Now, because of (\ref{eq:Re_a_bound}), there exist $N,J\geq0$ such
that for all $0\leq\mu<\mu_{max}$
\[
\mathcal{R}_{\mu}\cap[0,a_{0,0,\mu}^{+}]\subset\cup_{|n|\leq N,j\leq J}\left\{ \left|\Re a_{n,j,\mu}^{+}\right|,\left|\Re a_{n,j,\mu}^{-}\right|,\left|\Re b_{n,j,\mu}^{-}\right|,\left|\Re b_{n,j,\mu}^{+}\right|\right\} .
\]
From (\ref{eq:a_nj})---(\ref{eq:b_nj}) we easily obtain that 
\[
\sup_{|n|\leq N,\,j\leq J}\left|\Re a_{n,j,\mu}^{\pm}-\Re a_{n,j,0}^{\pm}\right|\xrightarrow[\mu\to0]{}0,\qquad\sup_{|n|\leq N,\,j\leq J}\left|\Re b_{n,j,\mu}^{\pm}-\Re b_{n,j,0}^{\pm}\right|\xrightarrow[\mu\to0]{}0.
\]
As a result, taking $\mu_{max}$ small enough, we have
\begin{equation*}
\inf_{0\leq\mu<\mu_{max}}\min\left(\mathcal{R}_{\mu}\cap[0,a_{0,0,\mu}^{+}]\right)=\inf_{0\leq\mu<\mu_{max}}\min\mathcal{R}_{\mu}\geq\frac{m}{2}>0.
\end{equation*}
Consequently, (\ref{eq:kappa}) holds with $\kappa=\frac{m}{4}>0$.

Finally, it remains to prove (\ref{eq:C_kappa}). Let $\underline{C}>0$
being given by Lemma \ref{lem:ab_estim}. There exist $\overline{N},\overline{J}\geq0$
such that, if $|n|\geq\overline{N}$ or $j\ge\overline{J}$, we have
for all $0\leq\mu<\mu_{max}$
\[
\sqrt{\mu n^{2}+j+|n|}\geq\frac{2}{\underline{C}}(c_{0}+\kappa)+1.
\]
We then deduce that
\[
\frac{1}{\underline{C}\sqrt{\mu n^{2}+j+|n|}-c_{0}-\kappa}\leq\frac{C_{\kappa}}{1+\sqrt{\mu n^{2}+j+|n|}}\leq\frac{C_{\kappa}}{\sqrt{1+\mu n^{2}+j+|n|}},
\]
for $C_{\kappa}=2/\underline{C}>0$. This yields (\ref{eq:C_kappa})
thanks to (\ref{eq:Re_a_bound}).
\end{proof}

Let us recall that $\mathcal{L}_{n,j,\mu}$ and $E_{\kappa}^{k}$
are defined by (\ref{eq:Lnj}) and (\ref{eq:E_kappa}) respectively.
We equip $E_{\kappa}^{2}$ with the Hermitian inner product $\langle g_{1},g_{2}\rangle:=\int_{\mathbb{R}}g_{1}(z)\overline{g_{2}(z)}dz$.

\begin{lem}[Injectivity of $\mathcal{L}_{n,j,\mu}$ after redefinitions]
\label{lem:redef}Let $\mu_{max}>0$ small enough so that both Lemmas
\ref{lem:homo_nj} and \ref{lem:kappa_choice} hold. Let $(n,j)\neq(0,0)$,
$\mu\in(0,\mu_{max})$ and $\kappa>0$ given by Lemma \ref{lem:kappa_choice}. 

If $\Re b_{n,j,\mu}^{+}\geq0$, then $\mathcal{L}_{n,j,\mu}$ is injective.

If $\Re b_{n,j,\mu}^{+}<0$, then we set $\mathcal{S}_{n,j,\mu}:=\{\varphi_{+}\}^{\bot}\subset E_{\kappa}^{2}$,
and we redefine $\mathcal{L}_{n,j,\mu}\colon\mathcal{S}_{n,j,\mu}\to E_{\kappa}^{0}$
as an injective operator.

Finally, we set $\mathcal{S}_{0,0}:=\left\{ U^{\prime}\right\} ^{\bot}\subset E_{\kappa}^{2}$
, and we redefine $\mathcal{L}_{0,0,\mu}\colon\mathcal{S}_{0,0}\to E_{\kappa}^{0}$
as an injective operator.
\end{lem}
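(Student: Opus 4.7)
\medskip

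\noindent\textbf{Proof proposal for Lemma \ref{lem:redef}.} The plan is to use the fundamental systems from Lemmas \ref{lem:homo_nj} and \ref{lem:homo_00} to compute, in each case, the intersection of $\ker \mathcal{L}_{n,j,\mu}$ with $E_\kappa^2$, and then conclude injectivity either directly or after quotienting by the obstructing one-dimensional subspace.

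\smallskip

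First assume $(n,j)\neq(0,0)$ and $0<\mu<\mu_{max}$. Since $\mathcal{L}_{n,j,\mu}$ is a second order linear ODE, its kernel inside $C^2(\mathbb{R},\mathbb{C})$ is the two-dimensional space spanned by the fundamental system $(\varphi_-,\varphi_+)$ given by (\ref{eq:phi_pm}). I will determine which of the linear combinations $\alpha\varphi_-+\beta\varphi_+$ belong to $E_\kappa^2$ by examining the behavior at $\pm\infty$. At $z\to-\infty$, using (\ref{eq:phi_pm}) together with $\liminf_{z\to-\infty}|P_\pm(z)|>0$ (which holds for $P_-$ by Lemma \ref{lem:homo_nj} and will be argued analogously for $P_+$ from its construction), the combination $\alpha\varphi_-+\beta\varphi_+$ behaves like $\alpha P_-(z)e^{a^-z}+\beta P_+(z)e^{a^+z}$. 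Since by Lemma \ref{lem:kappa_choice} one has $\Re a^-\leq -2\kappa<0<2\kappa\leq \Re a^+$, the term in $e^{a^-z}$ grows faster than $e^{-\kappa|z|}$ decays, so membership of the combination in $E_\kappa^2$ forces $\alpha=0$. At $z\to+\infty$, the surviving term is $\beta Q_+(z)e^{b^+z}$, and Lemma \ref{lem:kappa_choice} gives the dichotomy: either $\Re b^+<0$, in which case $|\Re b^+|\geq 2\kappa$ and $\varphi_+\in E_\kappa^2$; or $\Re b^+\geq 0$, in which case $Q_+(z)e^{b^+z}$ does not vanish at $+\infty$ (using $\liminf_{z\to+\infty}|Q_+(z)|>0$) and hence $\varphi_+\notin E_\kappa^2$, forcing $\beta=0$. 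This proves injectivity in the first case of the lemma, and shows that in the second case $\ker \mathcal{L}_{n,j,\mu}\cap E_\kappa^2 = \mathbb{C}\varphi_+$, so that the restriction to $\mathcal{S}_{n,j,\mu}=\{\varphi_+\}^\bot$ is injective (by standard Hilbert space arguments applied to the inner product on $E_\kappa^2$).

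\smallskip

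For the case $(n,j)=(0,0)$, I apply the same procedure to the fundamental system $(U',\Upsilon)$ from Lemma \ref{lem:homo_00}. Since $\lambda_0<0$ and $n=j=0$, the term $(1+\mu)n^2\sigma^2$ vanishes, so $\mu$ plays no role here and one reads off from (\ref{eq:sign_ab})--(\ref{eq:sign_b+}) that $\Re a^-_{0,0,0}<0$, $\Re a^+_{0,0,0}>0$, $\Re b^-_{0,0,0}<0$ and $\Re b^+_{0,0,0}<0$. Combined with (\ref{eq:Up_equiv})--(\ref{eq:upsilon_equiv}) and Lemma \ref{lem:kappa_choice} (which ensures that the constant $\kappa$ is smaller than all these $|{\Re}|$ values up to a factor $2$, even in the borderline $c_0=c^*$ case where polynomial prefactors $z$ or $1/z$ appear but are absorbed by the exponential slack), we obtain that $\Upsilon$ blows up at $-\infty$ and hence $\Upsilon\notin E_\kappa^2$, while $U'$ decays exponentially at both $\pm\infty$ at a rate strictly larger than $\kappa$, so $U'\in E_\kappa^2$. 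Hence $\ker \mathcal{L}_{0,0,\mu}\cap E_\kappa^2 = \mathbb{C}U'$, and restricting to $\mathcal{S}_{0,0}=\{U'\}^\bot$ yields an injective operator.

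\smallskip

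The main obstacle I expect is the careful bookkeeping around the critical speed $c_0=c^*$ for the $(0,0)$ case, where $U'$ picks up a polynomial factor $z$ and $\Upsilon$ picks up a factor $1/z$ at $+\infty$: one has to verify that these polynomial corrections are harmless against the exponential weight $e^{\kappa|z|}$, which is immediate since $\kappa$ was chosen strictly less than $|\Re b^\pm_{0,0,0}|$. A secondary small point is to ensure $\liminf_{z\to+\infty}|Q_+(z)|>0$ (and the analogous statement for $P_+$ at $-\infty$) so that, when the exponential factor has zero or positive real part, the function genuinely fails to lie in $E_\kappa^2$; this is already provided by Lemma \ref{lem:homo_nj} for the relevant sides, and the only subtlety is the borderline case $\Re b^+=0$, where the combination oscillates without decay and is therefore excluded from $E_\kappa^2$ for any $\kappa>0$.
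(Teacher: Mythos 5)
Your proposal is correct and follows essentially the same route as the paper: identify the kernel of the second-order operator via the fundamental systems $(\varphi_-,\varphi_+)$ (resp. $(U',\Upsilon)$), use the sign information on $\Re a^\pm_{n,j,\mu}$, $\Re b^\pm_{n,j,\mu}$ together with the choice of $\kappa$ from Lemma \ref{lem:kappa_choice} to decide which solutions lie in $E_\kappa^2$, and restrict to the orthogonal complement of the one-dimensional kernel when it is nontrivial. The only cosmetic difference is that you invoke $\liminf_{z\to-\infty}|P_+(z)|>0$, which Lemma \ref{lem:homo_nj} does not state and which is not needed (boundedness of $P_+$ together with $\Re a^+>0$ already gives decay of $\varphi_+$ at $-\infty$); the facts actually required, $\liminf_{z\to-\infty}|P_-|>0$ and $\liminf_{z\to+\infty}|Q_+|>0$, are exactly those provided by the lemma.
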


\begin{proof}
Let $n,j,\mu$ satisfy the above conditions. Let us recall that from
Lemma \ref{lem:homo_nj}, the solutions of $\mathcal{E}_{n,j,\mu}[u]=0$
are exactly $C_{-}\varphi_{-}+C_{+}\varphi_{+}$ with $C_{\pm}\in\mathbb{C}$.
Note that $\varphi_{-}\notin E_{\kappa}^{2}$ since $|\varphi_{-}(-\infty)|=+\infty$
from (\ref{eq:sign_ab}) and (\ref{eq:phi_pm}).

 If $\Re b_{n,j,\mu}^{+}\geq0$,
then from (\ref{eq:phi_pm}) we also have $\varphi_{+}\notin C_{0}(\mathbb{R},\mathbb{C})$,
so that $\varphi_{+}\notin E_{\kappa}^{2}$, which implies $\ker\mathcal{L}_{n,j,\mu}$
is trivial. 

If $\Re b_{n,j,\mu}^{+}<0$, then from (\ref{eq:phi_pm}) and (\ref{eq:kappa}),
we have $\varphi_{+}\in E_{\kappa}^{2}$. Therefore $\ker\mathcal{L}_{n,j,\mu}=\text{span}(\varphi_{+})$.
Setting $\mathcal{S}_{n,j,\mu}:=\{\varphi_{+}\}^{\bot}$, we
have that $\mathcal{L}_{n,j,\mu}\colon\mathcal{S}_{n,j,\mu}\to E_{\kappa}^{0}$
is injective. 

The last assertion for $\mathcal{L}_{0,0,\mu}$ is proved similarly, using (\ref{eq:Up_equiv})---(\ref{eq:upsilon_equiv})
and (\ref{eq:kappa}).
\end{proof}

\subsubsection{Solving (\ref{eq:vnj}) when $j\protect\geq1$\label{sssec:solve_j_geq1}}

For the rest of this section, we fix $\mu_{max},\kappa>0$ small enough
such that Lemmas \ref{lem:homo_nj} and \ref{lem:kappa_choice} are
valid. From Lemma \ref{lem:homo_nj}, we are equipped with $(\varphi_{-},\varphi_{+})$
given by (\ref{eq:phi_pm}), which is a fundamental system of solutions
of (\ref{eq:homo}). Let us mention that by construction $\kappa<a_{0,0,0}^{+}=-\frac{1}{2}c_{0}+\frac{1}{2}\sqrt{c_{0}^{2}-4\lambda_{0}}$,
which is consistent with our assumption at the beginning of subsection
\ref{subsec:Spaces_Yrond_Zrond}.

\medskip

In this subsection, we prove that, for each $n\in\mathbb{Z}$, $j\geq1$,
and $0<\mu<\mu_{max}$ there exists a unique $v_{j}^{n}$ such that
$\mathcal{L}_{n,j,\mu}v_{j}^{n}=f_{j}^{n}$. Additionally, we shall
prove the existence of $K>0$ independent of $n,j,\mu,f$ such that
\begin{equation}
\left|(v_{j}^{n})^{(k)}(z)\right|\leq K||f||_{\mathcal{Z}} \frac{e^{-\kappa|z|}}{(1+j)^{\beta}(1+|n|)^{\gamma}}\times\frac{1+|n|^{k}+j^{k/2}}{1+\mu n^{2}+j+|n|},\quad k\in\{0,1,2\},\,\forall z\in\mathbb{R}.\label{eq:vnj_toProve}
\end{equation}
Let us recall that $f$ satisfies (\ref{eq:fnj_Zbound}), and thus
$f_{j}^{n}\in E_{\kappa}^{0}$. In what follows, we denote $a^{\pm}=a_{n,j,\mu}^{\pm}$
and $b^{\pm}=b_{n,j,\mu}^{\pm}$ when there is no confusion. We shall
split the proof in two subcases, depending on the sign of $\Re b^{+}$.

\paragraph{Indexes $(n,j,\mu)$ such that $j\protect\geq1$ and $\protect\Re b_{n,j,\mu}^{+}\protect\geq0$.} For such $n,j,\mu$, we have the injectivity of $\mathcal{L}_{n,j,\mu}\colon E_{\kappa}^{2}\to E_{\kappa}^{0}$
from Lemma \ref{lem:redef}, so there is at most one solution $v_{j}^{n}\in E_{\kappa}^{2}$
of (\ref{eq:vnj}). To prove its existence, we construct explicitly
a solution with the variation of the constant, that is
\[
v_{j}^{n}(z)=\varphi_{-}(z)\int_{-\infty}^{z}\frac{1}{W(\omega)}\varphi_{+}(\omega)f_{j}^{n}(\omega)d\omega+\varphi_{+}(z)\int_{z}^{+\infty}\frac{1}{W(\omega)}\varphi_{-}(\omega)f_{j}^{n}(\omega)d\omega,
\]
where we denote the Wronskian $W(\omega):=\left[\varphi_{-}^{\prime}\varphi_{+}-\varphi_{+}^{\prime}\varphi_{-}\right](\omega)\neq0$.
Also, notice that since $(\varphi_{-},\varphi_{+})$ solve (\ref{eq:vnj}),
there holds
\begin{align*}
W(\omega) & =W(0)e^{-(c_{0}+2in\sigma)\omega}=W(0)e^{(a^{+}+a^{-})\omega}=W(0)e^{(b^{+}+b^{-})\omega}.
\end{align*}
To prove that $v_{j}^{n}$ satisfies $\mathcal{L}_{n,j,\mu}v_{j}^{n}=f_{j}^{n}$,
it suffices to prove that $v_{j}^{n}\in E_{\kappa}^{2}$. It is in
particular enough to prove that $v_{j}^{n}$ satisfies (\ref{eq:vnj_toProve}).

Let us first prove that (\ref{eq:vnj_toProve}) holds for $k=0$.
For all $z\geq0$, there holds
\begin{align*}
v_{j}^{n}(z) & =Q_{-}(z)e^{b^{-}z}\left(\int_{-\infty}^{0}\frac{P_{+}(\omega)}{W_{\varphi}}e^{-a^{-}\omega}f_{j}^{n}(\omega)+\int_{0}^{z}\frac{Q_{+}(\omega)}{W_{\varphi}}e^{-b^{-}\omega}f_{j}^{n}(\omega)d\omega\right)\\
 & \quad+Q_{+}(z)e^{b^{+}z}\left(\int_{z}^{+\infty}\frac{Q_{-}(\omega)}{W_{\varphi}}e^{-b^{+}\omega}f_{j}^{n}(\omega)d\omega\right),
\end{align*}
where we recall that $W(0)=W_{\varphi}$ satisfies (\ref{eq:W0}).
Combining (\ref{eq:fnj_Zbound}), (\ref{eq:PQ_bounds}) and (\ref{eq:kappa}),
we obtain
\begin{align*}
|v_{j}^{n}(z)| & \leq\frac{R_{max}^{2}||f||_{\mathcal{Z}}}{|W_{\varphi}|\,(1+j)^{\beta}(1+|n|)^{\gamma}}\times\\
 & \quad\left[e^{\Re b^{-}z}\left(\int_{-\infty}^{0}e^{-\Re a^{-}\omega}e^{\kappa\omega}d\omega+\int_{0}^{z}e^{-\Re b^{-}\omega}e^{-\kappa\omega}d\omega\right)+e^{\Re b^{+}z}\int_{z}^{+\infty}e^{-\Re b^{+}\omega}e^{-\kappa\omega}d\omega\right]\\
 & \leq\frac{R_{max}^{2}||f||_{\mathcal{Z}}}{|W_{\varphi}|\,(1+j)^{\beta}(1+|n|)^{\gamma}}\times\left(\frac{e^{\Re b^{-}z}}{-\Re a^{-}+\kappa}+\frac{e^{-\kappa z}-e^{\Re b^{-}z}}{-\Re b^{-}-\kappa}+\frac{e^{-\kappa z}}{\Re b^{+}+\kappa}\right)\\
 & \leq\frac{R_{max}^{2}||f||_{\mathcal{Z}}}{|W_{\varphi}|\,(1+j)^{\beta}(1+|n|)^{\gamma}}\left(\frac{1}{-\Re a^{-}+\kappa}+\frac{1}{-\Re b^{-}-\kappa}+\frac{1}{\Re b^{+}+\kappa}\right)e^{-\kappa z}.
\end{align*}
Let $\overline{N}_{0}=\max(N_{0},\overline{N})$ and $\overline{J}_{0}=\max(J_{0},\overline{J})$,
where $N_{0},J_{0}$ are given by Lemma \ref{lem:homo_nj} and $\overline{N},\overline{J}$
are given by Lemma \ref{lem:kappa_choice}. If $|n|\geq\overline{N}_{0}$
or $j\geq\overline{J}_{0}$, then (\ref{eq:W0_bignj}) and (\ref{eq:C_kappa})
hold. Therefore
\begin{equation}
|v_{j}^{n}(z)|\leq\frac{3C_{W}C_{\kappa}R_{max}^{2}||f||_{\mathcal{Z}}e^{-\kappa z}}{(1+j)^{\beta}(1+|n|)^{\gamma}}\times\frac{1}{1+\mu n^{2}+j+|n|},\qquad\text{if }|n|\geq\overline{N}_{0}\text{ or }j\geq\overline{J}_{0}.\label{eq:vnj_bound_bignj}
\end{equation}
Meanwhile, if $|n|\leq\overline{N}_{0}$ and $j\leq\overline{J}_{0}$,
we have from (\ref{eq:W0}) and (\ref{eq:kappa}) that 
\begin{equation}
|v_{j}^{n}(z)|\leq\frac{3R_{max}^{2}||f||_{\mathcal{Z}}e^{-\kappa z}}{\kappa W_{0}(1+j)^{\beta}(1+|n|)^{\gamma}},\qquad\text{if }|n|\leq\overline{N}_{0}\text{ and }j\leq\overline{J}_{0}.\label{eq:vnj_bound_smallnj}
\end{equation}
Note that $\overline{N}_{0},\overline{J}_{0}$ do not depend on $\mu\in(0,\mu_{max})$.
Therefore, combining (\ref{eq:vnj_bound_bignj})---(\ref{eq:vnj_bound_smallnj}),
there exists $K>0$ independent of $n,j,\mu,f$ such that (\ref{eq:vnj_toProve})
holds for $k=0$, $z\geq0$. The proof is similar for $z\leq0$.

Let us now prove that (\ref{eq:vnj_toProve}) is valid for $k=1$.
Note that 
\[
(v_{j}^{n})^{\prime}(z)=\varphi_{-}^{\prime}(z)\int_{-\infty}^{z}\frac{1}{W(\omega)}\varphi_{+}(\omega)f_{j}^{n}(\omega)d\omega+\varphi_{+}^{\prime}(z)\int_{z}^{+\infty}\frac{1}{W(\omega)}\varphi_{-}(\omega)f_{j}^{n}(\omega)d\omega.
\]
Then similar calculations and arguments yield that if $|n|\geq\overline{N}_{0}$
or $j\geq\overline{J}_{0}$, then for any $z\in\mathbb{R}$
\begin{align*}
\left|(v_{j}^{n})^{\prime}(z)\right| & \leq\frac{3C_{W}C_{\kappa}R_{max}^{2}||f||_{\mathcal{Z}}e^{-\kappa z}}{(1+j)^{\beta}(1+|n|)^{\gamma}}\times\frac{1+\max\left(|a^{+}|,|a^{-}|,|b^{+}|,|b^{-}|\right)}{\left(1+\sqrt{\mu n^{2}+j+|n|}\right)^{2}},
\end{align*}
thus, using (\ref{eq:a_bound}), we obtain
\begin{align*}
|(v_{j}^{n})^\prime (z)| & \leq\frac{3C_{W}C_{\kappa}(1+\overline{C})R_{max}^{2}||f||_{\mathcal{Z}}e^{-\kappa z}}{(1+j)^{\beta}(1+|n|)^{\gamma}}\times\frac{1+|n|+\sqrt{j}}{1+\mu n^{2}+j+|n|}.
\end{align*}
Meanwhile, if $|n|\leq\overline{N}_{0}$ and $j\leq\overline{J}_{0}$,
then in the same fashion, for any $z\in\mathbb{R}$, there holds
\begin{align*}
|(v_{j}^{n})^\prime (z)| & \leq\frac{3R_{max}^{2}||f||_{\mathcal{Z}}e^{-\kappa z}}{\kappa W_{0}(1+j)^{\beta}(1+|n|)^{\gamma}}\times\left[1+\max\left(|a^{+}|,|a^{-}|,|b^{+}|,|b^{-}|\right)\right]\\
 & \leq\frac{3R_{max}^{2}||f||_{\mathcal{Z}}e^{-\kappa z}}{\kappa W_{0}(1+j)^{\beta}(1+|n|)^{\gamma}}\left(1+\max_{|n|\leq\overline{N}_{0},j\leq\overline{J}_{0},0\leq\mu\leq\mu_{max}}\max\left(|a_{n,j,\mu}^{+}|,|a_{n,j,\mu}^{-}|,|b_{n,j,\mu}^{+}|,|b_{n,j,\mu}^{-}|\right)\right).
\end{align*}
Likewise, taking $K>0$ possibly even larger, $v_{j}^{n}$ satisfies
(\ref{eq:vnj_toProve}) for $k=1$. Finally, since (\ref{eq:vnj_toProve})
is proved for $k\in\{0,1\}$, the proof for $k=2$ is a direct consequence,
with a possibly larger $K>0$, since $v_{j}^{n}$ solves (\ref{eq:vnj})
and (\ref{eq:fnj_Zbound}) holds. Therefore, assuming $j\geq1$ and
$\Re b_{n,j,\mu}^{+}\geq0$, (\ref{eq:vnj_toProve}) holds with $K>0$
that does not depend on $n,j,\mu,f$.

\paragraph{Indexes $(n,j,\mu)$ such that $j\protect\geq1$ and $\protect\Re b_{n,j,\mu}^{+}<0$.} For such $n,j,\mu$, from Lemma \ref{lem:redef}, the operator $\mathcal{L}_{n,j,\mu}\colon\mathcal{S}_{n,j,\mu}\to E_{\kappa}^{0}$
is injective with $\mathcal{S}_{n,j,\mu}=\{\varphi_{+}\}^{\bot}$.
We define the family
\[
\chi_{\xi}(z):=\xi\varphi_{+}(z)+\varphi_{-}(z)\int_{-\infty}^{z}\frac{1}{W(\omega)}\varphi_{+}(\omega)f_{j}^{n}(\omega)d\omega+\varphi_{+}(z)\int_{0}^{z}\frac{1}{W(\omega)}\varphi_{-}(\omega)f_{j}^{n}(\omega)d\omega,\qquad\xi\in\mathbb{C}.
\]
Owing to the variation of the constant, we see that $\chi_{\xi}$ solves
(\ref{eq:vnj}). Using (\ref{eq:fnj_Zbound}), (\ref{eq:PQ_bounds})---(\ref{eq:W0})
and (\ref{eq:kappa}) as we did above, one can readily check that
$\chi_{\xi}\in E_{\kappa}^{2}$ for all $\xi\in\mathbb{C}$. Thus
there is a unique $\xi_{0}=-\frac{\langle\chi_{0},\varphi_{+}\rangle}{\langle\varphi_{+},\varphi_{+}\rangle}\in\mathbb{C}$
such that $\chi_{\xi_{0}}\in\mathcal{S}_{n,j,\mu}$. Therefore, the
equation $\mathcal{L}_{n,j,\mu}v_{j}^{n}=f_{j}^{n}$ admits a unique
solution, given by $v_{j}^{n}=\chi_{\xi_{0}}$.

It remains to prove that $v_{j}^{n}$ satisfies (\ref{eq:vnj_toProve}).
First, notice that $\Re b_{n,j,\mu}^{+}<0$ implies, from (\ref{eq:sign_b+}),
that $(n,j)$ belongs to a finite set $S\subset\mathbb{Z}\times\mathbb{N}_{+}$, independently of $\mu\in (0,\mu_{max})$.
Fix now $(n,j)\in S$. It can be readily checked that there exists
$C_{n,j}>0$ independent of $f$ such that 
\[
||\varphi_{+}||_{\kappa,2}\leq C_{n,j},\quad||\chi_{0}||_{\kappa,2}\leq C_{n,j}||f||_{\mathcal{Z}},\quad\forall\mu\in(0,\mu_{max}).
\]
We claim that there exists $C_{n,j}^{\prime}>0$ independent of $f$
such that $|\xi_{0}|\leq C_{n,j}^{\prime}||f||_{\mathcal{Z}}$ for
all $\mu\in(0,\mu_{max})$. On the one hand, by the Cauchy-Schwarz
inequality
\[
\left|\langle\chi_{0},\varphi_{+}\rangle\right|\leq\sqrt{\int_{\mathbb{R}}|\chi_{0}(z)|^{2}dz\int_{\mathbb{R}}|\varphi_{+}(z)|^{2}dz}\leq\frac{1}{\kappa}||\chi_{0}||_{\kappa,0}||\varphi_{+}||_{\kappa,0}\leq\frac{1}{\kappa}C_{n,j}^{2}||f||_{\mathcal{Z}}.
\]
On the other hand, we have from (\ref{eq:int_phi})
\begin{align*}
\left|\langle\varphi_{+},\varphi_{+}\rangle\right| & \geq\zeta_{1}e^{-\zeta_{2}\Re a^{+}}\geq\zeta_{1}e^{-\zeta_{2}M},\qquad M:=\max_{(n,j)\in S}\max_{0\leq\mu\leq1}\Re a_{n,j,\mu}^{+}>0.
\end{align*}
Therefore we deduce that such $C_{n,j}^{\prime}$ exists. Thus, we
have $||v_{j}^{n}||_{\kappa,2}\leq\left(1+C_{n,j}^{\prime}\right)C_{n,j}||f||_{\mathcal{Z}}$
for all $(n,j)\in S$ and $0<\mu<\mu_{max}$. Since the set $S$ is
finite, taking $K>0$ possibly even larger, independently of $n,j,\mu,f$,
we deduce that $v_{j}^{n}$ satisfies (\ref{eq:vnj_toProve}).

\subsubsection{Solving (\ref{eq:vnj}) when $j=0$\label{sssec:solve_j_0}}

From subsection \ref{sssec:solve_j_geq1}, we are now equipped with
$v_{j}^{n}=\mathcal{L}_{n,j,\mu}^{-1}(f_{j}^{n})$ for every $n\in\mathbb{Z}$,
$j\geq1$ and $0<\mu<\mu_{max}$. Also, there exists $K>0$ independent
of $n,j,\mu,f$ such that those $v_{j}^{n}$ satisfy (\ref{eq:vnj_toProve}).
Therefore, since (\ref{eq:eigenfunc_control_L1}) holds and $\beta>\frac{19}{4}>\frac{5}{4}$,
we have
\[
\left|\sum_{\ell=1}^{\infty}m_{\ell}v_{\ell}^{n}(z)\right|\leq K||f||_{\mathcal{Z}} \frac{e^{-\kappa|z|}}{(1+|n|)^{\gamma}}\times\Sigma,\qquad\Sigma:=\sum_{\ell=1}^{\infty}\frac{|m_{\ell}|}{(1+j)^{\beta}}<\infty.
\]
Let us recall that $\widetilde{f_{0}^{n}}$ is defined by (\ref{eq:f0n_tilde}). Since (\ref{eq:fnj_Zbound}) holds, we deduce that
\begin{equation}
\exists C>0,\,\forall n\neq0,\,\forall\mu\in(0,\mu_{max})\quad\left|\widetilde{f_{0}^{n}}(z)\right|\leq C||f||_{\mathcal{Z}} \frac{e^{-\kappa|z|}}{(1+|n|)^{\gamma}},\label{eq:___f0n_bound}
\end{equation}
As a consequence, for any $n\neq0$, we prove, in the same manner
as in subsection \ref{sssec:solve_j_geq1}, that $v_{0}^{n}$ satisfies
(\ref{eq:vnj_toProve}) if $K>0$ is large enough, independently of
$n,\mu,f$.

\medskip

The case $n=j=0$ is particular since this is the only equation where
$s$, i.e. our perturbed speed, appears. Given that $\mathcal{L}_{0,0,\mu}$
does not depend on $\mu$, we denote it $\mathcal{L}_{0,0}$ from
now on. Let us recall that from Lemma \ref{lem:redef}, $\mathcal{L}_{0,0}\colon\mathcal{S}_{0,0}\to E_{\kappa}^{0}$
is injective. Repeating the same arguments as above, we prove that
$\mathcal{L}_{0,0}$ is surjective, thus bijective. Now, we set 
\begin{equation}
h(z):=\mathcal{L}_{0,0}^{-1}(U^{\prime}),\qquad\widehat{\mathcal{S}}_{0,0}:=\left\{ U^{\prime}\right\} ^{\bot}\cap\left\{ h\right\} ^{\bot}\subset E_{\kappa}^{2},\label{eq:h_S00}
\end{equation}
and we define the following operator as a restriction of $\mathcal{L}_{0,0}$: 
\[
\widehat{\mathcal{L}}_{0,0}\colon\mathcal{\widehat{\mathcal{S}}}_{0,0}\subset E_{\kappa}^{2}\rightarrow E_{\kappa}^{0}.
\]
It is clear that $\widehat{\mathcal{L}}_{0,0}$ is not bijective since
$\widehat{\mathcal{S}}_{0,0}\subsetneq\mathcal{S}_{0,0}$. However,
we shall prove that the linear operator
\begin{alignat*}{2}
\mathcal{M}\colon & \mathbb{C}\times\widehat{\mathcal{S}}_{0,0} &  & \to E_{\kappa}^{0}\\
 & (s,v) &  & \mapsto\eta U^{\prime}(z) s+\widehat{\mathcal{L}}_{0,0}v
\end{alignat*}
is bijective. Assume that $\mathcal{M}(s,v)=0$. Then 
\[
\widehat{\mathcal{L}}_{0,0}v=\mathcal{L}_{0,0}v=-\eta U^{\prime}(z) s,
\]
which implies $v=-\eta sh$. Since $\widehat{\mathcal{S}}_{0,0}\subset\{h\}^{\bot}$,
we deduce that $s=0$, thus $v=0$. Therefore $\mathcal{M}$ is injective.
Let us now prove that $\mathcal{M}$ is surjective. For any $f\in E_{\kappa}^{0}$,
we set
\[
s=\frac{\langle\mathcal{L}_{0,0}^{-1}f,h\rangle}{\eta\langle h,h\rangle},\qquad v=\mathcal{L}_{0,0}^{-1}(f-\eta sU^{\prime})=\mathcal{L}_{0,0}^{-1}f-\eta sh.
\]
By definition of $\mathcal{L}_{0,0}$, we have $v\in\{U^{\prime}\}^{\bot}$,
thus $v\in\widehat{\mathcal{S}}_{0,0}$ by our choice of $s$. Finally,
$\widehat{\mathcal{L}}_{0,0}v=\mathcal{L}_{0,0}v=f-\eta sU^{\prime}$,
so that we indeed have $\mathcal{M}(s,v)=f$. Hence $\mathcal{M}$
is bijective.

\medskip

To conclude, we return to (\ref{eq:vnj}) for $n=j=0$. Note that,
with $f\in\mathcal{Z}$ being given, the functions $(v_{\ell}^{0})_{\ell\ge1}$
are uniquely determined from subsection \ref{sssec:solve_j_geq1}.
From now on, we rewrite $\mathcal{L}_{0,\ell}:=\mathcal{L}_{0,\ell,\mu}$
since those operators do not, in fact, depend on $\mu$. Therefore
we may recast (\ref{eq:vnj}) as
\begin{equation}
\eta U^{\prime}(z)s+\mathcal{E}_{0,0,0}[v_{0}^{0}]=f_{0}^{0}(z)+\eta U(z) \sum_{\ell=1}^{\infty}m_{\ell}\mathcal{L}_{0,\ell}^{-1}(f_{\ell}^{0})\eqqcolon\Phi_{f}(z).\label{eq:v00_recast}
\end{equation}
From the bijectivity of $\mathcal{M}$, there thus exists a unique
couple $(s,v_{0}^{0})\in\mathbb{C}\times\widehat{\mathcal{S}}_{0,0}$
solving (\ref{eq:v00_recast}). We claim that
\begin{equation}
s\in\mathbb{R},\qquad|s|\leq K_{0}||f||_{\mathcal{Z}},\qquad|v_{0}^{0}(z)|\leq K_{0}||f||_{\mathcal{Z}}e^{-\kappa|z|},\quad\forall z\in\mathbb{R},\label{eq:s_v00}
\end{equation}
for some $K_{0}>0$ independent of $f,\mu$. On the one hand, from
(\ref{eq:fj_nota})---(\ref{eq:en_nota}), we see that $f_{\ell}^{0}$
is real-valued for all $\ell\in\mathbb{N}$. On the other
hand, note that for any $\ell\in\mathbb{N}$, $\mathcal{L}_{0,\ell}$
has real coefficients. By uniqueness of the solution of $\mathcal{L}_{0,\ell}v_{\ell}^{0}=f_{\ell}^{0}$
for all $\ell\geq1$, the functions $v_{\ell}^{0}$ are also real-valued.
Therefore $\Phi_{f}(z)\in\mathbb{R}$ and does not depend on $\mu$.
Also, repeating the same arguments that we used to obtain (\ref{eq:___f0n_bound}),
we have $||\Phi_{f}||_{\kappa,0}\leq C_{\Phi}||f||_{\mathcal{Z}}$
for some $C_{\Phi}>0$ independent of $f,\mu$. Now, inverting $\mathcal{M}$,
we obtain
\begin{equation}
s=\frac{\int_{\mathbb{R}}\left[\mathcal{L}_{0,0}^{-1}\Phi_{f}\right](z)h(z)dz}{\eta\int_{\mathbb{R}}|h(z)|^{2}dz}.\label{eq:s}
\end{equation}
Similarly as above, $h=\mathcal{L}_{0,0}^{-1}(U^{\prime})$ is real-valued
and does not depend on $\mu$. Therefore $s\in\mathbb{R}$ does not
depend on $\mu$, and there holds
\begin{equation}
|s|\leq\frac{C_{\Phi}||\mathcal{L}_{0,0}^{-1}||\,||h||_{\kappa,0}}{\kappa\eta\int_{\mathbb{R}}|h(z)|^{2}dz}||f||_{\mathcal{Z}}.\label{eq:s_bound}
\end{equation}
Therefore, $s$ satisfies (\ref{eq:s_v00}) for $K_{0}$ large enough.
One can readily check that the same is true for $v_{0}^{0}=\mathcal{L}_{0,0}^{-1}(\Phi_{f})-\eta sh$.

\medskip

Combining the results of subsections \ref{sssec:solve_j_geq1} and
\ref{sssec:solve_j_0}, we have thus proved the following.

\begin{prop}[Results of subsections \ref{sssec:solve_j_geq1} and \ref{sssec:solve_j_0}]
\label{prop:summary}There exist $\mu_{max},\kappa>0$ so that for
any fixed $\mu\in(0,\mu_{max})$, the following results hold: there
exists a finite set $I_{\mu}\subset\mathbb{Z}\times\mathbb{N}$, and
a family of subsets $(\mathcal{S}_{n,j,\mu})_{(n,j)\in I_{\mu}}$
of $E_{\kappa}^{2}$, such that there exist
a unique $s\in\mathbb{R}$ and, for any $(n,j)\in\mathbb{Z}\times\mathbb{N}$,
a unique 
\[
v_{j}^{n}\begin{cases}
\in\mathcal{S}_{n,j,\mu} & \text{if }(n,j)\in I_{\mu},\\
\in E_{\kappa}^{2} & \text{otherwise},
\end{cases}
\]
such that (\ref{eq:vnj}) holds. Additionally, there exists $K>0$
independent of $n,j,\mu,f$ such that (\ref{eq:vnj_toProve}) holds.

Finally, with $h,\Phi_{f}$ being defined by (\ref{eq:h_S00})---(\ref{eq:v00_recast}),
the real $s$ is given by (\ref{eq:s}), satisfies (\ref{eq:s_bound}),
and does not depend on $\mu$.
\end{prop}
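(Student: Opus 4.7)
My plan is to synthesize, into a single statement, the case analysis that I will carry out in subsections \ref{sssec:solve_j_geq1} and \ref{sssec:solve_j_0}. First I will fix $\mu_{max}>0$ and $\kappa>0$ small enough that both Lemma \ref{lem:homo_nj} and Lemma \ref{lem:kappa_choice} apply simultaneously. For each fixed $\mu\in(0,\mu_{max})$, I will then treat separately each index $(n,j)\in\Z\times\N$, distinguishing the generic regime $j\geq 1$, the mixed regime $j=0$ with $n\neq 0$, and the singular regime $(n,j)=(0,0)$. The set $I_\mu$ will collect those finitely many indices where $\mathcal{L}_{n,j,\mu}$ fails to be injective on $E_\kappa^2$ and must therefore be restricted to a strict subspace $\mathcal{S}_{n,j,\mu}$.

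For $j\geq 1$, I will use the variation-of-constants formula based on the fundamental system $(\varphi_-,\varphi_+)$ provided by Lemma \ref{lem:homo_nj}, splitting according to the sign of $\Re b_{n,j,\mu}^+$. By (\ref{eq:sign_b+}), this real part is negative only for a finite set of indices, independent of $\mu$: when $\Re b_{n,j,\mu}^+\geq 0$, Lemma \ref{lem:redef} already guarantees injectivity of $\mathcal{L}_{n,j,\mu}$ on $E_\kappa^2$, so the variation-of-constants integral directly yields the solution; when $\Re b_{n,j,\mu}^+<0$, I restrict to $\mathcal{S}_{n,j,\mu}=\{\varphi_+\}^\perp$ and tune the free parameter in the general solution to enforce the orthogonality. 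The bound (\ref{eq:vnj_toProve}) will then follow by splitting the integration domain into $\{z\geq 0\}$ and $\{z\leq 0\}$, estimating the exponentials via (\ref{eq:kappa}), the amplitudes $P_\pm,Q_\pm$ via (\ref{eq:PQ_bounds}), and controlling $1/|W_\varphi|$ via (\ref{eq:W0}) for the finitely many small indices and via (\ref{eq:W0_bignj})--(\ref{eq:C_kappa}) for the large ones. The case $j=0$, $n\neq 0$ is then a direct variant: the bounds already established for the $v_\ell^n$'s with $\ell\geq 1$, combined with (\ref{eq:eigenfunc_control_L1}) and $\beta>\tfrac{5}{4}$, show that the series $\sum_\ell m_\ell v_\ell^n$ in (\ref{eq:f0n_tilde}) converges and inherits the same $\mathcal{Z}$-type decay as $f_0^n$, so the earlier argument applies verbatim.

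The main obstacle is the case $(n,j)=(0,0)$, where $\mathcal{L}_{0,0}$ has non-trivial kernel spanned by $U'$ (Lemma \ref{lem:homo_00}) and where the perturbed speed $s$ must be simultaneously determined. My plan is to first establish bijectivity of $\mathcal{L}_{0,0}\colon\mathcal{S}_{0,0}\to E_\kappa^0$ by the same variation-of-constants method, now based on the fundamental system $(U',\Upsilon)$; then introduce $h=\mathcal{L}_{0,0}^{-1}(U')$ and the codimension-two subspace $\widehat{\mathcal{S}}_{0,0}$ of (\ref{eq:h_S00}); and finally prove that the augmented operator $\mathcal{M}\colon(s,v)\mapsto\eta U' s+\widehat{\mathcal{L}}_{0,0}v$ is bijective from $\mathbb{C}\times\widehat{\mathcal{S}}_{0,0}$ onto $E_\kappa^0$. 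Injectivity will be immediate from the construction (any element of the kernel forces $v$ to be proportional to $h$, which is impossible in $\widehat{\mathcal{S}}_{0,0}\subset\{h\}^\perp$), and surjectivity will come from the explicit inversion formula (\ref{eq:s}), which simultaneously yields $s$ and $v_0^0$. The last delicate point is to verify that $s$ is real-valued and $\mu$-independent: I will argue that $f_\ell^0$ is real-valued from its definition, that $\mathcal{L}_{0,\ell,\mu}$ is in fact independent of $\mu$ when $n=0$ (since $\mu$ multiplies only $n^2\sigma^2$), so $\Phi_f$ and $h$ are both real and $\mu$-free, and therefore so is the quotient (\ref{eq:s}); the bound (\ref{eq:s_bound}) then follows from Cauchy--Schwarz together with boundedness of $\mathcal{L}_{0,0}^{-1}$ and the strict positivity of $\int_\R|h|^2$.
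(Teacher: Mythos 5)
Your proposal follows essentially the same route as the paper's own argument in subsections \ref{sssec:solve_j_geq1} and \ref{sssec:solve_j_0}: the same case split on the sign of $\Re b_{n,j,\mu}^{+}$ with variation of constants and the bounds (\ref{eq:W0})--(\ref{eq:W0_bignj}), (\ref{eq:kappa})--(\ref{eq:C_kappa}); the same treatment of $\widetilde{f_{0}^{n}}$ via convergence of $\sum_\ell m_\ell v_\ell^n$; and the same augmented operator $\mathcal{M}$ on $\mathbb{C}\times\widehat{\mathcal{S}}_{0,0}$ with the observation that $\mathcal{L}_{0,\ell,\mu}$ is $\mu$-independent for $n=0$, yielding reality and $\mu$-independence of $s$. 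The plan is correct and complete as a blueprint for the proof.
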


\subsubsection{Reconstruction of $v=v_{\mu}$ so that $\protect\Lmu(s,v_{\mu})=f$\label{sssec:Reconstruct_v}}

\paragraph{The set $\mathcal{S}_{\mu}$.}

Let us fix $0<\mu<\mu_{max}$. Let us recall that $e_{n}$ is defined
by (\ref{eq:en_nota}). We set 
\[
\mathcal{S}_{\mu}:=\bigcap_{(n,j)\in I_{\mu}}\left\{ (z,x,y)\mapsto V(z)e_{n}(x)\Gamma_{j}(y): V\in\mathcal{S}_{n,j,\mu}^{\bot}\right\} ^{\bot}\subset\mathcal{Y}_{\mu},
\]
the second orthogonal being taken according to the following hermitian
product on $\mathcal{Y}_{\mu}$:
\[
\langle u,v\rangle_{\mathcal{Y}_{\mu}}=\int_{\mathbb{R}}\int_{0}^{L}\int_{\mathbb{R}}u(z,x,y)\overline{v}(z,x,y)dydxdz.
\]
Since $I_\mu$ is finite, it is clear that $\mathcal{S}_{\mu}$ is non-empty.
Furthermore, $\mathcal{S}_{\mu}$ is closed for the topology associated to
$\langle\cdot,\cdot\rangle$, and also for the topology of $\mathcal{Y}_{\mu}$, by virtue of the dominated convergence theorem. 
 Therefore $\mathcal{S}_{\mu}$ is a Banach space when equipped with
$||\cdot||_{\mathcal{Y}_{\mu}}$ defined by (\ref{eq:Yrond_norm}).
Note also that $\Fmu$ redefined as a function of $\mathbb{R}\times\mathbb{R}\times\mathcal{S}_{\mu}$
to $\mathcal{Z}$ still satisfies conditions $(i)$---$(ii)$ of Theorem \ref{thm:IFT}, since we only restrict the departure space.

\paragraph{Bijectivity of $\protect\Lmu$.}

Let us prove that $\Lmu\colon\mathbb{R}\times\mathcal{S}_{\mu}\to\mathcal{Z}$
given by (\ref{eq:Lpar}) is bijective. From Proposition \ref{prop:summary}
and (\ref{eq:pulsL2_decomp})---(\ref{eq:pulsFourier_decomp}), we
already have that $\Lmu$ is injective. Let us now prove that $\Lmu$
is surjective. Since most of the arguments were already used in subsection
\ref{ss:checking}, we only give a short proof. We are equipped with
$s$ and $v_{j}^{n}=v_{j}^{n}(z)$ provided by Proposition \ref{prop:summary}.
Notice that (\ref{eq:vnj_toProve}) implies 
\begin{equation}
\left|(v_{j}^{n})^{(k)}(z)\right|\leq K||f||_{\mathcal{Z}} \frac{1}{(1+j)^{\beta}(1+|n|)^{\gamma+1-k}},\qquad\forall\mu\in(0,\mu_{max}),\,\forall k\le2.\label{eq:vnj_noMu}
\end{equation}
Now, we define 
\begin{equation}
v_{\mu}(z,x,y):=\sum_{j=0}^{+\infty}\left(\sum_{n=-\infty}^{+\infty}v_{j}^{n}(z)e_{n}(x)\right)\Gamma_{j}(y).\label{eq:v_reconstr}
\end{equation}
Because (\ref{eq:eigenfunc_control_inf})---(\ref{eq:dGamma_control_inf}) hold and $v_{j}^{n}$ satisfies
(\ref{eq:vnj_noMu}) with $\beta>\frac{19}{4}>\frac{9}{4}$ and $\gamma>3>2$,
the function $v_{\mu}$ is well-defined, $L$-periodic in $x$ and
belongs to $C^{2}(\mathbb{R}^{3})$ with
\[
D_{z}^{p}D_{x}^{q}D_{y}^{r}v_{\mu}(z,x,y)=\sum_{j=0}^{+\infty}\left(\sum_{n=-\infty}^{+\infty}(v_{j}^{n})^{(p)}(z)(in\sigma)^{q}e_{n}(x)\right)\Gamma_{j}^{(r)}(y),\qquad p+q+r\leq2.
\]
Similarly, using (\ref{eq:y2_Gamma_control_inf}), since $\beta>\frac{19}{4}>\frac{17}{4}$
and $\gamma>3>2$, and because $K$ does not depend on $\mu,f$, there
holds 
\begin{equation}
\exists C>0,\,\forall\mu\in(0,\mu_{max}),\quad\left|D^{\alpha}v_{\mu}(z,x,y)\right|\leq C||f||_{\mathcal{Z}} \frac{e^{-\kappa|z|}}{(1+y^{2})^{2}},\label{eq:Dalpha_v_bound}
\end{equation}
for any $|\alpha|\leq2$ and $(z,x,y)\in\mathbb{R}^{3}$. Thus $v_{\mu}\in\mathcal{Y}_{\mu}$. By construction \textbf{$v_{\mu}\in\mathcal{S}_{\mu}$} and satisfies
$\Lmu(s,v_{\mu})=f$. Therefore $\Lmu$ is bijective.

\paragraph{Boundedness of $||(\protect\Lmu)^{-1}||$ w.r.t. $\mu$.}

From Proposition \ref{prop:summary} and (\ref{eq:Dalpha_v_bound}), we see
that
\[
||v_{\mu}||_{\mathcal{Y}_{\mu}}\leq(C+K)||f||_{\mathcal{Z}},\quad\forall\mu\in(0,\mu_{max}),
\]
where $C,K>0$ do not depend on $f$. Meanwhile, $s$ satisfies a
similar estimate in (\ref{eq:s_bound}) and does not depend on $\mu$.
As a consequence,
\begin{equation}
\exists C_{\mathcal{L}}>0,\,\forall\mu\in(0,\mu_{max}),\quad||(\Lmu)^{-1}||\leq C_{\mathcal{L}}.\label{eq:L_inv_bound}
\end{equation}

\subsection{Construction of $(s_{\varepsilon,\mu},v_{\varepsilon,\mu})$ solving
$\protect\Fmu(\varepsilon,s,v)=0$\label{subsec:Constr_veps_mu}}

Let us fix $\mu\in(0,\mu_{max})$, and recall that $\ep_0^\ast>0$ has been fixed by Lemma \ref{lem:neps_Yast}. From subsections \ref{subsec:Spaces_Yrond_Zrond},
\ref{subsec:Front_IFT_i_ii} and \ref{sssec:Reconstruct_v}, we can
apply Theorem \ref{thm:IFT} to the function $\Fmu$ at the point
$(0,0,0)$. Hence there are $0<\overline{\varepsilon}_{0}\leq\varepsilon_{0}^{*}$
and $r>0$ that depend \textit{a priori} on $\mu$, such that, for
any $|\varepsilon|<\overline{\varepsilon}_{0}$, the following holds:
there is a unique $s_{\varepsilon,\mu}\in\mathbb{R}$ and $v_{\varepsilon,\mu}\in\mathcal{S}_{\mu}\subset\mathcal{Y}_{\mu}$
for which $|s_{\varepsilon,\mu}|+||v_{\varepsilon,\mu}||_{\mathcal{Y}_{\mu}}\leq r$
and $\Fmu(\varepsilon,s_{\varepsilon,\mu},v_{\varepsilon,\mu})=0$.

\medskip

We shall now prove that $\overline{\varepsilon}_{0},r$ can be selected
independently of $\mu$, which is crucial for letting $\mu \to 0$ in the next subsection. To do so we have to redo the proof of Theorem
\ref{thm:IFT} in a more accurate way than depicted in \cite{Zei_86}, which warrants to be detailed
here. Set 
\begin{alignat*}{2}
T_{\varepsilon,\mu}\colon & \mathbb{R}\times\mathcal{S}_{\mu} \; & \to &  \; \mathbb{R}\times\mathcal{S}_{\mu}\\
 \; & (s,v) & \mapsto &\;  (s,v)-(\Lmu)^{-1}\left(\Fmu(\varepsilon,s,v)\right).
\end{alignat*}
It is clear that $\Fmu(\varepsilon,s,v)=0$ if and only if $(s,v)$
is a fixed point of $T_{\varepsilon,\mu}$. Now, notice that $\Fmu(\varepsilon,s,v)=\Lmu(s,v)+\mathcal{G}_{\mu}(\varepsilon,s,v)$,
where
\begin{align}
\mathcal{G}_{\mu}(\varepsilon,s,v) & = sv_{z}+v\left(2A^{2}\varepsilon y\theta(x)-A^{2}\varepsilon^{2}\theta(x)^{2}-U(z)\int_{\mathbb{R}}(n^{\varepsilon}-n^{0})(x,y^{\prime})dy^{\prime}-\int_{\mathbb{R}}v(z,x,y^{\prime})dy^{\prime}\right)\nonumber \\
 & \quad -U(z)(n^{\varepsilon}-n^{0})\int_{\mathbb{R}}v(z,x,y^{\prime})dy^{\prime}+sU^{\prime}(z)(n^{\varepsilon}-n^{0})+2U^{\prime}(z)n_{x}^{\varepsilon}\nonumber \\
 & \quad +U(z)(1-U(z))n^{\varepsilon}\int_{\mathbb{R}}(n^{\varepsilon}-n^{0})(x,y^{\prime})dy^{\prime}.\label{def:Gmu}
\end{align}
Therefore $T_{\varepsilon,\mu}(s,v)=-(\Lmu)^{-1}\left(\mathcal{G}_{\mu}(\varepsilon,s,v)\right)$.
Note that 
\begin{align*}
\left[D_{(s,v)}\mathcal{G}_{\mu}(\varepsilon,s,v)\right](\tau,w) & = sw_{z}+\tau v_{z}+w\left(2A^{2}\varepsilon y\theta(x)-A^{2}\varepsilon^{2}\theta(x)^{2}-U(z)\int_{\mathbb{R}}(n^{\varepsilon}-n^{0})(x,y^{\prime})dy^{\prime}\right)\\
 & \quad -w\int_{\mathbb{R}}v(z,x,y^{\prime})dy^{\prime}-v\int_{\mathbb{R}}w(z,x,y^{\prime})dy^{\prime}\\
 & \quad -U(z)(n^{\varepsilon}-n^{0})\int_{\mathbb{R}}w(z,x,y^{\prime})dy^{\prime}+\tau U^{\prime}(z)(n^{\varepsilon}-n^{0}).
\end{align*}
Let us recall that $\theta$ satisfies (\ref{eq:thetam_decay}) with
$k+\delta>\gamma+\frac{1}{2}$. In particular, $\theta$ satisfies
(\ref{eq:bm_cond}) with $\rho=k+\delta-\gamma>1/2$ and $K_{b}=K_{\theta}$.
Repeating the same arguments as in subsection  \ref{subsec:Front_IFT_i_ii},
we have
\[
||sw_{z}||_{\mathcal{Z}}\leq C|s|\,||w||_{\mathcal{Y}_{\mu}},\qquad||\tau v_{z}||_{\mathcal{Z}}\leq C|\tau|\,||v||_{\mathcal{Y}_{\mu}},
\]
\[
||y\theta w||_{\mathcal{Z}}\leq\left(C_{\rho}^{\prime}K_{\theta}+||\theta||_{\infty}\right)||w||_{\mathcal{Y}_{\mu}},\qquad||\theta^{2}w||\leq\left(C_{\rho}K_{\theta}+||\theta||_{\infty}\right)||w||_{\mathcal{Y}_{\mu}},
\]
\[
\left\Vert Uw\int_{\mathbb{R}}(n^{\varepsilon}-n^{0})(x,y^{\prime})dy^{\prime}\right\Vert _{\mathcal{Z}}\leq\left(C_{1}K_{A}+\frac{\pi}{2}\right)||n^{\varepsilon}-n^{0}||_{Y^{*}}||w||_{\mathcal{Y}_{\mu}},
\]
\[
\left\Vert w\int_{\mathbb{R}}v(z,x,y^{\prime})dy^{\prime}\right\Vert _{\mathcal{Z}},\left\Vert v\int_{\mathbb{R}}w(z,x,y^{\prime})dy^{\prime}\right\Vert _{\mathcal{Z}}\leq\left(C_{1}K_{A}+\frac{\pi}{2}\right)||v||_{\mathcal{Y}_{\mu}}||w||_{\mathcal{Y}_{\mu}},
\]
\[
\left\Vert U(n^{\varepsilon}-n^{0})\int_{\mathbb{R}}w(z,x,y^{\prime})dy^{\prime}\right\Vert _{\mathcal{Z}}\leq K_\kappa \left(C_{1}K_{A}+\frac{\pi}{2}\right)||w||_{\mathcal{Y}_{\mu}}||n^{\varepsilon}-n^{0}||_{\mathcal{Y}_{\mu}},
\]
\[
\left\Vert \tau U^{\prime}(n^{\varepsilon}-n^{0})\right\Vert _{\mathcal{Z}}\leq C_{U}|\tau|\,||n^{\varepsilon}-n^{0}||_{Y^{*}}.
\]
Consequently, we have 
\[
\left\Vert D_{(s,v)}\mathcal{G}_{\mu}(\varepsilon,s,v)\right\Vert \leq\text{C}\left(||n^{\varepsilon}-n^{0}||_{Y^{*}}+|\varepsilon|+|\varepsilon|^{2}+|s|+||v||_{\mathcal{Y}_{\mu}}\right),
\]
where, crucially, $\text{C}>0$ does not depend on $\mu$. Fix now any $\ell>0$. Since $||n^{\varepsilon}-n^{0}||_{Y^{*}}\xrightarrow[\varepsilon\to0]{}0$
from Lemma \ref{lem:neps_Yast}, we may select $0<r<\ell$ small enough
such that for all $\mu\in(0,\mu_{max})$, we have 
\begin{equation}
\forall\mu\in(0,\mu_{max}),\quad\min\left(|\varepsilon|,\,||(s,v)||_{\mathbb{R}\times\mathcal{S}_{\mu}}\right)\leq r \Longrightarrow \left\Vert D_{(s,v)}\mathcal{G}_{\mu}(\varepsilon,s,v)\right\Vert \leq\frac{1}{2C_{\mathcal{L}}}.\label{eq:___DGmu}
\end{equation}
Then, using (\ref{eq:hxZ_Yast}), we have
\begin{align*}
||\mathcal{G}_{\mu}(\varepsilon,0,0)||_{\mathcal{Z}}= & \left\Vert 2U^{\prime}n_{x}^{\varepsilon}+U(1-U)n^{\varepsilon}\int_{\mathbb{R}}(n^{\varepsilon}-n^{0})(x,y^{\prime})dy^{\prime}\right\Vert _{\mathcal{Z}}\\
\leq & \, 2C_{U}K_\sigma ||n^{\varepsilon}-n^{0}||_{Y^{*}}+\left(C_{1}K_{A}+\frac{\pi}{2}\right)C_{U}||n^{\varepsilon}-n^{0}||_{Y^{*}},
\end{align*}
and thus we may select $0<\overline{\varepsilon}_{0}<\min(r,\varepsilon_{0}^{*})$
small enough such that
\begin{equation}
\forall\mu\in(0,\mu_{max}),\quad\forall|\varepsilon|\leq\overline{\varepsilon}_{0},\qquad||\mathcal{G}_{\mu}(\varepsilon,0,0)||_{\mathcal{Z}}\leq\frac{1}{2C_{\mathcal{L}}}r.\label{eq:___Gmu_eps00}
\end{equation}
Let $B_{r}=\left\{ (s,v)\in\mathbb{R}\times\mathcal{S}_{\mu}: ||(s,v)||_{\mathbb{R}\times\mathcal{S}_{\mu}}\leq r\right\} $
be a closed subset of the Banach space $\mathbb{R}\times\mathcal{S}_{\mu}$.
Note that $\mathcal{G}_{\mu},D_{(s,v)}\mathcal{G}_{\mu}$ are continuous
at $(0,0)$. Then from Taylor's theorem, (\ref{eq:L_inv_bound}) and
(\ref{eq:___DGmu}), we have for any $|\varepsilon|<\overline{\varepsilon}_{0}< r$
and $(s,v),(s^{\prime},v^{\prime})\in B_{r}$,
\begin{align*}
\left\Vert T_{\varepsilon,\mu}(s,v)-T_{\varepsilon,\mu}(s^{\prime},v^{\prime})\right\Vert _{\mathbb{R}\times\mathcal{S}_{\mu}} & \leq||(\Lmu)^{-1}||\,\left\Vert \mathcal{G}_{\mu}(\varepsilon,s,v)-\mathcal{G}_{\mu}(\varepsilon,s^{\prime},v^{\prime})\right\Vert _{\mathbb{R}\times\mathcal{S}_{\mu}}\\
 & \leq C_{\mathcal{L}}\sup_{0<\omega<1}\left\Vert D_{(s,v)}\mathcal{G}_{\mu}(\varepsilon,s+\omega(s^{\prime}-s),v+\omega(v^{\prime}-v))\right\Vert \\
 & \qquad \times ||(s-s^{\prime},v-v^{\prime})||_{\mathbb{R}\times\mathcal{S}_{\mu}}\\
 & \leq\frac{1}{2}||(s-s^{\prime},v-v^{\prime})||_{\mathbb{R}\times\mathcal{S}_{\mu}}.
\end{align*}
Repeating this argument, along with (\ref{eq:___Gmu_eps00}), yields
\begin{align*}
\left\Vert T_{\varepsilon,\mu}(s,v)\right\Vert _{\mathbb{R}\times\mathcal{S}_{\mu}} & \leq||(\Lmu)^{-1}||\,\left\Vert \mathcal{G}_{\mu}(\varepsilon,s,v)-\mathcal{G}_{\mu}(\varepsilon,0,0)\right\Vert _{\mathbb{R}\times\mathcal{S}_{\mu}}+||(\Lmu)^{-1}||\,\left\Vert \mathcal{G}_{\mu}(\varepsilon,0,0)\right\Vert _{\mathbb{R}\times\mathcal{S}_{\mu}}\\
 & \leq\frac{1}{2}||(s,v)||_{\mathbb{R}\times\mathcal{S}_{\mu}}+\frac{1}{2}r\leq r.
\end{align*}
Consequently $T_{\varepsilon,\mu}$ maps $B_{r}$ into itself and
is contractive, thus by the fixed-point theorem it admits a unique
fixed point in $B_{r}$. In conclusion, for any $\ell>0$, we can select $0<\overline{\varepsilon}_{0}<r<\ell$ such that
for each $|\varepsilon|<\overline{\varepsilon}_{0}$ and $\mu\in(0,\mu_{max})$,
there exists a unique $(s_{\varepsilon,\mu},v_{\varepsilon,\mu})\in B_{r}$
satisfying $\Fmu(\varepsilon,s_{\varepsilon,\mu},v_{\varepsilon,\mu})=0$. Since $\ell>0$ was taken arbitrarily, we also have
\begin{equation}
\sup_{0<\mu<\mu_{max}} ||(s_{\ep,\mu},v_{\ep,\mu})||_{\mathbb{R}\times\mathcal{S}_{\mu}} \xrightarrow[\ep \to 0]{} 0.\label{eq:ep_to_0}
\end{equation}

\subsection{Letting the parameter $\mu$ tend to zero}\label{subsec:ascoli}

\global\long\def\Ymup{\widehat{\mathcal{Y}}_{\mu}}
\global\long\def\Zmup{\widehat{\mathcal{Z}}}

Note that so far we only used the fact that $\beta>\frac{19}{4}>\frac{17}{4}$ and $\gamma>3>2$ at most. Since by assumption $\beta>\frac{19}{4}$ and $\gamma>3$,
we may redo the above proof by replacing $\mathcal{Y}_{\mu},\mathcal{Z}$ in (\ref{eq:Yrond}) and (\ref{eq:Zrond})
with
\begin{equation}
\Ymup:=\left\{ v\in C^{3}(\mathbb{R}^{3})\left|\begin{array}{c}
v(z,x+L,y)=v(z,x,y),\quad\text{on }\mathbb{R}^{3},\vspace{7pt}\\
\exists C>0,\,\forall|\alpha|\leq3,\quad\left|D^{\alpha}v(z,x,y)\right|\leq\frac{Ce^{-\kappa|z|}}{(1+y^{2})^{2}}\quad\text{on }\mathbb{R}^{3},\vspace{7pt}\\
\exists K>0,\,\forall n\in\mathbb{Z},\,\forall j\in\mathbb{N},\,\forall z\in\mathbb{R},\quad\text{there holds}\vspace{3pt}\\
|(v_{j}^{n})^{(k)}(z)|\leq\frac{Ke^{-\kappa|z|}}{(1+j)^{\beta}(1+|n|)^{\gamma}}\times\frac{1+|n|^{k}+j^{k/2}}{1+\mu n^{2}+j+|n|},\quad k\leq3\vspace{7pt}
\end{array}\right.\right\},\label{eq:Yhat}
\end{equation}
and
\begin{equation}
\Zmup:=\left\{ f\in C^{1}(\mathbb{R}^{3})\left|\begin{array}{c}
f(z,x+L,y)=f(z,x,y),\quad\text{on }\mathbb{R}^{3},\vspace{7pt}\\
\exists C>0,\,\forall|\alpha|\leq1,\quad\left|D^{\alpha}f(z,x,y)\right|\leq\frac{Ce^{-\kappa|z|}}{1+y^{2}}\quad\text{on }\mathbb{R}^{3},\vspace{7pt}\\
\exists K>0,\,\forall n\in\mathbb{Z},\,\forall j\in\mathbb{N},\,\forall z\in\mathbb{R},\quad\text{there holds}\vspace{3pt}\\
|(f_{j}^{n})^{(k)}(z)|\leq\frac{Ke^{-\kappa|z|}}{(1+j)^{\beta}(1+|n|)^{\gamma}}\times\left(1+|n|^{k}+j^{k/2}\right),\quad k\leq1
\end{array}\right.\right\},\label{eq:Zhat}
\end{equation}
equipped with the respective norms
\begin{align*}
||v||_{\Ymup}= & \sum_{|\alpha|\leq3}\left[\sup_{(z,x,y)\in\mathbb{R}^3}\left|(1+y^{2})^{2}D^{\alpha}v(z,x,y)\right|e^{\kappa|z|}\right]\\
 & +\sum_{k=0}^{3}\sup_{n\in\mathbb{Z},j\in\mathbb{N}}\left[(1+j)^{\beta}(1+|n|)^{\gamma}\frac{1+\mu n^{2}+j+|n|}{1+|n|^{k}+j^{k/2}}\sup_{z\in\mathbb{R}}\left|(w_{j}^{n})^{(k)}(z)e^{\kappa|z|}\right|\right],
\end{align*}
\begin{align*}
||f||_{\Zmup}= & \sum_{|\alpha|\leq1}\left[\sup_{(z,x,y)\in\mathbb{R}^3}\left|(1+y^{2})D^{\alpha}f(z,x,y)\right|e^{\kappa|z|}\right]\\
 & +\sum_{k\in\{0,1\}}\sup_{n\in\mathbb{Z},j\in\mathbb{N}}\left[\frac{(1+j)^{\beta}(1+|n|)^{\gamma}}{1+|n|^{k}+j^{k/2}}\sup_{z\in\mathbb{R}}\left|(f_{j}^{n})^{(k)}(z)e^{\kappa|z|}\right|\right].
\end{align*}
The proof in itself requires only slightly more precision, for example
the Young inequality $\sqrt{j}|n|\leq\frac{2}{3}j^{3/2}+\frac{1}{3}|n|^{3}$,
or the proof of (\ref{eq:bv_bound})---(\ref{eq:ybv_bound}) which
requires to split the summation over $m\in\mathbb{Z}$ into
$m\leq0$, $m\geq n$ and $0\leq m\leq n$ (assuming $n\geq0$). Details are omitted.

\medskip

Let us fix $|\varepsilon|<\overline{\varepsilon}_{0}$. From subsections
\ref{subsec:Spaces_Yrond_Zrond} to \ref{subsec:Constr_veps_mu},
for any $\mu\in(0,\mu_{max})$, we are thus equipped with $(s_{\varepsilon,\mu},v_{\varepsilon,\mu})\in\mathbb{R}\times\Ymup$,
with $|s_{\varepsilon,\mu}|,\,||v_{\varepsilon,\mu}||_{\mathcal{Y}_{\mu}}\leq r$
where $r$ does not depend on $\mu$. Therefore there exists a sequence $(\mu_{m})_{m\in\mathbb{N}}$ in $(0,\mu_{max})$ that tends to zero such
that $s_{\varepsilon,\mu_{m}}\xrightarrow[m\to+\infty]{} s_{\varepsilon}$ for some  $s_{\varepsilon}\in[-r,r]$. On the other hand, if we define for any $k\in\mathbb{N}$,
\[
C_{w,L}^{k}(\mathbb{R}^{3}):=\left\{ g\in C_{b}^{k}(\mathbb{R}^{3}): g(z,x+L,y)=g(z,x,y)\text{ on }\mathbb{R}^{3},\quad||g||_{w,k}<\infty\right\} ,
\] 
\[
w(z,y):=(1+y^{2})^{2}e^{\kappa|z|},\quad||g||_{w,k}:=\sum_{|\alpha|=0}^{k}\sup_{(z,x,y)\in\mathbb{R}^3}\left|w(z,y)D^{\alpha}g(z,x,y)\right|,
\]
we see that $||v_{\varepsilon,\mu_{m}}||_{w,3}\leq||v_{\varepsilon,\mu_{m}}||_{\Ymup}\leq r$.
We claim that a subsequence of $(v_{\varepsilon,\mu_{m}})_{m\in\mathbb{N}}$
converges, as $m\to+\infty$, to some $v_{\varepsilon}\in C_{w_{0},L}^{2}(\mathbb{R}^{3})$,
with $w_{0}(z,x,y):=(1+y^{2})e^{\frac{\kappa}{2}z}$. First,
one can readily check that $C_{w_{0},L}^{2}(\mathbb{R}^{3})$ is complete
for $||\cdot||_{w_{0},2}$. Then, because $(v_{\varepsilon,\mu_{m}})_{m}$
is bounded in $C_{w,L}^{3}(\mathbb{R}^{3})$, for any $\delta>0$
there exist $z_{\delta},y_{\delta}\geq0$ such that for
all $m,n\in\mathbb{N}$ and $|\alpha|\leq 2$,
\[
\left|\left(D^\alpha v_{\varepsilon,\mu_{m}}(z,x,y)-D^\alpha v_{\varepsilon,\mu_{n}}(z,x,y)\right)w_{0}(z,y)\right|\leq\delta,\qquad\forall|z|\geq z_{\delta},\,\forall|y|\geq y_{\delta},\,\forall x\in[0,L].
\]
From there, redoing the proof of the Arzelà-Ascoli theorem, we prove that  with another extraction $(\mu_{m}^{\prime})_{m\in\mathbb{N}}$, we obtain for $m,n$ large enough
\[
\left|\left(D^\alpha v_{\varepsilon,\mu^\prime_{m}}(z,x,y)-D^\alpha v_{\varepsilon,\mu^\prime_{n}}(z,x,y)\right)w_{0}(z,y)\right|\leq\delta,\qquad\forall (z,x,y)\in [-z_{\delta},z_{\delta}]\times[0,L]\times[-y_{\delta},y_{\delta}].
\]
Consequently the sequence $v_{\ep,\mu^\prime_m}$ is uniformly Cauchy in $C_{w_{0},L}^{2}(\mathbb{R}^{
3})$, thus convergent to some 
$v_{\varepsilon}\in C_{w_{0},L}^{2}(\mathbb{R}^{3})$.

\medskip

\begin{proof}[Completion of the proof of Theorem \ref{thm:pulsating}] Let us fix $|\varepsilon|\leq\overline{\varepsilon}_{0}$.
By construction we have $\Fmu(\varepsilon,s_{\varepsilon,\mu_{m}^{\prime}},v_{\varepsilon,\mu_{m}^{\prime}})=0$
for all $m\in\mathbb{N}$. Passing to the limit as $m\to+\infty$,
thanks to the dominated convergence theorem, we obtain $\Ff(\varepsilon,s_{\varepsilon},v_{\varepsilon})=0$.
As a result, 
\[
u^{\varepsilon}(z,x,y)=U(z)n^{\varepsilon}(x,y)+v_{\varepsilon}(z,x,y),\qquad z=x-(c_{0}+s_{\varepsilon})t,
\]
solves (\ref{eq-r}) by construction, and satisfies (\ref{eq:ueps_cond}). Finally, from (\ref{eq:ep_to_0}) combined with
\[
||v_\ep||_{w_0,2} \leq \limsup_{m \to +\infty} ||v_{\ep,\mu^\prime_m}||_{w_0,2} \leq \sup_{m \in \mathbb{N}} ||v_{\ep,\mu^\prime_m}||_{\mathcal{Y}_\mu},
\]
we deduce that $|s_\ep|,||v_{\varepsilon}||_{w_{0},2}\to 0$ as $\ep \to 0$. This yields (\ref{eq:ueps_CV})
with $b=\frac{\kappa}{2}>0$. 
\end{proof}



\section{Insights of the results on the biological model}\label{s:numerics}

In this section, our goal is to discuss some biological implications of our mathematical analysis, completed by some numerical explorations, for a population facing a nonlinear environmental gradient. 

\medskip

Throughout this section, we assume $\theta\in C_b(\R)$ and $0<A<1$ so that $\lambda_0<0$, meaning Theorems \ref{thm:steady_state_eps}, \ref{thm:steady-per} and \ref{thm:steady-C0} hold. 
Letting $\alpha:=\sqrt{2A}$,  (\ref{terme-un}), (\ref{def-terme-un}) and (\ref{def:proba}) are recast
\begin{equation}\label{discussion}
n^\ep(x,y)\approx n^0(y)\left(1 + \ep \underbrace{A\, \rho_\alpha*\theta(x)}_{\text{ deformation}} y +\cdots\right), \quad \rho_\alpha(z):=\frac 12 \alpha e^{-\alpha\vert z\vert}.
\end{equation}
In the following, we discuss two types of error between $n^\ep(x,y)$ and $n^0(y)$: firstly, the so-called relative error, whose leading order term is $A \,\rho_\alpha *\theta(x)\, y=:D(x)y$; secondly, the absolute error, whose leading order term is given by $\overline{D}(x,y):=D(x)yn^0(y)$. 

We first present some general bounds on the two errors. Notice that $||D||_{L^\infty(\R)}\leq A||\theta||_{L^\infty(\R)}$, meaning $D(x)$ remains limited as $A\to 0$, and so does the relative error for bounded $y$. In other words, the shape of populations \lq\lq far from extinction'' ($A$ small) when $\ep=0$ is very robust: such species can dampen the perturbation when $|\ep|\neq 0$. As for $\overline{D}$, thanks to (\ref{def:n0}), we can compute
\begin{align}
\Vert \overline{D} \Vert_{L^\infty(\R^2)} = C (1-A) \Vert D \Vert_{L^{\infty}(\R)}\leq CA(1-A)||\theta||_{L^\infty(\R)}, \label{eq:abs-rela}
\end{align}
for some universal constant $C>0$. Note that, for any $x$, the maximum of $\vert \overline D(x,\cdot)\vert$ is attained at $y=\pm A^{-1/2}$, independently of $\theta$. From (\ref{eq:abs-rela}), the absolute error vanishes both far from extinction ($A\to 0$), and close to extinction ($A\to 1$). In the latter case, this is because $||n^0||_{L^\infty(\R)}$ itself goes to zero, see (\ref{def:n0}).

In the sequel, we shall mostly discuss on $D(x)$, which is tied to the relative error and which we call the {\it deformation}. On the other hand,  expansion (\ref{discussion}) has the advantage to be uniform in $y$ thanks to (\ref{objectif1}) and (\ref{eq:Y_space}), and  our numerical explorations will therefore mainly focus on the absolute error $\overline D(x,y)$.

\begin{example}[Test case]\label{ex:test} If $\theta \equiv 1$, then (\ref{discussion}) yields $n^{\ep}(x,y)\approx n^0(y)\left(1+\ep Ay +\cdots \right)$. On the other hand, in view of equation (\ref{eq:statio_state}), the solution is explicitly computed as (recall (\ref{def:n0}) and Proposition \ref{prop:basis_eigenfunctions})
$$
n^\ep(x,y)=n^\ep(y)=n_0(y-\ep)=\eta C_0 e^{-\frac 12 A (y-\ep)^2}=n_0(y)e^{\ep Ay-\ep ^{2}\frac A 2}\approx n_0(y) \left(1+\ep A y +\cdots\right). 
$$ 
We thus recover that $D(x)\equiv A$.
\end{example}

\subsection{Deformation of the steady state under localized perturbation}\label{ss:bio-loc}

\begin{example}[Localized prototype case] Consider $\theta(x):=\mathbf{1}_{(-\ell,\ell)}(x)$, with $\ell >0$. This $\theta$ is not continuous but we may consider a smooth compactly supported approximation so it does not matter much for our discussion. From (\ref{uniform}), the perturbation is localized  so that we only consider $\vert x\vert \leq \frac \ell 2$, for which we compute
$$
\rho_\alpha *\theta (x)= \frac 1 2\left(\int_{-\ell}^{x} \alpha e^{-\alpha(x-z)}dz+\int_x^{\ell}\alpha e^{-\alpha(z-x)}dz\right)=1-e^{-\alpha \ell }\cosh (\alpha x).
$$
In this case $D(x)=A\left(1-e^{-\sqrt{2A}\,\ell }\cosh (\sqrt{2A}\,x)\right)$ for $\vert x\vert \leq \frac \ell 2$, and
$$
C_{A,\ell}:=\Vert D\Vert _{L^{\infty}\left(-\frac \ell 2,\frac \ell 2\right)}=A\left(1-e^{-\sqrt{2A}\, \ell}\right).
$$
For a given $\ell >0$, $A\mapsto C_{A,\ell}$ is increasing on  $(0,1)$,  $C_{A,\ell} \to 0$ as $A\to 0$, whereas $C_{A,\ell}\to c_\ell:=1-e^{-\sqrt 2\,\ell }$ as $A\to 1$. We thus recover the fact that the population can dampen the perturbation \lq\lq far from extinction'' ($A$ small). On the other hand, populations \lq\lq hardly surviving'' ($A$ close to 1) when $\ep=0$ are  more sensitive to the  perturbation which they suffer with the coefficient $c_\ell$. Notice that letting $\ell \to +\infty$ yields $D(x)\to A$ and we naturally recover the above test case of Example \ref{ex:test}.
\end{example}

\begin{example}[\lq\lq Dirac'' case] Consider  $\theta(x)=\theta_{h}(x) := \frac 1{2h} \mathbf{1}_{(-h,h)}(x)$, with $h>0$. Again, this $\theta_h$ is not continuous, and since $\Vert \theta_h \Vert_{L^\infty(\R)}\to +\infty$ as $h\to 0$, we expect that $\ep_0=\ep_0(h)$ provided by Theorem \ref{thm:steady_state_eps} satisfies $\ep_0(h)\to 0$ as $h\to 0$. Nevertheless, we  formally obtain
\[
D(x)=D_h(x)\to A \rho_\alpha(x), \text{ as $h\to 0$}.
\]
Therefore, a large variation of the optimal trait on a very small spatial range ($h\to 0$) induces 
a deformation which is maximal at the singularity (here $x=0$), and varies like $A^{\frac 32}$.
\end{example}

\subsection{Deformation of the steady state under periodic perturbation}\label{ss:bio-steady}

\begin{example}[Periodic prototype case]\label{ex:periodic} Consider $\theta (x):=\sin (\frac x \ell)$, with $\ell>0$, which is $L=2\pi \ell$-periodic. Then
\begin{equation}\label{n1-convol}
\rho_\alpha *\theta (x)= \Im \int _\R  \rho_\alpha (x-z)e^{i\frac z \ell }dz= \Im e^{i\frac x\ell } \widehat{\rho_\alpha}\left(\frac 1\ell\right)=\frac{\ell ^2\alpha^{2}}{\ell ^2 \alpha ^{2}+1}\sin \left(\frac x \ell \right).
\end{equation}
In this case
\begin{equation}\label{def-C-A-ell}
D(x)=C_{A,\ell} \,\theta(x), \quad C_{A,\ell}:=\frac{2\ell ^2A^2}{2\ell ^2A+1}.
\end{equation}
Hence the deformation is proportional to the perturbation $\theta(x)$ itself. Also, for a given $\ell >0$, $A\mapsto C_{A,\ell}$ is increasing on  $(0,1)$,  $C_{A,\ell} \to 0$ as $A\to 0$, whereas $C_{A,\ell}\to c_\ell :=\frac {2\ell ^2}{2\ell ^2+1}$ as $A\to 1$. We thus recover the fact that the population can dampen the perturbation \lq\lq far from extinction'' ($A$ small). On the other hand, populations \lq\lq hardly surviving'' ($A$ close to 1) when $\ep=0$ are more sensitive to the perturbation which they suffer with the coefficient $c_\ell$. Notice also that $c_\ell \to 0$ as $\ell \to 0$ so that rapidly changing environments are rather harmless (in the sense that the deformation is small). On the other hand, $c_\ell \to 1$ as $\ell \to +\infty$ meaning that, in slowly changing environments, populations hardly surviving when $\ep=0$ fully suffer the perturbation.
\end{example}

\begin{rem}[Influence of $L$] Since the deformation $D(x)$ vanishes as $A\to 0$, let us assume here that $A\in (0,1)$ is fixed. We also fix a 1-periodic profile $\tilde{\theta}(x)$ and set $\theta_L(x):=\tilde{\theta}\left(\frac{x}{L}\right)$. We shall highlight how $D_L(x)$, the deformation corresponding to the perturbation $\theta _L(x)$, is affected by $L$. Firstly, $D_L$ is obviously $L$-periodic. Then, we have 
\begin{align*}
\tilde{D}_L(x):= A^{-1} D_L(Lx)&=(\rho_\alpha * \theta _L)(Lx)=  (\rho_{\tilde{\alpha}} * \tilde{\theta} )(x), \qquad \tilde{\alpha}:=L\alpha=L\sqrt{2A}.
\end{align*}

When $L\to 0$, one can check that $\tilde{D}_L$ converges uniformly to $\Theta:= \int_{0}^{1}\tilde{\theta}(x)dx=\frac 1L \int_0^L \theta _L(x)dx$, so that 
\[
D_L(x) \to A\Theta, \quad \text{uniformly as }L\to 0.
\]
Note that a deformation $A\Theta$ also corresponds to the deformation assuming $\theta _L(x)\equiv \Theta$, see Example \ref{ex:test}. In other words, in a rapidly changing environment, the population is deformed as if the optimal trait was uniformly equal to its average. In particular, if the average is zero, the steady state is not distorted at first order.

On the other hand, as $L\to +\infty$, $\rho_{\tilde \alpha}$ serves as an approximation of identity and  $||\tilde{D}_L-\tilde{\theta }||_{L^\infty(\R)}\to 0$, so that
\[
\Vert D_L - A\theta _L \Vert_{L^\infty(\R)} \to 0,\quad \text{as } L\to +\infty.
\]
Consequently, the deformation is directly proportional to the optimal trait, meaning the population fully suffers from the perturbation. Note that, since $\tilde{\theta }$ is continuous, the profile $\theta _L(x)=\tilde{\theta}\left(\frac{x}{L}\right)$ flattens as $L\to +\infty$. In particular, in the above limit, we could have replaced $\theta _L$ by $x\mapsto \frac{1}{2p} \int_{x-p}^{x+p}\theta _L(z)dz$ for any fixed $p>0$.
\end{rem}

We now present some numerics for the periodic prototype case of Example \ref{ex:periodic}. As mentioned above, we are mainly concerned with the \textit{absolute} error
\begin{align}
E^\ep(x,y) := n^\ep(x,y) - n^0(y) = \ep \overline{D}(x,y) + o(\ep).\label{eq:Err}
\end{align}
To compute $n^\ep(x,y)$ numerically, we consider the Cauchy problem with initial data $n^0(y)$, and retain the asymptotics $t\to +\infty$. The steady state $n^\ep(x,y)$ being unique in a neighborhood of $n^0(y)$, one can reasonably assume such an asymptotic state to be $n^\ep(x,y)$. This is confirmed by comparing with the expected theoretical result from Theorem \ref{thm:steady_state_eps}, see Figures \ref{fig:A09_L1} and \ref{fig:A09_L2}. 

\begin{figure}[h]
	\begin{center}
	\includegraphics[width=0.85\textwidth]{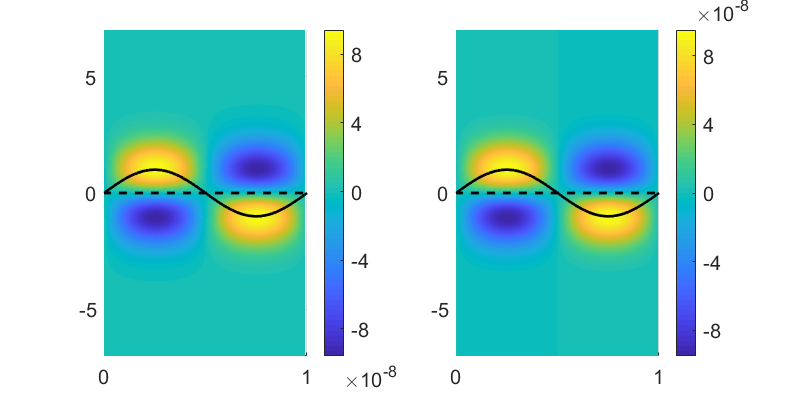}
	\caption{Left: absolute error $E^\ep(x,y) =n^\ep(x,y)-n^0(y)$, where $n^\ep(x,y)$ is determined numerically. Right: theoretical absolute error $\ep \overline{D}(x,y)$. In black, the function $\theta(x)=\sin(2\pi x)$, i.e. $L=1$. In dotted line, the optimal trait $\ep \theta(x)$. Here $A=0.9$ and $\ep=10^{-4}$.}
	\label{fig:A09_L1}
	\end{center}
\end{figure}

\begin{rem}[Absolute error vs. population distribution] In Figure \ref{fig:A09_L1} (and the ones that follow), we represent $\theta(x)$ with a solid, black line. Notice however that this does not correspond to the optimal trait at position $x$, given by $\ep\theta(x)$ and represented with a dotted line in Figure \ref{fig:A09_L1}.

The maximum of the absolute error $|\overline{D}|$ occurs in positions $x$ such that $|D(x)|$ is maximal and with trait $y=\pm y_A := \pm A^{-1/2}$, as mentioned above. As a consequence, at first order, the maximum of $\overline{D}$ occurs at traits $y=\pm y_A$ that do not depend on $\theta$, thus independently of the optimal traits. On the other hand, the positions $x$ where that maximum is attained directly depends on $\theta$ through $D(x)$.

Let us underline that this observation  concerns the absolute error $E^\ep(x,y)$, but not the population distribution $n^\ep(x,y)$ itself. For the latter, we observe numerically that its maximum remains close to $y=0$, for $|\ep|$ small enough. Moreover, thanks to (\ref{objectif1}) and (\ref{eq:Y_space}), we have
\[
\Vert n^\ep -n^0 -\ep D(x)yn^0(y)\Vert_Y = o(\ep),\quad \text{as }\ep \to 0,
\]
so that, keeping only the term corresponding to the index $D^\alpha = D_y$ in (\ref{eq:Y_norm}), and looking at $y=0$, we obtain
\[
\vert n^\ep_y(x,0) - \ep D(x)n^0(0)\vert = o(\ep), \quad \text{as }\ep\to 0.
\]
Consequently, for positions $x$ such that $D(x)\neq 0$, we see that, for $|\ep|$ small enough, $n^\ep_y(x,0)$ is non-zero and has same (opposite) sign as $D(x)$ when $\ep>0$ ($\ep<0$ respectively). In particular, the maximum of $n^\ep(x,y)$ is not attained for traits $y=0$. For those $x$, the maximum of the population size is typically shifted towards the optimal trait. Note that this also applies for non-periodic profile $\theta$.
\end{rem}

\begin{figure}[h]
	\begin{center}
	\includegraphics[width=0.7\textwidth]{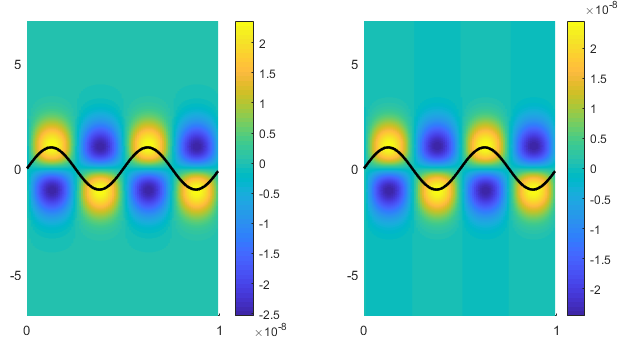}
		\caption{Left: absolute error $E^\ep(x,y) =n^\ep(x,y)-n^0(y)$, where $n^\ep(x,y)$ is determined numerically. Right: theoretical absolute error $\ep \overline{D}(x,y)$. In black, the function  $\theta(x)=\sin(4\pi x)$, i.e. $L=0.5$. Here $A=0.9$ and $\ep=10^{-4}$.}
	\label{fig:A09_L2}
	\end{center}
\end{figure}

Let us pursue with a few comments. Firstly, the error is small near $y=0$ since $\overline{D}(x,0)=0$. Also, we see that $E^\ep(x,y)$ has same sign as $y\theta(x)$, since here $D(x,y)=D(x)yn^0(y)=C_{A,\ell}\theta(x)yn^0(y)$. It can be checked that $\Vert E^\ep-\ep\overline{D} \Vert_{L^\infty(\R^2)}$ decays numerically like $O(\ep^2)$ as $\ep\to 0$.  Let us recall that, from (\ref{eq:abs-rela}),
	\[
	\Vert \overline{D} \Vert_{L^\infty(\R^2)} = C(1-A) C_{A,\ell} = C(1-A) \frac{2\ell^2A^2}{2\ell^2A+1},
	\]
for some universal $C>0$.  Therefore at first order, we expect $E_{max}^\ep :=\Vert E^\ep \Vert_{L^\infty(\R^2)}$ to be increasing with $\ell$, which is highlighted by a comparison of Figures \ref{fig:A09_L1} and \ref{fig:A09_L2} (notice the different scales). 
More generally, $\Vert \overline{D}\Vert_{L^\infty(\R^2)}$ is \lq\lq maximal'' for  $\ell \to +\infty$, $A=\frac 12$.

Last, in order to allow a better comparison with Example \ref{ex:periodic}, we also inquire for the numerical relative error. Note that for $x$ such that $D(x)\neq 0$, the relative error goes to infinity as $|y|\to +\infty$, as can be seen from (\ref{discussion}), and we therefore focus on small values of $y$. We refer to  Figure \ref{fig:Relative_A09_L1}. We have also computed the relative errors for $A\in\{0.8,0.9\}$ and $L\in\{0.5,1\}$. We observed that the numerical outcomes are in agreement with the results discussed in Example \ref{ex:periodic}.

\begin{figure}[h]
	\begin{center}
	\includegraphics[width=0.8\textwidth]{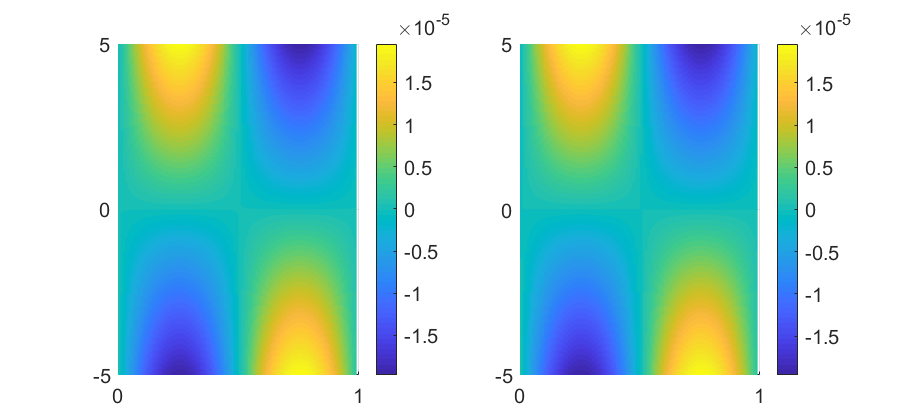}
	\caption{Left: relative error $\frac{n^\ep(x,y)-n^0(y)}{n^0(y)}$, where $n^\ep(x,y)$ is determined numerically. Right: theoretical relative error $\frac{\ep \overline{D}(x,y)}{ n^0(y)}$. Here $A=0.9$, $\ep=10^{-4}$ and $\theta(x)=\sin(2\pi x)$, i.e. $L=1$.}
	\label{fig:Relative_A09_L1}
	\end{center}
\end{figure}

\begin{example}[Influence of skewness] We here perform numerical simulations in   the 1-periodic step function case
\begin{align}
\theta(x)=\begin{cases}
+1 & \text{if } x\in \left(0,\frac{a}{2} \right)\cup\left(1-\frac{a}{2},1\right),\\
-1 & \text{if } x\in\left(\frac{a}{2},1-\frac{a}{2}\right),
\end{cases}\label{theta_marche}
\end{align}
where $0<a<1$ serves as a parameter which measures the asymmetry, or skewness, of the perturbation. Indeed, the optimal trait takes the values $y=+\ep$ and $y=-\ep$ with proportions (over a period) $a$ and $1-a$ respectively. 

In the balanced case $a=\frac 12$, the steady state is symmetrically distorted and, therefore, the location of the maximal absolute error switches between $y=\frac{1}{\sqrt{A}}$ and $y=- \frac {1}{\sqrt{A}}$, see Figure \ref{fig:marche5050}. 
\begin{figure}[h]
	\begin{center}
	\includegraphics[width=0.8\textwidth]{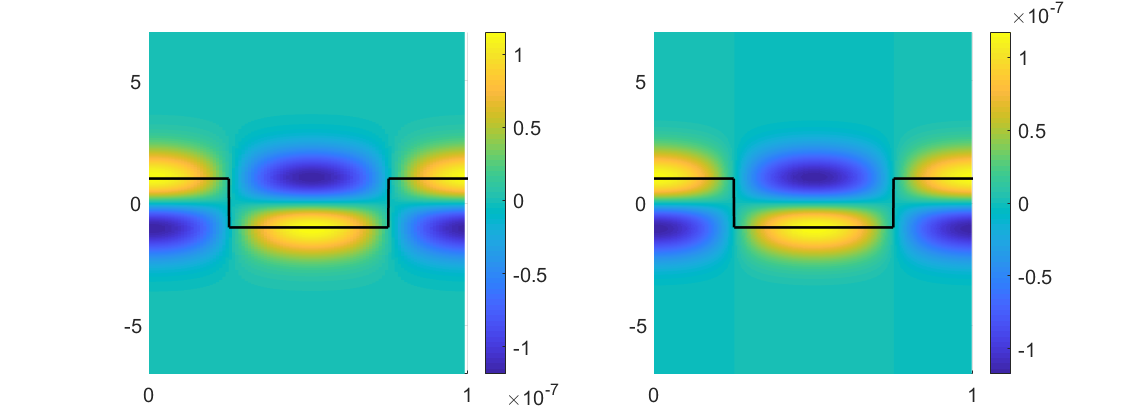}
	\caption{Left: $E^\ep(x,y)=n^\ep(x,y)-n^0(y)$, where $n^\ep(x,y)$ is determined numerically. Right: theoretical $\ep \overline{D}(x,y)$. In black, the 1-periodic function $\theta$ given by (\ref{theta_marche}) with $a=0.5$. Here $A=0.9$ and $\ep=10^{-4}$.}
	\label{fig:marche5050}
	\end{center}
\end{figure}

On the other hand, when $a\to 1$ (the $a\to 0$ case being similar), the $+\ep$ optimum is much more prevalent and, therefore, there is no switch of the maximal absolute error, and the population leans to the upper side, see Figure \ref{fig:marche8020} for $a=0.8$. In other words, there is little advantage for the population to invest on displacements to visit the lower side.
\begin{figure}[h]
	\begin{center}
	\includegraphics[width=0.8\textwidth]{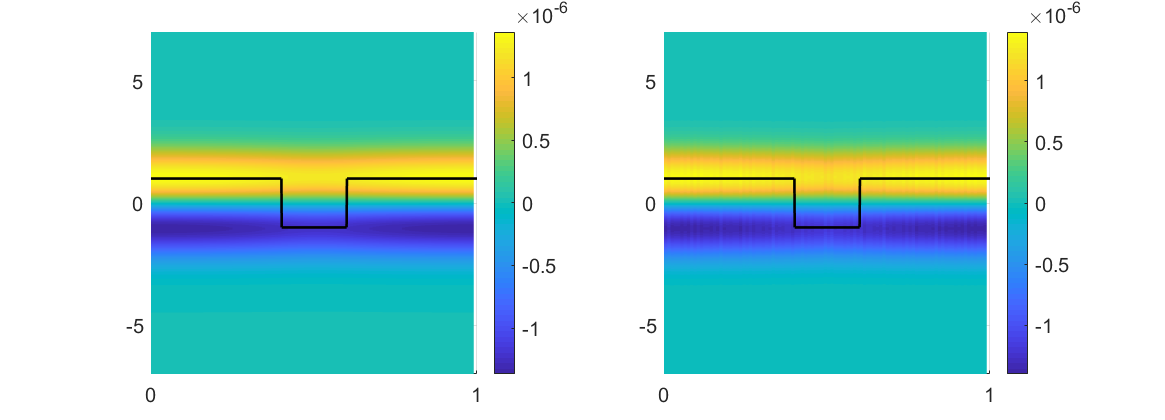}
	\caption{Left: $E^\ep(x,y)=n^\ep(x,y)-n^0(y)$, where $n^\ep(x,y)$ is determined numerically. Right: theoretical $\ep \overline{D}(x,y)$. In black, the 1-periodic function $\theta$ given by (\ref{theta_marche}) with $a=0.8$. Here $A=0.9$ and $\ep=10^{-4}$.}
	\label{fig:marche8020}
	\end{center}
\end{figure}

Last, we consider an intermediate case: Figure \ref{fig:marche5248}, for $a=0.52$, 
reveals that the population suffers less from the perturbation at positions $x$ where the optimal trait is $y=-\ep$ than at other positions.

\begin{figure}[h]
	\begin{center}
	\includegraphics[width=0.8\textwidth]{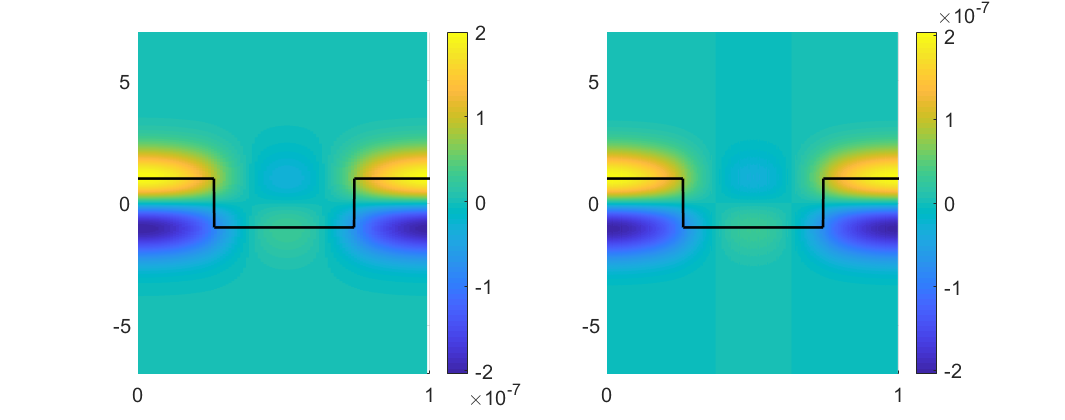}
	\caption{Left: $E^\ep(x,y)=n^\ep(x,y)-n^0(y)$, where $n^\ep(x,y)$ is determined numerically. Right: theoretical $\ep \overline{D}(x,y)$. In black, the 1-periodic function $\theta$ given by (\ref{theta_marche}) with $a=0.52$. Here $A=0.9$ and $\ep=10^{-4}$.}
	\label{fig:marche5248}
	\end{center}
\end{figure}

It is worth mentioning that Figures \ref{fig:marche5050} to \ref{fig:marche5248} highlight that the maximum absolute error increases (notice the different scales) with $\left|a-\frac 12\right|$ (i.e. the aforementioned skewness).

These remarks are consistent with the fact that the absolute error is $D(x)yn^0(y)=(\rho_\alpha * \theta)(x)yn^0(y)$. Indeed, in order to have a positive absolute error at the lower side of position $x$, one must have $D(x)<0$. In the balanced case, one obviously has $D(x)<0$ for all $x\in (0.25,0.75)$, and $|D(x)|$ is maximal at $x=0.5$. When $a=0.8$, we have $D(x)\geq D(0.5)>0$, so that the population always leans towards the upper side, albeit slightly less in $x=0.5$.

In fact, for any fixed $\alpha=\sqrt{2A}$, one can explicitly compute the value $a=a_\alpha$ such that $D(0.5)=0$. We omit the details (tedious but straightforward
cutting of the integral accordingly to the step function, computation of an infinite series and solving of a quadratic equation) and find
\[
a_\alpha=\frac{2}{\alpha}\ln\left(\frac{4}{e^{-\alpha}-1+\sqrt{(e^{-\alpha}-1)^{2}+16e^{-\alpha}}}\right)-1.
\]
Then for any $a>a_\alpha$, we have $D(x)\geq D(0.5)>0$, so that the population leans to the upper side everywhere. For $A=0.9$, we have $a_\alpha \approx 0.5273$, hence our choice of $a=0.52$ for the intermediate case.
\end{example}

\subsection{Deformation of the speed and profile of the front under periodic perturbation}\label{ss:bio-puls}

Here, we formally reproduce the arguments of subsection \ref{ss:proof-th2} (performed to analyse the perturbation of  the steady state) to analyse the perturbation of the pulsating front constructed through Section \ref{s:pulsating}, to which we refer for notations and definitions. We differentiate  $\Fmu(\ep,s_\ep,v_\ep)=0$ with respect to $\ep$ thanks to the chain rule and then evaluate at $\ep=0$ to get
\[
D_{\varepsilon}\Fmu(0,0,0)+\Lmu\left(\left.\frac{\partial s_{\varepsilon,\mu}}{\partial\varepsilon}\right|_{\varepsilon=0},\left.\frac{\partial v_{\varepsilon,\mu}}{\partial\varepsilon}\right|_{\varepsilon=0}\right)=0.
\]
From the expression of $D_{\varepsilon}\Fmu=D_{\varepsilon}\Fmu(\ep,s,v)$ and 
\[
n^{\varepsilon}(x,y)=n^{0}(y)+\varepsilon n^{1}(x,y)+o(\varepsilon)\qquad\text{in }Y,\text{ as }\varepsilon\to 0,
\]
we compute
$$
f(z,x,y):=D_{\varepsilon}\Fmu(0,0,0)  =2U'(z) n_x^{1}(x,y)+U(z)(1-U(z))n^{0}(y)\int_{\mathbb{R}}n^{1}(x,y^{\prime})dy^{\prime}=2U'(z) n_x^{1}(x,y),
$$
since we know from Theorem \ref{thm:steady_state_eps} that $n^{1}(x,y)$ is odd with respect to $y$. From the above and (\ref{def-terme-un}), we reach
$$
\left(\left.\frac{\partial s_{\varepsilon,\mu}}{\partial\varepsilon}\right|_{\varepsilon=0},\left.\frac{\partial v_{\varepsilon,\mu}}{\partial\varepsilon}\right|_{\varepsilon=0}\right)=(\Lmu)^{-1}(f)=(\Lmu)^{-1}\left(\sqrt{2A}\,\eta U'(z)(\rho_A*\theta)'(x)\Gamma _1(y)\right).
$$
Projecting on $(\Gamma _j)$ we thus have $f_j(z,x)=\sqrt{2A}\,\eta U'(z)(\rho_A*\theta)'(x)\, \delta_{j,1}$, where we use the Kronecker symbol. Now, the key point is that $\frac  1 L\int _0^{L}(\rho_A*\theta)'(x)\,dx=0$ so that the Fourier coefficient $f^{0}_{1}(z)\equiv 0$. As a result, recalling (\ref{eq:v00_recast}), $\Phi_f(z)\equiv 0$ so that $s=0$, where $s$ is given by (\ref{eq:s}). In our setting, the latter is recast $\left.\frac{\partial s_{\varepsilon,\mu}}{\partial\varepsilon}\right|_{\varepsilon=0}=0$. Formally letting $\mu \to 0$, this provides
$s_\ep=o(\ep)$ and, thus, (\ref{c1-egal-zero}). 

As explained above, (\ref{c1-egal-zero}) means that the perturbation of the speed of the front by the nonlinearity $\theta=\theta (x)$ is of the second order with respect to $\ep$. As far as the distortion of the profile of the front itself is involved, we focus on the following  example which sheds light on the amplitude of the deformation. 

\begin{example}[Amplitude of the deformation of the profile]\label{ex:periodic2} Here, following Example \ref{ex:periodic}, we consider $\theta(x):=\sin\left(\frac x \ell\right)$, with $\ell>0$, which is $L=2\pi \ell$-periodic. As a result, recalling (\ref{def-terme-un}), (\ref{def:proba}) and (\ref{n1-convol}), we reach
$$
f(z,x,y)=U'(z)\frac{\ell \alpha ^4}{\ell ^2\alpha ^2+1}\cos\left(\frac x\ell\right) yn_0(y)=U'(z)\frac{\ell \alpha ^3\eta}{\ell ^2\alpha ^2+1}\cos\left(\frac x\ell\right) \Gamma_1(y),
$$
where, as above, we use the shortcut $\alpha=\sqrt{2A}$. Projecting on $(\Gamma _j)$, we get
$$
f_j(z,x)= U'(z)\frac{\ell \alpha ^3\eta}{\ell ^2\alpha ^2+1}\cos\left(\frac x\ell\right)\, \delta_{j,1}\,,
$$
whose Fourier coefficients are
$$
f_j^n(z)= U'(z)\frac{\ell \alpha ^3\eta}{2(\ell ^2\alpha ^2+1)}\,\delta_{\vert n\vert,1}\,\delta_{j,1}=:  \mathcal{C} \eta U'(z) \,\delta_{\vert n\vert,1}\,\delta_{j,1}\,,
$$
and where
\begin{equation}
\label{def:C-ronde-A-ell}
\mathcal{C}=\mathcal{C}_{A,\ell}:=\frac{\ell A\sqrt{2A}}{2 \ell ^{2}A+1}.
\end{equation}
 In other words $
f_1^{1}(z)=f_1^{-1}(z)= \mathcal{C}\eta  U'(z)$ and all other coefficients vanish. As a result, the profile of the pulsating front is described by
$$
u^\ep(z,x,y) \approx U(z)n^\ep(x,y)+\ep \left(v_1^{-1}(z)e_{-1}(x)+v_1^1(z)e_1(x)\right)\Gamma_1(y)+\cdots,
$$
where $\mathcal{E}_{\pm 1,1,\mu}[v_{1}^{\pm 1}]=f_1^{\pm 1}(z)=\mathcal{C}\eta U'(z)$, see (\ref{eq:vnj}). Clearly, we have $\overline{v_1^{1}}=v_1^{-1}$ so that
\begin{eqnarray*}
u^\ep(z,x,y) &\approx & U(z)n^\ep(x,y)+\ep\, 2 \Re \left(v_1^1(z)e_1(x)\right)\Gamma_1(y)+\cdots\vspace{10pt}\\
& \approx & U(z)n^0(y)\Big(1+\ep\, C_{A,\ell} \theta(x) y+\ep\, 2 \frac{\Re \left(v_1^1(z)e_1(x)\right)}{\eta U(z)}\sqrt{2A}\, y +\cdots\Big).
\end{eqnarray*}
Here we have used Example \ref{ex:periodic}, in particular $C_{A,\ell}$ is given by (\ref{def-C-A-ell}). Next, since $\ell\sqrt{2A}\, \mathcal{C}_{A,\ell}=C_{A,\ell}$, we end up with
$$
u^\ep(z,x,y) \approx   U(z)n^0(y)\left(1+\ep\, C_{A,\ell} \left(\theta(x) + 2 \frac{\Re\, \left( \mathcal{L}_{1,1,\mu}^{-1}[U^{\prime}](z) e_1(x) \right)}{\ell U(z)}\right)y +\cdots\right).
$$

At this stage, since the term $w(z):=\mathcal{L}_{1,1,\mu}^{-1}[U^{\prime}](z)$ also depends on $A$ and $\ell$, the amplitude of (the leading order term of) the deformation of the profile of the front is not transparent. Nevertheless we can formally obtain some clues in some asymptotic regimes. Recall that, up to letting $\mu \to 0$, $w$ solves
\begin{equation}\label{bidule-truc}
w''+\left(\frac{2i}\ell+c_0\right)w'-\left(\lambda_1-\lambda_0U(z)+\frac 1{\ell ^{2}}\right)w=U'(z).
\end{equation}

Letting $\ell \to 0$, (\ref{bidule-truc}) formally provides $w(z)\sim -\ell ^{2}U'(z)$ so that   $\frac{\Re\, \left( w(z) e_1(x) \right)}{\ell U(z)}$ is of \lq\lq magnitude $\ell$'', 
and thus $u^{\ep}(z,x,y)\approx U(z)n^{0}(y)\left(1+\ep C_{A,\ell} \theta(x)y+\cdots\right)$. On the other hand, letting $\ell \to +\infty$, (\ref{bidule-truc}) formally shows that $w(z)$ is independent on $\ell$ so that $\frac{\Re\, \left( w(z) e_1(x) \right)}{\ell U(z)}$ is of \lq\lq magnitude $1 /\ell$'' and thus, again, $u^{\ep}(z,x,y)\approx U(z)n^{0}(y)\left(1+\ep C_{A,\ell} \theta(x)y+\cdots\right)$. As a result, at least in any of the asymptotic regimes $\ell \to 0$, $\ell \to +\infty$,   the amplitude of (the leading order term of) the deformation of the profile of the front is again measured by $C_{A,\ell}$, so that the biological insights are similar to those of Example \ref{ex:periodic}.
 
 On the other hand, letting $A\to 0$ or $A\to 1$, $w(z)$ formally becomes independent on $A$ and thus $$
u^{\ep}(z,x,y)\approx U(z)n^{0}(y)\Big(1+\ep C_{A,\ell} (\theta(x)+\Psi _\ell (z,x))y+\cdots\Big),
$$
so that an additional deformation term, denoted $\Psi_\ell (z,x)$, is involved.

\end{example}

\appendix

\section{Appendix}

\subsection{Proof of Lemma \ref{lem:homo_nj}\label{sec:Lemmaphi}}

We first need to construct solutions
of (\ref{eq:homo}) on $\mathbb{R}_{-}$ and $\mathbb{R}_{+}$. The proof mainly consists in rewriting the ordinary differential equation (\ref{eq:homo}) as a fixed point problem, and then to perform careful estimates by considering separately large values of $\min(\vert n \vert,j)$ from bounded values of $\min(\vert n \vert,j)$.

\begin{lem}[Fundamental system of (\ref{eq:homo}) on $\mathbb{R}_{-}$ and $\mathbb{R}_{+}$]
\label{lem:phi_tilde}Let $(n,j)\in\mathbb{Z}\times\mathbb{N}$ with
$(n,j)\neq(0,0)$, and $0\leq\mu<1$. On $\mathbb{R}_{-}$, we can
construct a system of fundamental solutions $(\tilde{\varphi}_{-},\tilde{\varphi}_{+})$
of (\ref{eq:homo}), such that 
\begin{equation}
\tilde{\varphi}_{\pm}(z)=\tilde{P}_{\pm}(z)e^{a_{n,j,\mu}^{\pm}z},\qquad\tilde{P}_{\pm}\in C_{b}^{2}(\mathbb{R}_{-},\mathbb{C}),\qquad\liminf_{z\to-\infty}\left|\tilde{P}_{-}(z)\right|>0,\label{eq:homo_sol_R-}
\end{equation}
with $a_{n,j,\mu}^{\pm}$ given by (\ref{eq:a_nj}). On $\mathbb{R}_{+}$,
we can construct a system of fundamental solutions $(\tilde{\psi}_{-},\tilde{\psi}_{+})$
of (\ref{eq:homo}) such that 
\begin{equation}
\tilde{\psi}_{\pm}(z)=\tilde{Q}_{\pm}(z)e^{b_{n,j,\mu}^{\pm}z},\qquad\tilde{Q}_{\pm}\in C_{b}^{2}(\mathbb{R}_{+},\mathbb{C}),\qquad\liminf_{z\to+\infty}\left|\tilde{Q}_{+}(z)\right|>0,\label{eq:homo_sol_R+}
\end{equation}
with $b_{n,j,\mu}^{\pm}$ given by (\ref{eq:b_nj}). Also, there is
$\tilde{R}_{max}>0$ such that
\begin{equation}
\sup_{(n,j)\neq(0,0)}\sup_{0\leq\mu<1}\sup_{\tilde{R}\in\{\tilde{P}_{\pm},\tilde{Q}_{\pm}\}}\left(||\tilde{R}||_{L^{\infty}}+||\tilde{R}^{\prime}||_{L^{\infty}}\right)\leq\tilde{R}_{max},\label{eq:PQtilde_bounds}
\end{equation}
where by convention the sup norm is taken over the domain of definition 
of $\tilde{R}$. 

Additionally, there exist $n_{0},j_{0}>0$ such that if $|n|\geq n_{0}$
or $j\geq j_{0}$, there holds for all $0\leq\mu<1$,
\begin{equation}
\left|\tilde{P}_{+}(0)-1\right|,\left|\tilde{Q}_{-}(0)-1\right|\leq\frac{1}{2},\qquad\tilde{P}_{-}(0)=\tilde{Q}_{+}(0)=1,\label{eq:PQtilde_0}
\end{equation}
\begin{equation}
\left|\tilde{P}_{+}^{\prime}(0)\right|,\left|\tilde{Q}_{-}^{\prime}(0)\right|\leq1,\qquad\tilde{P}_{-}^{\prime}(0)=\tilde{Q}_{+}^{\prime}(0)=0.\label{eq:PpQptilde0}
\end{equation}

Besides, denoting $\tilde{P}_{\pm}=\tilde{P}_{\pm}^{n,j,\mu}$ and
$\tilde{Q}_{\pm}=\tilde{Q}_{\pm}^{n,j,\mu}$, we have for any $N_{0},J_{0}>0$
\begin{equation}
\sup_{\substack{|n|\leq N_{0},j\leq J_{0}\\
(n,j)\neq(0,0)
}
}\sup_{\tilde{R}\in\{\tilde{P}_{\pm},\tilde{Q}_{\pm}\}}\left(\left|\tilde{R}^{n,j,\mu}(0)-\tilde{R}^{n,j,0}(0)\right|+\left|\left(\tilde{R}^{n,j,\mu}\right)^{\prime}(0)-\left(\tilde{R}^{n,j,0}\right)^{\prime}(0)\right|\right)\xrightarrow[\mu\to0]{}0.\label{eq:PQtilde_muto0}
\end{equation}

Next, by taking $\tilde{\mu}_{max}>0$ small enough, there exists
$W_{min}>0$ such that for all $(n,j)\neq(0,0)$ and $0\leq\mu<\tilde{\mu}_{max}$,
the Wronskians of $(\tilde{\varphi}_{-},\tilde{\varphi}_{+})$ and
$(\tilde{\psi}_{-},\tilde{\psi}_{+})$ in zero satisfy:
\begin{equation}
\begin{cases}
\left|W_{\tilde{\varphi}}\right|:=\left|\left[\tilde{\varphi}_{-}^{\prime}\tilde{\varphi}_{+}-\tilde{\varphi}_{+}^{\prime}\tilde{\varphi}_{-}\right](0)\right|\geq W_{min},\vspace{5pt}\\
\left|W_{\tilde{\psi}}\right|:=\left|\left[\tilde{\psi}_{-}^{\prime}\tilde{\psi}_{+}-\tilde{\psi}_{+}^{\prime}\tilde{\psi}_{-}\right](0)\right|\geq W_{min},
\end{cases}\label{eq:Wmin}
\end{equation}
and if $|n|\geq n_{0}$ or $j\geq j_{0}$, we have as well 
\begin{equation}
\left|W_{\tilde{\varphi}}\right|\geq\frac{1}{4}\left|a^{+}_{n,j,\mu}-a^{-}_{n,j,\mu}\right|+1,\qquad\left|W_{\tilde{\psi}}\right|\geq\frac{1}{4}\left|b^{+}_{n,j,\mu}-b^{-}_{n,j,\mu}\right|+1.\label{eq:Wmin_bignj}
\end{equation}

Furthermore, there exist $\zeta_{1},\zeta_{2}>0$ such that for all
$(n,j)\neq(0,0)$ and $0\leq\mu<\tilde{\mu}_{max}$
\begin{equation}
\int_{-\infty}^{0}|\tilde{\varphi}_{+}(z)|^{2}dz\geq\zeta_{1}e^{-\zeta_{2}\Re a^{+}_{n,j,\mu}}.\label{eq:int_phi_tilde}
\end{equation}
\end{lem}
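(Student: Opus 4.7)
The strategy is to substitute $\tilde\varphi_\pm(z) = \tilde P_\pm(z)e^{a^\pm z}$ on $\mathbb{R}_-$ and $\tilde\psi_\pm(z) = \tilde Q_\pm(z)e^{b^\pm z}$ on $\mathbb{R}_+$ into (\ref{eq:homo}). Using that $a^\pm$ (resp.\ $b^\pm$) are the roots of the characteristic polynomial of (\ref{eq:homo_-}) (resp.\ (\ref{eq:homo_+})) and that $a^+ + a^- = -(2in\sigma+c_0) = b^+ + b^-$, this substitution reduces to the perturbed ODEs
\[
\tilde P_\pm'' \pm \xi\, \tilde P_\pm' = V(z)\tilde P_\pm, \qquad \tilde Q_\pm'' \pm \zeta\, \tilde Q_\pm' = W(z)\tilde Q_\pm,
\]
with $\xi := a^+ - a^-$, $\zeta := b^+ - b^-$, $V(z) := (1+\delta_{0j})\lambda_0(1-U(z))$ and $W(z) := -(1+\delta_{0j})\lambda_0 U(z)$. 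Crucially, $\Re\xi, \Re\zeta > 0$ by (\ref{eq:Re_a-a_sign}) (since $(n,j)\neq(0,0)$), $V$ and $W$ decay exponentially at $-\infty$ and $+\infty$ respectively (via $1-U$ and $U$), and their $L^1$ norms on the respective half-lines are bounded independently of $(n,j,\mu)$.

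For the \emph{outer} solutions $\tilde P_+$ and $\tilde Q_-$, I impose the asymptotic normalizations $\tilde P_+(-\infty) = \tilde Q_-(+\infty) = 1$ and apply variation of constants using the fundamental system $\{1, e^{-\xi z}\}$ (resp.\ $\{1, e^{-\zeta z}\}$) of the limit operator, obtaining
\[
\tilde P_+(z) = 1 + \frac{1}{\xi}\int_{-\infty}^z \left(1 - e^{\xi(s-z)}\right) V(s)\tilde P_+(s)\,ds
\]
and analogously for $\tilde Q_-$. For $s \leq z \leq 0$, $\Re\xi(s-z) \leq 0$, so the kernel is uniformly bounded by $2/|\xi|$ and the integral operator is a contraction on $C_b(\mathbb{R}_-)$ of ratio $\leq 2||V||_{L^1}/|\xi|$. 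By (\ref{eq:a-a_bound}), this is strictly less than $1/2$ as soon as $|n| \geq n_0$ or $j \geq j_0$ for $n_0, j_0$ large enough, uniformly in $\mu \in [0,1)$, and Banach's fixed-point theorem yields a unique bounded $\tilde P_+$ with $\tilde P_+(0) = 1 + O(1/|\xi|)$. Differentiating the integral equation and splitting around a small neighborhood of $z=0$ to exploit the concentration of $e^{\Re\xi(s-z)}$ then gives $\tilde P_+'(0) = O(1/|\xi|)$, hence (\ref{eq:PQtilde_0})--(\ref{eq:PpQptilde0}) for large indices. For the \emph{inner} solutions $\tilde P_-$ and $\tilde Q_+$, I prescribe the initial data $\tilde P_-(0) = 1, \tilde P_-'(0) = 0$ and $\tilde Q_+(0) = 1, \tilde Q_+'(0) = 0$ at $z=0$; the same variation-of-constants procedure produces Volterra equations on $[z,0]$ and $[0,z]$, and Gr\"onwall's inequality combined with the contraction structure yields uniform $C^2$ bounds together with $\tilde P_-(-\infty) = 1 + O(1/|\xi|) \neq 0$ for large indices, ensuring $\liminf|\tilde P_-|>0$, and similarly at $+\infty$ for $\tilde Q_+$.

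For the finite remaining set of bounded $(n,j)$, standard ODE theory (hyperbolicity of the limit matrices at $\pm\infty$ and the associated stable/unstable manifolds) provides the asymptotic solutions, while continuous dependence on $\mu$ on the compact $[0,1]$ yields uniform $C^2$ bounds, the convergence (\ref{eq:PQtilde_muto0}), and---via linear independence at $\mu=0$---the Wronskian bound (\ref{eq:Wmin}) upon fixing $\tilde\mu_{max}$ small. The large-index bound (\ref{eq:Wmin_bignj}) follows directly from
\[
W_{\tilde\varphi}(0) = (a^- - a^+)\tilde P_-(0)\tilde P_+(0) + \tilde P_-'(0)\tilde P_+(0) - \tilde P_+'(0)\tilde P_-(0),
\]
combined with $\tilde P_\pm(0) \approx 1$ and $\tilde P_\pm'(0) \approx 0$. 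Finally, selecting a uniform interval $[-\delta,0]$ on which $|\tilde P_+| \geq 1/2$ gives
\[
\int_{-\infty}^0 |\tilde\varphi_+(z)|^2\,dz \geq \frac{1}{4}\int_{-\delta}^0 e^{2\Re a^+\, z}\,dz = \frac{1 - e^{-2\delta\Re a^+}}{8\Re a^+},
\]
whose at worst polynomial decay in $\Re a^+$ dominates any $\zeta_1 e^{-\zeta_2 \Re a^+}$, proving (\ref{eq:int_phi_tilde}).

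The main obstacle is producing constants uniform in the triple $(n,j,\mu)$. The contraction argument only succeeds once $|\xi|$ exceeds a threshold, which by (\ref{eq:a-a_bound}) requires $\sqrt{\mu n^2 + j + |n|}$ to be large; the complementary finite set of indices must be treated by a genuinely different compactness-and-continuity argument, and matching the two regimes into single constants $\tilde R_{max}$ and $W_{min}$ requires care. A further subtle point is the derivative bound $|\tilde P_+'(0)| \leq 1$: the crude sup-norm fixed-point estimate only yields $O(||V||_{L^1})$, which need not be small, so one must refine by splitting the integral representation of $\tilde P_+'(0)$ and exploiting the concentration of the kernel $e^{\Re\xi(s-z)}$ near $s=0$ to recover the needed factor $1/\Re\xi$.
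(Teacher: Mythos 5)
Your proposal is correct and follows essentially the same route as the paper: the substitution $\tilde\varphi_\pm=\tilde P_\pm e^{a^\pm z}$, $\tilde\psi_\pm=\tilde Q_\pm e^{b^\pm z}$, the Volterra-type integral equations normalized at $\mp\infty$ for the outer solutions, a contraction argument uniform in $\mu$ once $|a^+-a^-|$ exceeds a threshold (i.e.\ for $|n|\geq n_0$ or $j\geq j_0$, via the lower bound (\ref{eq:a-a_bound})), finiteness of the remaining index set plus continuity in $\mu$ for (\ref{eq:PQtilde_muto0}) and (\ref{eq:Wmin}), the explicit Wronskian formula at $z=0$ for (\ref{eq:Wmin_bignj}), and the localization of $|\tilde P_+|$ near its anchor point for (\ref{eq:int_phi_tilde}). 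The one spot where your write-up is looser than the paper's is the inner solutions at the finitely many small indices: prescribing $\tilde P_-(0)=1$, $\tilde P_-'(0)=0$ does not by itself yield $\liminf_{z\to-\infty}|\tilde P_-(z)|>0$ there (that data could lie on the line spanned by $\tilde\varphi_+$, forcing $\tilde P_-\to 0$), and the paper's fix is to anchor the fixed point instead at a $z_0<0$ chosen so that $|\lambda_0|\int_{-\infty}^{z_0}(1-U)$ is small, giving $\|\tilde P_--1\|_{L^\infty}\leq\frac12$ on $(-\infty,z_0]$ before extending to $[z_0,0]$ by Gr\"onwall --- your fallback to asymptotic (stable/unstable manifold) theory for those indices is the same idea and closes the gap, but you should use that construction, not the $z=0$-normalized one, for the small-index $\tilde P_-$ and $\tilde Q_+$.
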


\begin{proof}
In the context of this proof, we always assume $(n,j)\neq(0,0)$.
Also, for the sake of readability, we drop the ``tilde'' notations
for $\tilde{\varphi},\tilde{\psi},\tilde{P},\tilde{Q}$ and denote
$a_{n,j,\mu}^{\pm}=a^{\pm}$, $b_{n,j,\mu}^{\pm}=b^{\pm}$. We first
construct the solutions $\varphi_{\pm}$. Let us fix $n,j$ and $0\leq\mu<1$.
We only treat the case $j\geq1$, the proof for $j=0$ being similar.
Set $\varphi_{\pm}(z)=P_{\pm}(z)e^{a^{\pm}z}$ where $P_{\pm}\in C_{b}^{2}(\mathbb{R}_{-},\mathbb{C})$
is to be determined. Plugging it into (\ref{eq:homo}), we obtain
\begin{equation}
P_{\pm}^{\prime\prime}\pm rP_{\pm}^{\prime}-\lambda_{0}(1-U(z))P_{\pm}=0,\label{eq:___cauchypb}
\end{equation}
where $r:=2a^{+}+2in\sigma+c_{0}=a^{+}-a^{-}$, so that $\Re r>0$ from (\ref{eq:Re_a-a_sign}).

Let us first construct $\varphi_{+}$. Using a Sturm-Liouville approach, we may recast (\ref{eq:___cauchypb})
as
\[
(P_{+}^{\prime}e^{rz})^{\prime}-\lambda_{0}(1-U(z))P_{+}e^{rz}=0,
\]
so that, assuming $P_{+}^{\prime}(-\infty)=0$, we obtain after integration on $(-\infty,z)$,
\begin{equation}
P_{+}^{\prime}(z)=\lambda_{0}\int_{-\infty}^{z}e^{r(\omega-z)}(1-U(\omega))P_{+}(\omega)d\omega,\label{eq:___P+p}
\end{equation}
and thus, assuming $P_{+}(-\infty)=1$, after another integration and a ,
\begin{equation}
P_{+}(z)=1-\lambda_{0}\int_{-\infty}^{z}\frac{e^{r(\omega-z)}-1}{r}(1-U(\omega))P_{+}(\omega)d\omega.\label{eq:___P+}
\end{equation}
Hence, $P_{+}$ is written as the solution of a fixed-point problem.
Since $1-U\in L^{1}(\mathbb{R}_{-})$, for a given $z_{0}\leq0$,
the operator in the right-hand side of (\ref{eq:___P+}) is globally
Lipschitz continuous on $C_{b}\left((-\infty,z_{0}],\mathbb{C}\right)$
with Lipschitz constant $2\left|\lambda_{0}r^{-1}\right|\int_{-\infty}^{z_{0}}(1-U(\omega))d\omega$.
Hence, for $|z_{0}|$ large enough,
the fixed-point theorem yields the existence and uniqueness of a solution
$P_{+}\in C_{b}\left((-\infty,z_{0}],\mathbb{C}\right)$ to the problem
(\ref{eq:___P+}). One can readily check that $P_{+}$ indeed solves
(\ref{eq:___cauchypb}) and belongs to $C_{b}^{2}\left((-\infty,z_{0}],\mathbb{C}\right)$.
We extend it to $\mathbb{R}_{-}$ by solving the Cauchy problem associated
to (\ref{eq:___cauchypb}). We have therefore constructed a function
$\varphi_{+}(z)=P_{+}(z)e^{a^{+}z}$ that solves (\ref{eq:homo}).

We now construct $\varphi_{-}$. We can repeat the same procedure, and obtain that $\varphi_{-}(z)=P_{-}(z)e^{a^{-}z}$
solves (\ref{eq:homo}) if and only if $P_{-}$ satisfies 
\[
(P_{-}^{\prime}e^{-rz})^{\prime}-\lambda_{0}(1-U(z))P_{-}e^{-rz}=0.
\]
By integrating on $[z,z_{0}]$ instead of $(-\infty,z]$, and assuming
$P_{-}^{\prime}(z_{0})=0$, $P_{-}(z_{0})=1$, we deduce successively
that 
\begin{equation}
P_{-}^{\prime}(z)=-\lambda_{0}\int_{z}^{z_{0}}e^{-r(\omega-z)}(1-U(\omega))P_{-}(\omega)d\omega,\label{eq:___P-p}
\end{equation}
\begin{equation}
P_{-}(z)=1+\lambda_{0}\int_{z}^{z_{0}}\frac{1-e^{-r(\omega-z)}}{r}(1-U(\omega))P_{-}(\omega)d\omega,\label{eq:___P-}
\end{equation}
so that $P_{-}$ solves a fixed-point problem. Assuming $|z_{0}|$
large enough, there exists a unique solution $P_{-}\in C_{b}\left((-\infty,z_{0}],\mathbb{C}\right)$
by the fixed-point theorem. One can then readily check that $P_{-}\in C_{b}^{2}\left((-\infty,z_{0}],\mathbb{C}\right)$,
and after extending it to $\mathbb{R}_{-}$ by solving the Cauchy
problem associated to (\ref{eq:___cauchypb}), we obtain another solution
$\varphi_{-}(z)=P_{-}(z)e^{a^{-}z}$ of (\ref{eq:homo}) on $\mathbb{R}_{-}$.
Finally, the solutions $(\varphi_{-},\varphi_{+})$ are linearly independent
since $\Re a^{+}\neq\Re a^{-}$.

We shall now prove that there exist $n_{0},j_{0}>0$ such that $P_{\pm}$
satisfy (\ref{eq:PQtilde_0})---(\ref{eq:PpQptilde0}) when $|n|\geq n_{0}$
or $j\geq j_{0}$. Also, for those indexes $n,j$, we shall prove
that $P_{\pm}$ satisfy (\ref{eq:homo_sol_R-}) and (\ref{eq:PQtilde_bounds}).
Here we denote $r=r_{n,j,\mu}$. Since (\ref{eq:a-a_bound}) and (\ref{eq:Re_a-a_noMu})
hold, there exist $n_{0},j_{0}>0$ such that if $|n|\geq n_{0}$ or
$j\geq j_{0}$, we have
\begin{equation}
\left|r_{n,j,\mu}\right|\geq8,\quad\frac{|\lambda_{0}|}{\Re r_{n,j,\mu}}\leq\frac{2}{3},\quad\left|\frac{\lambda_{0}}{r_{n,j,\mu}}\right|\int_{-\infty}^{0}(1-U(\omega))d\omega\leq\frac{1}{6},\qquad\forall\mu\in[0,1).\label{eq:___rnj_bignj}
\end{equation}
Let us assume that $|n|\geq n_{0}$ or $j\geq j_{0}$. Then (\ref{eq:___P+})
and (\ref{eq:___P-}) hold for any $z\leq z_{0}=0$, independently
of $0\leq\mu<1$. Therefore $P_{-}(0)=1$, $P_{-}^{\prime}(0)=0$
and
\[
||P_{\pm}||_{C_{b}^{0}(\mathbb{R}_{-})}\leq1+\left(2\left|\frac{\lambda_{0}}{r_{n,j,\mu}}\right|\int_{-\infty}^{0}(1-U(\omega))d\omega\right)||P_{\pm}||_{C_{b}^{0}(\mathbb{R}_{-})}\leq1+\frac{1}{3}||P_{\pm}||_{C_{b}^{0}(\mathbb{R}_{-})},
\]
and thus $||P_{\pm}||_{C_{b}^{0}(\mathbb{R}_{-})}\leq\frac{3}{2}$.
Combining this bound with (\ref{eq:___P+p})--- (\ref{eq:___rnj_bignj}),
we deduce on the one hand,
\[
||P_{\pm}-1||_{C_{b}^{0}(\mathbb{R}_{-})}\leq2\left|\frac{\lambda_{0}}{r_{n,j,\mu}}\right|||P_{\pm}||_{C_{b}^{0}(\mathbb{R}_{-})}\int_{-\infty}^{0}(1-U(\omega))d\omega\leq\frac{1}{2},
\]
and on the other hand,
\[
||P_{\pm}^{\prime}||_{C_{b}^{0}(\mathbb{R}_{-})}\leq\left|\lambda_{0}\right|\int_{-\infty}^{0}e^{\omega \Re r_{n,j,\mu}}(1-U(\omega))\left|P_{\pm}(\omega)\right|d\omega\leq\frac{3\left|\lambda_{0}\right|}{2\Re r_{n,j,\mu}}\leq1.
\]
In conclusion, assuming $|n|\geq n_{0}$ or $j\geq j_{0}$, $P_{\pm}$
satisfy (\ref{eq:homo_sol_R-}) and (\ref{eq:PQtilde_bounds})---(\ref{eq:PpQptilde0})
with $\tilde{R}_{max}=\frac{5}{2}$.

Let us now fix $n,j$ such that $|n|\leq n_{0}$ and $j\leq j_{0}$.
We shall prove that, up to taking $\tilde{R}_{max}$ possibly larger,
$P_{\pm}$ satisfy (\ref{eq:homo_sol_R-}) and (\ref{eq:PQtilde_bounds}).
Note that
\[
|r_{n,j,\mu}|\leq|r_{n_{0},j_{0},1}|\eqqcolon r_{max}>0,
\]
while we also have, since $(n,j)\neq(0,0)$,
\begin{equation}
|r_{n,j,\mu}|\geq\Re r_{n,j,\mu}\geq\inf_{(n,j)\neq(0,0)}\Re r_{n,j,0}\geq\min(\Re r_{1,0,0},r_{0,1,0})\eqqcolon r_{min}>0.\label{eq:___rmin}
\end{equation}
We now select $z_{0}\leq0$ independent of $n,j,\mu$ such that 
\begin{equation}
|\lambda_{0}|\int_{-\infty}^{z_{0}}(1-U(\omega))d\omega\leq\min\left(\frac{2}{3},\frac{r_{min}}{6}\right),\label{eq:___z0_smallnj}
\end{equation}
and thus (\ref{eq:___P+}) and (\ref{eq:___P-}) hold for any $z\leq z_{0}$.
Similarly as above, we deduce
\begin{equation}
||P_{\pm}-1||_{C_{b}^{0}\left((-\infty,z_{0})\right)}\leq\frac{1}{2},\quad||P_{\pm}^{\prime}||_{C_{b}^{0}\left((-\infty,z_{0})\right)}\leq1.\label{eq:___P_bounds}
\end{equation}
From there, we recall that the functions $P_{\pm}$ are extended to
$\mathbb{R}_{-}$ by solving the Cauchy problem associated to (\ref{eq:___cauchypb}),
which we recast
\[
Y_{\pm}^{\prime}=A_{\pm}(z)Y_{\pm},\qquad Y_{\pm}=\left(\begin{array}{c}
P_{\pm}\\
P_{\pm}^{\prime}
\end{array}\right),\quad A_{\pm}(z)=\left(\begin{array}{cc}
0 & 1\\
\mp r_{n,j,\mu} & \lambda_{0}(1-U(z))
\end{array}\right).
\]
If we denote $||\cdot||_{\infty}$ both the supremum norm on $\mathbb{C}^{2}$
and its associated subordinate norm on $M_{2}(\mathbb{C})$, we thus
obtain 
\[
||Y_{\pm}^{\prime}(z)||_{\infty}\leq||A_{\pm}(z)||_{\infty}||Y_{\pm}(z)||_{\infty}\leq M||Y_{\pm}(z)||_{\infty},\qquad M:=\left\Vert \left(\begin{array}{cc}
0 & 1\\
r_{max} & \left|\lambda_{0}\right|
\end{array}\right)\right\Vert _{\infty}.
\]
Using the Gronwall's Lemma, this leads to, for all $z_{0}\le z\leq0$:
\[
||Y_{\pm}(z)||_{\infty}\leq||Y_{\pm}(z_{0})||_{\infty}e^{M(z-z_{0})}\leq\frac{3}{2}e^{M|z_{0}|},\qquad\forall|n|\leq n_{0},\,\forall j\leq j_{0},\,\forall\mu\in[0,1).
\]
In conclusion, combining this paragraph and the previous one, we deduce
that $P_{\pm}$ satisfy (\ref{eq:homo_sol_R-}) and (\ref{eq:PQtilde_bounds}) 
for $\tilde{R}_{max}=\max\left(\frac{5}{2},\frac{3}{2}e^{M|z_{0}|}\right)$. Note that $n_0,j_0,z_0$ do not depend on $\mu$, so that this is also the case for $\tilde{R}_{max}$.

Let us prove that $W_{\tilde{\varphi}}$ satisfies (\ref{eq:Wmin_bignj}). Given that
\begin{equation}
W_{\tilde{\varphi}}=\left[(a^{-}-a^{+})P_{+}P_{-}+P_{-}^{\prime}P_{+}-P_{+}^{\prime}P_{-}\right](0),\label{eq:___Wphi}
\end{equation}
and using (\ref{eq:PQtilde_0})---(\ref{eq:PpQptilde0}) with (\ref{eq:___rnj_bignj}),
we have, for all $0\leq\mu<1$, 
\[
\left|W_{\tilde{\varphi}}\right|\geq\frac{1}{2}\left|a^{+}-a^{-}\right|-1\geq\frac{1}{4}|a^{+}-a^{-}|+1,\qquad\text{ if }|n|\geq n_{0}\text{ or }j\geq j_{0},
\]
so that (\ref{eq:Wmin_bignj}) holds. 

We now fix $N_{0},J_{0}>0$ and show that $P_{\pm}$ satisfies (\ref{eq:PQtilde_muto0}).
We first consider fixed indexes $|n|< n_{0}$ and $j< j_{0}$.
Let us recall that for those $n,j$ we selected $z_{0}\leq0$
such that (\ref{eq:___z0_smallnj}) holds for all $0\leq\mu<1$, which
means (\ref{eq:___P+}) and (\ref{eq:___P-}) hold for any $z\leq z_{0}$.
To begin with, we first prove that
\begin{equation}
\sup_{|n|\leq n_{0},j\leq j_{0}}\left(\left|P_{\pm}^{n,j,\mu}(z_{0})-P_{\pm}^{n,j,0}(z_{0})\right|+\left|\left(P_{\pm}^{n,j,\mu}\right)^{\prime}(z_{0})-\left(P_{\pm}^{n,j,0}\right)^{\prime}(z_{0})\right|\right)\xrightarrow[\mu\to0]{}0,\label{eq:___Ptilde_muto0_z0}
\end{equation}
where we denoted $P_{\pm}=P_{\pm}^{n,j,\mu}$. Let us mention that
$P_{-}$ satisfies (\ref{eq:___Ptilde_muto0_z0}) since by construction
$P_{-}^{n,j,\mu}(z_{0})=1$ and $(P_{-}^{n,j,\mu})^{\prime}(z_{0})=0$
for all $0\leq\mu<1$. It thus suffices to show that $P_{+}^{n,j,\mu}$
satisfies (\ref{eq:___Ptilde_muto0_z0}). Fix $\varepsilon>0$. We
set
\[
g_{n,j,\mu}(z):=\frac{e^{r_{n,j,\mu}z}-1}{r_{n,j,\mu}},\quad0\leq\mu<1,\quad z\leq0.
\]
Note that, due to (\ref{eq:___rmin}), we have $||g_{n,j,\mu}||_{\infty}\leq\frac{2}{r_{min}}$.
Also, we fix $z_{\varepsilon}\leq z_{0}$ such that $
\int_{-\infty}^{z_{\varepsilon}}(1-U(\omega))d\omega\leq\varepsilon$. 
Consequently, we have for all $z\leq z_{0}$
\begin{align*}
\left|P_{+}^{n,j,\mu}(z)-P_{+}^{n,j,0}(z)\right| & \leq\left|\lambda_{0}\right|\left|\int_{-\infty}^{z_{\varepsilon}}\left[(g_{n,j,\mu}-g_{n,j,0})(\omega-z)\right](1-U(\omega))P_{+}^{n,j,\mu}(\omega)d\omega\right|\\
 & \quad+\left|\lambda_{0}\right|\left|\int_{z_{\varepsilon}}^{z}\left[(g_{n,j,\mu}-g_{n,j,0})(\omega-z)\right](1-U(\omega))P_{+}^{n,j,\mu}(\omega)d\omega\right|\boldsymbol{1}_{(z_{\varepsilon},z_{0}]}(z)\\
 & \quad+\left|\lambda_{0}\right|\left|\int_{-\infty}^{z}g_{n,j,0}(\omega-z)(1-U(\omega))\left[P_{+}^{n,j,\mu}(\omega)-P_{+}^{n,j,0}(\omega)\right]d\omega\right|\\
 & \leq2\varepsilon\left|\lambda_{0}\right|||g_{n,j,\mu}||_{\infty}||P_{+}^{n,j,\mu}||_{L^\infty(\R^-)}\\
 & \quad+\left|\lambda_{0}\right|||P_{+}^{n,j,\mu}||_{L^\infty(\R^-)}\sup_{z_{\varepsilon}\leq\omega\leq0}\left|(g_{n,j,\mu}-g_{n,j,0})(\omega)\right|\int_{-\infty}^{0}(1-U(\omega))d\omega\\
 & \quad+\left|\lambda_{0}\right|||g_{n,j,0}||_{\infty}\int_{-\infty}^{z}(1-U(\omega))\left|P_{+}^{n,j,\mu}(\omega)-P_{+}^{n,j,0}(\omega)\right|d\omega.
\end{align*}
Also, one can readily check that 
\[
\sup_{|n|< n_{0},j< j_{0}}\sup_{z_{\varepsilon}\leq\omega\leq0}\left|(g_{n,j,\mu}-g_{n,j,0})(\omega)\right|\xrightarrow[\mu\to0]{}0.
\]
Therefore there exists $\mu_{\varepsilon}>0$ such that for any $|n|< n_{0}$,
$j< j_{0}$, $0\leq\mu\leq\mu_{\varepsilon}$ and $z\leq z_{0}$,
there holds
\[
\left|P_{+}^{n,j,\mu}(z)-P_{+}^{n,j,0}(z)\right|\leq C\varepsilon+D\int_{-\infty}^{z}(1-U(\omega))\left|P_{+}^{n,j,\mu}(\omega)-P_{+}^{n,j,0}(\omega)\right|d\omega,
\]
\[
C:=\left|\lambda_{0}\right|\tilde{R}_{max}\left(\frac{4}{r_{min}}+\int_{-\infty}^{0}(1-U(\omega))d\omega\right)>0,\qquad D:=\frac{2\left|\lambda_{0}\right|}{r_{min}}>0.
\]
From
the Gronwall's Lemma, we obtain
\[
\left|P_{+}^{n,j,\mu}(z)-P_{+}^{n,j,0}(z)\right|\leq C\varepsilon\exp\left(D\int_{-\infty}^{0}(1-U(\omega))d\omega\right).
\]
Since $\varepsilon$ is arbitrary, we see that $\left|P_{+}^{n,j,\mu}(z_{0})-P_{+}^{n,j,0}(z_{0})\right|\to0$
as $\mu\to0$ uniformly in $|n|< n_{0}$ and $j< j_{0}$. The
proof for $(P_{+}^{n,j,\mu})^{\prime}$ is similar and is thus omitted.
Therefore (\ref{eq:___Ptilde_muto0_z0}) holds. We are now ready to
prove (\ref{eq:PQtilde_muto0}) for indexes $|n|< n_{0}$ and $j< j_{0}$.
Let us recall that $P_{\pm}^{n,j,\mu}$ is extended to $\mathbb{R}_{-}$
by solving the Cauchy problem associated to (\ref{eq:___cauchypb})
with initial data taken at $z=z_{0}$. Because $z_{0}$ does not
depend on $\mu$, we deduce from classical results of ODEs and continuous
dependency of the solutions with respect to the parameter $\mu$,
that
\[
\sup_{|n|\leq n_{0},j\leq j_{0}}\left(\left|P_{\pm}^{n,j,\mu}(0)-P_{\pm}^{n,j,0}(0)\right|+\left|\left(P_{\pm}^{n,j,\mu}\right)^{\prime}(0)-\left(P_{\pm}^{n,j,0}\right)^{\prime}(0)\right|\right)\xrightarrow[\mu\to0]{}0.
\]
We now consider indexes $(n,j)$ such that $n_{0}\leq |n|\leq N_{0}$
or $j_{0}\leq j\leq J_{0}$, assuming $N_0\geq n_0$ or $J_0\geq j_0$. Let us recall that for such indexes, (\ref{eq:___rnj_bignj})
holds for all $0\leq\mu<1$, which means (\ref{eq:___P+}) and (\ref{eq:___P-})
hold for any $z\leq0$. Then using the same arguments, we have that
(\ref{eq:___Ptilde_muto0_z0}) holds where $(n_{0},j_{0},z_{0})$
are replaced by $(N_{0},J_{0},0)$, and thus $P_{\pm}^{n,j,\mu}$
satisfies (\ref{eq:PQtilde_muto0}).

We are now ready to prove (\ref{eq:Wmin}). We first consider fixed
indexes $|n|\leq n_{0}$ and $j\leq j_{0}$. Then one can readily
check that
\[
\sup_{|n|\leq n_{0},j\leq j_{0}}|a_{n,j,\mu}^{\pm}-a_{n,j,0}^{\pm}|\xrightarrow[\mu\to0]{}0.
\]
Since (\ref{eq:PQtilde_muto0}) holds with $(N_{0},J_{0})=(n_{0},j_{0})$,
we deduce from (\ref{eq:___Wphi}) that
\[
\sup_{|n|\leq n_{0},j\leq j_{0}}\left|W_{\tilde{\varphi}}^{n,j,\mu}-W_{\tilde{\varphi}}^{n,j,0}\right|\xrightarrow[\mu\to0]{}0,
\]
where we denoted $W_{\tilde{\varphi}}=W_{\tilde{\varphi}}^{n,j,\mu}$.
We have $W_{\tilde{\varphi}}^{n,j,0}\neq0$ for all $n,j$, since
it is the Wronskian of $(\varphi_{-},\varphi_{+})$ when $\mu=0$.
Therefore there exists $m>0$ such that $\inf_{|n|\leq n_{0},j\leq j_{0}}\left|W_{\tilde{\varphi}}^{n,j,0}\right|\geq m$.
Thus taking $\tilde{\mu}_{max}>0$ small enough, we obtain for any
$0\leq\mu<\tilde{\mu}_{max}$
\[
\inf_{|n|\leq n_{0},j\leq j_{0}}\left|W_{\tilde{\varphi}}^{n,j,\mu}\right|\geq\frac{m}{2}.
\]
Combining this with (\ref{eq:Wmin_bignj}), we obtain (\ref{eq:Wmin})
with $W_{min}:=\min\left(1,\frac{m}{2}\right)$.

Finally, let us prove (\ref{eq:int_phi_tilde}). Let us consider indexes
$(n,j)$ such that $|n|\geq n_{0}$ or $j\geq j_{0}$. Then $|P_{+}(0)|\geq\frac{1}{2}$
from (\ref{eq:PQtilde_0}). Set $\rho=(4\tilde{R}_{max})^{-1}>0$.
Then by the mean value inequality we have for all $\mu\in[0,1)$
$$
\int_{-\infty}^{0}\left|\varphi_{+}(z)\right|^{2}dz  \geq\int_{-\rho}^{0}\left|P_{+}(z)\right|^{2}e^{2\Re a^{+}z}dz \geq\left(\frac{1}{2}-\tilde{R}_{max}\rho\right)^{2}\int_{-\rho}^{0}e^{2\Re a^{+}z}dz\geq\frac{\rho}{16}e^{-2\Re a^{+}\rho}.
$$
Let us now assume that $|n|\leq n_{0}$
and $j\leq j_{0}$. Then from (\ref{eq:___P_bounds}) we have $|P_{+}(z_{0})|\geq\frac{1}{2}$,
with $z_{0}\leq0$ independent of $n,j,\mu$, see (\ref{eq:___z0_smallnj}).
Redoing the same calculations with $\int_{-\rho}^{0}$ being replaced
by $\int_{z_{0}-\rho}^{z_{0}}$, we obtain
\[
\int_{-\infty}^{0}\left|\varphi_{+}(z)\right|^{2}dz\geq\frac{\rho}{16}e^{2\Re a^{+}(z_{0}-\rho)}.
\]
Combining those estimates, we obtain (\ref{eq:int_phi_tilde}).

As for the construction of solutions $(\psi_{-},\psi_{+})$ on $\mathbb{R}_{+}$,
the procedure is similar and leads to the construction of a system
of fundamental solutions $(\psi_{-},\psi_{+})$ of (\ref{eq:homo})
on $\mathbb{R}_{+}$ such that (\ref{eq:homo_sol_R+})---(\ref{eq:Wmin_bignj})
hold. Let us however underline a key difference: there may happen
that $\Re b^{+}\leq0$ since (\ref{eq:sign_b+}) holds.
Despite that, $r\coloneqq 2b^{+}+2in\sigma+c_{0}=b^{+}-b^{-}$ still satisfies
$\Re r>0$ from (\ref{eq:Re_a-a_sign}). If however $(n,j,c_0)=(0,0,c^\ast)$, then $r=0$ and the above proof does not work, which is why we excluded the case $n=j=0$.
\end{proof}

We are now in the position to prove Lemma \ref{lem:homo_nj} concerning a fundamental system of solutions to (\ref{eq:homo}) on $\R$.

\begin{proof}[Proof of Lemma \ref{lem:homo_nj}]
In the context of this proof, we always assume $(n,j)\neq(0,0)$,
and for the sake of readability, we denote $a_{n,j,\mu}^{\pm}=a^{\pm}$,
$b_{n,j,\mu}^{\pm}=b^{\pm}$. From Lemma \ref{lem:phi_tilde}, for
all $n,j$ and $0\leq\mu<1$, we are equipped with the functions 
\[
\tilde{\varphi}_{\pm}(z)=\tilde{P}_{\pm}(z)e^{a^{\pm}z},\qquad\tilde{\psi}_{\pm}(z)=\tilde{Q}_{\pm}(z)e^{b^{\pm}z},
\]
that we extend to $\mathbb{R}$ by solving the Cauchy problem associated
to (\ref{eq:homo}). Let us fix $n,j$ and $0\leq\mu<\tilde{\mu}_{max}$,
where $\tilde{\mu}_{max}$ is obtained from Lemma \ref{lem:phi_tilde}.
Let us first assume that $(n,j,\mu)$ are such that $\tilde{\varphi}_{+},\tilde{\psi}_{-}$
are linearly independent. Then we set $(\varphi_{-},\varphi_{+}):=(\tilde{\psi}_{-},\tilde{\varphi}_{+})$,
which is indeed a fundamental system of solutions. In particular,
there exist $c_{-}\in\mathbb{C}$ and $c_{+}\in\mathbb{C}\backslash\{0\}$
such that for any $z\geq0$, there holds
\[
\varphi_{+}(z)=c_{-}\tilde{\psi}_{-}(z)+c_{+}\tilde{\psi}_{+}(z)=c_{-}\tilde{Q}_{-}(z)e^{b^{-}z}+c_{+}\tilde{Q}_{+}(z)e^{b^{+}z}.
\]
Setting 
\begin{equation}
P_{+}(z)=\tilde{P}_{+}(z),\qquad Q_{+}(z)=c_{-}\tilde{Q}_{-}(z)e^{(b^{-}-b^{+})z}+c_{+}\tilde{Q}_{+}(z),\label{eq:___P+Q+_expr}
\end{equation}
yields that $\varphi_{+}$ satisfies (\ref{eq:phi_pm}) with $P_{+}\in C_{b}^{2}(\mathbb{R}_{-})$
and $Q_{+}\in C_{b}^{2}(\mathbb{R}_{+})$, thanks to (\ref{eq:Re_a-a_sign}) and (\ref{eq:homo_sol_R-})---(\ref{eq:homo_sol_R+}). Also, since $c_{+}\neq0$, we have
$\liminf_{z\to+\infty}|Q_{+}(z)|\geq|c_{+}|\liminf_{z\to+\infty}|\tilde{Q}_{+}(z)|>0$
from (\ref{eq:homo_sol_R+}). We can prove in the same way that $\varphi_{-}$
satisfies (\ref{eq:phi_pm}) with $P_{-},Q_{-}$ belonging to $C_{b}^{2}(\mathbb{R}_{-}),C_{b}^{2}(\mathbb{R}_{+})$
respectively, and $\liminf_{z\to-\infty}|P_{-}(z)|>0$. 

Next, we claim that $\tilde{\varphi}_{+},\tilde{\psi}_{-}$ are never
collinear. Let us assume by contradiction that there exist $(n,j)\neq(0,0)$
and $0\leq\mu<\tilde{\mu}_{max}$ such that $\tilde{\varphi}_{+},\tilde{\psi}_{-}$
are collinear. Then adapting the above proof yields that $(\psi_{\infty},\psi_{0}):=(\tilde{\varphi}_{-},\tilde{\varphi}_{+})$
is a fundamental system of solutions of (\ref{eq:homo}) such that
\begin{equation}
\psi_{\infty}(z)=\begin{cases}
P_{-}(z)e^{a^{-}z} & z\leq0,\\
Q_{+}(z)e^{b^{+}z} & z\geq0,
\end{cases}\qquad\psi_{0}(z)=\begin{cases}
P_{+}(z)e^{a^{+}z} & z\leq0,\\
Q_{-}(z)e^{b^{-}z} & z\geq0,
\end{cases}\label{eq:___psi_0inf}
\end{equation}
with $P_{\pm}\in C_{b}^{2}(\mathbb{R}_{-})$, $Q_{\pm}\in C_{b}^{2}(\mathbb{R}_{+})$,
$\liminf_{z\to-\infty}|P_{-}(z)|>0$ and $\liminf_{z\to+\infty}|Q_{+}(z)|>0$.
Next, set $0<\delta<1$, and let $\mathcal{L}_{n,j,\mu}^{\delta,\rho}$
be the operator defined by (\ref{eq:Lnj_rho}), with $\rho = \frac{c_0}{2}$. We shall prove that
$\mathcal{L}_{n,j,\mu}^{\delta,\rho}$ is surjective. For any $f\in C_{\rho}^{0,\delta}(\mathbb{R},\mathbb{C})$,
using the variation of the constant, we set
\[
v(z):=\psi_{\infty}(z)\int_{-\infty}^{z}\frac{1}{W(\omega)}\psi_{0}(\omega)f(\omega)d\omega-\psi_{0}(z)\int_{0}^{z}\frac{1}{W(\omega)}\psi_{\infty}(\omega)f(\omega)d\omega,
\]
with the Wronskian $W(\omega):=\left[\psi_{\infty}^{\prime}\psi_{0}-\psi_{0}^{\prime}\psi_{\infty}\right](\omega)\neq0$.
By construction, $v\in C^{2}(\mathbb{R},\mathbb{C})$ and satisfies
$\mathcal{E}_{n,j,\mu}[v]=f$. Thus to conclude, it suffices to prove
that $v\in C_{\rho}^{2,\delta}(\mathbb{R},\mathbb{C})$, or equivalently that $\overline{v}(z):= v(z)e^{\rho z}\in C^{2,\delta}(\mathbb{R},\mathbb{C})$.
Firstly, we clearly have $\overline{v}\in C^{2}(\mathbb{R},\mathbb{C})$.
Also, notice that since $(\psi_{\infty},\psi_{0})$ solve (\ref{eq:homo}),
there holds
\[
W(\omega)=W(0)e^{-(c_{0}+2in\sigma)\omega}=W(0)e^{(a^{+}+a^{-})\omega}=W(0)e^{(b^{+}+b^{-})\omega}.
\]
Let us prove that $\overline{v}\in C_{b}^{2}(\mathbb{R},\mathbb{C})$.
Setting $\overline{f}:= fe^{\rho z}\in C^{0,\delta}(\mathbb{R},\mathbb{C})$,
for any $z\leq0$, there holds 
\begin{align*}
\left|\overline{v}(z)\right| & = \left|P_{-}(z)e^{(a^{-}+\rho)z}\int_{-\infty}^{z}\frac{P_{+}(\omega)}{W(0)}e^{-(a^{-}+\rho)\omega}\overline{f}(\omega)d\omega+P_{+}(z)e^{(a^{+}+\rho)z}\int_{z}^{0}\frac{P_{-}(\omega)}{W(0)}e^{-(a^{+}+\rho)\omega}\overline{f}(\omega)d\omega\right|\\
 & \leq\frac{1}{|W(0)|}||P_{-}||_{\infty}||P_{+}||_{\infty}||\overline{f}||_{\infty}\left(e^{(\text{Re }a^{-}+\rho)z}\int_{-\infty}^{z}e^{-(\text{Re }a^{-}+\rho)\omega}d\omega+e^{(\text{Re }a^{+}+\rho)z}\int_{z}^{0}e^{-(\text{Re }a^{+}+\rho)\omega}d\omega\right)\\
 & \leq\frac{1}{|W(0)|}||P_{-}||_{\infty}||P_{+}||_{\infty}||\overline{f}||_{\infty}\left(-\frac{1}{\text{Re }a^{-}+\rho}+\frac{1-e^{(\Re a^{+}+\rho)z}}{\text{Re }a^{+}+\rho}\right)\\
 & \leq\frac{1}{|W(0)|}||P_{-}||_{\infty}||P_{+}||_{\infty}||\overline{f}||_{\infty}\left(-\frac{1}{\text{Re }a^{-}+\rho}+\frac{1}{\text{Re }a^{+}+\rho}\right),
\end{align*}
where the last two lines of calculation are valid since $\Re a^{\pm}+\rho=\pm\frac{1}{2}\Re(a^{+}-a^{-})$ and (\ref{eq:Re_a-a_sign}) holds. Therefore $\overline{v}$ is uniformly bounded on $\mathbb{R}_{-}$,
and similarly on $\mathbb{R}_{+}$.
Next, because 
\[
\overline{v}^{\prime}(z)=\psi_{\infty}^{\prime}(z)\int_{-\infty}^{z}\frac{1}{W(\omega)}\psi_{0}(\omega)f(\omega)d\omega-\psi_{0}^{\prime}(z)\int_{0}^{z}\frac{1}{W(\omega)}\psi_{\infty}(\omega)f(\omega)d\omega,
\]
and $P_{\pm},Q_{\pm}\in C_{b}^{1}(\mathbb{R},\mathbb{C})$, we prove
in the same way that $\overline{v}\in C_{b}^{1}(\mathbb{R},\mathbb{C})$.
Finally, plugging $v=\overline{v}e^{-\rho z}$ into $\mathcal{E}_{n,j,\mu}[v]=f$,
we deduce that 
\[
\mathcal{\overline{E}}_{n,j,\mu}[\overline{v}]:=\overline{v}^{\prime\prime}+2in\sigma\overline{v}^{\prime}+\left[-\frac{c_{0}^{2}}{4}-\left(\lambda_{j}-(1+\delta_{0j})\lambda_{0}U(z)+(1+\mu)n^{2}\sigma^{2}\right)\right]\overline{v}=\overline{f}.
\]
Therefore $\overline{v}\in C_{b}^{2}(\mathbb{R},\mathbb{C})$, and
since $\overline{f}\in C^{0,\delta}(\mathbb{R},\mathbb{C})$, we deduce
immediately that $\overline{v}\in C^{2,\delta}(\mathbb{R},\mathbb{C})$,
so that $v\in C_{\rho}^{2,\delta}(\mathbb{R},\mathbb{C})$. Therefore
$\mathcal{L}_{n,j,\mu}^{\delta,\rho}$ is surjective. From Lemma \ref{lem:Fredholm_weight}
in Appendix \ref{sec:Fredholm}, this operator is also Fredholm of
index zero, and is thus injective. We shall obtain a contradiction
by showing that $\psi_{0}\in\ker\mathcal{L}_{n,j,\mu}^{\delta,\rho}$.
First, setting $\overline{\psi}_{0}(z):=\psi_{0}(z)e^{\rho z}$,
we have $\overline{\psi}_{0}\in C_{b}^{2}(\mathbb{R},\mathbb{C})$
from (\ref{eq:___psi_0inf}).
Then, since $\mathcal{\overline{E}}_{n,j,\mu}[\overline{\psi}_{0}]=0$,
we also have $\overline{\psi}_{0}\in C^{2,\delta}(\mathbb{R},\mathbb{C})$.
Therefore $\psi_{0}\in C_{\rho}^{2,\delta}(\mathbb{R},\mathbb{C})$
satisfies $\mathcal{E}_{n,j,\mu}[\psi_{0}]=0$, which means $\psi_{0}\in\ker\mathcal{L}_{n,j,\mu}^{\delta,\rho}$.
Consequently our assumption that $\tilde{\varphi}_{+},\tilde{\psi}_{-}$
are collinear is absurd. Therefore for all $(n,j)\neq(0,0)$ and $0\leq \mu<\tilde{\mu}_{max}$, the functions $(\varphi_{-},\varphi_{+})$
defined by (\ref{eq:phi_pm}) form a fundamental system of solutions
of (\ref{eq:homo}).

Let us prove that $P_{\pm},Q_{\pm}$ satisfy (\ref{eq:PQ_bounds}).
We shall only prove it for $P_{+},Q_{+}$, the equivalent for $P_{-},Q_{-}$
being similar. Since $P_{+}=\tilde{P}_{+}$ satisfies (\ref{eq:PQtilde_bounds}),
$P_{+}$ satisfies (\ref{eq:PQ_bounds}) if $R_{max}\geq\tilde{R}_{max}$.
As for $Q_{+}$, we first need upper bounds on $c_{\pm}$. Since $\varphi_{+},\varphi_{+}^{\prime}$
are continuous at $z=0$, we obtain from (\ref{eq:phi_pm}) and (\ref{eq:___P+Q+_expr}),
the following linear system in $(c_{-},c_{+})$
\begin{equation}
\begin{cases}
c_{-}\tilde{Q}_{-}(0)+c_{+}\tilde{Q}_{+}(0) & =\tilde{P}_{+}(0),\\
c_{-}\left(\tilde{Q}_{-}^{\prime}(0)+b^{-}\tilde{Q}_{-}(0)\right)+c_{+}\left(\tilde{Q}_{+}^{\prime}(0)+b^{+}\tilde{Q}_{+}(0)\right) & =\tilde{P}_{+}^{\prime}(0)+a^{+}\tilde{P}_{+}(0).
\end{cases}\label{eq:___lin_syst}
\end{equation}
Let us recall that we denoted $W_{\tilde{\psi}}$ the Wronskian in
$z=0$ of the family $(\tilde{\psi}_{-},\tilde{\psi}_{+})$, and there
holds (\ref{eq:Wmin}) since $\mu<\tilde{\mu}_{max}$. Therefore the
system (\ref{eq:___lin_syst}) admits a unique solution for all $n,j$
and $0\leq\mu<\tilde{\mu}_{max}$, given by
\begin{equation}
c_{-}=-\frac{\left(\tilde{Q}_{+}^{\prime}(0)+b^{+}\tilde{Q}_{+}(0)\right)\tilde{P}_{+}(0)-\left(\tilde{P}_{+}^{\prime}(0)+a^{+}\tilde{P}_{+}(0)\right)\tilde{Q}_{+}(0)}{W_{\tilde{\psi}}},\label{eq:___c-}
\end{equation}

\begin{equation}
c_{+}=-\frac{\left(\tilde{P}_{+}^{\prime}(0)+a^{+}\tilde{P}_{+}(0)\right)\tilde{Q}_{-}(0)-\left(\tilde{Q}_{-}^{\prime}(0)+b^{-}\tilde{Q}_{-}(0)\right)\tilde{P}_{+}(0)}{W_{\tilde{\psi}}}.\label{eq:___c+}
\end{equation}
Let $n_{0},j_{0}>0$ being given by Lemma \ref{lem:phi_tilde} and
fix $n,j$ such that $|n|\geq n_{0}$ or $j\geq j_{0}$. Then we deduce
from (\ref{eq:a-b_bound}), (\ref{eq:PQtilde_0})---(\ref{eq:PpQptilde0})
and (\ref{eq:Wmin_bignj}) that
\begin{align*}
|c_{-}| & \leq\frac{\frac{3}{2}\left|b^{+}-a^{+}\right|+1}{\frac{1}{4}|b^{+}-b^{-}|+1}\leq\frac{6\overline{C}+4}{|b^{+}-b^{-}|+4},
\end{align*}
\[
|c_{+}|\leq\frac{\frac{9}{4}\left|a^{+}-b^{-}\right|+3}{\frac{1}{4}|b^{+}-b^{-}|+1}\leq\frac{9\left(|b^{+}-b^{-}|+\overline{C}\right)+12}{|b^{+}-b^{-}|+4}.
\]
Thus there exists $C>0$ such that for all $n,j$ and $0\leq\mu<\tilde{\mu}_{max}$,
\begin{equation}
|c_{-}|\leq\frac{C}{1+|b^{+}-b^{-}|},\qquad|c_{+}|\leq C,\qquad\text{if }|n|\geq n_{0}\text{ or }j\geq j_{0}.\label{eq:___cpm_bounds}
\end{equation}
This leads to, thanks to (\ref{eq:PQtilde_bounds}) and (\ref{eq:___P+Q+_expr}),
\begin{equation}
||Q_{+}||_{L^{\infty}(\mathbb{R}_{+})}\leq2C\tilde{R}_{max},\qquad\text{if }|n|\geq n_{0}\text{ or }j\geq j_{0},\label{eq:___Q+_bignj}
\end{equation}
\begin{equation}
||Q_{+}^{\prime}||_{L^{\infty}(\mathbb{R}_{+})}\leq\frac{C\left(\tilde{R}_{max}+|b^{+}-b^{-}|\tilde{R}_{max}\right)}{1+|b^{+}-b^{-}|}+C\tilde{R}_{max}\leq2C\tilde{R}_{max},\qquad\text{if }|n|\geq n_{0}\text{ or }j\geq j_{0},
\end{equation}
so that $Q_{+}$ satisfies (\ref{eq:PQ_bounds}) for any such $n,j$.
We now consider fixed indexes $|n|\leq n_{0}$ and $j\leq j_{0}$.
It is clear that there exists $M>0$ such that 
\[
\max\left(|b_{n,j,\mu}^{\pm}|,|a_{n,j,\mu}^{\pm}|\right)\leq M,\qquad\forall|n|\leq n_{0},\quad\forall j\leq j_{0},\quad\forall0\leq\mu<\tilde{\mu}_{max}.
\]
Then combining this with (\ref{eq:PQtilde_bounds}) and (\ref{eq:Wmin}),
we have
\[
|c_{\pm}|\leq\frac{2\left(M+1\right)\tilde{R}_{max}^{2}}{W_{min}}\eqqcolon K,
\]
so that we deduce 
\begin{equation}
||Q_{+}||_{L^{\infty}(\mathbb{R}_{+})}\leq2K\tilde{R}_{max},\qquad\forall|n|\leq n_{0},\,\forall j\leq j_{0}.
\end{equation}
\begin{equation}
||Q_{+}^{\prime}||_{L^{\infty}(\mathbb{R}_{+})}\leq K\left(\tilde{R}_{max}+|b^{+}-b^{-}|\tilde{R}_{max}\right)+K\tilde{R}_{max}\leq2K\left(1+M\right)\tilde{R}_{max},\qquad\forall|n|\leq n_{0},\,\forall j\leq j_{0}.\label{eq:___Q+p_smallnj}
\end{equation}
Consequently, from (\ref{eq:___Q+_bignj})---(\ref{eq:___Q+p_smallnj}),
we obtain that $Q_{+}$ satisfies (\ref{eq:PQ_bounds}) for all $n,j$
and $0\leq\mu<\tilde{\mu}_{max}$.

Let us now prove that $W_{\varphi}$ satisfies (\ref{eq:W0_bignj}).
Let us consider indexes $n,j$ such that $|n|\geq N_{0}\geq n_{0}$
\textit{or} $j\geq J_{0}\geq j_{0}$, where $N_{0},J_{0}>0$ are large
enough so that, thanks to (\ref{eq:a-a_bound}), we have
\begin{equation}
\left|b_{n,j,\mu}^{+}-b_{n,j,\mu}^{-}\right|\geq\max\left(2\overline{C}+24\,,\,702\left(2R_{max}^{2}+1\right)\right),\quad\forall\mu\in[0,1),\label{eq:___N0J0_1}
\end{equation}
\begin{equation}
\frac{\frac{1}{12}\left|b_{n,j,\mu}^{+}-b_{n,j,\mu}^{-}\right|-\frac{3}{2}C}{\left|b_{n,j,\mu}^{+}-b_{n,j,\mu}^{-}\right|+1}\geq\frac{1}{13},\quad\forall\mu\in[0,1).\label{eq:___N0J0_2}
\end{equation}
Since $N_{0}\geq n_{0}$ and $J_{0}\geq j_{0}$, we also have thanks
to (\ref{eq:PQtilde_0})---(\ref{eq:PpQptilde0})
\[
\left|W_{\tilde{\psi}}\right|\leq\frac{3}{2}|b^{+}-b^{-}|+1\leq\frac{3}{2}\left(1+|b^{+}-b^{-}|\right),
\]
and thus, using (\ref{eq:a-b_bound}), (\ref{eq:PQtilde_0})---(\ref{eq:PpQptilde0})
and (\ref{eq:___c+}), we deduce
\[
|c_{+}|\geq\frac{\frac{1}{4}|b^{+}-b^{-}|-\frac{1}{4}\overline{C}-3}{\frac{3}{2}\left(1+|b^{+}-b^{-}|\right)}\geq\frac{\frac{1}{12}|b^{+}-b^{-}|}{1+|b^{+}-b^{-}|},
\]
where we used (\ref{eq:___N0J0_1}) for the last inequality. Combining
this lower bound with (\ref{eq:PQtilde_0}), (\ref{eq:___P+Q+_expr})
and (\ref{eq:___cpm_bounds}), we have
\begin{align*}
\left|Q_{+}(0)\right| & \geq|c_{+}|-\frac{3}{2}|c_{-}|\geq\frac{1}{13},
\end{align*}
where we used (\ref{eq:___N0J0_2}) for the last inequality. Let us
also recall that, by construction, we have $|Q_{-}(0)|=|\tilde{Q}_{-}(0)|\geq\frac{1}{2}$
from (\ref{eq:PQtilde_0}) and because $|n|\geq N_{0}\geq n_{0}$.
Combining that with (\ref{eq:PQ_bounds}), we obtain 
\begin{align}
W_{\varphi} & := \left[(b^{-}-b^{+})Q_{-}Q_{+}+Q_{-}^{\prime}Q_{+}-Q_{+}^{\prime}Q_{-}\right](0) \label{eq:Wphi_expr}\\
|W_{\varphi}| & \geq\frac{1}{26}\left|b^{+}-b^{-}\right|-2R_{max}^{2} \nonumber \\
 & \ge\frac{1}{27}\left|b^{+}-b^{-}\right|+1,\nonumber
\end{align}
since (\ref{eq:___N0J0_1}) holds. Thanks to (\ref{eq:a-a_bound}),
one can readily show that $W_{\varphi}$ satisfies (\ref{eq:W0_bignj})
with $C_{W}=\min\left(1,\frac{1}{27}\underline{C}\right)^{-1}$.

Let us prove that $W_{\varphi}=W_{\varphi}^{n,j,\mu}$ satisfies (\ref{eq:W0})
for indexes $|n|\leq N_{0}$ and $j\leq J_{0}$. From (\ref{eq:Wphi_expr}), since $Q_{-}=\tilde{Q}_{-}$
by construction, since $Q_{+}$ satisfies (\ref{eq:___P+Q+_expr})
with $c_{\pm}$ given by (\ref{eq:___c-})---(\ref{eq:___c+}),
and because $W_{\tilde{\psi}}$ is given by (\ref{eq:Wmin}), there
exists a polynomial function $\mathcal{P}$ such that for all
$n,j,\mu$, we have
\[
W_{\varphi}^{n,j,\mu} = \frac{1}{W_{\tilde{\psi}}^{n,j,\mu}} \,\mathcal{P}\left(\tilde{Q}_{-}(0),\tilde{Q}_{-}^{\prime}(0),\tilde{Q}_{+}(0),\tilde{Q}_{+}^{\prime}(0),\tilde{P}_{+}(0),\tilde{P}_{+}^{\prime}(0),a^{+},b^{+},b^{-}\right)\eqqcolon \frac{1}{W_{\tilde{\psi}}^{n,j,\mu}} \,\mathcal{P}^{n,j,\mu}.
\]
One can readily check that $\left|a_{n,j,\mu}^{\pm}-a_{n,j,0}^{\pm}\right|,\left|b_{n,j,\mu}^{\pm}-b_{n,j,0}^{\pm}\right|\to0$
as $\mu\to0$ uniformly in $|n|\leq N_{0}$ and $j\leq J_{0}$. Combining
this with (\ref{eq:PQtilde_muto0}), we obtain 
\[
\sup_{|n|\leq N_{0},j\leq J_{0}}\left|\mathcal{P}^{n,j,\mu}-\mathcal{P}^{n,j,0}\right|\xrightarrow[\mu\to0]{}0.
\]
Meanwhile, the same holds for $W_{\tilde{\psi}}^{n,j,\mu}$, as we showed in the proof of (\ref{eq:Wmin}). Since (\ref{eq:Wmin}) holds, we have
\[
\sup_{|n|\leq N_{0},j\leq J_{0}}\left|W_{\varphi}^{n,j,\mu}-W_{\varphi}^{n,j,0}\right|\xrightarrow[\mu\to0]{}0.
\]
Also, there holds $W_{\varphi}^{n,j,0}\neq0$ for all $n,j$
since $\varphi_{-},\varphi_{+}$ are linearly independent. Thus there
exists $m>0$ such that 
\[
\inf_{|n|\leq N_{0},j\leq J_{0}}\left|W_{\varphi}^{n,j,0}\right|\geq m.
\]
Therefore, and because $N_{0},J_{0}$ are uniform in $\mu\in[0,1)$
from (\ref{eq:___N0J0_1})---(\ref{eq:___N0J0_2}), we deduce the
existence of $0<\mu_{max}\leq\tilde{\mu}_{max}$ such that for all
$0\leq\mu<\mu_{max}$ we have
\[
\inf_{|n|\leq N_{0},j\leq J_{0}}\left|W_{\varphi}^{n,j,\mu}\right|\ge\frac{m}{2}.
\]
Combining this with (\ref{eq:W0_bignj}), we see that (\ref{eq:W0})
holds with $W_{0}=\min(\frac{1}{C_{W}},\frac{m}{2})$.

Finally, (\ref{eq:int_phi}) simply follows from (\ref{eq:int_phi_tilde})
since $\varphi_{+}=\tilde{\varphi}_{+}$ by construction. The proof
of Lemma \ref{lem:homo_nj} is thus complete.
\end{proof}

\subsection{Fredholm analysis\label{sec:Fredholm}}

We recall below \cite[Theorem 2.4, p. 366]{Vol_11}.
\begin{thm}[Fredholm property in $C^{2,\delta}(\mathbb{R},\mathbb{R}^{d})$]
\label{thm:Fredholm}Fix $0<\delta<1$ and $d\in\mathbb{N}_+$.
Consider the operator $L\colon C^{2,\delta}(\mathbb{R},\mathbb{R}^{d})\to C^{0,\delta}(\mathbb{R},\mathbb{R}^{d})$
defined by
\[
Lu:=\alpha(x)u^{\prime\prime}+\beta(x)u^{\prime}+\gamma(x)u,
\]
where the coefficients $\alpha(x),\beta(x),\gamma(x)$ are smooth
$d\times d$ matrices and there is $\alpha_{0}>0$ such that $\langle\alpha(x)\xi,\xi\rangle\geq\alpha_{0}|\xi|^{2}$
for any $\xi\in\mathbb{R}^{d}$. Assume that $\alpha(x),\beta(x),\gamma(x)$
have finite limits as $x\to\pm\infty$, denoted respectively $\alpha_{\pm},\beta_{\pm},\gamma_{\pm}$.
Finally, we define the limiting operators
\[
L^{\pm}u=\alpha_{\pm}u^{\prime\prime}+\beta_{\pm}u^{\prime}+\gamma_{\pm}u,
\]
and assume that 
\[
\forall\xi\in\mathbb{R},\quad T^{\pm}(\xi)=-\alpha_{\pm}\xi^{2}+\beta_{\pm}i\xi+\gamma_{\pm}\quad\text{is an invertible matrix}.
\]
Then $L$ is a Fredholm operator, and its index is given by $\ind L=k_{+}-k_{-}$,
where 
\[
k_{\pm}=\text{Sp}(M^{\pm})\cap\left\{ z\in\mathbb{C}: \Re z>0\right\} ,\quad M^{\pm}=\left(\begin{array}{cc}
0 & -I_{d}\\
\alpha_{\pm}^{-1}\gamma_{\pm} & \alpha_{\pm}^{-1}\beta_{\pm}
\end{array}\right)\in M_{2d\times2d}(\mathbb{R}).
\]
\end{thm}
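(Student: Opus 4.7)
\medskip

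\noindent\textbf{Proof proposal.}
The plan is to convert the scalar ODE system into a first-order system on $\mathbb{R}^{2d}$, establish exponential dichotomies at $\pm\infty$ coming from the invertibility of the symbols $T^{\pm}(\xi)$, and then apply the Palmer--Sacker--Sell theory which identifies Fredholmness of $L$ with the existence of such dichotomies and delivers the index as a difference of stable (or unstable) dimensions at $-\infty$ and $+\infty$. First, I would rewrite $Lu=f$ as
\[
U'=M(x)U+F(x),\qquad U=\begin{pmatrix}u\\ u'\end{pmatrix},\quad M(x)=\begin{pmatrix}0 & I_d \\ -\alpha(x)^{-1}\gamma(x) & -\alpha(x)^{-1}\beta(x)\end{pmatrix},
\]
with $F=(0,\alpha^{-1}f)^{T}$; this is licit because $\alpha(x)$ is uniformly elliptic, hence uniformly invertible. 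The matrix $M(x)$ is H\"older continuous with finite limits $M^{\pm}$ as $x\to\pm\infty$, and $-M^{\pm}$ coincide with the matrices $M^{\pm}$ of the statement up to an inessential sign convention: this is where the hypotheses of the theorem enter.

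\medskip

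The second step will be to show that $M^{\pm}$ are hyperbolic, i.e.\ have no eigenvalues on $i\mathbb{R}$. If $i\xi\in\mathrm{Sp}(M^{\pm})$ with eigenvector $(v,i\xi v)^{T}$ then $(-\xi^{2}\alpha_{\pm}-i\xi\beta_{\pm}\alpha_{\pm}\alpha_{\pm}^{-1}-\gamma_{\pm})v=0$, which after a direct computation amounts to $T^{\pm}(\xi)v=0$; the hypothesis that $T^{\pm}(\xi)$ is invertible for all real $\xi$ then forces $v=0$, a contradiction. By standard ODE theory, the constant-coefficient systems $U'=M^{\pm}U$ thus admit exponential dichotomies on $\mathbb{R}$, and by a perturbation argument (the coefficients $M(x)$ converge to $M^{\pm}$ as $x\to\pm\infty$, uniformly enough because of H\"older regularity) these dichotomies persist on half-lines $[R,+\infty)$ and $(-\infty,-R]$ for $R$ large; this is the Coppel / Sacker--Sell roughness theorem.

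\medskip

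The heart of the argument is then Palmer's Fredholm theorem: the operator
\[
\mathcal{T}: C^{1,\delta}(\mathbb{R},\mathbb{R}^{2d})\longrightarrow C^{0,\delta}(\mathbb{R},\mathbb{R}^{2d}),\qquad \mathcal{T}U=U'-M(x)U
\]
is Fredholm on $\mathbb{R}$ if and only if $\mathcal{T}$ admits exponential dichotomies on both $[R,+\infty)$ and $(-\infty,-R]$, with index equal to the difference of the dimensions of the unstable subspaces at $-\infty$ and $+\infty$. (The original Palmer statement is in $C_b$ or $L^p$; transferring to the H\"older setting proceeds through a standard Schauder bootstrap using the ellipticity of $\alpha$.) The unstable subspace at $\pm\infty$ is exactly the generalized eigenspace of $M^{\pm}$ associated with eigenvalues of positive real part, whose dimension is $k_{\pm}$ as defined in the statement, giving
\[
\operatorname{ind} L=\operatorname{ind}\mathcal{T}=k_{+}-k_{-}.
\]
Finally, one checks that $\ker L$ and $\operatorname{coker} L$ are naturally identified with those of $\mathcal{T}$: injectivity of the embedding $u\mapsto U$ is clear, and surjectivity of the quotient uses that any $f\in C^{0,\delta}$ lifts to $F\in C^{0,\delta}$ and, conversely, a solution of $\mathcal{T}U=F$ with $F$ of the restricted form $(0,\alpha^{-1}f)^T$ has first component $u$ automatically in $C^{2,\delta}$ by the equation itself.

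\medskip

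The main obstacle I anticipate is not the algebraic index computation but the careful verification that the dichotomy inherited from the constant-coefficient limit problems passes to the variable-coefficient operator in $C^{0,\delta}(\mathbb{R},\mathbb{R}^{d})$ rather than merely in $C_b$ or $L^p$; this requires combining the roughness theorem with local Schauder estimates, and a gluing argument on $[-R,R]$ to go from half-line dichotomies to a global Fredholm statement. Once this is in place, the index formula is a direct consequence of the definition of $k_{\pm}$ and the fact that the unstable bundle at $\pm\infty$ has constant dimension equal to the number of eigenvalues of $M^{\pm}$ with positive real part, counted with algebraic multiplicity.
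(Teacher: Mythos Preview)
The paper does not prove this theorem at all: it is quoted verbatim as \cite[Theorem 2.4, p.~366]{Vol_11} and used as a black box in the appendix. So there is no ``paper's own proof'' to compare against; you have supplied a proof sketch where the authors simply cite the literature.

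That said, your strategy --- reduce to a first-order system, show hyperbolicity of the limiting matrices from the invertibility of the symbols $T^{\pm}(\xi)$, invoke roughness of exponential dichotomies, and apply Palmer's theorem --- is the standard dynamical-systems route to results of this type and is sound in outline. Volpert's own treatment in the cited monograph is more PDE-flavoured (a priori estimates and limiting operators rather than dichotomies), so your approach is a genuinely different, and arguably more transparent, path to the same conclusion.

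One point needs tightening. You correctly observe that your companion matrix at $\pm\infty$ is the \emph{negative} of the statement's $M^{\pm}$, but then you write that ``the unstable subspace at $\pm\infty$ \ldots has dimension $k_{\pm}$ as defined in the statement''. This is inconsistent: eigenvalues of the statement's $M^{\pm}$ with positive real part correspond to eigenvalues of your first-order matrix with \emph{negative} real part, so $k_{\pm}$ is the \emph{stable} dimension of your system, i.e.\ $\dim E^{u}_{\pm}=2d-k_{\pm}$. The Palmer index $\dim E^{u}_{-}-\dim E^{u}_{+}=(2d-k_{-})-(2d-k_{+})=k_{+}-k_{-}$ then gives the right answer, but your intermediate identification is wrong as written. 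Straighten out this bookkeeping and the argument goes through; the remaining work (H\"older-space version of Palmer via Schauder estimates, and the identification of $\ker/\operatorname{coker}$ between the second-order and first-order problems) is as you describe.
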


We now apply Theorem \ref{thm:Fredholm} to our case.
\begin{lem}[Fredholm property on weighted spaces]
\label{lem:Fredholm_weight}Let $(n,j)\in\mathbb{Z}\times\mathbb{N}$
with $(n,j)\neq(0,0)$, and $0\leq\mu<1$. Set $0<\delta<1$, $\rho:=\frac{c_{0}}{2}>0$
and
\begin{alignat}{2}
\mathcal{L}_{n,j,\mu}^{\delta,\rho}\colon & C_{\rho}^{2,\delta}(\mathbb{R},\mathbb{C}) &  & \to C_{\rho}^{0,\delta}(\mathbb{R},\mathbb{C})\label{eq:Lnj_rho}\\
 & u &  & \mapsto\mathcal{E}_{n,j,\mu}[u],\nonumber 
\end{alignat}
where $\mathcal{E}_{n,j,\mu}[u]$ is given by (\ref{eq:vnj}), and
for any $k\in\mathbb{N}$, we set
\[
C_{\rho}^{k,\delta}(\mathbb{R},\mathbb{C}):=\left\{ f\in C^{k}(\mathbb{R},\mathbb{C}): ||f||_{C_{\rho}^{k,\delta}(\mathbb{R},\mathbb{C})}<\infty\right\} ,\qquad||f||_{C_{\rho}^{k,\delta}(\mathbb{R},\mathbb{C})}:=\left\Vert z\mapsto f(z)e^{\rho z}\right\Vert _{C^{k,\delta}(\mathbb{R},\mathbb{C})}.
\]
Then $\mathcal{L}_{n,j,\mu}^{\delta,\rho}$ is Fredholm with index
zero.
\end{lem}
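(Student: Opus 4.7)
\medskip

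\noindent\textbf{Proof proposal for Lemma \ref{lem:Fredholm_weight}.}

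\medskip

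The strategy is to reduce to the unweighted Hölder setting by conjugating with the exponential weight, and then invoke Theorem \ref{thm:Fredholm}. First I would introduce $\overline{u}(z) := u(z)e^{\rho z}$, which defines an isometric isomorphism $C_\rho^{k,\delta}(\mathbb{R},\mathbb{C}) \to C^{k,\delta}(\mathbb{R},\mathbb{C})$ for every $k$. A direct computation (of the type already carried out in the proof of Lemma \ref{lem:homo_nj}) shows that $\mathcal{L}_{n,j,\mu}^{\delta,\rho} u = f$ is equivalent to $\overline{\mathcal{L}}_{n,j,\mu} \overline{u} = \overline{f}$, where
\[
\overline{\mathcal{L}}_{n,j,\mu} \overline{u} := \overline{u}'' + (2in\sigma)\overline{u}' + \Big[-\tfrac{c_0^2}{4} - in\sigma c_0 - \lambda_j + (1+\delta_{0j})\lambda_0 U(z) - (1+\mu)n^2\sigma^2\Big]\overline{u}
\]
acts from $C^{2,\delta}(\mathbb{R},\mathbb{C})$ to $C^{0,\delta}(\mathbb{R},\mathbb{C})$. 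Since the conjugation is an isomorphism of domains and codomains, it suffices to prove that $\overline{\mathcal{L}}_{n,j,\mu}$ is Fredholm of index zero.

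Then I would rewrite $\overline{\mathcal{L}}_{n,j,\mu}$ as a $2\times 2$ real system acting on $(\Re\overline u, \Im\overline u)$, so as to cast the problem in the framework of Theorem \ref{thm:Fredholm} with $d=2$. The principal coefficient is the identity matrix (hence trivially uniformly elliptic), the other coefficients are smooth, and they admit finite limits as $z \to \pm\infty$ because $U(-\infty)=1$ and $U(+\infty)=0$. The limiting complex characteristic polynomials are precisely those of the equation (\ref{eq:homo}) after conjugation by $e^{\rho z}$: they are obtained from (\ref{eq:homo_-})--(\ref{eq:homo_+}) by shifting the roots, so that the characteristic roots at $-\infty$ and $+\infty$ are respectively $a_{n,j,\mu}^\pm + \tfrac{c_0}{2}$ and $b_{n,j,\mu}^\pm + \tfrac{c_0}{2}$.

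To check the invertibility of the symbols $T^\pm(\xi)$ on $\mathbb{R}$, I need to verify that none of these four complex roots is purely imaginary, i.e. that each has nonzero real part. Using the explicit formulas (\ref{eq:a_nj})--(\ref{eq:b_nj}) and the convention on the complex square root, this amounts to checking that the radicands $4\mu n^2\sigma^2 + c_0^2 + 4in\sigma c_0 + 4\lambda_j$ and $4\mu n^2\sigma^2 + c_0^2 + 4in\sigma c_0 + 4[(1-\delta_{0j})\lambda_j - \lambda_0]$ lie in $\mathbb{C}\setminus\mathbb{R}_-$. The second one has real part at least $c_0^2 + 4(\lambda_j - \lambda_0)\chi_{j\geq 1} - 4\lambda_0 \chi_{j=0} \geq c_0^2 - 4\lambda_0 > 0$; the first one is strictly positive for $n=0$, $j\geq 1$ (since $c_0^2 + 4\lambda_j > 0$ using $c_0 \geq c^\ast$) and has nonzero imaginary part whenever $n\neq 0$. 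The only case that could produce a zero radicand is $(n,j,c_0)=(0,0,c^\ast)$, which is ruled out by the hypothesis $(n,j)\neq (0,0)$. Hence both radicands lie in $\mathbb{C}\setminus\mathbb{R}_-$, giving $\Re(a_{n,j,\mu}^+ + \tfrac{c_0}{2}) > 0 > \Re(a_{n,j,\mu}^- + \tfrac{c_0}{2})$ and similarly for $b_{n,j,\mu}^\pm + \tfrac{c_0}{2}$; in particular no root is purely imaginary, so Theorem \ref{thm:Fredholm} applies.

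It remains to compute the index. Since the real system has size $d=2$, the companion matrices $M^\pm$ are $4\times 4$, and their eigenvalues are precisely the complex characteristic roots together with their complex conjugates. At each end, exactly one of the two complex roots has positive real part and one has negative real part, so the conjugates contribute in the same way; we obtain $k_+ = 2 = k_-$, whence $\ind \overline{\mathcal{L}}_{n,j,\mu} = k_+ - k_- = 0$. Going back via conjugation, $\mathcal{L}_{n,j,\mu}^{\delta,\rho}$ is Fredholm of index zero. The only mildly delicate point in this plan is the case analysis required to rule out purely imaginary characteristic roots at each end, which relies in an essential way on excluding $(n,j)=(0,0)$ and on the lower bound $c_0 \geq c^\ast$.
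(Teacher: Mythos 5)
Your proposal is correct and follows essentially the same route as the paper: conjugate by the weight $e^{\rho z}$, rewrite the complex scalar equation as a real $2\times 2$ system, verify invertibility of the limiting symbols (equivalently, that the shifted characteristic roots $a_{n,j,\mu}^{\pm}+\tfrac{c_0}{2}$ and $b_{n,j,\mu}^{\pm}+\tfrac{c_0}{2}$ have nonzero real part, which is where $(n,j)\neq(0,0)$ and $c_0\geq c^{*}$ enter), and count $k_{+}=k_{-}=2$ from the eigenvalues of the companion matrices. The only blemish is the intermediate chain $c_0^2+4(\lambda_j-\lambda_0)\geq c_0^2-4\lambda_0$ for $j\geq 1$, which would require $\lambda_j\geq 0$; the needed positivity of the real part of the second radicand follows anyway directly from $\lambda_j>\lambda_0$.
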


\begin{proof}
We cannot apply Theorem \ref{thm:Fredholm} to $\mathcal{L}_{n,j,\mu}^{\delta,\rho}$
because of the weighted space and the functions involved being complex.
Thus we define successively the real counterpart of $\mathcal{L}_{n,j,\mu}^{\delta,\rho}$
by
\begin{alignat*}{2}
L_{n,j,\mu}^{\delta,\rho} \colon & C_{\rho}^{2,\delta}(\mathbb{R},\mathbb{R}^{2}) &  & \to C_{\rho}^{0,\delta}(\mathbb{R},\mathbb{R}^{2})\\
 & u=(u_{1},u_{2}) &  & \mapsto\left(\begin{array}{c}
u_{1}^{\prime\prime}+c_{0}u_{1}^{\prime}-2n\sigma u_{2}^{\prime}-\left(\lambda_{j}-(1+\delta_{0j})\lambda_{0}U(z)+(1+\mu)n^{2}\sigma^{2}\right)u_{1}\vspace{0.1cm}\\
u_{2}^{\prime\prime}+c_{0}u_{2}^{\prime}+2n\sigma u_{1}^{\prime}-\left(\lambda_{j}-(1+\delta_{0j})\lambda_{0}U(z)+(1+\mu)n^{2}\sigma^{2}\right)u_{2}
\end{array}\right),
\end{alignat*}
as well as the operator $M_{n,j,\mu}^{\delta,\rho}\colon C^{2,\delta}(\mathbb{R},\mathbb{R}^{2})\to C^{0,\delta}(\mathbb{R},\mathbb{R}^{2})$
with
\begin{align*}
M_{n,j,\mu}^{\delta,\rho}u & =e^{\rho z}L_{n,j,\mu}^{\delta,\rho}\left(ue^{-\rho z}\right)\\
 & =\left(\begin{array}{c}
u_{1}^{\prime\prime}+\left[\rho^{2}-\rho c_{0}-\left(\lambda_{j}-(1+\delta_{0j})\lambda_{0}U(z)+(1+\mu)n^{2}\sigma^{2}\right)\right]u_{1}-2n\sigma\left(u_{2}^{\prime}-\rho u_{2}\right)\vspace{0.1cm}\\
u_{2}^{\prime\prime}+\left[\rho^{2}-\rho c_{0}-\left(\lambda_{j}-(1+\delta_{0j})\lambda_{0}U(z)+(1+\mu)n^{2}\sigma^{2}\right)\right]u_{2}+2n\sigma\left(u_{1}^{\prime}-\rho u_{1}\right)
\end{array}\right).
\end{align*}
We may rewrite the above operator as
\[
M_{n,j,\mu}^{\delta,\rho}u=\left(\begin{array}{cc}
1 & 0\\
0 & 1
\end{array}\right)u^{\prime\prime}+\left(\begin{array}{cc}
0 & -2n\sigma\\
2n\sigma & 0
\end{array}\right)u^{\prime}+\left(\begin{array}{cc}
q(z) & 2n\sigma\rho\\
-2n\sigma\rho & q(z)
\end{array}\right)u\eqqcolon I_{2}u^{\prime\prime}+\beta u^{\prime}+\gamma(z)u,
\]
with, given that $\rho=\frac{c_{0}}{2}$,
\[
q(z):=-\frac{c_{0}^{2}}{4}-\left(\lambda_{j}-(1+\delta_{0j})\lambda_{0}U(z)+(1+\mu)n^{2}\sigma^{2}\right).
\]
We also set 
\[
\gamma_{\pm}=\lim_{z\to\pm\infty}\gamma(z)=\left(\begin{array}{cc}
q_{\pm} & n\sigma c_{0}\\
-n\sigma c_{0} & q_{\pm}
\end{array}\right),
\]
\[
\mathbb{R}\ni q_{\pm}=\lim_{z\to\pm\infty}q(z)=-\frac{c_{0}^{2}}{4}-\left(\lambda_{j}+(1+\mu)n^{2}\sigma^{2}\right)+\begin{cases}
(1+\delta_{0j})\lambda_{0}, & \text{ if } q_{\pm}=q_{-},\\
0, & \text{ if  } q_{\pm}=q_{+}.
\end{cases}
\]
Finally, we define
\[
T_{\rho}^{\pm}\colon\xi\in\mathbb{R}\mapsto-\xi^{2}I+i\xi\beta+\gamma_{\pm}\in M_{2\times2}(\mathbb{R}).
\]
In order to apply Theorem \ref{thm:Fredholm}, we have to check if
$T_{\rho}^{\pm}(\xi)$ is invertible for any $\xi\in\mathbb{R}$.
We compute
\begin{align*}
\det T_{\rho}^{\pm}(\xi) & =\left|\begin{array}{cc}
-\xi^{2}+q_{\pm} & -2in\sigma\xi+2n\sigma\rho\\
2in\sigma\xi-2n\sigma\rho & -\xi^{2}+q_{\pm}
\end{array}\right|\\
 & =\left(-\xi^{2}+q_{\pm}\right)^{2}+\left(2in\sigma\xi-2n\sigma\rho\right)^{2}\\
 & =P_{\mathbb{R}}(\xi)-8in^{2}\sigma^{2}\rho\xi,
\end{align*}
with $P_{\mathbb{R}}(\xi)$ a real polynomial function of $\xi$.
Assume by contradiction that there exists $\xi\in\mathbb{R}$ such
that $\det T_{\rho}^{\pm}(\xi)=0$. Then necessarily $n\xi=0$. Then
$n$ has to be zero, for otherwise we would have $\xi=0$, while
\[
\det T_{\rho}^{\pm}(0)=q_{\pm}^{2}+(2n\sigma\rho)^{2}>0.
\]
Therefore $n=0$, but this would yield
\[
\det T_{\rho}^{\pm}(\xi)=\left(-\xi^{2}+q_{\pm}\right)^{2}.
\]
However, since $(n,j)\neq(0,0)$
\[
q_{+}=-\frac{c_{0}^{2}}{4}-\lambda_{j}<-\frac{1}{4}\left(c_{0}^{2}+4\lambda_{0}\right)\leq0,\qquad q_{-}=q_{+}+\lambda_{0}<q_{+}\leq0,
\]
which means that if $n=0$, we again have $\det T_{\rho}^{\pm}(\xi)\neq0$
for any $\xi\in\mathbb{R}$, which contradicts our assumption. Consequently,
$T_{\rho}^{\pm}(\xi)$ is invertible for any $\xi\in\mathbb{R}$. 

Hence, from Theorem \ref{thm:Fredholm}, we deduce that $L_{n,j,\mu}^{\delta}$
is Fredholm with
\[
\ind M_{n,j,\mu}^{\delta,\rho}=k^{+}-k^{-},\qquad k^{\pm}=\text{Sp}M^{\pm}\cap\left\{ z: \Re z>0\right\} ,
\]
where 
\begin{align*}
M^{\pm} & :=\left(\begin{array}{cc}
0 & -I_{2}\\
\gamma_{\pm} & \beta
\end{array}\right)=\left(\begin{array}{cccc}
0 & 0 & -1 & 0\\
0 & 0 & 0 & -1\\
q_{\pm} & n\sigma c_{0} & 0 & -2n\sigma\\
-n\sigma c_{0} & q_{\pm} & 2n\sigma & 0
\end{array}\right).
\end{align*}
Let us first determine $k^{+}$. We compute
\begin{align*}
\det(M^{+}-XI_{4}) & =\left|\begin{array}{cc}
q_{+}+X^{2} & n\sigma c_{0}+2n\sigma X\\
-n\sigma c_{0}-2n\sigma X & q_{+}+X^{2}
\end{array}\right|\\
 & =(q_{+}+X^{2})^{2}+\left(n\sigma c_{0}+2n\sigma X\right)^{2}\\
 & =\left(X^{2}+q_{+}-in\sigma c_{0}-2in\sigma X\right)\left(X^{2}+q_{+}+in\sigma c_{0}+2in\sigma X\right),
\end{align*}
from which we deduce that the eigenvalues of $M^{+}$ are
\[
X_{\pm}^{1}=\frac{2in\sigma\pm\sqrt{(2in\sigma)^{2}-4(q_{+}-in\sigma c_{0})}}{2},\quad 
X_{\pm}^{2}=\frac{-2in\sigma\pm\sqrt{(2in\sigma)^{2}-4(q_{+}+in\sigma c_{0})}}{2},
\]
and thus
\begin{align*}
\Re\, (2X_{\pm}^{1}) & =\pm\Re\sqrt{-4n^{2}\sigma^{2}-4q_{+}+4in\sigma c_{0}}\\
 & =\pm\Re\sqrt{c_{0}^{2}+4\lambda_{j}+4in\sigma c_{0}+4\mu n^{2}\sigma^{2}}\\
 & =\pm\Re\, (b_{n,j,\mu}^{+}-b_{n,j,\mu}^{-}).
\end{align*}
Notice that, since $(n,j)\neq(0,0)$, we have $\Re\,(b_{n,j,\mu}^{+}-b_{n,j,\mu}^{-})>0$,
so that $\Re X_{-}^{1}<0<\Re X_{+}^{1}$. Similar calculations yield
\[
\Re\, (2X_{\pm}^{2})=\pm\Re\sqrt{c_{0}^{2}+4\lambda_{j}-4in\sigma c_{0}+4\mu n^{2}\sigma^{2}}=\pm\Re\,(b_{-n,j,\mu}^{+}-b_{-n,j,\mu}^{-}),
\]
therefore we also have $\Re X_{-}^{2}<0<\Re X_{+}^{2}$. As a result
$k^{+}=2$.

Let us now turn our attention to $k^{-}$. Similar calculations yield
that the eigenvalues of $M^{-}$ are
\[
Y_{\pm}^{1}=\frac{2in\sigma\pm\sqrt{(2in\sigma)^{2}-4(q_{-}-in\sigma c_{0})}}{2},\quad
Y_{\pm}^{2}=\frac{-2in\sigma\pm\sqrt{(2in\sigma)^{2}-4(q_{-}+in\sigma c_{0})}}{2},
\]
which yields 
\begin{align*}
\Re\,(2Y_{\pm}^{1}) & =\pm\Re\sqrt{-4n^{2}\sigma^{2}+4in\sigma c_{0}-4q_{-}}\\
 & =\pm\Re\sqrt{c_{0}^{2}+4in\sigma c_{0}+4\lambda_{j}+4\mu n^{2}\sigma^{2}-4(1+\delta_{0j})\lambda_{0}}\\
 & =\pm\Re\, (a_{n,j,\mu}^{+}-a_{n,j,\mu}^{-}).
\end{align*}
Notice that $\Re\,(a_{n,j,\mu}^{+}-a_{n,j,\mu}^{-})>0$, so that $\Re Y_{-}^{1}<0<\Re Y_{+}^{1}$,
and similarly $\Re Y_{-}^{2}<0<\Re Y_{+}^{2}$. Hence, $k^{-}=2$,
which means $\ind M_{n,j,\mu}^{\delta,\rho}=0$. Since the operator $S_{\rho}\colon u\in C^{2,\delta}(\mathbb{R},\mathbb{R}^{2})\mapsto ue^{-\rho z}\in C_{\rho}^{2,\delta}(\mathbb{R},\mathbb{R}^{2})$
is continuously invertible with $S_{\rho}^{-1}\colon u\in C_{\rho}^{2,\delta}(\mathbb{R},\mathbb{R}^{2})\mapsto ue^{\rho z}\in C^{2,\delta}(\mathbb{R},\mathbb{R}^{2})$,
we have that $L_{n,j,\mu}^{\delta,\rho}u=S_{\rho}M_{n,j,\mu}^{\delta,\rho}S_{\rho}^{-1}u$
shares the same Fredholm property and index as $M_{n,j,\mu}^{\delta,\rho}$.
From there, we prove similarly that the operator $\mathcal{L}_{n,j,\mu}^{\delta,\rho}$
defined by (\ref{eq:Lnj_rho}) is also Fredholm and satisfies $\ind\mathcal{L}_{n,j,\mu}^{\delta,\rho}=0$.
The proof is thus complete.
\end{proof}

\medskip
 \noindent{\bf Acknowledgements.} 
 Both authors are supported by the 
ANR \textsc{i-site muse}, project \textsc{michel} 170544IA (n$^{\circ}$ ANR \textsc{idex}-0006).

\bibliographystyle{plain}
\bibliography{./biblio}

\end{document}